\theoremstyle{definition}
\newtheorem{assumption}{Assumption}[part]
\newtheorem{theorem}{Theorem}[section]
\newtheorem{lemma}[theorem]{Lemma}
\newtheorem{proposition}[theorem]{Proposition}
\newtheorem{corollary}[theorem]{Corollary}
\newtheorem{definition}[theorem]{Definition}
\newtheorem{remark}[theorem]{Remark}
\numberwithin{equation}{section}
\def\initial{I_0}
\def\R{{\mathbb R}}
\def\N{{\mathbb N}}
\def\Z{{\mathbb Z}}
\def\C{{\mathbb C}}
\def\AC{\mathbb{AC}}
\def\Z{{\mathbb Z}}
\def\D{\mathbb{D}_{\R}[0,\infty)}
\def\Yn{Y^{(N)}}
\def\Ynt{\widetilde{Y}^{(N)}}
\def\Y{\mathcal{Y}}
\def\phiset{\mathbb{C}^{1,1}_c(\supint\times\R_+)}
\def\tightphiset{\mathbb{C}_b(\supint\times\R_+)}
\def\fset{\mathbb{C}_c^1\supint}
\def\cal{\mathcal}
\def\id{\textbf{Id}}
\def\f1{\mathbf{1}}
\newcommand{\indic}[1]{\mathbb{1}_{\{#1\}}}
\newcommand{\indicil}[1]{\mathbb{1}_{\{#1\}}}
\def\terma{{\mathcal J}_1}
\def\termb{{\mathcal J}_2}
\def\termc{{\mathcal J}_3}
\def\termcb{{\mathcal J}_{3}^\prime}
\newcommand{\Xth}[1]{#1^{\text{th}}}
\newcommand{\Ept}[1]{\mathbb{E}\left[#1\right]}
\newcommand{\Eptil}[1]{\mathbb{E}[#1]}
\newcommand{\Prob}[1]{\mathbb{P}\left\{#1\right\}}
\newcommand{\Probil}[1]{\mathbb{P}\{#1\}}
\newcommand{\deq}[0]{\overset{d}{=}}
\newcommand{\twopartdef}[4]{	\left\{		\begin{array}{ll}	#1 & \mbox{if } #2 \\[3mm]	#3 & \mbox{if } #4	 \end{array}	\right.}
\newcommand{\threepartdef}[6]{	\left\{		\begin{array}{lll}	#1 & \mbox{if } #2  \\[3mm]	#3 & \mbox{if }  #4	\\[3mm]#5 & \mbox{#6}  \end{array}	\right.}
\newcommand{\threepartdefE}[6]{	\left\{		\begin{array}{lll}	#1 & \mbox{if } #2  \\	#3 & \mbox{if }  #4	\\#5 & \mbox{if } #6 \end{array}	\right.}
\def\endsup{L}
\def\endsupzero{\ell_0}
\def\supint{[0,\endsup)}
\def\delayn{u^{(N)}_0}
\def\ren{R_E^{(N)}}
\def\re{R_E}
\def\xnbar{\overline{X}^{(N)}}
\def\xn{X^{(N)}}
\def\xii{X^i}
\def\xni{X^{(N),i}}
\newcommand{\xnX}[1]{X^{(N),#1}}
\newcommand{\xX}[1]{X^{#1}}
\def\arrivaln{E^{(N)}}
\def\arrival{E}
\def\arrivalni{E^{(N),i}}
\def\arrivali{E^{i}}
\newcommand{\arrivalnX}[1]{E^{(N),#1}}
\def\arrivalnbar{\overline{E}^{(N)}}
\def\nun{\nu^{(N)}}
\def\nunbar{\overline{\nu}^{(N)}}
\def\nuln{\nu_\ell^{(N)}}
\def\nuni{\nu^{(N),i}}
\def\nunone{S^{(N)}}
\def\nuone{S}
\def\nunonebar{\overline{S}^{(N)}}
\def\nuonebar{\overline{S}}
\def\kn{K^{(N)}}
\def\d{D}
\def\dn{D^{(N)}}
\def\dni{D^{(N),i}}
\def\di{D^i}
\def\dnbar{\overline{D}^{(N)}}
\def\Uactiven{\mathcal{V}^{(N)}}
\def\Usizeln{\mathcal{U}^{(N)}_\ell}
\def\Usizen{\mathcal{U}^{(N)}}
\def\agen{a^{(N)}}
\def\ageni{a^{(N),i}}
\def\agei{a^{i}}
\newcommand{\agenX}[1]{a^{(N),#1}}
\def\age{a}
\def\tM{\tilde{\mathbb{M}}}
\def\supp{{\rm{supp}}}
\def\j0{j_0}%
\def\jo0{-\xn(0)+1}
\def\ki0{-k_{0,i}^{(N)}}
\def\serviceTime{v}
\def\kninv{\alpha^{(N)}}
\def\kinv{\alpha}
\def\dept{\beta}
\def\deptn{\beta^{(N)}}
\def\arrivalTime{\gamma^{(N)}}
\def\arrivalTimenon{\gamma}
\def\eventtimen{\tau^{(N)}}
\def\eventmarkn{z^{(N)}}
\def\MPn{\mathcal{T}^{(N)}}
\def\eventtime{\tau}
\def\eventmark{z}
\def\stationn{\kappa^{(N)}}
\def\station{\kappa}
\def\rqc{\iota}
\def\custServSize{\chi^{(N)}}
\def\custServSizenon{\chi}
\def\custES{\chi^{E,(N)}}
\def\custKS{\chi^{K,(N)}}
\def\custDS{\chi^{D,(N)}}
\def\custDSnon{\chi^{D}}
\def\Que{\cal D}
\def\Quephinl{\cal D_{\varphi,\ell}^{(N)}}
\def\Quen{\cal D^{(N)}}
\def\Queni{\cal D^{(N),i}}
\def\Quenbar{\overline{\cal D}^{(N)}}
\def\Quephinli{\cal D_{\varphi,\ell}^{(N),i}}
\newcommand{\QuephinlX}[1]{\cal D_{\varphi,\ell}^{(N),#1}}
\def\An{A^{(N)}}
\def\Anbar{\overline{A}^{(N)}}
\def\Mn{\cal M^{(N)}}
\def\Mni{\cal M^{(N),i}}
\def\Mnbar{\overline{\cal M}^{(N)}}
\def\Rphinl{\cal R_{\varphi,\ell}^{(N)}}
\def\Rn{\cal R^{(N)}}
\def\Rni{\cal R^{(N),i}}
\newcommand{\RnX}[1]{\cal R^{(N),#1}}
\def\Rnbar{\overline{\cal R}^{(N)}}
\def\Bphinl{B_{\varphi,\ell}^{(N)}}
\def\Bn{B^{(N)}}
\def\Bnbar{\overline{B}^{(N)}}
\def\Nn{\cal N^{(N)}}
\def\Nni{\cal N^{(N),i}}
\def\Nnbar{\overline{\cal N}^{(N)}}
\def\filt{\mathcal{F}}
\def\filtn{\mathcal{F}^{(N)}}
\def\Gen{G_E^{(N)}}
\def\Ge{G_E}
\def\bGen{\overline G_E^{(N)}}
\def\bGe{\overline G_E}
\def\gen{g_E^{(N)}}
\def\gee{g_E}
\def\hen{h_E^{(N)}}
\def\he{h_E}
\def\Pstar{P^*}
\def\Gstar{G^*}
\def\gstar{G^*}
\def\hstar{h^*}
\def\rstar{r^*}
\def\Ustar{U^*}
\def\ustar{u^*}
\def\s{\mathbb{S}}
\newcommand{\hc}[0] {[0,\infty)}
\newcommand{\ho}[0] {(0,\infty)}
\def\In{\mathcal{I}^{(n)}}
\def\busyn{\mathfrak{B}^{(N)}}
\def\busy{\mathfrak{B}}
\newcommand{\poly}[2]{\mathfrak{P}_d\left(#1,#2\right)}
\def\mset{\mathcal{Z}}
\def\filtData{\mathcal{F}^{\text{Data}}}
\def\tnun{\tilde{\nu}^{(N)}}
\def\tkn{\tilde{K}^{(N)}}
\def\tdn{\tilde{D}^{(N)}}
\def\txn{\tilde{X}^{(N)}}
\def\tV{\tilde{{\mathcal V}}}
\def\tvn{\tilde{\mathcal V}^{(N)}}
\def\ta{\tilde{a}}
\def\tQuen{\tilde{\mathcal{D}}^{(N)}}
\def\tilQuen{\tilde{{\mathcal D}}^{(N)}}
\def\tilAn{\tilde{A}^{(N)}}
\def\tmn{\tilde{{\mathcal M}}^{(N)}}
\def\inten{\varrho}
\def\tRni{\tilde{R}^{(N),i}}
\begin{document}

\begin{frontmatter}

\title{The hydrodynamic limit of a randomized load balancing network}

\begin{aug}
\author{\fnms{Reza} \snm{Aghajani} \ead[label=e1]
{maghajani@ucsd.edu}}
\and
\author{\fnms{Kavita} \snm{Ramanan}\ead[label=e2]
{kavita\_ramanan@brown.edu}}

\affiliation{Department of Mathematics, UCSD,
and Division of Applied Mathematics, Brown University}

\end{aug}

\begin{abstract}
Randomized load balancing networks arise in a variety of applications, and allow for efficient sharing of resources, while being relatively easy to implement.  We consider a network of parallel queues in which incoming jobs with  independent and identically distributed service times are assigned to the shortest queue among a randomly chosen subset of $d$ queues, and leave the network on completion of service.  Prior work on dynamical properties of this model has focused on the case of exponential service distributions. In this work, we analyze the more realistic case of  general service distributions. We first introduce a novel particle representation of the state of the network, and characterize the state dynamics via a sequence of interacting measure-valued stochastic processes. Under mild assumptions, we show that the sequence of scaled state processes converges, as the number of servers goes to infinity, to a hydrodynamic limit that is characterized as the unique solution to a  countable system of coupled deterministic measure-valued equations.  We also establish a propagation of chaos result that shows that finite collections of  queues are asymptotically independent.  The general framework developed here is potentially useful for analyzing a larger class of  models arising in diverse fields including biology and materials science.
\end{abstract}

\end{frontmatter}


\section{Introduction}\label{sec_introduction}

\subsection{Background and  Discussion of Results}

Randomized load balancing is a method for the efficient sharing of resources in networking systems that is relatively easy to implement, and  used in a variety of applications  such as, for example,  hash tables in data switches, parallel computing \cite{Mit01} and wireless networks \cite{Ganetal12}.  In this article, we  introduce a  mathematical framework for the analysis of a class of  large-scale parallel server load balancing networks in the presence of general service times, with the specific goal of obtaining a tractable characterization of the hydrodynamic limit of  randomized join-the-shortest-queue  networks, as the number of servers  goes to infinity. Past work on dynamical properties of this model  has essentially been restricted to the case of exponential service distributions. A key component of our approach that allows us to handle general service distributions is a
characterization of its dynamics via interacting measure-valued stochastic processes.   Our framework can be generalized and we expect similar
representations  to also  be  useful  for the study of other load-balancing models \cite{Mit00} as well as models arising in population
biology and materials science.


In  the randomized join-the-shortest-queue  network model,  also referred to as the supermarket model, jobs with independent and identically distributed (i.i.d.) service times arrive  according to a renewal process with rate $\lambda N$ to a network of $N$ identical  servers in parallel, each with an infinite-capacity queue.  Upon arrival of a job,  $d$ queues are sampled independently and uniformly at random (with replacement) and the job is routed to the shortest queue amongst those sampled, with ties broken uniformly at random.  Each server processes jobs from its queue  in a first-come first-serve (FCFS)  manner,  a server never idles when there is a job in its queue and jobs leave the network on completion of service.   The arrival process and service times are assumed to be mutually independent, and  service times of jobs have finite mean which, without loss of generality,  will be taken to be one. We refer to this model as the ($N$-server) SQ($d$) model.  A positive feature  of this algorithm is that its implementation does not require much system memory.

Several results are known when the arrival process is Poisson with rate $\lambda < 1$ and the service time is exponential (with unit mean). When $d=1$, the model reduces to a system of $N$ independent single-server queues with exponential service times, for which it is a classical result that  the stationary distribution of the length of a typical queue is geometric. When $d = 2$,  the stationary distribution of a typical queue  is not exactly computable, but it was shown in \cite{VveDobKar96} (also see \cite{Mit01} for the extension to $d > 2$)  that as the number of servers goes to infinity,  the limit of the stationary distributions of a typical queue  has a double exponential tail. This shows that  introducing just a little bit of random choice leads to a dramatic improvement in performance in equilibrium, a phenomenon that has been dubbed the ``power of two choices'' and  has led to substantial interest in this class of randomized load balancing schemes.

The analysis in \cite{VveDobKar96} proceeds by  representing the  dynamics  of the $N$-server network by a Markov  chain that keeps track of the fraction of queues that have $\ell$ or more jobs at time $t$, for each positive integer $\ell$, and
then applying the so-called ODE method \cite[Theorem 11.2.1]{KurtzBook} to  show that, as $N \rightarrow \infty$, the sequence of  Markov chains  converges weakly (on finite time intervals)   to the unique solution of a countable system of coupled $[0,1]$-valued  ordinary differential equations (ODEs).  Further tightness estimates are then used to prove  convergence of the stationary distributions to the unique invariant state of the ODE.  This basic approach was subsequently used to analyze various relevant modifications of the supermarket model \cite{FarMoaPra05, Mit00, Kat14}.  Other theoretical results on the SQ($d$) model with exponential service distributions in this asymptotic regime  include \cite{Gra00,LucNor05,MukMaz17}.

However, measurements in different applications have shown that service times are typically not exponentially distributed  \cite{BroEtAl05,CheEtAl14,GuaKoz14,Kol84}. In this case, the ODE method is no longer directly applicable because in order to describe the future evolution of the system, it is not sufficient to keep track of the fraction of queues with $\ell$ jobs at any time. For each job in service, one has also to keep track of additional information such as its age (the amount of time the job has spent in service)  or its residual service time. In the system with $N$ servers, this requires keeping track of $N$ additional nonnegative random variables, and thus the dimension of the Markovian state representation grows with $N$, which is not  conducive to obtaining a limit theorem.  Our goal is to develop a general framework for the analysis of this model and related models, that in particular enables
   an intuitive and tractable description of the hydrodynamic limit.


With this in mind,  we  introduce a novel  interacting particle representation of the state of the network that  allows for a description of the dynamics of all $N$-server systems on  a common (infinite-dimensional) state space. Specifically, we represent the state  of an $N$-server SQ($d$) network at any time $t$ in terms of a countable collection of finite measures $\nun(t) = \{ \nun_{\ell}(t)\}_{\ell \in \N}$, where $\nun_{\ell}(t)$ is the measure that has a ``particle''  or unit delta mass at the age of each  job that is in service at a queue of length greater than or equal to $\ell$ at time $t$, where  length denotes the number of jobs either waiting or in service.
 We then characterize  the dynamics of the $N$-server SQ($d$) model in terms of a  coupled system of interacting measure-valued stochastic processes (see Proposition \ref{prelim_dynamics_prop}). The main result of this article,  Theorem \ref{thm_convergence}, shows that under general conditions on the service time distribution and arrival processes,
as $N \rightarrow \infty$,  the  sequence of scaled  state processes $\nun/N$ converges weakly to the unique solution of a coupled system of deterministic measure-valued equations, which  we refer to as  the hydrodynamic equations (see Definition \ref{Def_fluid}).

The state-dependent routing structure makes the SQ($d$) model  substantially more challenging to analyze than certain other many-server models such as the GI/GI/N model studied in \cite{KasRam11,KanRam10}.
As a result,  the proof of convergence requires several new ingredients.  First, we use a marked point process representation of the dynamics (see Section \ref{sec_mpp}) that allows us to prove certain conditional independence properties that are used to identify compensators (with respect to a suitable filtration) of various auxiliary processes that govern the dynamics, such as the cumulative routing and departure processes (see Propositions \ref{prop_prelim_Rcomp} and \ref{prop_prelim_Qcomp}). Next,  we establish certain  renewal estimates to characterize the limit of the scaled compensators (in Sections \ref{sec_tightnessAux} and \ref{sec_limitRouting}).  Furthermore (in Section \ref{sec_limit}) we obain  an alternative dynamical characterization of the solution to the hydrodynamic equations (see Proposition \ref{prop_SolutionForm}). This is used to prove relative compactness of the sequence of scaled state processes in Theorem \ref{thm_tightness} and  show that any  subsequential limit satisfies the hydrodynamic equations. To complete the proof, we establish uniqueness of the solution to the hydrodynamic equations  (see Theorem \ref{thm_uniqueness}).  The hydrodynamic equations consist of a countable collection of coupled nonlinear measure-valued equations subject to non-standard boundary conditions that appear to fall outside the class considered in the literature.
 Two new ingredients that we introduce to  facilitate the uniqueness analysis is  a non-standard norm on the space of measure-valued paths and a characterization of  the evolution of this norm in terms of a certain renewal equation.

The hydrodynamic equations are useful for two purposes. First, characterization of dynamical behavior is important given the presence of non-stationary effects  in real networks. As shown in \cite{AghRam15d,AghThesis17}, under additional assumptions on the service distribution, the dynamics of relevant functionals of the state process such as the queue length and virtual waiting time can be captured by a simpler system of coupled classical partial differential equations (PDE). Furthermore, in \cite{AghRam15d}, this PDE was numerically solved to  show good agreement with simulations for moderately sized networks, and also to uncover non-intuitive behavior of load-balancing networks, such as, for example, the effect of  heavy-tailed service distributions. Thus, our approach yields a PDE method for analyzing randomized load balancing networks, which generalizes the more classical ODE  method that is valid only in the presence of exponential service distributions, and  can be adapted to study many other models. Second, the hydrodynamic equations can also be used to characterize  equilibrium behavior. To the best of our knowledge, the only prior work on the SQ($d$) model, $d \geq 2$, for a general class of non-exponential service distributions seems to be the work of Bramson, Lu and Prabhakar \cite{Bra11,BraLuPra10, BraLuPra12,BraLuPra13},  which   focuses on equilibrium behavior rather than dynamical behavior. In particular, under the assumption that  the arrival process is Poisson with rate $\lambda < 1$ and  the service distribution has a decreasing hazard rate, they show  that the stationary distribution of a typical queue in the $N$-server model converges to a limit, and uncover the interesting phenomenon that when the service distribution is power law,  its tail does not always have  double-exponential decay. The hydrodynamic equations introduced in this article pave the way for an alternative approach to analyzing the equilibrium behavior for a larger class of service distributions and  more general, renewal arrivals.  Indeed, the hydrodynamic equations are shown in \cite{AgaRam17} to  have a unique invariant state,  which serves as a candidate limit for the sequence of stationary state processes. Establishing convergence of the stationary distributions to this invariant state for the class of service distributions considered here is an interesting problem for future work.

The rest of the paper is organized as follows. Section \ref{sec_introNotation} lists some common notation.  Section \ref{sec_main} first introduces the basic assumptions of the model, the  state representation, and the definition of the hydrodynamic equations, and then states the main results.  Section \ref{sec_fluid} is devoted to the analysis of the hydrodynamic equations,  with uniqueness of the solution established in Section  \ref{sec_fluid_unique} and an alternative dynamical characterization of the solution obtained in Section \ref{sec_fluid_dynamics}. Section \ref{sec_model} contains a detailed description of the state dynamics in the $N$-server system. Martingale decompositions for the routing and departure processes are stated in  Section \ref{sec_prelimit_summary}. The proofs build on a marked point process representation and some conditional independence results established in Section \ref{sec_mpp}.   Finally, the main convergence results are established in Section \ref{sec_convergence}, with the proof of Theorem \ref{thm_convergence} presented in Section \ref{sec_limitCharacterization}.  Proofs of some technical lemmas are relegated to the Appendices.

\subsection{Common Notation}\label{sec_introNotation}

The following notation will be used throughout the paper. We  use $\Z$,  $\Z_+$ and $\N$ to denote the sets of integers, nonnegative integers and positive integers, respectively. Also, $\R$ is the set of real numbers and $\R_+$ the set of nonnegative real numbers. For $a, b \in \R$, $a \wedge b$ and $a \vee b$ denote the minimum and maximum of $a$ and $b$, respectively.  For a set $B$, $\indic{B,\cdot}$ is the indicator function of the set B (i.e., $\indic{B,x} = 1$ if $x \in B$ and $\indic{B,x} = 0$ otherwise). When $B$ is a measurable subset of a probability space $(\Omega,\mathbb{F})$, we usually omit the explicit dependence on $\omega$ and write $\indic{B,\omega}$ as $\indic{B}$. Moreover, with a slight abuse of notation, on every domain $V$, $\f1$ denotes the constant function equal to $1$ on $V$. Also, $\id$ is the identity function on $\hc$, that is, $\id(t)=t,$ for all $t\geq0$.

For a topological space $V$, we let $\C(V)$, $\C_b(V)$ and $\C_c(V)$ be, respectively, the space of continuous functions, bounded continuous functions, and continuous functions with compact support on $V$. For $f\in\C_b(V),$ $\|f\|_\infty$ denotes  $\sup_{s \in V} |f(s)|$. When $V=\hc$, for $T\geq0$,  $\|f\|_T$ denotes  $\sup_{s \in [0,T]} |f(s)|$, and recall that  $w_f(\delta, T)\doteq\sup\{|f(t)-f(s)|; s,t\in[0,T], |s-t|\leq \delta\},$ $\delta >0$, is the modulus of continuity of $f$ on the interval $[0,T]$.   For $V = [0, L)$, $L \in [0,\infty]$,  $\C_b^1(V)$ is the set of functions $f\in\C_b(V)$ for which the first derivative, denoted by $f^\prime$, exists and is bounded and continuous on $V$. Similarly, when $V \subset \R^2$ is the product of two intervals in $\R$, $\C_b^{1,1}(V) $ (respy, $\C_c^{1,1}(V)$) is the set of functions $(x,s) \mapsto \varphi (x,s)$ in $\C_b(V)$ ( resp. $\C_c(V)$) for which the first order partial derivatives $\varphi_x$ and $\varphi_s$ exist and are bounded and continuous (resp. continuous with compact support) on $V$.  Also, let $\AC (V)$ denote the space of real-valued functions that are absolutely continuous on every bounded subset of $V$.

For a metric space $\mathbb{X}$, $\mathbb{D}_{\mathbb{X}}\hc$ is the set of $\mathbb{X}$-valued functions on $\hc$ that  are right continuous and have finite left limits on $(0,\infty)$, and $\mathbb{C}_{\mathbb{X}}\hc$ is the subset of continuous functions on $\hc$.  For every function $f\in\mathbb{D}_{\mathbb{X}}\hc$ and $T\geq0,$ $w^\prime(f,\delta,T)$ is the modulus of continuity of $f$ in $\mathbb{D}_{\mathbb{X}}\hc$; see \cite[(3.6.2]{KurtzBook} for a precise definition of $w^\prime.$  Furthermore, for every function $f\in\D,$  we define
\[  [f]_t \doteq \lim_{|\pi|\to 0} \sum_{k=1}^n \left(f(t_k)-f(t_{k-1})\right)^2, \]
where the limit is taken over all partitions $\pi=\{t_0=0,t_1,...,t_n=t\}$  $[0,t]$ with $|\pi|\doteq\max_{k=1,...,n}|t_k-t_{k-1}|.$  When $f$ is a c\`{a}dl\`{a}g stochastic process, the limit is  defined in the sense of convergence in probability.

Finally, $\mathbb{L}^1\ho,$ $\mathbb{L}^2\ho$ and $\mathbb{L}^\infty\ho,$ denote, respectively, the spaces of integrable, square-integrable and essentially bounded functions on $\ho$, equipped  with their corresponding standard norms. Also, $\mathbb{L}^1_{\text{loc}}\ho$  denotes the space of locally integrable functions on $\hc.$  For any $f\in\mathbb{L}^1_{\text{loc}}\ho$ and a function $g$ that is bounded on finite intervals,  $g*f$ denotes the (one-sided) convolution of the two functions, defined as $f*g(t)\doteq\int_0^t f(t-s)g(s)ds$,  $t \geq0.$

For every subset $V$ of $\R$ or $\R^2$ endowed with  the Borel sigma-algebra, let $\mathbb{M}_{F}(V)$ (resp. $\mathbb{M}_{\leq 1}(V)$) be the space of finite positive (resp.\ sup-probability) measures on $V$. For $\mu \in \mathbb {M}_F(V)$ and any bounded Borel-measurable function $f$ on $V$, we denote the integral of $f$ with respect to $\mu$  by
\[   \langle f, \mu \rangle \doteq \int_{V}  f(x) \mu (dx).\]
Given $\mu \in \mathbb {M}_F(V)$ and  a function $f$ defined on a larger set $\tilde{V} \supseteq V$, by some abuse of notation, we will write $\langle f, \mu\rangle$ to denote $\langle f_{|V}, \mu \rangle = \int_{V} f(x) \mu (dx)$, where $f_{|V}$ denotes the restriction of $f$ to $V$. For every measure $\mu$ with representation $\mu=\mu^+-\mu^-;\mu^+,\mu^-\in \mathbb{M}_f\hc$,  we  extend the bracket notation by setting $\langle f,\mu\rangle\doteq\langle f,\mu^+\rangle- \langle f,\mu^-\rangle$. We  equip $\mathbb{M}_F (V)$ and $\mathbb{M}_{\leq 1}(V)$ with the weak topology: $\mu_n\Rightarrow \mu$ if and only if $\langle f,\mu_n\rangle \to \langle f,\mu\rangle$ for all $f\in\mathbb{C}_b\hc.$   Recall that the Prohorov metric $d_P$ on $\mathbb{M}_{F}(V)$ \cite[p.\ 72]{BillingsleyBook}  induces the same topology \cite[Theorem 6.8 of Chap. 1]{BillingsleyBook}.

Also, we denote by $\mathbb{M}(V)$ the space of Radon measures on $V$, that is, the space of measures on $V$  that assign finite  mass to every relatively compact subset of $V$.    Alternatively, one can identify  a Radon measure $\mu \in \mathbb{M}(V)$ with the  space of  linear functionals $\varphi \mapsto \mu(\varphi) \doteq\int_V \varphi(x) \mu (dx) $ on $\C_c(V)$  such that for every compact set ${\cal K}\subset V$, there exists a finite $C_{\cal K}$ such that
\[ \mu(\varphi)\leq C_{\cal K} \|\varphi\|_\infty, \forall \varphi \in \C_c(V)  \quad \text{ with supp}(\varphi)\subset{\cal K}.\]


\section{Main Results}\label{sec_main}

\subsection{Basic Assumptions}\label{sec_mianAsm}
Consider the SQ($d$) model with $N$ servers described in the introduction.  For  $t\geq0,$ let $\arrivaln(t)$ denote the number of jobs that arrived to the network in the interval $[0,t]$.  We start by stating our assumptions on the cumulative arrival process $\arrivaln$. Let $\widetilde E$ be a delayed renewal process with inter-arrival times $\widetilde u_n,n\geq1,$ whose cumulative distribution function $G_{\widetilde E}$   has a density $g_{\widetilde E}$ and mean $\lambda^{-1}$, for some $\lambda > 0$,  and delay $\widetilde{u_0}$ that satisfies $\Prob{\widetilde u_0>r}=\overline{G}_{\widetilde E}(\tilde R+r)/\overline{G}_{\widetilde E}(\tilde R)$, for some $\tilde R\geq0$,  where $\overline{G}_{\widetilde E} \doteq  1 - G_{\widetilde{E}}$.

\begin{assumption}\label{asm_E}
The arrival process satisfies $\arrivaln(t)=\widetilde E(Nt),t\geq0$.
\end{assumption}

Note that Assumption \ref{asm_E} implies that  $\arrivaln$ is a delayed renewal process with  delay $\delayn\doteq\widetilde u_0/N$, and inter-arrival times $u^{(N)}_n\doteq\widetilde u_n/N,n\geq1,$ with common distribution $\Gen(x)\doteq G_{\widetilde E}(Nx)$, $x\geq0$, and probability density function $\gen(\cdot)  = N g_E^{(N)} (N \cdot).$ Moreover,  setting
$R^{(N)}\doteq\widetilde R/N$,
 we have
\begin{equation}\label{delay_dist}
 \Prob{\delayn>r}=\frac{\bGen(R^{(N)}+r)}{\bGen(R^{(N)})}, \quad r \geq 0.
\end{equation}
For future purposes, we also  define the backward recurrence time of $E^{(N)}$:
\begin{equation}\label{def_Ren}
  \ren(t)\doteq\twopartdef{R^{(N)}+t}{0\leq t< u_0,}{t-\sup\{s\geq 0,\arrivaln(s)<\arrivaln(t)\}}{t\geq u_0,}
\end{equation}
where in this particular definition, the supremum of an empty set should be interpreted as zero. Note that $\ren(0)=R^{(N)}$.

Next, let $G$ denote the cumulative distribution function of the i.i.d.\  service times $\{v_j;j\in\Z\}$, and let $\overline G \doteq 1 - G$. We impose the following conditions on  $G$.
\begin{assumption}\label{asm_G}
The service time distribution $G$ has the following properties:
\renewcommand{\theenumi}{\alph{enumi}}
\begin{enumerate}
    \item $G$ has a density $g$ and finite mean which can (and will) be set to $1$.  \label{asm_mean}
    \item \label{hazard}There exists $\endsupzero<\endsup$ such that the hazard rate function
\begin{equation}\label{def_h}
  h(x)\doteq\frac{g(x)}{{\overline G}(x)},\quad\quad x\in\supint,
\end{equation}
where $\endsup\doteq\sup\{x\in[0,\infty):G(x)<1\}$, is either bounded or lower semi-continuous on $(\endsupzero,\endsup)$. \label{asm_h}
\item \label{loc_b}  The density $g$ is bounded on every finite interval of $[0,\infty)$.
\end{enumerate}
\end{assumption}

Note that both Assumptions \ref{asm_G}.\ref{hazard} and \ref{asm_G}.\ref{loc_b} hold  if either $g$ is continuous or $h$ is bounded on $[0,\infty)$.

\subsection{State Representation}
\label{sec_state}

Recall from the introduction that $\nu^{(N)}_\ell(t)$ is a (random) finite measure on $[0,\infty)$ that has a unit delta mass at the age (i.e., amount of time spent in service) of each job that, at time $t$, is in service at a queue of length no less than $\ell$. Since the maximum number of jobs in service at any time is $N$,   $\nun_\ell(t)/N$ takes values in the space $\mathbb{M}_{\leq1}\supint$ of sub-probability measures on $\supint$.  The state of the system at time $t$ will be represented by  ${\nu}^{(N)}(t)\doteq(\nun_\ell(t);\ell\geq1)$.  The scaled state $\nun(t)/N$ takes values in the space
\begin{equation}
  \s \doteq \left\{(\mu_\ell;\ell\geq 1)\in  \mathbb{M}_{\leq1}\supint^\N \; ; \; \langle f, \mu_{\ell}\rangle \geq\langle f, \mu_{\ell+1}\rangle, \forall \ell\geq 1,f\in\C_b\hc, f\geq0 \right\}
\end{equation}
of ordered sequences of  sub-probability measures.  We equip  $\s$ with the metric
\begin{equation}\label{def_dS}
  d_{\s}(\mu,\tilde \mu) \doteq \sup_{\ell\geq 1} \frac{d_P(\mu_\ell,\tilde \mu_\ell)}{\ell},
\end{equation}
where $d_P$ is the Prohorov metric. Thus,  a sequence $\{\mu^{n}\}$ converges to $\mu$  in $\s$ if and only if for every $\ell\geq1$, $ \{\mu_\ell^n\}$ converges weakly to $\mu_\ell$.

Recall that $\f1$ denotes the function that is identically one, and note that
\begin{equation}
\label{nunone}
\nunone_\ell(t)\doteq\langle \f1, \nu_\ell^{(N)}(t) \rangle, \quad  t \geq 0, \ell \geq 1,
\end{equation}
is the number of queues with length at least $\ell$ at time $t$. Moreover, let $\xn(t)$ be the total number of jobs in the system at time $t$ (including those in service and those waiting in queue). Since  $\nunone_\ell(t)-\nunone_{\ell+1}(t)$ is the number of queues with length exactly $\ell$, we have
\begin{equation}
  X^{(N)} (t) = \sum_{ \ell \geq 1} [ \ell (\nunone_\ell(t)-\nunone_{\ell+1}(t))  ]  = \sum_{ \ell \geq 1} \nunone_\ell(t) = \sum_{ \ell \geq 1} \langle \f1, \nun_{\ell}(t)\rangle.
\end{equation}
Finally, for  $t\geq0$  and  $\ell \in \N$, let  $D^{(N)}_\ell(t)$ denote the total number of jobs that completed service  in the interval $[0,t]$ at a queue that had length  $\ell$ just prior to service completion.
All these random elements are assumed to be supported on a common probability space $(\Omega,\cal F, \mathbb{P})$.

\subsection{Hydrodynamic Equations}\label{sec_hydro}

We now introduce the hydrodynamic equations, which will be shown to characterize the ``functional law of large numbers'' or ``fluid'' limit of the state of the network.   The terminology
refers to the fact that we are looking at the limiting dynamics of the empirical measure of an interacting particle system.
\begin{equation}\label{def_poly}
  \mathfrak{P}_d(x,y)\doteq \frac{x^d-y^d}{x-y}= \sum_{m=0}^{d-1} x^m \;y^{d-1-m}.
\end{equation}
When $d=2$, we have the simple form $\mathfrak{P}_2(x,y)=x+y$, and in general,
 for $x,y,\tilde x, \tilde y\leq 1,$
\begin{equation}\label{polyDerivative}
 \mathfrak{P}_d(x,y)\leq d \qquad \mbox{ and } \qquad  \mathfrak{P}_d(x,y)-\mathfrak{P}_d(\tilde x,\tilde y) \leq d^2 \Big((x-\tilde x)+(y-\tilde y) \Big).
\end{equation}

\begin{definition}\textbf{(Hydrodynamic Equations) }\label{Def_fluid}
Given $\lambda>0$ and $\nu(0)\in\s$, $\{\nu(t)=(\nu_\ell(t);\ell\geq1);t\geq0\}$ in $ \mathbb{C}_{\s}\hc$ is said to solve the \textit{hydrodynamic equations} associated with $(\lambda,\nu(0))$ if and only if for every $t\in\hc$,
\begin{equation}\label{Fluid_bound}
        \int_0^t\langle h,\nu_1(s) \rangle ds<\infty,
\end{equation}
and for every  $\ell\geq1$,
\begin{equation}\label{Fluid_Balance}
  \langle\f1,\nu_\ell(t)\rangle - \langle\f1,\nu_\ell(0)\rangle = D_{\ell+1}(t)+\int_0^t\langle\f1,\eta_\ell(s)\rangle ds -D_\ell(t),
\end{equation}
where
\begin{equation}\label{Fluid_D}
    D_\ell(t)\doteq\int_0^t\langle h,\nu_\ell(s) \rangle ds,\quad\quad \forall \ell\geq 1,
  \end{equation}
and for every  $f\in\C_b\hc$,
\begin{align}\label{Fluid_f}
    \langle f, \nu_\ell(t)\rangle = &\langle f(\cdot+t)\frac{\overline G(\cdot+t)}{\overline G(\cdot)},\nu_\ell(0)\rangle +\int_{[0,t]} f(t-s)\overline G(t-s)dD_{\ell+1}(s)\\
    & + \int_0^t \langle f(\cdot+t-s)\frac{\overline G(\cdot+t-s)}{\overline G(\cdot)},\eta_\ell(s)\rangle ds,\notag
\end{align}
with
\begin{equation}\label{Fluid_R}
  \eta_\ell(t)\doteq\twopartdef{\lambda\left(1-\langle\f1,\nu_1(t)\rangle^d\right)\delta_0}{\ell=1,} {\lambda\mathfrak{P}_d\Big(\langle \f1,\nu_{\ell-1}(t)\rangle,\langle \f1,\nu_\ell(t)\rangle\Big)(\nu_{\ell-1}(t)-\nu_\ell(t))}{\ell\geq 2.}
\end{equation}
\end{definition}

Given a solution $\nu$ to the hydrodynamic equations, we define
\begin{equation}
\label{Fluid_S}
S_\ell(t)\doteq \langle \f1,\nu_\ell(t) \rangle, \quad  t\geq0, \ell\geq1.
\end{equation}

\begin{remark}
The bound \eqref{Fluid_bound} implies that for every $\ell\geq 1$, the process $D_\ell$ is well-defined.
\end{remark}

\begin{remark}
Definition \ref{Def_fluid} of the hydrodynamic equations and the corresponding uniqueness result in Theorem \ref{thm_uniqueness} can be generalized to time-varying rates by simply replacing the constant arrival $\lambda$ everywhere with a non-negative locally integrable function $\lambda (\cdot)$.
\end{remark}

We now state  our first main result, which is proved in  Section \ref{sec_fluid_unique}.

\begin{theorem}\label{thm_uniqueness}
Suppose Assumptions \ref{asm_E} and \ref{asm_G}.\ref{asm_mean} hold. Then for every $\lambda>0$ and $\nu(0)\in\s$, the hydrodynamic equations associated with $(\lambda,\nu(0))$ have at most one solution.
\end{theorem}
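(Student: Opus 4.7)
Let $\nu^1, \nu^2$ be two solutions to the hydrodynamic equations associated with $(\lambda, \nu(0))$, and set $\delta\nu_\ell := \nu^1_\ell - \nu^2_\ell$, $\delta S_\ell := S^1_\ell - S^2_\ell$, $\delta D_\ell := D^1_\ell - D^2_\ell$, and $\delta\eta_\ell := \eta^1_\ell - \eta^2_\ell$. The plan is to derive coupled integral inequalities for these quantities that involve only \emph{locally bounded} kernels built from $g$ and $G$ (rather than the possibly unbounded hazard $h$), and then to close the system via a Gronwall argument in a geometrically weighted, renewal-compatible path-norm.

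Subtracting two copies of \eqref{Fluid_f} eliminates the term containing $\nu(0)$ and yields, for every $f \in \C_b\hc$ and $\ell \geq 1$,
\[
\langle f, \delta\nu_\ell(t)\rangle = \int_0^t f(t-s)\overline G(t-s)\,d\delta D_{\ell+1}(s) + \int_0^t \Big\langle f(\cdot+t-s)\tfrac{\overline G(\cdot+t-s)}{\overline G(\cdot)}, \delta\eta_\ell(s)\Big\rangle\,ds.
\]
Expanding $\delta\eta_\ell$ via \eqref{Fluid_R} and applying \eqref{polyDerivative} together with $\mathfrak{P}_d \leq d$ and $S^i_\ell \leq 1$ gives, for any bounded measurable $\psi$,
\[
|\langle \psi, \delta\eta_\ell(s)\rangle| \leq C\,\|\psi\|_\infty\,(|\delta S_{\ell-1}(s)| + |\delta S_\ell(s)|) + C\,(|\langle \psi, \delta\nu_{\ell-1}(s)\rangle| + |\langle \psi, \delta\nu_\ell(s)\rangle|)
\]
(with level-$0$ terms dropped when $\ell=1$). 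To handle $\delta D_\ell(t) = \int_0^t \langle h, \delta\nu_\ell(s)\rangle\,ds$ despite the possibly unbounded $h$, I would avoid testing directly against $h$; instead, applying \eqref{Fluid_f} with $f = \f1$, combining with the balance equation \eqref{Fluid_Balance} to solve for $D_\ell(t)$ in terms of $\nu(0)$, $D_{\ell+1}$ and $\eta_\ell$, and integrating by parts in the Stieltjes integral (using $\overline G(0)=1$ and $D_{\ell+1}(0)=0$) yield the renewal-type identity
\[
\delta D_\ell(t) = \int_0^t g(t-s)\,\delta D_{\ell+1}(s)\,ds + \int_0^t \Big\langle \tfrac{G(\cdot+t-s) - G(\cdot)}{\overline G(\cdot)}, \delta\eta_\ell(s)\Big\rangle\,ds,
\]
whose kernels $g(t-s)$ and $(G(a+t-s)-G(a))/\overline G(a) = 1-\overline G(a+t-s)/\overline G(a) \in [0,1]$ are locally bounded by Assumption \ref{asm_G}.\ref{loc_b}.

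I then introduce, for each $T>0$, the non-standard renewal-compatible norm on signed measures on $\supint$,
\[
\|\mu\|_{*,T} := \sup\Big\{\Big|\Big\langle f(\cdot+\tau)\tfrac{\overline G(\cdot+\tau)}{\overline G(\cdot)}, \mu\Big\rangle\Big| : 0 \leq \tau \leq T,\, f \in \C_b^1\hc,\, \|f\|_\infty + \|f'\|_\infty \leq 1\Big\},
\]
tailored so that all test functions arising in the two displays above -- including the increment-weighted $1-\overline G(\cdot+\tau)/\overline G(\cdot)$ -- lie in the predual unit ball of $\|\cdot\|_{*,T}$ up to uniform constants. Plugging test functions of the form $f(\cdot+\tau)\overline G(\cdot+\tau)/\overline G(\cdot)$ into the first display, integrating by parts in the $d\delta D_{\ell+1}$-integral, applying the $\delta\eta_\ell$-bound, and substituting the renewal identity back to control the pointwise $|\delta D_{\ell+1}(t)|$ term produce, for $\Phi_\ell(T) := \sup_{t\leq T}(\|\delta\nu_\ell(t)\|_{*,T} + |\delta D_\ell(t)|)$, an inequality
\[
\Phi_\ell(T) \leq C(T)\,T\,\big(\Phi_{\ell-1}(T) + \Phi_\ell(T) + \Phi_{\ell+1}(T) + \Phi_{\ell+2}(T)\big)
\]
with $C(T)$ finite. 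Setting $\rho_T := \sup_{\ell \geq 1} r^\ell\,\Phi_\ell(T)$ for $r \in (0,1)$ collapses this into $\rho_T \leq C(T)\,T\,(r + 1 + r^{-1} + r^{-2})\,\rho_T$; since $\rho_T < \infty$ by \eqref{Fluid_bound} and $S^i_\ell \leq 1$, choosing $T = T_0$ sufficiently small forces $\rho_{T_0} = 0$, hence $\delta\nu \equiv 0$ on $[0, T_0]$. A standard restart argument using $\nu(T_0)$ as a new initial condition extends the conclusion to all of $\hc$.

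The principal obstacle is the unboundedness of $h$ near $L$, which rules out any naive Lipschitz bound for $\delta D_\ell$ in terms of $\delta\nu_\ell$; this is precisely circumvented by the renewal identity, which trades $h$ for the locally bounded $g$ and $G$ via $h\overline G = g$. A secondary obstacle is the two-sided infinite-level coupling -- upward through $\delta D_{\ell+1}$ and downward through $\delta\nu_{\ell-1}$ inside $\delta\eta_\ell$ -- which the geometric weight $r^\ell$ tames. The most delicate element of the construction is the design of the renewal-compatible norm $\|\cdot\|_{*,T}$ so that all survival-weighted test functions appearing in the two displays simultaneously lie in its predual unit ball.
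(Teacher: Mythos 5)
Your proposal is correct in substance, and it takes a route that is related to, but genuinely different from, the paper's. Both arguments rest on the same two pillars: testing against the survival-ratio family $\overline G(\cdot+r)/\overline G(\cdot)$, so that the representation \eqref{Fluid_f} reproduces the test family and the possibly unbounded hazard $h$ is never estimated directly, and trading $h$ for $g$ through the convolution identity obtained by combining \eqref{Fluid_f} at $f=\f1$ with the balance equation \eqref{Fluid_Balance}. The paper, however, converts your cross-level identity (which expresses $\delta D_\ell$ through $g*\delta D_{\ell+1}$ and $\delta\eta_\ell$-terms) into a same-level renewal equation $\Delta D_\ell = g*\Delta D_\ell + F_\ell$, resolves it with the renewal density $u$ (this is where local boundedness of $g$, Assumption \ref{asm_G}.\ref{loc_b}, enters), closes with a global-in-time Gronwall bound for $\sum_\ell 2^{-\ell}V_\ell(t)$ on an arbitrary horizon, and then needs a second bootstrap stage (induction in $\ell$, Gronwall over $\AC$ test functions) to upgrade $\langle\f1,\Delta\nu_\ell\rangle\equiv0$ and $\Delta D_\ell\equiv 0$ to $\Delta\nu_\ell\equiv0$. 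You instead keep the upward coupling, pay for it with the $\Phi_{\ell+2}$ term and the geometric weight $r^\ell$, and close by a short-time contraction plus restart; since your norm already contains the $\C_b^1$ unit ball at $\tau=0$, which is measure-determining, no second stage is needed. Your route avoids the renewal density altogether, and in fact only needs $G(0+)=0$ and the bound $\int_\tau^{\tau+t}g\le 1$, so it is closer to the hypotheses actually stated in the theorem; the price is that the contraction works only on a short window whose length must then be iterated.

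Two points need tightening. First, as defined your predual ball is not stable under the shifts produced by the equation: testing $\delta\nu_\ell(t)$ against $f(\cdot+\tau)\overline G(\cdot+\tau)/\overline G(\cdot)$ generates, inside the $\delta\eta_\ell$-integral and inside the integrated-by-parts $\delta D_{\ell+1}$-term, functions with shift $t-s+\tau$, which may exceed $T$. The fix is to let $\tau$ range over all of $[0,\infty)$ in the definition of $\|\cdot\|_{*,T}$; all of your estimates are uniform in $\tau$ because the $fg$-contribution of $(f\overline G)^\prime$ is controlled by $\int_\tau^{\tau+t}g(u)\,du\le 1$ and the $f^\prime\overline G$-contribution by $t$. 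After this fix the coefficient in your master inequality is of order $T+G(T)$ rather than literally $C(T)\,T$, which still vanishes as $T\downarrow0$ and is all the contraction uses (note this also removes any genuine reliance on Assumption \ref{asm_G}.\ref{loc_b}). Second, the ``standard restart argument'' deserves a proof line: one must check that $t\mapsto\nu(T_0+t)$ solves the hydrodynamic equations associated with $(\lambda,\nu(T_0))$, which follows from the cocycle identity $\overline G(x+t+T_0)/\overline G(x)=\bigl(\overline G(x+t+T_0)/\overline G(x+T_0)\bigr)\bigl(\overline G(x+T_0)/\overline G(x)\bigr)$ applied inside \eqref{Fluid_f}, together with the trivial shifts of \eqref{Fluid_bound}--\eqref{Fluid_R}; and one should note that the contraction window $T_0$ depends only on $(\lambda,d,G)$, not on the solutions or the initial state, so the same $T_0$ can be iterated to cover $\hc$. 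With these repairs your argument goes through.
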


We now provide some   intuition into the form of the hydrodynamic equations. Given $a(s)$, the age  of a job in service at time $s$, the mean conditional probability that this job will complete service in the time interval $(s,s+ds)$ is roughly $h(a(s)) ds$.
 Summing over the ages of all jobs in service at queues of length no less than $\ell$, we see that the conditional mean departure rate from such queues at time $s$ is $\langle h, \nu_{\ell} (s) \rangle$.  In the large $N$ limit, the scaled departure process coincides with its mean, thus giving rise to the  equality in \eqref{Fluid_D}. Next, to understand the mass balance equation  \eqref{Fluid_Balance}, fix $ \ell \geq 1$ and  note that in analogy with \eqref{nunone}, $\langle \f1, \nu_\ell(t)\rangle$ represents the limit fraction of queues of length no less than $\ell$ at time $t$. Over the interval $[0,t]$, this quantity decreases due to departures from queues of length precisely $\ell$, which is given by $D_{\ell}(t) - D_{\ell+1}(t)$, and increases due to exogeneous arrivals to queues of length $\ell - 1$. To quantify the latter, note that   $\lambda$  is the scaled mean arrival rate of jobs to the network and the probability that an arriving job is routed to a queue of length $\ell-1$ at time $s$ is approximately equal to $(\langle \f1, {\nu}_{\ell-1}(s) \rangle)^d - (\langle \f1, {\nu}_{\ell}(s) \rangle)^d$, which is equal to $\mathfrak{P}_d(\langle \f1,\nu_{\ell-1}(s)\rangle,\langle \f1,\nu_\ell(s)\rangle) (\langle \f1, \nu_{\ell-1}(s) \rangle -\langle \f1, \nu_\ell(s)\rangle)$, with the convention $\langle \f1, {\nu}_0(s) \rangle\doteq1$. Thus,  with $\eta_\ell$ defined by \eqref{Fluid_R}, $\langle \f1, \eta_\ell (s)\rangle $ represents the scaled arrival rate at time $s$ of jobs to queues of length $\ell-1$,  and  $\int_0^t \langle \f1, \eta_\ell(s) \rangle ds$ represents the total exogeneous arrivals to such queues over the interval $[0,t]$.    These observations, when combined, justify the form of \eqref{Fluid_Balance}.

The equation \eqref{Fluid_f} is a more involved mass balance equation, whose right-hand side consists of three terms that contribute to the measure $\nu_\ell (t)$.  The first term on the right-hand side accounts for  jobs already in service at time $0$.  Any such  job, conditioned on having  initial age $a(0)$,  would still be in service at time $t$, with age $a(0) + t$, with probability $\overline{G} (a(0) + t)/\overline{G}(a(0))$.   The second term represents the contribution to $\nu_\ell(t)$  due to jobs that entered  service at some time $s \in (0,t]$ at a queue of length no less than $\ell$ at time  $s$ and that are still in service at time $t$. Such service entries occur due to departures of jobs at time $s$ from a queue no less than $\ell+1$ prior to departure, which would happen  at rate $dD_{\ell+1}(s)$. Further,  the job entering service would have age  $0$ at time $s$ and  so would still be in  service at time $t$ (with age $t-s$) with  probability $\overline{G}(t-s)$.   Finally, the last term captures the contribution due to jobs that were in service at a queue of length $\ell - 1$ at some time $s \in [0,t]$ when its length increased by one due to the routing of a job  to that queue.  If $\ell > 1,$ and $a(s)$ was the age of  the job in service at that queue at time $s$, the  job would still be in service at time $t$ only if its service time were greater than $a(s)  + t-s$ (given that it was clearly greater than $a(s)$), which has probability $\overline{G} (a(s) + t- s)/\overline{G}(a(s))$. Now, the (limit) distribution of ages in service at queues of length $\ell-1$ at time $s$ is $\nu_{\ell-1} (s) - \nu_{\ell}(s)$.   When multiplied by  the (limit) rate at which  jobs are routed to a random queue of length $\ell-1$, which  is $\lambda \mathfrak{P}_d(\langle \f1,\nu_{\ell-1}(s)\rangle,\langle \f1,\nu_\ell(s)\rangle)$,  yields $\eta_{\ell}(s)$. The case $\ell = 1$ can be argued similarly. This explains the form of the third term on the right-hand side of \eqref{Fluid_f}.  The above discussion also suggests why the limit of a more general class of routing algorithms could be characterized similarly, but with a suitably modified definition of $\eta_\ell$.

\subsection{Convergence Result}\label{subs-conv}

For $H = E, D, \nu_\ell, \nu, S_\ell$,  we define the scaled version of $H^{(N)}$ as follows:
\begin{equation}
\label{scaling}
\overline{H}^{(N)} (t) = \frac{H^{(N)}(t)}{N}, \quad N \in \N, t \geq 0.
\end{equation}

The following condition is imposed on the initial state of the network.

\renewcommand{\theenumi}{\alph{enumi}}
\begin{assumption}\label{asm_initial}
The sequence of initial conditions satisfies the following.
  \begin{enumerate}
    \item For every $N\in\N$, $\xn(0)=\sum_{\ell\geq1}\langle\f1,\nun_\ell(0)\rangle<\infty$ almost surely, $E^{(N)}$ and  the random queue choices in the load balancing algorithm are independent of $\nun(0)$, and for each job $j$ that is in service at time $0$,  its service time $v_j$ is conditionally independent of $\nun(0)$ given its initial age $a_j(0)$; see \eqref{vInitial} for further details.  \label{asm_initial_ind}
    \item there exists $\nu(0)=(\nu_\ell(0);\ell\geq1)\in \s$ such that $\nunbar(0)\to\nu(0)$ in $\s$, $\mathbb{P}$-almost surely, as $N\to\infty.$\label{asm_initial_nu}
    \item \label{asm_initial_X} $\limsup_N\Eptil{ \xnbar(0) }<\infty$, and $\xnbar(0)\to X(0)$ as $N\to\infty$, where
    $X(0)\doteq\sum_{\ell\geq1} \langle\f1,\nu_\ell(0)\rangle$.
  \end{enumerate}
\end{assumption}
\renewcommand{\theenumi}{\arabic{enumi}}

We now state some immediate consequences of Assumptions \ref{asm_E} and \ref{asm_initial}.\ref{asm_initial_X}.

\begin{lemma}\label{asm_initial_remark}
    Suppose Assumption \ref{asm_E} holds. Then, as $N\to\infty$, $\arrivalnbar\to\lambda \id$ in $\D$, $\mathbb{P}$-almost surely, Moreover, for all $t\geq0$, $\Eptil{\arrivalnbar(t)}\to  \lambda t$ as $N\to\infty$, and hence,
    \begin{equation}\label{asm_initial_arrivalmoment}
      \limsup_{N\to\infty} \Ept{\arrivalnbar(t)}<\infty.
    \end{equation}
In addition, if Assumption \ref{asm_initial}.\ref{asm_initial_X} holds, then for every $t\geq0$,
  \begin{equation}\label{asm_initial_Xarrivalmoment}
    \limsup_{N\to\infty}\Ept{\xnbar(0) + \arrivalnbar(t)}<\infty.
  \end{equation}
\end{lemma}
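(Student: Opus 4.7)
The strategy is to leverage the time-scaling relation $\arrivalnbar(t) = \widetilde E(Nt)/N$ from Assumption \ref{asm_E} and reduce everything to standard renewal-theoretic facts about the underlying delayed renewal process $\widetilde E$. This process has inter-arrival times with finite mean $\lambda^{-1}$, and its delay $\widetilde u_0$, whose law is specified by $\Prob{\widetilde u_0 > r} = \overline{G}_{\widetilde E}(\widetilde R + r)/\overline{G}_{\widetilde E}(\widetilde R)$, is a.s.\ finite and has finite mean, since $\int_0^\infty \overline{G}_{\widetilde E}(r)\,dr = \lambda^{-1} < \infty$. Verifying these basic integrability properties is the only up-front care needed; the rest of the lemma then follows from classical tools.

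For the almost sure convergence $\arrivalnbar \to \lambda\,\id$ in $\D$, I would first apply the SLLN for delayed renewal processes to conclude that $\widetilde E(s)/s \to \lambda$ almost surely as $s \to \infty$. Setting $s = Nt$ for each fixed $t > 0$ then gives $\arrivalnbar(t) \to \lambda t$ almost surely, and this in particular holds simultaneously on a countable dense set such as $\mathbb{Q}\cap\hc$. To upgrade pointwise a.s.\ convergence to uniform convergence on compact subsets of $\hc$, I would use the fact that each sample path of $\arrivalnbar$ is non-decreasing and the limit $\lambda\,\id$ is continuous: a standard Dini-type monotonicity argument then delivers uniform convergence on compacts almost surely, which implies convergence in the Skorokhod topology on $\D$.

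For the convergence of the means, I would invoke the elementary renewal theorem for the delayed renewal process $\widetilde E$, which, under the finite-mean hypotheses verified above, yields $\Ept{\widetilde E(s)}/s \to \lambda$ as $s \to \infty$. Substituting $s = Nt$ gives $\Ept{\arrivalnbar(t)} \to \lambda t$ for every $t \geq 0$, and the $\limsup$ bound \eqref{asm_initial_arrivalmoment} is immediate from the convergence of a sequence to a finite limit. Finally, \eqref{asm_initial_Xarrivalmoment} follows by combining \eqref{asm_initial_arrivalmoment} with the assumption $\limsup_N \Ept{\xnbar(0)} < \infty$ from Assumption \ref{asm_initial}.\ref{asm_initial_X} and linearity of expectation. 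I do not anticipate any genuine obstacles; this lemma is essentially a packaging of well-known renewal-process facts in the scaling appropriate to the problem, and the proof should be short.
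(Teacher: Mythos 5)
Your proposal is correct and follows essentially the same route as the paper: reduce everything to the underlying delayed renewal process $\widetilde E$ via $\arrivalnbar(t)=\widetilde E(Nt)/N$, obtain the almost sure functional convergence to $\lambda\,\id$, and apply the elementary renewal theorem for the convergence of means, with \eqref{asm_initial_Xarrivalmoment} following from Assumption \ref{asm_initial}.\ref{asm_initial_X}. The only cosmetic difference is that the paper cites a functional law of large numbers for renewal processes directly, whereas you rederive it from the scalar SLLN using monotonicity of the sample paths and continuity of the limit, which is a standard and valid substitute.
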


\begin{proof}
The  almost sure convergence of $\arrivalnbar$ to $\lambda\id$ in $\D$ follows from Assumption \ref{asm_E}
and the functional law of large numbers for renewal processes (e.g., see \cite[Theorem 5.10]{CheYao2001}).
  Also, for $t\geq0$, by the elementary renewal theorem (e.g., see \cite[Proposition V.1.4]{Asm03}),     $\lim_{N\to\infty}\Ept{\arrivalnbar(t)}$ $=\lim_{N\to\infty}\widetilde E(N t)/N$ $= \lambda t,$  where $\tilde{E}$ is the delayed renewal process of Assumption \ref{asm_E}.  This implies  \eqref{asm_initial_arrivalmoment}, which along with
Assumption \ref{asm_initial}.\ref{asm_initial_X}, implies  \eqref{asm_initial_Xarrivalmoment}.
\end{proof}

We now state the second main result, whose proof is given in Section \ref{sec_limitCharacterization}.

\begin{theorem}\label{thm_convergence}
Suppose Assumptions \ref{asm_E}-\ref{asm_initial} hold. Then there exists a unique solution $\nu \in \C_{\s}$ to the hydrodynamic equations associated  with $(\lambda,\nu(0))$, and the sequence $\{\nunbar\}$ converges in distribution to  $\nu$.
\end{theorem}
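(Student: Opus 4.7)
The plan is to execute the classical three-step program for proving a functional law of large numbers in the presence of a uniqueness result: (i) prove relative compactness of $\{\bar\nu^{(N)}\}$ in $\mathbb{D}_\s\hc$; (ii) show that every subsequential weak limit is, almost surely, a continuous $\s$-valued process that solves the hydrodynamic equations associated with $(\lambda,\nu(0))$; and (iii) apply the uniqueness statement Theorem \ref{thm_uniqueness} to conclude that the whole sequence converges in distribution to the unique solution $\nu$. Existence of a solution comes for free from the combination of (i) and (ii), so no separate existence argument is required.

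For (i), since $d_\s$ is the $\ell$-weighted Prohorov metric defined in \eqref{def_dS}, relative compactness in $\mathbb{D}_\s$ reduces to joint relative compactness of each coordinate $\{\bar\nu_\ell^{(N)}\}$ in $\mathbb{D}_{\mathbb{M}_{\leq 1}\supint}\hc$. I would prove this jointly with relative compactness of the scaled auxiliary processes $\bar D_\ell^{(N)}$, $\bar S_\ell^{(N)}=\langle\f1,\bar\nu_\ell^{(N)}\rangle$ and the routing processes by combining the martingale-compensator decompositions of Propositions \ref{prop_prelim_Rcomp} and \ref{prop_prelim_Qcomp} with the renewal estimates of Sections \ref{sec_tightnessAux} and \ref{sec_limitRouting}. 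A Jakubowski-type criterion applies: compact containment follows from $\bar\nu_\ell^{(N)}(t)\in\mathbb{M}_{\leq 1}\supint$ for every $t$, together with uniform control of the total mass via $\limsup_N \Ept{\bar X^{(N)}(0)+\bar E^{(N)}(t)}<\infty$ from Lemma \ref{asm_initial_remark}; the oscillation bound follows from the pre-limit mass-balance identities after subtracting martingale terms whose quadratic variations are $O(1/N)$.

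For (ii), fix a convergent subsequence with limit $\nu^\ast\in\mathbb{D}_\s\hc$. Because the martingale parts in the pre-limit decompositions vanish in the limit and the compensators are absolutely continuous, $\nu^\ast$ lies in $\mathbb{C}_\s\hc$. The key tool is the pre-limit analogue of \eqref{Fluid_f} provided by Proposition \ref{prop_SolutionForm}, which expresses $\langle f,\bar\nu_\ell^{(N)}(t)\rangle$ as the sum of an initial-condition term, a departure-driven integral against $d\bar D_{\ell+1}^{(N)}$, a routing-driven integral, and a negligible error. Passing to the limit in each term using the continuous mapping theorem, the Lipschitz bound \eqref{polyDerivative} on $\mathfrak{P}_d$ (which gives continuity of $\eta_\ell$ as a functional of $\bar S_{\ell-1}^{(N)},\bar S_\ell^{(N)},\bar\nu_{\ell-1}^{(N)},\bar\nu_\ell^{(N)}$), and standard arguments for convolutions against the survival function $\overline G$, I would identify $\nu^\ast$ as satisfying \eqref{Fluid_D}, \eqref{Fluid_f}, \eqref{Fluid_Balance}, and the integrability bound \eqref{Fluid_bound}. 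Step (iii) is then immediate.

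The main obstacle is the limit identification of the departure process $\bar D_\ell^{(N)}(t)$, whose compensator involves $\int_0^t\langle h,\bar\nu_\ell^{(N)}(s)\rangle\,ds$. Under Assumption \ref{asm_G}.\ref{asm_h}, $h$ is only assumed bounded or lower semi-continuous on $(\ell_0,L)$, so it can blow up near the right endpoint $L$ and is generally neither bounded nor continuous on $\supint$; hence weak convergence of $\bar\nu_\ell^{(N)}(s)$ to $\nu_\ell^\ast(s)$ does not directly imply convergence of $\langle h,\bar\nu_\ell^{(N)}(s)\rangle$ to $\langle h,\nu_\ell^\ast(s)\rangle$. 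I would handle this by truncating $h$ to $h_M=h\wedge M$, which is bounded and admits a continuous approximation, passing to the limit in $\langle h_M,\bar\nu_\ell^{(N)}\rangle$ via the continuous mapping theorem, and then controlling the tail $h-h_M$ uniformly in $N$ using the renewal estimates of Section \ref{sec_tightnessAux}: since ages enter service at zero and are governed by the service time distribution, the expected mass that $\bar\nu_\ell^{(N)}(s)$ places in a neighborhood of $L$ is controlled by a renewal functional that vanishes as $M\to\infty$. A secondary but routine technical point is verifying, through the conditional independence properties derived from the marked point process representation of Section \ref{sec_mpp}, that the compensator identities used above hold with respect to a filtration large enough to support the passage to the limit.
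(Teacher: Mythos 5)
Your three-step program (relative compactness, identification of subsequential limits as solutions of the hydrodynamic equations, then Theorem \ref{thm_uniqueness}) is exactly the route the paper takes, and your treatment of the hazard-rate difficulty by truncation plus uniform control of the age mass near $\endsup$ is in the same spirit as the paper's Lemma \ref{lem_GGNconv}. However, there is a concrete gap in step (i): compact containment does \emph{not} follow from $\bar\nu_\ell^{(N)}(t)\in\mathbb{M}_{\leq 1}\supint$ together with a bound on $\Ept{\xnbar(0)+\arrivalnbar(t)}$. Since $\supint=[0,\endsup)$ is not compact, $\mathbb{M}_{\leq 1}\supint$ is not weakly compact: mass can escape toward $\endsup$ (or toward infinity when $\endsup=\infty$), so Jakubowski's condition J1 requires a uniform-in-$N$ and uniform-in-$t\in[0,T]$ control of $\bar\nu_1^{(N)}(t,(m,\endsup))$ as $m\to\endsup$. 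This is precisely what the paper obtains in Lemmas \ref{lem_nuContainment} and \ref{lem_nuContainment2}, using the tail behavior of the initial age measure (Assumption \ref{asm_initial}), the tail of $\overline G$, and the fact that ages grow linearly; your sketch as written would fail at this point without such an estimate (the same issue reappears for J1 of the measure-valued routing processes, handled in Lemma \ref{lem_Rtight}).

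A secondary issue is your use of Proposition \ref{prop_SolutionForm}: it is not a ``pre-limit analogue of \eqref{Fluid_f}'' and gives no $N$-level representation of $\langle f,\bar\nu_\ell^{(N)}(t)\rangle$ as initial$+$departure$+$routing convolution terms. The paper instead passes to the limit in the weak-form pre-limit identity \eqref{prelim_dymamics_eq} (using Propositions \ref{prop_Alimit} and \ref{prop_Blimit} to identify the limits of $\Quenbar_{\varphi,\ell}$ and $\Rnbar_{\varphi,\ell}$, the latter requiring the renewal estimate $\frac1N\int_0^t\hen(\ren(u))\,du\to\lambda t$ of Lemma \ref{lem_hRN}, which your sketch only gestures at), obtains the measure-valued PDE \eqref{GAE_dynamic} for the limit, and only then invokes Proposition \ref{prop_SolutionForm} to convert it into \eqref{Fluid_f} via the abstract age equation. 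To make your version rigorous you would either have to derive the renewal-type representation at the pre-limit level yourself (which the paper never does) or reorder the argument as the paper does; with that reordering, and with the compact-containment estimate above supplied, your outline matches the paper's proof.
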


As a corollary, we establish a ``propagation of chaos'' result, whose proof is also deferred to Section  \ref{sec_limitCharacterization}.  Let $\xni(t)$  be the length of the $i$th queue at time $t$, and if the queue is initially non-empty,  let $\ageni(0)$ be the initial age of the job receiving service at the $i^{\text{th}}$ queue.

\begin{corollary}\label{cor_PropofChaos}
Suppose Assumptions \ref{asm_E}-\ref{asm_initial} hold, and the initial conditions are exchangeable, that is, for every $N$ and any permutation $\pi$ of the queue indices $\{1,...,N\}$, the random vector
\[\left( \xnX{\pi(i)}(0), \agenX{\pi(i)}(0) \indicil{\xnX{\pi(i)}(0)>0};i=1,...,N \right),\]
has the same distribution.   Let $\nu$ be the solution to the hydrodynamic equations associated with
$(\lambda, \nu(0))$ and let $\{S_\ell, \ell \geq 1\}$ be as defined in \eqref{Fluid_S}.
Then,  for every $\ell\geq 1$ and $t\geq0$,
\begin{equation}\label{propOfChaos_marg}
  \lim_{N\to\infty} \Prob{\xnX{1}(t)\geq \ell} = S_\ell(t),
\end{equation}
and for ever fixed $k\geq 0$ and $\ell_1,...,\ell_k\in\N$,
\begin{equation}\label{propOfChaos}
  \lim_{N\to\infty} \Prob{\xnX{1}(t)\geq \ell_1,...,\xnX{k}(t)\geq \ell_k} = \prod_{m=1}^k S_{\ell_m}(t).
\end{equation}
\end{corollary}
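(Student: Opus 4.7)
The plan is to reduce both assertions to two inputs: the deterministic hydrodynamic convergence $\overline{\nu}^{(N)} \to \nu$ from Theorem \ref{thm_convergence}, and the exchangeability of the queues. Set $Y_i^\ell(t) \doteq \indicil{\xnX{i}(t)\geq \ell}$. By the definition of $\nun_\ell(t)$ (a unit mass per queue of length at least $\ell$), one has
\[
\overline{S}_\ell^{(N)}(t) \;=\; \langle \f1, \overline{\nu}_\ell^{(N)}(t)\rangle \;=\; \frac{1}{N}\sum_{i=1}^N Y_i^\ell(t).
\]
Because $\nu \in \C_\s\hc$ is deterministic, Theorem \ref{thm_convergence} upgrades distributional convergence to convergence in probability in $\C_\s\hc$; evaluating at a fixed $t$ and using that $\mu \mapsto \langle \f1,\mu_\ell\rangle$ is continuous on $\s$ (since $\f1 \in \C_b\hc$ and $d_\s$ induces pointwise-in-$\ell$ weak convergence), we get $\overline{S}_\ell^{(N)}(t) \to S_\ell(t)$ in probability. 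Bounded convergence (the variables lie in $[0,1]$) then gives $\Ept{\overline{S}_\ell^{(N)}(t)} \to S_\ell(t)$.

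The marginal statement \eqref{propOfChaos_marg} is then immediate: by exchangeability of the initial conditions and the symmetric dynamics of SQ($d$), the law of $\xnX{i}(t)$ is independent of $i$, so
\[
\Probil{\xnX{1}(t)\geq \ell} \;=\; \Ept{Y_1^\ell(t)} \;=\; \Ept{\overline{S}_\ell^{(N)}(t)} \;\to\; S_\ell(t).
\]

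For the joint statement \eqref{propOfChaos}, the idea is to expand the $k$-fold product of empirical fractions and isolate the distinct-index contribution. Writing
\[
\Ept{\prod_{m=1}^k \overline{S}_{\ell_m}^{(N)}(t)} \;=\; \frac{1}{N^k}\sum_{i_1,\ldots,i_k=1}^N \Ept{\prod_{m=1}^k Y_{i_m}^{\ell_m}(t)},
\]
the tuples with all distinct indices number $N(N-1)\cdots(N-k+1)$ and, by exchangeability, each yields the desired joint probability; the remaining $O(N^{k-1})$ tuples contribute $O(1/N)$ since each summand lies in $[0,1]$. Since $\overline{S}_{\ell_m}^{(N)}(t) \to S_{\ell_m}(t)$ in probability and all factors are bounded by $1$, the product $\prod_m \overline{S}_{\ell_m}^{(N)}(t)$ also converges in probability, hence in $L^1$, to $\prod_m S_{\ell_m}(t)$. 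Dividing through by the combinatorial prefactor $N(N-1)\cdots(N-k+1)/N^k \to 1$ and passing to the limit yields \eqref{propOfChaos}.

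No serious obstacle is anticipated: the only point requiring brief justification is the continuity of the total-mass functional $\mu \mapsto \langle \f1,\mu\rangle$ on $\mathbb{M}_{\leq 1}\supint$ under the weak topology (used to lift convergence of $\overline{\nu}^{(N)}$ to convergence of $\overline{S}^{(N)}$) and the $L^1$-convergence of products of uniformly bounded random variables that converge in probability. Everything else is the standard combinatorial trick converting empirical $k$-th moments of an exchangeable array into joint marginal probabilities.
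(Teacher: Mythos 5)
Your proposal is correct and follows essentially the same route as the paper: exchangeability of the queue-length vector at time $t$, convergence in expectation of $\langle \f1,\overline{\nu}^{(N)}_\ell(t)\rangle$ to $S_\ell(t)$ via Theorem \ref{thm_convergence} (using continuity of the limit and uniform boundedness), and the identification of the $k$-fold product of empirical fractions with the joint probability. The only difference is that you explicitly separate the tuples with repeated indices and bound their contribution by $O(1/N)$, whereas the paper's displayed computation treats the equality $\Eptil{\prod_m \overline{S}^{(N)}_{\ell_m}(t)} = \Probil{X^{(N),1}(t)\geq \ell_1,\ldots,X^{(N),k}(t)\geq \ell_k}$ as exact, which strictly speaking holds only over distinct-index tuples; your version is the more careful (and asymptotically equivalent) bookkeeping, and the conclusion is unaffected.
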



\section{Analysis of the Hydrodynamic Equations}\label{sec_fluid}
In Section \ref{sec_fluid_unique} we prove Theorem \ref{thm_uniqueness}.  In Section \ref{sec_fluid_dynamics}, we  obtain a dynamical characterization of the hydrodynamic equations in terms of a measure-valued PDE,  which is  used in Section \ref{sec_limitCharacterization}  to prove Theorem \ref{thm_convergence}.

\subsection{Proof of Uniqueness of the Solution to the Hydrodynamic Equations}\label{sec_fluid_unique}

\begin{proof}[Proof of Theorem \ref{thm_uniqueness}]
Fix $\nu(0)\in\s$, $\lambda > 0$, and let $\nu$ and $\tilde\nu$ both be solutions to the hydrodynamic equations  associated with $(\lambda,\nu(0))$, and let $\tilde{D}, \tilde{\eta}, \tilde{S}$ be defined as in \eqref{Fluid_D}--\eqref{Fluid_S}, but with $\nu$ replaced by $\tilde{\nu}$. For $\ell\geq1$, define $\Delta H_\ell\doteq H_\ell -\tilde H_\ell$ for $H=\nu,D,\eta,S$.  Consider the parameterized family of continuous bounded functions
\[
\mathbb{F}\doteq\left\{\vartheta^r\doteq\frac{\overline G(\cdot+r)}{\overline G(\cdot)};r\geq0\right\} \subset \C_b\supint
 \cap \mathbb{AC}\hc,
\]
where the last inclusion holds by Assumption \ref{asm_G}.
Note that $\f1=\vartheta^0\in\mathbb{F}$, and for every $\ell\geq1$ and $t\geq0$, define
\begin{equation}
\label{def-vell}
  V_\ell(t) \doteq \sup_{f\in \mathbb{F}} |\langle f, \Delta\nu_\ell(t) \rangle|.
\end{equation}

By \eqref{Fluid_R} with $\ell=1$, \eqref{Fluid_S} and \eqref{def_poly}, for $s\geq0$ and  every $f\in\C_b\supint$ we have
\begin{align}\label{uniq_proof_generalf1}
  \langle f,\Delta \eta_1(s)\rangle =&\lambda f(0)\left((\tilde S_1(s))^d -( S_1(s))^d\right) \notag\\
  =& -\lambda f(0)\mathfrak{P}_d\Big(\tilde S_1(s),S_1(s)\Big) \langle \f1,\Delta\nu_1(s) \rangle.
\end{align}
Therefore, for every $f\in \mathbb{F}$, since $\mathfrak{P}_d(x,y)\leq d$ and $\|f\|_\infty\leq 1$,
\begin{equation}\label{uniqueproof_nubound_part2}
  \left|\langle f,\Delta \eta_1(s)\rangle\right| \leq  \lambda d | \langle \f1,\Delta\nu_1(s)\rangle| \leq \lambda d V_1(s),\quad\quad\quad f\in\mathbb{F}.
\end{equation}
Next, for $\ell\geq2,$ $s\geq0$ and $f\in\C_b\supint$, again invoking \eqref{Fluid_R}, \eqref{Fluid_S} and \eqref{def_poly}, we have
\begin{align}\label{uniq_proof_generalfl}
  \langle f,\Delta \eta_\ell(s)\rangle =&\lambda \mathfrak{P}_d\big(S_{\ell-1}(s),S_\ell(s)\big) \langle f,\Delta\nu_{\ell-1}(s)-\Delta\nu_\ell(s)\rangle\\
   &+ \lambda \Big(\mathfrak{P}_d\big(S_{\ell-1}(s),S_\ell(s)\big) -\mathfrak{P}_d\big(\tilde S_{\ell-1}(s),\tilde S_\ell(s)\big)\Big) \langle f,\tilde \nu_{\ell-1}(s)-\tilde \nu_\ell(s)\rangle.\notag
\end{align}
Hence, for  $f\in \mathbb{F}$, given $\|f\|_\infty\leq 1$, $\nu_{\ell-1} \geq \nu_\ell$, $\tilde \nu_{\ell-1} \geq \tilde \nu_\ell$, $\nu_\ell, \tilde{\nu}_\ell \in {\mathcal M}_{\leq 1} \supint$, $\mathfrak{P}_d(x,y)\leq d$  and inequality \eqref{polyDerivative},  we have
\begin{equation}\label{uniqueproof_nubound_part3}
  \left|\langle f,\Delta \eta_\ell(s)\rangle\right| \leq \lambda(d+d^2)\big( V_{\ell-1}(s)+V_{\ell}(s) \big),\quad\quad f\in\mathbb{F}.
\end{equation}

Now, for   $f\in \mathbb{C}_b\hc\cap \mathbb{AC}\hc$ applying integration by parts to \eqref{Fluid_f}, we obtain
\begin{equation}
\label{Fluid_f_alt}
\begin{array}{rcl}
\displaystyle    \langle f, \nu_\ell(t)\rangle  & = & \displaystyle \langle f(\cdot+t)\frac{\overline G(\cdot+t)}{\overline G(\cdot)},\nu_\ell(0)\rangle +f(0)D_{\ell+1}(t)+\int_{[0,t]} (f\overline G)^\prime(t-s)D_{\ell+1}(s)ds\\
    & & \displaystyle + \int_0^t \langle f(\cdot+t-s)\frac{\overline G(\cdot+t-s)}{\overline G(\cdot)},\eta_\ell(s)\rangle ds.
\end{array}
\end{equation}
Also,  for every $t\geq s\geq0$, $r \geq 0$,  we have $\vartheta^r(0)=\overline G(r)$,
$(\vartheta^r \overline{G})^\prime (t-s) = -  g(t-s+r),$ and
\[\vartheta^r(x+t-s)\frac{\overline G(x+t-s)}{\overline G(x)}= \frac{\overline G(x+t-s+r)}{\overline G(x)} = \vartheta^{t-s+r}(x),\quad\quad x\in\supint. \]
Thus, substituting $f=\vartheta^r$ in  \eqref{Fluid_f_alt}, both as  is and when $\nu_\ell,D_\ell,\eta_\ell$ are replaced by $\tilde\nu_\ell, \tilde D_\ell$, $\tilde\eta_\ell$, respectively, and recalling $\Delta \nu_\ell (0) = 0$, we have
\begin{equation}\label{uniquenessproof_deltanu}
    \langle \vartheta^r, \Delta\nu_\ell(t)\rangle = \overline G(r)\Delta D_{\ell+1}(t)-\int_0^tg(t-s+r)\Delta D_{\ell+1}(s)ds + \int_0^t \langle \vartheta^{t-s+r},\Delta \eta_\ell(s)\rangle ds.
\end{equation}
Since $\vartheta^0=\f1$, equation \eqref{uniquenessproof_deltanu} for $r=0$ gives
\begin{equation}\label{uniquenessproof_deltanu1}
      \langle \f1, \Delta\nu_\ell(t)\rangle =  \Delta D_{\ell+1}(t)-\int_0^tg(t-s)\Delta D_{\ell+1}(s)ds + \int_0^t \langle \vartheta^{t-s},\Delta \eta_\ell(s)\rangle ds.
\end{equation}
Since $\{\nu_\ell\}_{\ell\in\N}$ and $\{\tilde\nu_\ell\}_{\ell\in\N}$ satisfy \eqref{Fluid_Balance} and its analog,  and that $\Delta\nu_\ell(0)=0$, we  have
\begin{equation}\label{temp_unique1}
  \Delta D_{\ell+1}(t)=   \langle\f1,\Delta\nu_\ell(t)\rangle +\Delta D_\ell(t) - \int_0^t\langle\f1,\Delta \eta_\ell(s)\rangle ds.
\end{equation}
Combining \eqref{temp_unique1} and  \eqref{uniquenessproof_deltanu1},  it follows that for $\ell\geq 2$, $\Delta D_\ell$ satisfies the renewal equation
\begin{equation*}
  \Delta D_\ell(t) = g*\Delta D_\ell(t) + F_\ell(t),
\end{equation*}
with
\begin{equation*}
  F_\ell(t)\doteq \int_0^t\langle\f1,\Delta \eta_\ell(s)\rangle ds + g*\langle\f1,\Delta \nu_\ell\rangle(t) - (g* \int_0^\cdot\langle\f1,\Delta \eta_\ell(s)\rangle ds)(t) - \int_0^t \langle \vartheta^{t-s},\Delta \eta_\ell(s)\rangle ds.
\end{equation*}
Since $\Delta\nu_\ell$ is the difference of two measures in $\mathbb{D}_{\mathbb{M}_F\supint}\hc$,
$\langle\f1,\Delta\nu_\ell(\cdot)\rangle$ and $\langle f,\Delta \eta_\ell(\cdot)\rangle,$ $f\in\C_b\hc$,  are also locally  integrable, and hence $F_\ell$ is uniformly bounded on finite intervals (i.e., $\|F\|_t<\infty$ for all $t\geq 0$). Moreover, \eqref{Fluid_bound} ensures that $\Delta D_\ell$ is also bounded on finite intervals.  Therefore, by \cite[Theorem 2.4, Chapter V]{Asm03},
\begin{equation}
    \label{new_dl} \Delta D_\ell(t)= F_\ell(t) + u*F_\ell(t),
\end{equation}
where $u$ is the renewal density of $G$ on  $\supint$.  Note that since $G$ has density $g$,  by \cite[Proposition 2.7, Chapter V]{Asm03}, $u$ exists and satisfies the equation $u=u*g +g$. Moreover, since $g$ is   locally bounded due to Assumption \ref{asm_G}.\ref{loc_b},  $u$ is bounded on every finite interval  of $[0,L)$ by another application of \cite[Theorem 2.4, Chapter V]{Asm03}.  Substituting the definition of $F_\ell$ into equation \eqref{new_dl} and using the relation $u*g + g=u$, we have
\begin{align}\label{uniq_Dbound_eq}
  \Delta D_\ell(t)=  \int_0^t\langle\f1,\Delta \eta_\ell(s)\rangle ds+ u*\langle\f1,\Delta \nu_\ell\rangle(t) -  \int_0^t \langle \vartheta^{t-s},\Delta \eta_\ell(s)\rangle ds -  (u* \int_0^\cdot \langle \vartheta^{\cdot-s},\Delta \eta_\ell(s)\rangle ds)(t).
\end{align}

Next, we bound each term on the right-hand side of \eqref{uniq_Dbound_eq}.  Fix $T\geq 0$.  Then  \eqref{uniqueproof_nubound_part3} implies
\begin{equation}\label{uniq_Dbound_1}
  \left|\int_0^t\langle\f1,\Delta \eta_\ell(s)\rangle ds\right|\leq \lambda(d+d^2)\int_0^t \left( V_{\ell-1}(s) + V_\ell(s) \right) ds,\quad t\leq T.
\end{equation}
Moreover, recalling the notation $\|f\|_T = \sup_{t\in[0,T]}|f(t)|$, we also have
\begin{equation}\label{uniq_Dbound_2}
    \left|u*\langle\f1,\Delta \nu_\ell\rangle(t)\right|
    \leq \int_0^t u(t-s) \left|\langle\f1,\Delta \nu_\ell(s)\rangle \right| ds
    \leq \|u\|_T\int_0^t V_\ell(s)ds,\quad  t\leq T.
\end{equation}
Furthermore, for the function $\zeta_\ell(t)\doteq\int_0^t \langle \vartheta^{t-s},\Delta \eta_\ell(s)\rangle ds$, the bound  \eqref{uniqueproof_nubound_part3} with $f=\vartheta^{t-s}$ implies
\begin{equation}\label{uniq_Dbound_3}
  |\zeta_\ell(s)|\leq \lambda(d+d^2)\int_0^s \left( V_{\ell-1}(v) + V_\ell(v) \right) dv, \quad\quad s\geq 0,
\end{equation}
and hence, applying Tonelli's theorem in the third inequality below, we obtain
\begin{align}\label{uniq_Dbound_4}
  |u*\zeta_\ell(t)|  \leq  & \int_0^tu(t-s)|\zeta_\ell(s)|ds \\
                \leq  & \lambda(d+d^2)\int_0^t \int_0^s u(t-s) \left( V_{\ell-1}(v) + V_\ell(v) \right) dv\; ds\notag\\
                \leq  & \lambda(d+d^2)\int_0^t  \left( V_{\ell-1}(v) + V_\ell(v) \right) \left(\int_v^tu(t-s)ds\right)dv \notag\\
                \leq  & \lambda(d+d^2)U(T) \int_0^t  \left( V_{\ell-1}(s) + V_\ell(s) \right) ds,\quad\quad\forall t\leq T,\notag
\end{align}
where $U(\cdot)=1+\int_0^\cdot u(s)ds$ is the renewal function associated with $G$. From  equation \eqref{uniq_Dbound_eq} and the bounds \eqref{uniq_Dbound_1}-\eqref{uniq_Dbound_4}, for $\ell\geq 2$ we then have
\begin{equation}
\label{uniqueproof_Dbound_final}
  \|\Delta D_\ell\|_t \leq C(T)\int_0^t  \left( V_{\ell-1}(s) + V_\ell(s) \right) ds,\quad\forall t\leq T,
\end{equation}
with $C(T)\doteq\|u\|_T + \lambda(d+d^2) (2+U(T))<\infty$. Finally, incorporating \eqref{uniqueproof_nubound_part2}, \eqref{uniqueproof_nubound_part3} and \eqref{uniqueproof_Dbound_final}, with $\ell$ replaced by $\ell+1$, into  \eqref{uniquenessproof_deltanu}, we have
\begin{equation*}
  |\langle f,\Delta\nu_1(t)\rangle| \leq 3 C(T)\int_0^t (V_1(s)+V_2(s)) ds,\quad\quad\quad \forall f\in\mathbb{F},
\end{equation*}
and for every $\ell \geq 2$,
\begin{equation*}
  |\langle f,\Delta\nu_\ell(t)\rangle| \leq 3 C(T) \int_0^t  (V_{\ell-1}(s)+V_\ell(s)+V_{\ell+1}(s)) ds,\quad\quad\quad \forall f\in\mathbb{F}.
\end{equation*}
Taking the supremum over  $f\in \mathbb{F}$ in the last two  inequalities, for all $t\leq T$ we obtain
\begin{equation}\label{uniqueproof_nubound_final}
  V_\ell(t)\leq\twopartdef{3 C(T)\int_0^t  (V_1(s)+V_2(s)) ds,}{\ell=1,}{3 C(T) \int_0^t  (V_{\ell-1}(s)+V_\ell(s)+V_{\ell+1}(s)) ds,}{\ell\geq 2.}
\end{equation}

Define $V(t)\doteq\sum_{\ell=1}^\infty 2^{-\ell}V_\ell(t)$. Then  \eqref{uniqueproof_nubound_final}   implies
\begin{align}
  V(t)\leq 12C(T)\int_0^t V(s) ds.
\end{align}
Also, since $|\langle f,\nu_\ell(t)\rangle|\leq 1$ for $f\in\mathbb{F},$ \eqref{def-vell} implies $V_\ell(t)\leq 2$ for all  $\ell\geq 1$, and hence $V(t)\leq 2$.
Since $V(0)=0$, an application of Gronwall's inequality then shows that $V(t)=0$ for all $t\geq 0$, and hence,  $V_\ell(t)=0$ for all $t\geq 0$ and $\ell \geq 1$. In particular, this shows that
\begin{equation}\label{temp_unique2}
  \langle \f1,\Delta\nu_k \rangle \equiv 0,\quad\quad\quad k \geq1.
\end{equation}
Moreover, by  \eqref{uniqueproof_Dbound_final}, $\Delta D_{\ell} \equiv 0$ for all $\ell \geq 2$.
Taking the difference between equation \eqref{Fluid_f_alt}, and the same equation, but  with $\nu_\ell$, $D_{\ell+1}$ and $\eta_{\ell}$ replaced by  $\tilde{\nu}_\ell$, $\tilde{D}_{\ell+1}$ and $\tilde{\eta}_{\ell}$, respectively, and using the identities  $\Delta D_{\ell+1} \equiv 0$ and $\Delta\nu_\ell(0)=0$, we see that for $\ell \geq 1$ and
 $f\in \mathbb{C}_b\hc\cap \mathbb{AC}_{\text{loc}}\hc$,
\begin{equation}\label{uniq_newnubound}
  \langle f, \Delta\nu_\ell(t)\rangle =\int_0^t \langle f(\cdot+t-s)\vartheta^{t-s}(\cdot),\Delta \eta_\ell(s)\rangle ds.
\end{equation}

To finish the proof, we  use induction on $\ell$ to show that $\Delta\nu_\ell\equiv0$ for $\ell\geq1$. Let $\AC^\prime\supint \doteq \{f \in \AC \supint: ||f||_\infty \leq 1\}$.  Since $G$ has a density by Assumption \ref{asm_G}, $f(\cdot+t-s)\vartheta^{t-s}(\cdot)\in \AC^\prime \supint$ for all $f\in \AC^\prime\supint$ and $t,s\geq0$. For $\ell=1$, \eqref{uniq_proof_generalf1} and  \eqref{temp_unique2} with $k=1$ imply  $\langle f(\cdot+t-s)\vartheta^{t-s},\Delta \eta_1(s) \rangle =0$, and  therefore, $\Delta\nu_1\equiv0$ by  \eqref{uniq_newnubound}. Furthermore, if $\Delta \nu_{\ell-1}\equiv0$ for some $\ell\geq2$, it follows from \eqref{uniq_proof_generalfl}, \eqref{polyDerivative} and \eqref{temp_unique2}, both with $k=\ell-1$ and $k=\ell$, that
\begin{align*}
    \left|\langle f(\cdot+t-s)\vartheta^{t-s}(\cdot),\Delta \eta_\ell(s)\rangle\right| \leq & \lambda \mathfrak{P}_d\big(\langle \f1,\nu_{\ell-1}(s)\rangle,\langle \f1,\nu_\ell(s)\rangle\big) \langle f(\cdot+t-s)\vartheta^{t-s}(\cdot),\Delta\nu_\ell(s)\rangle\\
    \leq &\lambda  d \sup_{f \in \AC^\prime \supint} \left|\langle f, \Delta\nu_\ell (s)\rangle \right|.
\end{align*}
Together with \eqref{uniq_newnubound}  this implies
\begin{equation}
   \sup_{f \in  \AC^\prime \supint} \left|\langle f, \Delta\nu_\ell(t) \rangle \right|\leq d\int_0^t\lambda
    \sup_{f \in \AC^\prime \supint}\left|\langle f, \Delta\nu_\ell (s)\rangle \right| ds,\quad\quad \forall t \geq0.
\end{equation}
Since $\Delta\nu_\ell(0)=0,$ Gronwall's inequality shows $\left|\langle f, \Delta\nu_\ell (t)\rangle \right| = 0$ for $f \in \AC^\prime \supint$ and hence, by linearity and a density argument, for  $f \in \C_b \supint$. This proves $\Delta\nu_\ell(t)=0$ for all $t\geq0$.
\end{proof}

\subsection{A Measure-valued PDE associated with the hydrodynamic limit}\label{sec_fluid_dynamics}

The following is the main result of this section.

\begin{proposition}\label{prop_SolutionForm}
Given $\nu(0)=(\nu_\ell(0);\ell\geq1)\in\s$ and $\lambda>0$,
suppose  $\nu=(\nu_\ell)_{\ell\geq1}\in\mathbb{D}_{\s}\hc$ satisfies the following: \eqref{Fluid_bound} holds and for every $\ell\geq 1$ and $t\geq0$, \eqref{Fluid_Balance} holds, with $D_\ell$ and $\eta_\ell$ defined as in \eqref{Fluid_D}  and \eqref{Fluid_R}, respectively, and for $\varphi\in\phiset,$
\begin{align}\label{GAE_dynamic}
  \langle \varphi(\cdot, t),\nu_\ell(t)\rangle  = & \langle \varphi(\cdot, 0),\nu_\ell(0)\rangle + \int_0^t \langle \varphi_x(\cdot, s)+\varphi_s(\cdot, s)-\varphi(\cdot, s)h(\cdot),\nu_\ell(s)\rangle ds \notag\\
  &+ \int_0^t\varphi(0,s)dD_{\ell+1}(s)+ \int_0^t \langle \varphi(\cdot, s),\eta_\ell(s)\rangle ds.
\end{align}
Then, $\nu$ is a solution to the hydrodynamic equations associated with $(\lambda,\nu(0)).$
\end{proposition}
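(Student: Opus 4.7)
Since the proposition already assumes \eqref{Fluid_bound} and \eqref{Fluid_Balance} (with $D_\ell$ and $\eta_\ell$ as in \eqref{Fluid_D} and \eqref{Fluid_R}), only the mild form \eqref{Fluid_f} needs to be deduced from the weak PDE \eqref{GAE_dynamic}; the required continuity $\nu\in\mathbb{C}_{\s}\hc$ will then follow from \eqref{Fluid_f} by dominated convergence. The plan is to produce \eqref{Fluid_f} by the method of characteristics: for fixed $t>0$, $\ell\geq 1$, and $f\in\C_b\hc$, consider
\[
\psi(x,s) \doteq f(x+t-s)\,\frac{\overline G(x+t-s)}{\overline G(x)}, \qquad (x,s)\in\supint\times[0,t],
\]
with the convention that the quotient is $0$ whenever its numerator vanishes. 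A direct computation, using that $\overline G$ is absolutely continuous with density $-g$, shows that $\psi_s+\psi_x-\psi h=0$ wherever the derivatives are defined, while $\psi(\cdot,t)=f(\cdot)$, $\psi(0,s)=f(t-s)\overline G(t-s)$, and $\psi(\cdot,0)=f(\cdot+t)\overline G(\cdot+t)/\overline G(\cdot)$. Substituting $\varphi=\psi$ into \eqref{GAE_dynamic} annihilates the $\int_0^t\langle\varphi_x+\varphi_s-\varphi h,\nu_\ell(s)\rangle\,ds$ term and produces exactly \eqref{Fluid_f}; the only remaining task is to justify this substitution.

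Since $\psi\notin\phiset$ in general, I would approximate it in three stages by a sequence $\{\varphi^n\}\subset\phiset$. First, replace $f$ by $f^n\in\C_c^\infty\supint$ with $f^n\to f$ pointwise and $\|f^n\|_\infty\leq\|f\|_\infty+1$; second, if $g$ is not continuous, mollify $\overline G$ to a smooth $\overline G^{(n)}$ agreeing with $\overline G$ outside a shrinking neighborhood, using the local boundedness of $g$ from Assumption \ref{asm_G}.\ref{loc_b} for uniform Lipschitz control; third, multiply by smooth cutoffs $\rho^n(s)$ with $\rho^n\equiv 1$ on $[0,t]$ and support in $[0,t+1/n]$, and $\tilde\rho^n(x)$ with $\tilde\rho^n\equiv 1$ on $[0,K_n]$ and support in $[0,K_n+1]$ for some $K_n\uparrow L$, each with uniformly bounded derivatives. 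Setting
\[
\varphi^n(x,s) \doteq f^n(x+t-s)\,\frac{\overline G^{(n)}(x+t-s)}{\overline G^{(n)}(x)}\,\rho^n(s)\,\tilde\rho^n(x)
\]
then places $\varphi^n$ in $\phiset$. All terms in \eqref{GAE_dynamic} other than the residual $\mathcal R_n\doteq\int_0^t\langle\varphi^n_x+\varphi^n_s-\varphi^n h,\nu_\ell(s)\rangle\,ds$ converge to their $\psi$-counterparts by bounded and dominated convergence, using that each $\nu_\ell(s)\in\mathbb{M}_{\leq 1}\supint$, that $D_{\ell+1}$ has finite variation on $[0,t]$ by \eqref{Fluid_bound}, and that $\int_0^t\langle\f1,\eta_\ell(s)\rangle\,ds<\infty$ by \eqref{Fluid_R} together with $S_k\leq 1$.

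The principal obstacle is controlling $\mathcal R_n$ in the limit. I would split its integrand into four contributions: (i) the interior region where both cutoffs equal $1$ and the mollification is trivial, where the identity $\psi_s+\psi_x-\psi h=0$ makes the integrand vanish; (ii) the $\tilde\rho^{n\prime}$ piece, supported in $x\in[K_n,K_n+1]$, which is dominated by a constant times $(\|f\|_\infty+1)\nu_\ell(s)([K_n,L))$ and vanishes as $K_n\uparrow L$ since each $\nu_\ell(s)$ is a finite measure; (iii) the $\rho^{n\prime}$ piece, supported in $s\in[t,t+1/n]$, which vanishes by bounded convergence; and (iv) the mollification error, which is where the weak regularity of $h$ permitted by Assumption \ref{asm_G}.\ref{hazard} is felt. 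Piece (iv) is the delicate step: here \eqref{Fluid_bound} is indispensable, since it ensures integrability of $h$ against $\nu_\ell$ in a time-averaged sense, which together with pointwise convergence $\varphi^n h\to\psi h$ and a uniform bound on $\varphi^n$ permits dominated convergence. Once $\mathcal R_n\to 0$, letting $n\to\infty$ in \eqref{GAE_dynamic} yields \eqref{Fluid_f} for every $f\in\C_b\hc$, completing the proof.
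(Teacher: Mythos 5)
Your overall strategy is, in spirit, the paper's own: \eqref{Fluid_f} is extracted from \eqref{GAE_dynamic} by testing against the transported function $\psi(x,s)=f(x+t-s)\overline G(x+t-s)/\overline G(x)$, which annihilates the bulk term. The paper implements this by recognizing \eqref{GAE_dynamic} (after letting $t\to\infty$ for compactly supported $\varphi$) as the abstract age equation with inhomogeneity the Radon measure $\xi_\ell$, and then invoking Lemma \ref{lem_aae}, i.e.\ the machinery of \cite{KasRam11}; you instead try to justify the substitution of $\psi$ directly by approximation. Your cancellation $\psi_x+\psi_s=\psi h$ is correct, the cutoff pieces (ii)--(iii) are handled correctly (in fact the $\rho^{n\prime}$ piece contributes nothing to the integrals over $[0,t]$ since $\rho^n\equiv 1$ there), and reducing the proposition to \eqref{Fluid_f} plus a routine continuity remark matches the paper.

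The genuine gap is in your step (iv). First, ``mollify $\overline G$ to a smooth $\overline G^{(n)}$ agreeing with $\overline G$ outside a shrinking neighborhood'' is not a meaningful construction under Assumption \ref{asm_G}: $g$ is only locally bounded (its discontinuity set may be dense), so any genuine mollification changes $\overline G$ everywhere and yields only $g^{(n)}\to g$, hence $h^{(n)}\doteq g^{(n)}/\overline G^{(n)}\to h$, Lebesgue-almost everywhere. Second, the interior residual is not the term you dominate: where the cutoffs equal $1$ one has $\varphi^n_x+\varphi^n_s=\varphi^n h^{(n)}$, so ${\mathcal R}_n$ contains $\int_0^t\langle \varphi^n\,(h^{(n)}-h),\nu_\ell(s)\rangle\,ds$, and your dominated-convergence argument ($\varphi^n h\to\psi h$, dominated by $Ch$, integrable by \eqref{Fluid_bound}) covers only the $-\varphi^n h$ half. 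For the $\varphi^n h^{(n)}$ half there is no dominating function once $K_n\uparrow \endsup$ (on $[0,K_n]$ one only gets $h^{(n)}\leq C_{K_n}$ with $C_{K_n}\to\infty$, and $h^{(n)}\leq Ch$ fails in general); and even if you decouple the limits (mollification first at fixed cutoff $K$, then $K\uparrow \endsup$), you would still need $h^{(n)}\to h$ on a set of full measure for $\int_0^t\nu_\ell(s)(\cdot)\,ds$, whereas the proposition allows arbitrary $\nu_\ell(s)\in\mathbb{M}_{\leq1}\supint$ — atoms and singular parts included, e.g.\ inherited from $\nu_\ell(0)$ — so Lebesgue-a.e.\ convergence of $h^{(n)}$ does not suffice. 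This is precisely the difficulty the paper's route avoids: Lemma \ref{lem_aae} first removes $h$ via the change of variables $\psi_h$, which involves only ratios of $\overline G$ (continuous objects), and then solves a pure transport equation, so $g$ is never evaluated pointwise against the possibly singular measures $\nu_\ell(s)$. To repair your argument you would either have to reproduce that change-of-variables analysis or produce a smoothing of $h$ converging $\nu$-a.e.\ with an integrable envelope; your sketch provides neither.
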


The proof of Proposition \ref{prop_SolutionForm}, relies on the following lemma. Denote by $\tM$  the space of Radon measures  on $\R^2$ whose supports lie in $\supint\times\R_+$, and denote the integral with respect to any Radon measure $\Theta$ on $\R^2$ by
\[\Theta(\varphi)=\int_{\R^2}\varphi(x,s)\Theta(dx,ds),  \quad  \varphi \in \C_c ([0,\endsup) \times \R_+). \]

\begin{lemma}\label{lem_aae}
Given a Radon measure $\Theta\in\tM$, suppose $ \mu=\{\mu(t);t\geq0\}$ in $ \mathbb{D}_{\mathbb{M}_F\supint}\hc$ satisfies $ \int_0^t\langle h,\mu(s) \rangle ds <\infty,$ and
and for every  $\varphi\in\phiset$,
\begin{equation}\label{GAE_dynamic_abstract}
  -  \int_0^\infty \langle \varphi_x(\cdot, s)+\varphi_s(\cdot, s)-\varphi(\cdot, s)h(\cdot),\mu(s)\rangle ds = \Theta(\varphi).
\end{equation}
Then for every $f\in\mathbb{C}_c\supint$ and $t\geq0$,
\begin{equation}\label{gae_solution}
    \langle f,\mu(t)\rangle =\int_{\supint \times [0,t]} f(x+t-s)\frac{\overline G(x+t-s)}{\overline G(x)}\Theta(dx,ds).
\end{equation}
\end{lemma}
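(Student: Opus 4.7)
The plan is to view \eqref{GAE_dynamic_abstract} as the weak form of the transport equation $\partial_s \mu + \partial_x \mu + h\mu = \Theta$ and to extract $\mu(t)$ by testing against a function adapted to the characteristics $s \mapsto x + s$. The natural ``backward'' kernel is
\[
F(x, s) := f(x+t-s)\, \frac{\overline{G}(x+t-s)}{\overline{G}(x)}, \qquad (x, s) \in [0, L) \times \mathbb{R}_+,
\]
for which a short computation using $h = g/\overline{G}$ gives $F_x + F_s - hF = 0$, together with $F(x, t) = f(x)$. This is exactly the identity that will collapse the weak equation to a boundary-type relation in $s$.

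First take $f \in \mathbb{C}^1_c[0, L)$ with support in a compact subset of $(0, L)$, and let $\chi_\epsilon: [0, \infty) \to [0, 1]$ be smooth and nonincreasing with $\chi_\epsilon \equiv 1$ on $[0, t]$ and $\chi_\epsilon \equiv 0$ on $[t + \epsilon, \infty)$; set $\varphi_\epsilon := F \chi_\epsilon$. For $\epsilon$ small enough, $\varphi_\epsilon \in \phiset$: the support hypothesis on $f$ keeps $\mathrm{supp}(\varphi_\epsilon)$ bounded away from $\{x=0\}$ and $\{x=L\}$, and $F$ is $C^1$ on a neighborhood of that support. Plugging $\varphi_\epsilon$ into \eqref{GAE_dynamic_abstract} and invoking $F_x + F_s - hF = 0$ reduces the integrand to $F \chi_\epsilon'$, yielding
\[
-\int_0^\infty \chi_\epsilon'(s)\, \langle F(\cdot, s), \mu(s)\rangle\, ds \;=\; \Theta(F \chi_\epsilon).
\]

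Letting $\epsilon \downarrow 0$: the signed measures $-\chi_\epsilon'(s)\, ds$ are probability measures on $[t, t+\epsilon]$ concentrating at $t^+$, and since $\mu$ is c\`adl\`ag and $s \mapsto F(\cdot, s)$ is continuous with uniformly bounded compact support, the map $s \mapsto \langle F(\cdot, s), \mu(s)\rangle$ is right-continuous at $t$. Hence the left-hand side tends to $\langle F(\cdot, t), \mu(t)\rangle = \langle f, \mu(t)\rangle$. By bounded convergence (the integrand is dominated on the compact region where $F \chi_\epsilon$ lives, and $\Theta$ is Radon), the right-hand side tends to $\int_{[0,L)\times[0,t]} F(x, s)\, \Theta(dx, ds)$, establishing \eqref{gae_solution} for this class of $f$. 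Extension to general $f \in \mathbb{C}_c[0, L)$ is a standard density argument: approximate by $f_n \in \mathbb{C}^1_c(0, L)$ with $\|f_n\|_\infty \leq \|f\|_\infty$ and $f_n \to f$ pointwise on $(0, L)$, and transfer the identity by dominated convergence on both sides, using the finiteness of $\mu(t)$ and that $1/\overline{G}$ is bounded on the relevant compact $x$-range.

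The principal technical point is the verification $\varphi_\epsilon \in \phiset$: if $f(0) \neq 0$, then extending $f$ by zero outside $[0, L)$ creates a jump discontinuity in $F$ along the characteristic $\{(x,s): x + t = s\}$ through $(0, t)$, which would obstruct $C^{1,1}$ regularity of $\varphi_\epsilon$. Insisting from the outset that $f$ be supported strictly inside $(0, L)$ sidesteps this, and the density step then handles the general case. A secondary point is the justification of the limit identity $-\int \chi_\epsilon' \langle F, \mu\rangle\, ds \to \langle F(\cdot, t), \mu(t)\rangle$, which needs the c\`adl\`ag regularity of $\mu$ at $t$ together with joint continuity properties of $F$, but is otherwise routine.
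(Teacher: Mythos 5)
Your overall scheme -- testing \eqref{GAE_dynamic_abstract} against the kernel $F(x,s)=f(x+t-s)\overline G(x+t-s)/\overline G(x)$ (constant along characteristics after the $\overline G$-twist) multiplied by a temporal cut-off $\chi_\epsilon$, then letting $\epsilon\downarrow 0$ -- is the natural direct route, and your treatment of the two limits (right-continuity of $s\mapsto\langle F(\cdot,s),\mu(s)\rangle$ at $t$, dominated convergence for $\Theta(F\chi_\epsilon)$) and the final density step in $f$ are fine. The genuine gap is the admissibility claim ``$F$ is $C^1$ on a neighborhood of that support.'' The partial derivatives of $F$ involve $g$ (through $\overline G'=-g$ and $h=g/\overline G$), and under Assumption \ref{asm_G} the density $g$ is only required to exist and be locally bounded; it need not be continuous (the paper notes that continuity of $g$ is merely a sufficient condition for Assumptions \ref{asm_G}.\ref{hazard} and \ref{asm_G}.\ref{loc_b}). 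Consequently $\varphi_\epsilon=F\chi_\epsilon$ is in general \emph{not} an element of $\phiset$, the identity $F_x+F_s-hF=0$ holds only Lebesgue-a.e., and you are not entitled to insert $\varphi_\epsilon$ into \eqref{GAE_dynamic_abstract}, whose hypothesis is stated only for $\mathbb{C}^{1,1}_c$ test functions.

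Repairing this is not routine. Mollifying $F$ restores admissibility, but then you must pass to the limit in $\int_0^\infty\langle \varphi^\delta_x+\varphi^\delta_s-\varphi^\delta h,\mu(s)\rangle\,ds$, and the mollified derivatives converge to their a.e.\ versions only off a Lebesgue-null set $D$ of $x$-values; since $\mu$ is an arbitrary c\`adl\`ag $\mathbb{M}_F\supint$-valued path satisfying the abstract equation (and in the intended application is a limit of sums of Dirac masses whose atoms travel along the very characteristics in question), nothing rules out $\int_0^t\mu(s)(D)\,ds>0$, so a dominated-convergence argument in the pairing does not go through without further input. This measure-theoretic care is precisely what the paper's proof outsources: it performs the absolutely continuous change of variable $\psi_h$ (a ratio of values of $\overline G$), reduces \eqref{GAE_dynamic_abstract} to a linear transport equation, and invokes the representation and uniqueness results of \cite{KasRam11} (Proposition 4.16, Lemma 4.13, Corollary 4.17), which are established under absolute continuity of $\overline G$ alone. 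To complete your argument you would need either the extra hypothesis $g\in\C\supint$ (which the paper deliberately avoids) or a justified extension of \eqref{GAE_dynamic_abstract} to compactly supported test functions that are merely Lipschitz/absolutely continuous along characteristics.
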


Equation \eqref{GAE_dynamic_abstract} is called the  \textit{abstract age equation} and was studied extensively in \cite[Section 4.3]{KasRam11}.  Lemma \ref{lem_aae} essentially  follows from Corollary 4.17 and equations (4.24),  (4.45), (4.46) and (4.55)  in \cite{KasRam11}; however, we provide a proof here for completeness.

\begin{proof}[Proof of Lemma \ref{lem_aae}]
Define the measure $h\mu$ on $\R^2$ as
\[(h\mu)(\varphi)=\int_0^\infty \langle \varphi(\cdot,s)h(\cdot),\mu(s)\rangle ds,\quad\quad\varphi\in\C_c(\R_+^2).\]
Using this definition, the equation \eqref{GAE_dynamic_abstract} can be written as
\begin{equation}\label{temp_aae}
  -  \int_0^\infty \langle \varphi_x(\cdot, s)+\varphi_s(\cdot, s),\mu(s)\rangle ds= -(h\mu)(\varphi) + \Theta(\varphi).
\end{equation}
With a change of variable, \eqref{temp_aae} can be transformed into a simpler linear measure-valued transport equation.   Define $\psi_h(x,t)\doteq\exp(r_h(x,t))$, where
\begin{equation*}
  r_h(x,t) = \threepartdef{\displaystyle -\int_{x-t}^xh(u)du}{0\leq t\leq x<  L,}{\displaystyle -\int_{0}^xh(u)du}{  0\leq x\leq t, x<L,}{0}{otherwise,}
\end{equation*}
and $\tilde\Theta(\varphi)\doteq\Theta(\psi_h^{-1}\varphi)$. Then, the measure-valued process $\{\tilde \mu_t;t\geq0\}$ defined as
\begin{equation}\label{temp_sae0}
\langle f, \tilde \mu_t \rangle \doteq \langle f \psi_{h}^{-1} (\cdot,t), \mu_t \rangle \end{equation}
can be easily shown to satisfy (see \cite[Proposition 4.16]{KasRam11})
\begin{equation}\label{temp_sae}
 -  \int_0^\infty \langle \varphi_x(\cdot, s)+\varphi_s(\cdot, s),\tilde\mu(s)\rangle ds= \tilde\Theta(\varphi).
\end{equation}
The equation \eqref{temp_sae} is essentially a measure-valued linear inhomogeneous transport equation, which has a unique solution that is given explicitly by (see \cite[Lemma 4.13]{KasRam11})
\begin{equation}\label{temp_sae2}
  \langle f, \tilde\mu_t\rangle = \tilde\Theta(\Lambda_f^t),
\end{equation}
where $\Lambda_f^t(x,s)=f(x+t-s)\mathbb{1}_{[0,t]}(s)$. Combining \eqref{temp_sae0} and \eqref{temp_sae2}, it follows that the equation \eqref{GAE_dynamic_abstract} has the unique solution given by (see \cite[Corollary 4.17]{KasRam11})
\begin{equation}
 \label{gae_temp}
  \langle f,\mu(t)\rangle =\Theta\big(  \psi_{h}^{-1} \Lambda^t_{f(\cdot)\psi_h(\cdot,t)} \big).
\end{equation}
Note that $\psi_h$ can be simplified as
\begin{equation*}
  \psi_h(x,t) = \threepartdef{\displaystyle \frac{\overline G(x)}{\overline G(x-t)}}{0\leq t\leq x<  L,}{\overline G(x)}{0\leq x\leq t<\infty,}{0}{otherwise.}
\end{equation*}
and hence,
\[  (\psi_{h})^{-1} (x,s) \Lambda^t_{f(\cdot)\psi_h(\cdot,t)}(x,s)=f(x+t-s) \frac{\overline{G}(x+t-s)}{\overline{G}(x)}\f1_{[0,t]}(s).
\]
Equation \eqref{gae_solution} then follows on substituting $(\psi_{h})^{-1} \Lambda^t_{f(\cdot)\psi_h(\cdot,t)}$ from the equation above into  \eqref{gae_temp}.
\end{proof}

\begin{proof}[Proof of Proposition \ref{prop_SolutionForm}]
Clearly, we only need to show that \eqref{Fluid_f} holds for all $t\geq0,$ $\ell\geq1$ and $f\in\mathbb{C}_b\supint.$ Fix $\ell\geq 1$, and consider the linear functional $\xi_\ell$ on $\mathbb{C}_c(  \R^2)$ defined by
\begin{equation}\label{CAE_proof_xi}
    \xi_\ell(\varphi)\doteq \langle \varphi(\cdot,0), \nu_\ell(0) \rangle + \int_{\hc} \varphi(0,s)dD_{\ell+1}(s) + \int_0^\infty \langle\varphi(\cdot,s),\eta_\ell(s)\rangle ds.
\end{equation}
By  \eqref{Fluid_R} and \eqref{polyDerivative},  for $m\in\supint$, $T\in\hc$ and every $\varphi\in\C_c(\R^2)$ with $\supp(\varphi)\subset [0,m]\times[0,T]$,
\begin{equation}
   \big|\langle \varphi(\cdot,s), \eta_\ell(s) \rangle\big|  \leq \|\varphi\|_\infty \lambda d.
\end{equation}
Hence, since $D_{\ell+1}$ is non-decreasing,
\begin{equation}
  |\xi_\ell(\varphi)| \leq \|\varphi\|_\infty \big(|\nu_\ell(0)|_{TV} + D_{\ell+1}(T) + C(T)\big),
\end{equation}
with $C(T)\doteq \lambda d T<\infty$, and  $D_{\ell+1}(T)<\infty$ by \eqref{Fluid_bound}. Moreover, $\xi_\ell(\varphi)$=0 for all $\varphi$ such that $\supp(\varphi)\cap\supint\times\R_+ =\emptyset$. Hence, $\xi$ is a Radon measure on $\R^2$ with support in $\supint\times\R_+$, i.e., $\xi\in\tM$. Moreover, since $\varphi$ has compact support, the left-hand side of \eqref{GAE_dynamic} is equal to zero for sufficiently large $t$. Therefore, sending $t\to\infty$ in \eqref{GAE_dynamic}, we have  for all $\varphi \in \phiset$,
\begin{equation}
  - \int_0^t \langle \varphi_x(\cdot, s)+\varphi_s(\cdot, s)-\varphi(\cdot, s)h(\cdot),\nu_\ell(s)\rangle ds =\xi_\ell(\varphi).
\end{equation}
Thus, $\nu_\ell$ satisfies the abstract  age equation associated with $\xi_\ell\in\tM$ and $h$.  Therefore, by Lemma \ref{lem_aae} and \eqref{CAE_proof_xi}, for all $f\in\mathbb{C}_c\supint$ and $t\geq0,$
\begin{align}\label{gae_solved}
  \langle f,\nu_\ell(t)\rangle =& \int_{\supint \times [0,t]} f(x+t-s)\frac{\overline G(x+t-s)}{\overline G(x)}\xi_\ell(dx,ds) \notag\\
   =&\langle f(\cdot+t)\frac{\overline G(\cdot+t)}{\overline G(\cdot)},\nu_\ell(0)\rangle +\int_{[0,t]} f(t-s)\overline G(t-s)dD_{\ell+1}(s)\notag\\
    &\hspace{2.5cm} + \int_0^t \langle f(\cdot+t-s)\frac{\overline G(\cdot+t-s)}{\overline G(\cdot)},\eta_\ell(s)\rangle ds.
\end{align}
Since for $t\geq0$, the right-hand side of \eqref{gae_solved} is finite for every $f\in\mathbb{C}_b\supint,$ the relation \eqref{gae_solved} can be extended to all $f\in\mathbb{C}_b\supint$ by an application of the dominated convergence theorem, and \eqref{Fluid_f} follows.
This completes the proof.
\end{proof}


\section{State Dynamics}\label{sec_model}

In Section \ref{sec_StateVariable} we express the state descriptor $\nun$  in terms of primitives of the networks.   This will be required to justify the existence of compensators of certain processes in Section \ref{sec_mgale}. We also  introduce some auxiliary processes in Section \ref{sec_AuxProcess}, and use them to derive dynamical equations for the state variables in Section \ref{sec_prelimit_prelimit}.

\begin{remark}
  To make it easy to follow the notation, throughout the paper, we use the superscript $i$ to denote queue indices and  subscript $j$ to denote jobs.
\end{remark}

\subsection{State Variables}\label{sec_StateVariable}

For each job $j$, let $\arrivalTime_j, \kninv_j$ and $\deptn_j$, respectively, represent the time at which job $j$ arrives into the system, enters service, and departs the queue on completing service. Note that  $\arrivalTime_j=\kninv_j$ if the job is routed to an empty queue, and  $\deptn_j=\kninv_j+\serviceTime_j$ where $\{v_j\}$ is the i.i.d.\ sequence of service times. We use the convention that jobs initially in the network are indexed by non-positive numbers $j=\j0,...,0$ with $\j0\doteq-\xn(0)+1$ being the smallest job index, where recall that $\xn(t)$ represents the total number of jobs in system at time $t$.   We also assume that jobs that entered service earlier get smaller indices. Jobs that arrive after time $0$ are given indices $j\geq1$ in the order of their arrival time ($0<\arrivalTime_j<\arrivalTime_{j+1}$ for $j\geq1$, almost surely).   Then the age $\agen_j(t)$ of job $j$ at time $t$ takes the form:
\begin{equation}\label{def_agen}
  \agen_j(t)\doteq\threepartdefE{0}{t<\kninv_j,}{t-\kninv_j}{\kninv_j\leq t <\deptn_j,}{v_j}{t\geq \deptn_j.}
\end{equation}

We also assign to each queue an index $i \in \{1, \ldots, N\}$.   To implement the $SQ($d$)$ routing algorithm, upon arrival, each job $j\ge 1$ chooses a vector  $\rqc_j=(\rqc_{j}(1),...,\rqc_{j}($d$))$ of $d$ indices,  each chosen independently and uniformly at random from the set $\{1, \ldots, N\}$ (in practice, it  would  be more natural to  sample $d$ queues at random without replacement, but we choose the former routing procedure for simplicity;  the effect of the  difference vanishes in the hydrodynamic limit). The job is then  routed to the queue with the shortest  length amongst the chosen indices, where if there are multiple queues of minimal length, then one of them is chosen uniformly at random. We denote the index of the queue to which job $j$ is routed  by $\stationn_j$:
\begin{equation}\label{def_station}
  \stationn_j\sim \text{Unif} \left( \text{argmin}\left\{ \xnX{\rqc_{j}(1)}(\arrivalTime_j-),...,\xnX{\rqc_{j}(d)}(\arrivalTime_j-)\right\} \right),
\end{equation}
where recall $\xni(t)$ is the number of jobs in the $i$th queue at time $t$. With a slight abuse of notation, we also use $\stationn_j(t)\doteq\indicil{t\geq \arrivalTime_j}\stationn_j$ to denote the queue index process.

For a Markovian description, the initial state of the network is completely determined by $\ren(0)$ from \eqref{def_Ren} and $\nun(0)$. However, it will prove convenient to  also refer to a more detailed description,  in which  $\stationn_j$ is specified for each job initially in the system. According to  our indexing convention, each job initially in the network has an index $j\in\{\j0 = -\xn(0)+1,...,N\},$ and any job initially in service has an index $j\in\{-\j0, -\xn(0)+1,...,-\xn(0)+\langle\f1,\nun_1(0)\rangle\}$.  Now, let
\begin{equation}\label{def_I0}
    I_0 \doteq \left(\ren(0),\agen_j(0),\stationn_j;j=\jo0,...,0\right).
\end{equation}
Given our indexing convention, $I_0$ and $(\ren(0),\nun(0))$ can be recovered from each other. Also,
 Assumption \ref{asm_initial}.\ref{asm_initial_ind} can be expressed in terms of the above notation as follows: for every finite subset $\cal K \subset \{-\xn(0)+1, \ldots, ...,0\}$ of  jobs initially in system,
\begin{equation}\label{vInitial}
    \Prob{v_{j}>b_j; j\in \cal K|I_0} =\prod_{j\in\cal K}\frac{\overline G(\agen_j(0)+b_j)}{\overline G(\agen_j(0))}, \quad b_j \geq 0.
\end{equation}
In other words, for every job $j$  not initially in service, $v_j$ is independent of $I_0$.

We now express the measures $\nun_\ell$ in terms of the primitives defined above.  A job $j$  receives service during the interval $[\kninv_j,\deptn_j)$, and hence,
\begin{equation}\label{def_Uactive}
  \Uactiven(t)\doteq\left\{j\geq \j0 \; :\;  \kninv_j\leq t<\deptn_j\right\}
\end{equation}
is the set of indices of  jobs receiving service at time $t$. Also, for $t \in  [\arrivalTime_j,\infty)$ let $\custServSize_j(t)$
 denote the length at time $t$ of the queue to which job $j$ was routed. In other words,
\begin{equation}\label{def-custservsize}
  \custServSize_j(t)= \xnX{\stationn_j}(t),\quad t\geq \arrivalTime_j.
\end{equation}
The value of $\custServSize_j(t)$ for $t<\arrivalTime_j$ is irrelevant. Therefore, for $\ell\geq1$,
\begin{equation}\label{def_Usize}
  \Usizeln(t)\doteq\left\{j\geq \j0: \indic{\arrivalTime_j\leq t} \custServSize_j(t)\geq \ell\right\}
\end{equation}
is the set of jobs in queues with length at least $\ell$ at time $t$. Using this notation, we can write
\begin{align}
    \nun_\ell(t) & \doteq \sum_{j=\j0}^{\infty}\indic{\kninv_j\leq t}\indic{\deptn_j>t}\indic{\custServSize_j(t)\geq \ell}\delta_{\agen_j(t)} \label{nuln_alternative}\\
    & = \sum_{j=\j0}^{\infty}\indic{j\in\Uactiven(t)}\indic{j\in\Usizeln(t)}\delta_{\agen_j(t)}. \label{def_nuln}
\end{align}

The pair $(\ren,\nun)$ with $\nun=(\nun_\ell;\ell\geq1)$ is the state descriptor of the $N$-server network.

\subsection{Auxiliary Processes and Filtration}\label{sec_AuxProcess}

To describe the dynamics of  $\nun$, it will be convenient to introduce a number of auxiliary processes. For every queue $i\in\{1,...,N\}$, let $\arrivalni$ denote the cumulative arrival process to queue $i$, defined as
\begin{equation}\label{def_Ei}
  \arrivalni(t)\doteq\sum_{j=1}^\infty \indic{\arrivalTime_j\leq t}\indic{\stationn_j=i},\quad\quad t\geq 0.
\end{equation}
For $\ell\geq1$, let $\nuni_\ell (t)$ denote the measure that has a Dirac delta mass at the age of the job in service at queue $i$ (if any) at time $t$: that is, for $t\geq0$,
\begin{equation}\label{nuni}
    \nuni_\ell(t)\doteq\sum_{j=\j0}^{\infty}\indic{\kninv_j\leq t}\indic{\deptn_j>t}\indic{\custServSize_j(t)\geq \ell}\indic{\stationn_j=i}\delta_{\agen_j(t)}.
\end{equation}
Note that $\nuni_\ell(t)$ always has mass either zero or 1, and clearly,
\begin{equation}\label{nun_nuni}
    \nun_\ell=\sum_{i=1}^N\nuni_\ell.
\end{equation}

Fix $\ell\geq1$.  For $\varphi\in\tightphiset$, let $\Rn_{\varphi,\ell}$ be the cumulative $\varphi$-weighted routing measure process to queues with length exactly $\ell-1$, defined as follows for all $t\geq 0:$
\begin{equation}\label{def_Rphi}
    \Rn_{\varphi,\ell}(t)\doteq \twopartdef{ \displaystyle \sum_{i=1}^N \int_{(0,t]} \varphi(0,s) (1 - \langle \f1, \nuni_1 (s-)\rangle )d\arrivalni(s)}{\ell=1,}{\displaystyle\sum_{i=1}^{N}\int_{(0,t]} \langle \varphi(\cdot,s),\nuni_{\ell-1}(s-) - \nuni_{\ell}(s-) \rangle d\arrivalni(s),}{\ell \geq 2.}
\end{equation}
Roughly speaking, $\Rn_{\varphi,\ell}(t)$ captures the cumulative effect on the measure $\nu_\ell^{(N)}$ due to jobs routed in the interval $[0,t]$.  Indeed, in
 both cases, $\varphi (\cdot, s)$  in the integral is  evaluated at the age of the job in service at queue $i$.
 Note that  when $\ell = 1$,
since $\langle \f1, \nuni(s-)\rangle = X^{(N),i}(s-)$, we can also write
\begin{equation}\label{def_Rphi1}
  \Rn_{\varphi,1}(t)=\sum_{i=1}^N \int_{(0,t]} \varphi(0,s) \indic{\xni(s-)=0} d\arrivalni(s).
\end{equation}

Next, we turn to the counting process $\dn_\ell=\{\dn_\ell(t);t\geq0\}$ of departures from queues with length  at least $\ell$ right before departure.   For conciseness, we use the following notation for values of the queue length of job $j$ right after its arrival time or service entry  and right before its departure time:
\begin{align}\label{def_CustXS}
  \custES_j\doteq\custServSize_j(\arrivalTime_j),\quad
  \custKS_j\doteq\custServSize_j(\kninv_j),\quad
  \custDS_j\doteq\custServSize_j(\deptn_j-),
\end{align}
where $\custServSize_j(\cdot)$ is the queue length process defined in \eqref{def-custservsize}. Then we have
\begin{equation}\label{def_Dl}
  \dn_\ell(t) = \sum_{j=\j0}^\infty \indic{\deptn_j\leq t}\indic{\custDS_j\geq \ell},\quad\quad t\geq0.
\end{equation}
Note that  $\dn\doteq\dn_1$ is the total cumulative departure process.

\begin{remark}\label{remark_custX}
Since a queue is never empty just prior to a departure or right after a service entry, we have  $\custDS_j\geq1$ and $\custKS_j\geq1$. Also, a simple mass balance shows that
\begin{equation}\label{apx_massTotal}
   \dn(t) + \langle \f1, \nun_1 (t) \rangle \leq \xn(0)+\arrivaln(t).
\end{equation}
\end{remark}

For  $\varphi\in\tightphiset$, let $\Quen_{\varphi,\ell}$ be the cumulative $\varphi$-weighted departure process from queues of length at least $\ell$, defined by
\begin{equation}\label{def_Qphi}
  \Quen_{\varphi,\ell}(t) \doteq \sum_{j=\j0}^{\infty} \varphi\left(\serviceTime_j,\deptn_{j}\right) \indic{\deptn_j\leq t}\indic{\custDS_j\geq \ell},\quad\quad  t\geq 0.
\end{equation}
Clearly, $\Quen_{\f1,\ell}=\dn_\ell,$ and hence by \eqref{apx_massTotal},
\begin{equation}\label{apx_Qbound}
 |\Quen_{\varphi,\ell}(t)|\leq \|\varphi\|_\infty \left( \xn(0)+\arrivaln(t) \right), \quad t \geq 0.
\end{equation}
For $i\in\{1,...,N\}$, let $\dni$ denote the departure process from queue $i$. Then
\begin{equation}\label{def_Di}
\dni(t) = \sum_{j=\j0}^\infty\indic{\deptn_j\leq t}\indic{\stationn_j=i},\quad t\geq0,
\end{equation}
\begin{equation}\label{mass_balance_i}
  \xni(t)=\xni(0)+\arrivalni(t)-\dni(t), \quad t \geq 0.
\end{equation}

Finally, we define the filtration $\{\filtn_t;t\geq0\}$ generated by the initial conditions of the network, see \eqref{def_I0},  plus the filtrations $\mathcal{F}_t^{\arrivalni}$ and $\mathcal{F}_t^{\dni}$  generated by $\arrivalni$ and $\dni$, respectively, $i = 1, \ldots, N$. In other words,
\begin{equation}\label{def_filt}
  \filtn_t\doteq\bigvee_{i=1}^N \left(\mathcal{F}_t^{\arrivalni}\vee\mathcal{F}_t^{\dni}\right)\vee\sigma(I_0),
\end{equation}
We show in Proposition \ref{prop_adapted} that all state variables and auxiliary processes are $\{\filtn_t\}$-adapted.

\begin{remark}
It is possible to show that $\{\filtn_t\}$  is also equal to  the filtration generated by the age and queue index processes $\agen_j(\cdot),\stationn_j(\cdot);j\geq1$. However, our definition allows us to exploit results from \cite{BremaudBook} in Section \ref{sec_mgale} to identify compensators of certain processes.
\end{remark}

\begin{remark}
One can also show that $\{(\ren(t),\nun(t));t\geq0\}$ is a Markov process with respect to the filtration $\{\filtn_t;t\geq0\}$.  But, since we do not use this property,  we do not prove it.
\end{remark}

\subsection{Equations governing the dynamics of the N-Server Network}\label{sec_prelimit_prelimit}
We now describe the dynamics of the state descriptor $\nun$ for a fixed $N$. Our main result (Proposition \ref{prelim_dynamics_prop}) does not require all our assumptions on the arrival process and service time distribution, but instead holds for a very general class of networks and load balancing algorithms, as long as all arrival and departure times are distinct, almost surely. To make this notion precise,  denote by $\Omega_{s,\delta}$ the set of realizations for which at most one arrival or one departure occurs during $(s, s+\delta]$. Also, define $\Omega_t$ to be the set of realizations for which there exists a partition $\{(\frac{k}{n},\frac{k+1}{n}];k=0,...,\lfloor nt\rfloor\}$ of $(0,t]$ such that at most one arrival or one departure occurs in each subinterval, that is,
\begin{equation}\label{def_Omegat}
  \Omega_t\doteq \bigcup_{n=1}^\infty\bigcap_{k=0}^{\lfloor nt\rfloor}\Omega_{\frac{k}{n},\frac{1}{n}}.
\end{equation}
The result in Proposition \ref{prelim_dynamics_prop} holds for a much larger class of load-balancing algorithms. In fact,  as long as for every $t\geq0,$ $\Omega_t$ has full measure, the equation \eqref{prelim_dymamics_eq} holds for the routing process $\Rn$ associated with the corresponding algorithm.
\begin{proposition}\label{prelim_dynamics_prop}
Consider an $N$-server network with any arrival process, service times and load balancing algorithm such that  $\Probil{\Omega_t}=1$ for every $t\geq0$. Then, for $\varphi\in\phiset$, almost surely, for $\ell\geq 1$ and  $t\geq 0$,
\begin{align}\label{prelim_dymamics_eq}
  \langle\varphi(\cdot,t),\nun_\ell(t)\rangle =& \langle\varphi(\cdot,0),\nun_\ell(0)\rangle +\int_0^t \langle \varphi_s(\cdot,s)+\varphi_x(\cdot,s),\nun_\ell(s)\rangle ds - \Quen_{\varphi,\ell}(t)\notag\\
  &+ \int_{[0,t]}\varphi(0,s)d\dn_{\ell+1}(s)+ \Rn_{\varphi,\ell}(t);
\end{align}
\begin{equation}
    \label{prelim_dymamics_balance}
  \langle \f1,\nun_\ell(t)\rangle = \langle\f1,\nun_\ell(0)\rangle - \dn_\ell(t)+ \dn_{\ell+1}(t)+ \Rn_{\f1,\ell}(t).
\end{equation}
\end{proposition}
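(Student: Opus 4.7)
The plan is to fix $\omega \in \Omega_t$ and exploit the assumption $\mathbb{P}(\Omega_t)=1$ to find, for each realization, a partition $\{s_k = kt/n\}_{k=0}^{n}$ of $[0,t]$ so fine that each subinterval $(s_k,s_{k+1}]$ contains at most one arrival or departure. Then I would telescope
\[
\langle\varphi(\cdot,t),\nun_\ell(t)\rangle - \langle\varphi(\cdot,0),\nun_\ell(0)\rangle = \sum_{k=0}^{n-1}\Bigl[\langle\varphi(\cdot,s_{k+1}),\nun_\ell(s_{k+1})\rangle - \langle\varphi(\cdot,s_k),\nun_\ell(s_k)\rangle\Bigr],
\]
and analyze each increment by splitting off the (at most one) jump at a time $\tau \in (s_k,s_{k+1}]$ from the ``aging'' contributions on the two event-free subintervals $(s_k,\tau)$ and $(\tau,s_{k+1}]$.

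On each event-free subinterval, $\nun_\ell$ is a sum of delta masses translating at unit velocity with no atoms created or destroyed, so by the chain rule (valid because $\varphi \in \mathbb{C}_c^{1,1}(\supint\times\R_+)$)
\[
\frac{d}{ds}\langle\varphi(\cdot,s),\nun_\ell(s)\rangle = \langle \varphi_x(\cdot,s)+\varphi_s(\cdot,s),\nun_\ell(s)\rangle,
\]
and the aging parts of all increments sum to $\int_0^t \langle \varphi_x(\cdot,s)+\varphi_s(\cdot,s),\nun_\ell(s)\rangle\,ds$. The ages of atoms in the support of $\nun_\ell$ lie in $\supint$ by Assumption \ref{asm_G}, and the compact support of $\varphi$ makes these integrals well defined.

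For the jump contributions I would perform a careful case analysis.  For a departure of job $j$ at time $\tau=\deptn_j$ from queue $\stationn_j$ with pre-departure length $\custDS_j$: if $\custDS_j<\ell$ there is no change; if $\custDS_j=\ell$, the atom $\delta_{v_j}$ is removed so the jump equals $-\varphi(v_j,\tau)$; if $\custDS_j\ge \ell+1$, the next waiting job enters service with age $0$ at that queue, and the jump equals $-\varphi(v_j,\tau)+\varphi(0,\tau)$. Summing over all departures in $[0,t]$ produces exactly $-\Quen_{\varphi,\ell}(t) + \int_{[0,t]}\varphi(0,s)\,d\dn_{\ell+1}(s)$ by \eqref{def_Qphi} and \eqref{def_Dl}. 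For an arrival of job $j$ at time $\arrivalTime_j$ to queue $i=\stationn_j$: if $\xni(\tau-)=0$ and $\ell=1$, job $j$ enters service immediately and the jump equals $\varphi(0,\tau)$, matching \eqref{def_Rphi1}; if $\xni(\tau-)=\ell-1\ge 1$, the in-service job at queue $i$ (of age $a$) begins to be counted by $\nun_\ell$, contributing $\varphi(a,\tau) = \langle\varphi(\cdot,\tau),\nuni_{\ell-1}(\tau-)-\nuni_\ell(\tau-)\rangle$; otherwise no change. Summing matches $\Rn_{\varphi,\ell}(t)$ by \eqref{def_Rphi}. Combining the aging, routing, and departure contributions yields \eqref{prelim_dymamics_eq} on $\Omega_t$, and hence almost surely.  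The mass balance \eqref{prelim_dymamics_balance} follows by taking $\varphi\equiv\f1$, since then $\varphi_x\equiv\varphi_s\equiv 0$, $\Quen_{\f1,\ell}=\dn_\ell$, and $\int_{[0,t]}\f1\,d\dn_{\ell+1}=\dn_{\ell+1}(t)$.

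The main obstacle is the combinatorial bookkeeping of the jump terms, in particular correctly pairing the $\delta_0$ contribution from a post-departure service entry with the counting process $\dn_{\ell+1}$ rather than $\dn_\ell$, and separating the $\ell=1$ boundary case (arrival to an empty queue) from the $\ell\ge 2$ cases so that the final identity unifies into a single expression involving $\Rn_{\varphi,\ell}$. A minor technical point to verify is that atoms of $\nun_\ell(s)$ remain in $\supint$ for all $s \le t$, which ensures the compactly supported test function $\varphi$ is evaluated only where it is smooth; this follows because any atom of age $x$ corresponds to a job still in service, whence $v_j > x$ and $v_j < L$ almost surely.
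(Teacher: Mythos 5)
Your proposal is correct and follows essentially the same route as the paper's proof: the paper likewise telescopes over a partition guaranteed by $\Omega_t$ and computes the one-step increments by exactly your case analysis (Lemma \ref{prelim_deltanuABS_lemma} together with Lemma \ref{lem_K}, which pairs post-departure service entries with $D^{(N)}_{\ell+1}$ and arrivals to empty queues with $\mathcal{R}^{(N)}_{\varphi,1}$), before setting $\varphi=\f1$ for the mass balance \eqref{prelim_dymamics_balance}. The only difference is technical rather than conceptual: you obtain the aging term via the chain rule on event-free subintervals, whereas the paper recovers $\int_0^t \langle \varphi_s(\cdot,s)+\varphi_x(\cdot,s),\nun_\ell(s)\rangle ds$ as an $n\to\infty$ limit of Riemann-type sums using the mean value theorem and uniform continuity of $\varphi_s$ and $\varphi_x$.
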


The rest of this section is devoted to the proof of this proposition. Throughout this section, for ease of notation,  $\sum_j$ is used to denote the sum over all job indices $j\in\{\j0,...,-1,0,1,...\}.$

Fix $\ell\geq 1$, $t\geq0$. For $\varphi\in\mathbb{C}_b(\supint\times\hc),$ since $s \mapsto \langle\varphi(\cdot,s),\nun_\ell(s)\rangle$ is right-continuous, we can write
\begin{align}\label{prelim_discretization_eq}
  \langle\varphi(\cdot,t),\nun_\ell(t)\rangle -  \langle\varphi(\cdot,0),\nun_\ell(0)\rangle =& \lim_{n\to\infty}\sum_{k=0}^{\lfloor nt\rfloor} \left[\langle \varphi(\cdot,\frac{k+1}{n}),\nun_\ell(\frac{k+1}{n}) \rangle  -\langle \varphi(\cdot,\frac{k}{n}),\nun_\ell(\frac{k}{n}) \rangle\right]\notag\\
  =& \lim_{n\to\infty}\left( \In_1+\In_2\right),
\end{align}
where
\begin{equation}\label{prelim_I1def}
  \In_1(t) \doteq \sum_{k=0}^{\lfloor nt\rfloor} \langle \varphi(\cdot,\frac{k+1}{n})-\varphi(\cdot,\frac{k}{n}) ,\nun_\ell(\frac{k+1}{n}) \rangle
\end{equation}
and
\begin{equation}\label{prelim_I2def}
\In_2(t) \doteq \sum_{k=0}^{\lfloor nt\rfloor} \langle \varphi(\cdot,\frac{k}{n}),\nun_\ell(\frac{k+1}{n}) \rangle  -\langle \varphi(\cdot,\frac{k}{n}),\nun_\ell(\frac{k}{n}) \rangle.
\end{equation}

We first compute $\In_1(t)$ for $\varphi\in\phiset$. For $x\in\supint$, by the mean value theorem for $\varphi(x,\cdot)$,
\begin{align*}
\varphi(x,\frac{k+1}{n})-\varphi(x,\frac{k}{n})=& \frac{1}{n}\varphi_s(x,\frac{k+1}{n}-\delta(x,n,k))
\end{align*}
for some $\delta(x,n,k)\in[0, \frac{1}{n}]$. Therefore, we have
\begin{align}\label{prelim_temp1}
   &\lim_{n\to\infty}\left|\In_1 (t) -\int_0^t \langle \varphi_s(\cdot,s),\nun_\ell(s)\rangle ds\right| \notag\\
   &\hspace{2cm}  \leq  \lim_{n\to\infty}\left|\frac{1}{n}\sum_{k=0}^{\lfloor nt\rfloor} \langle \varphi_s(\cdot,\frac{k+1}{n}), \nun_\ell(\frac{k+1}{n}) \rangle-\int_0^t \langle \varphi_s(\cdot,s),\nun_\ell(s)\rangle ds\right|\notag\\
  &\hspace{2.5cm}+\lim_{n\to\infty}\left|\frac{1}{n}\sum_{k=0}^{\lfloor nt\rfloor} \langle \varphi_s(\cdot,\frac{k+1}{n}-\delta(\cdot,n,k))-\varphi_s(\cdot,\frac{k+1}{n}),\nun_\ell(\frac{k+1}{n}) \rangle\right|
\end{align}
The first term on the right-hand side above is equal to zero because $\nuln$ is right-continuous and $\varphi_s$ is continuous, and hence $s\mapsto\langle \varphi_s(\cdot,s),\nun_\ell(s)\rangle$ is Riemann integrable in $[0,t]$. Also, since $\varphi$ is a function in  $\mathbb{C}^{1,1}$  with support in $[0,m]\times[0,T]$ for some $m<\endsup$ and $T<\infty$, $\varphi_s$ is uniformly continuous, and hence given any $\epsilon>0$, for all sufficiently large $n$, we have
\[\left|\varphi_s(x,\frac{k+1}{n}-\delta(x,n,k))-\varphi_s(x,\frac{k+1}{n})\right|\leq \epsilon,\quad\quad \forall x\in[0,m],k\geq 1.\]
Therefore, using the bound $\langle1,\nun_\ell(t)\rangle\leq N$,
\[\left|\frac{1}{n}\sum_{k=0}^{\lfloor nt\rfloor} \langle \varphi_s(\cdot,\frac{k+1}{n}-\delta(\cdot,n,k))-\varphi_s(\cdot,\frac{k+1}{n}),\nun_\ell(\frac{k+1}{n}) \rangle\right| \leq (t+1)\epsilon N.\]
Since $\epsilon$ is arbitrary, the second term on the right-hand side of \eqref{prelim_temp1} is also equal to zero. Therefore, we have shown that
\begin{equation}\label{prelim_I1}
  \lim_{n\to\infty}\In_1 (t) = \int_0^t \langle \varphi_s(\cdot,s),\nun_\ell(s)\rangle ds.
\end{equation}

To compute the limit of $\In_2(t)$, first fix  $s,\delta\geq 0$ and use the expressions for $\nun_\ell$, $\agen_j$
$\Uactiven$ and $\Usizeln$ in \eqref{def_nuln}, \eqref{def_agen}, \eqref{def_Uactive} and \eqref{def_Usize} to write

\begin{align}\label{nulsd1}
  \nuln(s+\delta)=&\sum_{j}\indic{j\in\Uactiven(s+\delta)\cap\Usizeln(s+\delta)}\delta_{\agen_j(s+\delta)}
=  \terma + \termb + \termc
\end{align}
where
\begin{align*}
\terma    \doteq &\sum_{j}\indic{j\in\Uactiven(s+\delta)\cap\Usizeln(s+\delta)\backslash \Uactiven(s)}\delta_{\agen_j(s+\delta)}\\
\termb  \doteq    &\sum_{j}\indic{j\in\Uactiven(s+\delta)\cap\Uactiven(s)\cap\Usizeln(s+\delta)\backslash\Usizeln(s)}\delta_{\agen_j(s)+\delta}\notag\\
 \termc  \doteq     &\sum_{j}\indic{j\in\Uactiven(s+\delta)\cap\Uactiven(s)\cap\Usizeln(s+\delta)\cap\Usizeln(s)}\delta_{\agen_j(s)+\delta}. \notag\:\:.
\end{align*}
Next,  applying the identity $C \cap F\cap \tilde F =  \tilde F \backslash \left\{ [ \tilde F\backslash C] \cup [ (\tilde F \cap  C)\backslash F]\right\}$ with $C=\Uactiven(s+\delta)$, $F=\Usizeln(s+\delta)$ and  $\tilde F=\Uactiven(s) \cap  \Usizeln(s)$, we have
\begin{align}\label{nulsd2}
  \termc   &= \sum_j\indic{j \in \Uactiven(s)\cap\Usizeln(s)}\delta_{\agen_j(s)+\delta}
- \termcb \\
  &\qquad  -\sum_j\indic{j \in [\Uactiven(s)\cap\Usizeln(s)\cap\Uactiven(s+\delta)]\backslash\Usizeln(s+\delta)}\delta_{\agen_j(s)+\delta},\notag
\end{align}
where
\[ \termcb  \doteq \sum_j
\indic{j \in [\Uactiven(s)\cap\Usizeln(s)]\backslash\Uactiven(s+\delta)}\delta_{\agen_j(s)+\delta}.
\]
Note we have not used any property of $\Usizeln$ in this calculation. We now restrict to realizations $\Omega_{s,\delta}$ when there is only a single arrival or departure during $(s,s+\delta]$.

\begin{lemma}\label{prelim_deltanuABS_lemma}
For $\ell\geq 1$ and $\varphi\in\mathbb{C}_b(\supint\times\hc)$,  $s\geq 0$ and $\delta>0$, on $\Omega_{s,\delta}$,
\begin{align}\label{prelim_deltanuABS_eq}
&\langle \varphi(\cdot,s),\nun_\ell(s+\delta)\rangle  = \langle \varphi(\cdot+\delta,s),\nun_\ell(s)\rangle\\
&\hspace{2cm}-\sum_{j} \varphi\left(\agen_j(s)+\delta,s\right) \indic{\deptn_j\in(s,s+\delta]} \indic{\custDS_j\geq \ell} \notag\\
&\hspace{2cm}+\sum_{j}\varphi\left(\agen_j(s+\delta),s\right)\indic{\kninv_j\in(s,s+\delta]} \indic{\custKS_j\geq \ell} \notag\\
&\hspace{2cm}+\indic{\ell\geq 2}\sum_{j}\varphi\left(\agen_j(s+\delta),s\right)\indic{\kninv_j\leq s}\indic{\deptn_j>s}\notag\\
&\hspace{4.7cm}\indic{\custServSize_j(s)=\ell-1}\sum_{j^\prime\geq1}\indic{\arrivalTime_{j^\prime}\in(s,s+\delta]} \indic{\stationn_{j^\prime}=\stationn_{j}}.\notag
\end{align}
\end{lemma}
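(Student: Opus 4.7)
The plan is to start from the algebraic decomposition of $\nun_\ell(s+\delta)$ implicit in the display lines just above the lemma: namely $\nun_\ell(s+\delta)=\terma+\termb+\termc$ and the further split of $\termc$ given in the unnumbered analogue of \eqref{nulsd2}. None of that decomposition uses $\Omega_{s,\delta}$; it is purely set-theoretic. I would then pair each of these pieces against $\varphi(\cdot,s)$ and identify the result term-by-term with the right-hand side of \eqref{prelim_deltanuABS_eq}.

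The carry-forward piece $\sum_j \indic{j\in\Uactiven(s)\cap\Usizeln(s)}\delta_{\agen_j(s)+\delta}$ integrates against $\varphi(\cdot,s)$ to give exactly $\langle \varphi(\cdot+\delta,s),\nun_\ell(s)\rangle$, which is the first term on the right-hand side. The remaining "length-drop but still active" piece in the decomposition of $\termc$, namely jobs that stay in service from $s$ to $s+\delta$ while their queue drops from length $\geq\ell$ to length $<\ell$, is shown to vanish on $\Omega_{s,\delta}$: a drop in the length of queue $\stationn_j$ requires a departure from that queue, and under FCFS this departing job is precisely the one currently in service, hence leaves $\Uactiven$. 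Thus the job $j$ itself cannot simultaneously remain active and see its queue length drop under a single departure event.

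For $\termcb$, I would argue that $j\in[\Uactiven(s)\cap\Usizeln(s)]\setminus\Uactiven(s+\delta)$ forces $\deptn_j\in(s,s+\delta]$, and that on $\Omega_{s,\delta}$ the length of queue $\stationn_j$ is constant on $[s,\deptn_j)$, so $\custDS_j=\custServSize_j(s)\geq\ell$; this yields the $\indic{\deptn_j\in(s,s+\delta]}\indic{\custDS_j\geq\ell}$ term. For $\terma$, a job newly active at $s+\delta$ satisfies $\kninv_j\in(s,s+\delta]$, and on $\Omega_{s,\delta}$ one cannot also have $\deptn_j\in(s,s+\delta]$ (this would require two events in the interval since positive service times preclude simultaneous entry and exit), so $\deptn_j>s+\delta$ and the length of $\stationn_j$ is constant on $[\kninv_j,s+\delta]$, giving $\custServSize_j(s+\delta)=\custKS_j$ and producing the service-entry term.

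The subtle piece is $\termb$: jobs active at both $s$ and $s+\delta$ whose queue length crosses from $<\ell$ to $\geq\ell$. This can only occur via an arrival routed to $\stationn_j$; on $\Omega_{s,\delta}$ that arrival is the unique event in $(s,s+\delta]$, so $\custServSize_j(s+\delta)=\custServSize_j(s)+1$ and hence $\custServSize_j(s)=\ell-1$. Since any $j\in\Uactiven(s)$ has $\custServSize_j(s)\geq 1$, $\termb$ is empty when $\ell=1$, explaining the factor $\indic{\ell\geq 2}$; and the condition "the single event is an arrival routed to $\stationn_j$" is captured precisely by $\sum_{j'\geq 1}\indic{\arrivalTime_{j'}\in(s,s+\delta]}\indic{\stationn_{j'}=\stationn_j}$. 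Also $\agen_j(s+\delta)=\agen_j(s)+\delta$ for such $j$, so replacing $\agen_j(s)+\delta$ by $\agen_j(s+\delta)$ is harmless. I expect this $\termb$ bookkeeping, particularly the simultaneous use of the single-event property to (i) equate $\custServSize_j(s+\delta)$ with $\custServSize_j(s)+1$, (ii) exclude a service-entry event for $j$, and (iii) identify the arriving job's routing target with $\stationn_j$, to be the main obstacle; everything else is a clean case analysis.
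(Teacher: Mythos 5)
Your proposal is correct and follows essentially the same route as the paper's proof: the same decomposition $\terma+\termb+\termc$ with the further split of $\termc$, the same single-event bookkeeping on $\Omega_{s,\delta}$ for each piece, and the same FCFS/non-idling observation that a queue's length cannot drop while a job remains in service there (the paper notes this last set is empty even without restricting to $\Omega_{s,\delta}$, since the queue length is non-decreasing on the service interval, but your version of the argument is equally valid). The $\termb$ analysis you flag as the delicate step is handled in the paper exactly as you describe, including the $\indic{\ell\geq 2}$ factor and the identification of the increase with a single arrival routed to $\stationn_j$.
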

\begin{remark}
Since $\custDS_j\geq1$ and $\custKS_j\geq1$, for $\ell=1$ \eqref{prelim_deltanuABS_eq} reduces to
\begin{align}\label{prelim_deltanuABS1_eq}
\langle \varphi(\cdot,s),\nun_1(s+\delta)\rangle  =& \langle \varphi\left(\cdot+\delta,s\right),\nun_1(s)\rangle -\sum_{j} \varphi\left(\agen_j(s)+\delta,s\right) \indic{\deptn_j\in(s,s+\delta]} \notag\\
&+\sum_{j} \varphi\left(\agen_j(s+\delta),s\right)\indic{\kninv_j\in(s,s+\delta]}.
\end{align}
\end{remark}

\begin{proof}[Proof of Lemma \ref{prelim_deltanuABS_lemma}]
We first simplify the terms $\terma, \termb$ and $\termc$ in  \eqref{nulsd1}. A job $j$ receives service at time $s+\delta$, but not at $s$ if and only if it entered service during $(s,s+\delta]$. Moreover, on $\Omega_{s,\delta}$, there could have been no arrivals in $(s,s+\delta]$ and so, the length of the job's queue is constant from the service entry time to $s+\delta$.  This implies  $j\in\Usizeln(s+\delta)$ if and only if $\custKS_j\geq \ell$. Thus,
\begin{align}\label{nulsd_temp1}
\terma &=\sum_{j}\indic{\kninv_j\in(s,s+\delta]} \indic{\custKS_j\geq \ell} \delta_{\agen_j(s+\delta)}.
\end{align}

We now analyze the term $\termb$. If a job $j$ received service throughout the period $(s,s+\delta]$, that is $j\in\Uactiven(s)\cap\Uactiven(s+\delta)$, then the corresponding queue could not have been empty at time $s$, and on $\Omega_{s,\delta}$, the difference between the queue length at time $s+\delta$ and time $s$ is either zero (if there were no arrivals to that queue) or one (if there was precisely one arrival to that queue.) Therefore, when $\ell=1$,
\begin{equation}
\Uactiven(s)\cap\Uactiven(s+\delta)\cap\Usizen_1(s+\delta)\backslash\Usizen_1(s)=\emptyset,
\end{equation}
and $\termb = 0$, whereas
 for $\ell=2,$ using the representation \eqref{def_Ei} for $\arrivalni$,
\begin{align}
\termb   &\hspace{.5cm}=\sum_{j}\indic{\kninv_j\leq s}\indic{\deptn_j>s}\indic{\custServSize_j(s)= \ell-1}\arrivalnX{\stationn_j}(s,s+\delta] \delta_{\agen_j(s+\delta)}\notag\\
  &\hspace{.5cm}=\sum_{j}\indic{\kninv_j\leq s}\indic{\deptn_j>s}\indic{\custServSize_j(s)=\ell-1}\delta_{\agen_j(s+\delta)}\sum_{j^\prime\geq1} \indic{\arrivalTime_{j^\prime}\in(s,s+\delta],\stationn_{j^\prime}=\stationn_{j}}.\notag
\end{align}

For the third term $\termc$, we use \eqref{nulsd2}. First, note that by the form \eqref{def_nuln} of $\nuln$,
\begin{align}
  \langle \varphi(\cdot,s),\sum_{j}\indic{j\in\Uactiven(s)\cap \Usizeln(s)}\delta_{\agen_j(s)+\delta}\rangle = \langle \varphi(\cdot+\delta),s),\nuln(s)\rangle.
\end{align}

Next, note that a job $j$  departed  a queue during $(s,s+\delta]$ if and only if  $j\in\Uactiven(s)\backslash\Uactiven(s+\delta)$. Moreover, on $\Omega_{s,\delta}$ there were no arrivals during $(s,s+\delta]$, and hence, the queue length was constant on $(s,\deptn_j-)$. Therefore, $j\in\Usizeln(s)$ if and only if $\custDS_j\geq\ell$. Hence,
\begin{align}
 \termcb &=  \sum_{j}\indic{\deptn_j\in(s,s+\delta]} \indic{\custDS_j\geq \ell} \delta_{\agen_j(s)+\delta}.
\end{align}

Finally,  for the last term on the right-hand side of \eqref{nulsd2}, note that if a job $j$ receives service at a queue during $(s,s+\delta]$, then that queue length is non-decreasing on that interval.  Therefore,
\begin{equation}\label{nulsd_temp2}
  \left[\Uactiven(s)\cap\Uactiven(s+\delta)\cap\Usizeln(s)\right]\backslash\Usizeln(s+\delta)=\emptyset.
\end{equation}
The result follows from \eqref{nulsd1}-\eqref{nulsd2} and \eqref{nulsd_temp1}-\eqref{nulsd_temp2}.
\end{proof}

We continue with the identification of the limit of $\In_2(t)$. Since $\Probil{\Omega_t}=1$  by  assumption, there exists $n_0\in \N$ such that almost surely, the identity \eqref{prelim_deltanuABS_eq} holds with $\delta =1/n$ and $s=k/n$ simultaneously for every $n\geq n_0$ and $ k=0,1,...,\lfloor nt \rfloor$. Substituting \eqref{prelim_deltanuABS_eq}, with $\delta =1/n$ and $s=k/n,$  into \eqref{prelim_I2def} we have almost surely,
\begin{align}\label{prelim_I2}
   \In_2(t) = &\sum_{k=0}^{\lfloor nt\rfloor} \left[\left\langle \varphi(\cdot+\frac{1}{n},\frac{k}{n})-\varphi(\cdot,\frac{k}{n}) ,\nun_\ell(\frac{k}{n}) \right\rangle\right]\\
  &-\sum_{k=0}^{\lfloor nt \rfloor}\sum_{j} \varphi\left(\agen_j(\frac{k}{n})+\frac{1}{n},\frac{k}{n}\right)\indic{\deptn_j\in(\frac{k}{n},\frac{k+1}{n}]} \indic{\custDS_j\geq \ell} \notag\\
  &+\sum_{k=0}^{\lfloor nt \rfloor} \;\; \sum_{j} \varphi\left(\agen_j(\frac{k+1}{n}),\frac{k}{n}\right) \indic{\kninv_j\in(\frac{k}{n},\frac{k+1}{n}]} \indic{\custKS_j\geq \ell} \notag\\
  &+\indic{\ell\geq 2}\sum_{k=0}^{\lfloor nt \rfloor}\;\;\sum_{j}\;\;\sum_{j^\prime\geq1} \varphi\left(\agen_j(\frac{k+1}{n}),\frac{k}{n}\right)\indic{\kninv_j\leq\frac{k}{n}}\indic{\deptn_j>\frac{k}{n}}\notag\\ &\hspace{4.5cm}\indic{\custServSize_j(\frac{k}{n})= \ell-1} \indic{\arrivalTime_{j^\prime}\in(\frac{k}{n},\frac{k+1}{n}]}
     \indic{\stationn_{j^\prime}=\stationn_{j}}\notag.
\end{align}

For $\varphi\in\phiset$, using computations analogous to the derivation of the limit of  $\In_1$ in \eqref{prelim_I1def},  the limit of the first term on the right-hand side of \eqref{prelim_I2} is
\begin{equation}\label{prelim_deltanu1_eq}
  \lim_{n\to\infty}\sum_{k=0}^{\lfloor nt\rfloor} \left[\left\langle \varphi(\cdot+\frac{1}{n},\frac{k}{n})-\varphi(\cdot,\frac{k}{n}) ,\nun_\ell(\frac{k}{n}) \right\rangle\right] = \int_0^t \langle \varphi_x(\cdot,s),\nuln(s)\rangle ds.
\end{equation}

For the second term, setting $\deptn_{j,n}\doteq\frac{1}{n}\lfloor n\deptn_j\rfloor$,  noting that $\deptn_{j,n}\uparrow \deptn_j$ as $n\to\infty$,  using the continuity of  $\varphi$ and $\agen_j$,  and the identity $\agen_j(\deptn_j)=\serviceTime_j$,  we have
\begin{align}\label{prelim_deltanu2_eq}
    &\lim_{n\to\infty}\sum_{k=0}^{\lfloor nt \rfloor}\sum_{j} \varphi\left(\agen_j(\frac{k}{n})+\frac{1}{n},\frac{k}{n}\right)\indic{\deptn_j\in(\frac{k}{n},\frac{k+1}{n}]} \indic{\custDS_j\geq \ell} \notag\\
    & \hspace{1cm}=\lim_{n\to\infty}\sum_{j}\;\sum_{k=0}^{\lfloor nt \rfloor} \varphi\left(\agen_j(\frac{k}{n})+\frac{1}{n},\frac{k}{n}\right)\indic{\deptn_j\in(\frac{k}{n},\frac{k+1}{n}]} \indic{\custDS_j\geq \ell}\notag\\
    & \hspace{1cm}=\lim_{n\to\infty}\sum_{j} \varphi\left(\agen_j(\deptn_{j,n})+\frac{1}{n},\deptn_{j,n}\right) \indic{\deptn_j\leq \frac{\lceil nt\rceil}{n}} \indic{\custDS_j\geq \ell} \notag\\
    & \hspace{1cm}=\sum_{j} \varphi\left(\serviceTime_j,\deptn_{j}\right) \indic{\deptn_j\leq t}\indic{\custDS_j\geq \ell} \notag\\
    & \hspace{1cm}=\Quephinl(t).
\end{align}
where the last equality follows from \eqref{def_Qphi}.

Likewise, for the third term, setting $\kninv_{j,n}\doteq\frac{1}{n}\lfloor n\kninv_j\rfloor$, and noting that $\agen_j(\kninv_j)=0$ and $\kninv_{j,n}\uparrow \kninv_j$ as $n\to\infty$,  we obtain
\begin{align} \label{prelim_deltanu3_eq}
  &\lim_{n\to\infty}\sum_{k=0}^{\lfloor nt \rfloor} \;\; \sum_{j} \varphi\left(\agen_j(\frac{k+1}{n}),\frac{k}{n}\right) \indic{\kninv_j\in(\frac{k}{n},\frac{k+1}{n}]} \indic{\custKS_j\geq \ell} \notag\\
 &\hspace{1cm}=\lim_{n\to\infty} \sum_{j} \varphi\left(\agen_j(\kninv_{j,n}+\frac{1}{n}),\kninv_{j,n}\right) \indic{0\leq \kninv_j\leq \frac{\lceil nt\rceil}{n}} \indic{\custKS_j\geq \ell} \notag\\
 &\hspace{1cm}= \sum_{j} \varphi\left(0,\kninv_{j}\right) \indic{0\leq \kninv_j\leq t}\indic{\custKS_j\geq \ell}.
\end{align}
Further simplification of \eqref{prelim_deltanu3_eq} is slightly different for $\ell=1$ and $\ell\geq2$. When $\ell\geq 2,$ due to the non-idling assumption, the service entry time of any job to a queue of length at least $\ell$ just after service entry coincides with the departure time of another job from the same queue, which had length at least $\ell+1$ just before the departure. Therefore, for $\ell\geq 2$, by definition \eqref{def_Dl} of $\dn_\ell$ we have
\begin{align}\label{prelim_deltanu32_eq}
  \sum_{j} \varphi\left(0,\kninv_{j}\right) \indic{0\leq \kninv_j\leq t}\indic{\custKS_j\geq \ell}\notag
  &=\sum_{j} \varphi\left(0,\deptn_{j}\right) \indic{ \deptn_j\leq t}\indic{\custDS_j\geq \ell+1}\notag\\
  &=\int_{[0,t]}\varphi(0,s)d\dn_{\ell+1}(s).
\end{align}
To simplify \eqref{prelim_deltanu3_eq} for $\ell=1$, we first define the cumulative service entry process:
\begin{equation}\label{def_K}
  \kn(t)\doteq\sum_{j=\j0}^\infty\indic{0\leq\kninv_j\leq t}, \quad t \geq 0.
\end{equation}

\begin{lemma}\label{lem_K}
Given $\dn_2$ and $\arrivalni$ defined in \eqref{def_Dl} and \eqref{def_Ei}, respectively,
\begin{equation}\label{KDERelation}
  \kn(t)=\dn_2(t)+\sum_{i=1}^N\;\int_0^t\indic{\xni(u-)=0}d\arrivalni(u), \quad t \geq 0.
\end{equation}
\end{lemma}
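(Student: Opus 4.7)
The plan is a straightforward bijective enumeration of service-entry events. The key observation is that, by the FCFS non-idling discipline, every service entry in $[0,t]$ falls into exactly one of two disjoint categories: (i) the job entered service upon its own arrival, which happens precisely when it arrived to an empty queue; or (ii) the job was waiting in queue and entered service at the instant the job currently in service at the same queue completed its service. I would match category (i) with the integral term and category (ii) with $\dn_2(t)$.

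For a concrete decomposition, I would work queue-by-queue. Define $\kni(t) \doteq \sum_j \indic{0 \le \kninv_j \le t, \stationn_j = i}$ and analogously $D_2^{(N),i}(t) \doteq \sum_j \indic{\deptn_j \le t, \stationn_j = i, \custDS_j \ge 2}$, so that $\kn = \sum_i \kni$ and $\dn_2 = \sum_i D_2^{(N),i}$. Then it suffices to show for each $i$ that
\begin{equation*}
\kni(t) = D_2^{(N),i}(t) + \int_0^t \indic{\xni(u-)=0}\, d\arrivalni(u).
\end{equation*}
For a fixed queue $i$, list the jobs assigned to queue $i$ in FCFS order. A job $j$ with $\stationn_j = i$ satisfies $\kninv_j = \arrivalTime_j$ if and only if $\xni(\arrivalTime_j -)=0$, which contributes one unit to the integral on the right. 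Otherwise, $\kninv_j = \deptn_{j'}$ where $j'$ is the unique job immediately preceding $j$ in the FCFS order at queue $i$; at time $\deptn_{j'}-$, the queue contained $j'$ in service together with $j$ (and possibly other waiting jobs), so $\custDS_{j'} \ge 2$, contributing one unit to $D_2^{(N),i}(t)$. Conversely, every arrival to an empty queue in $[0,t]$ triggers an immediate service entry of the arriving job, and every job $j'$ with $\stationn_{j'} = i$, $\deptn_{j'} \le t$, $\custDS_{j'} \ge 2$ triggers the service entry of its FCFS successor at queue $i$ at time $\deptn_{j'} \le t$.

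The main (and only) subtle point is to verify that this pairing is exact, i.e., that the two types of events are disjoint and jointly exhaustive. Disjointness follows because the two types occur at arrival times and at departure times, respectively, which are almost surely distinct by the hypothesis $\Prob{\Omega_t}=1$ (and more directly from the fact that at an arrival time $\arrivalTime_j$ to an empty queue, no departure from that queue can occur simultaneously). Exhaustiveness follows from the FCFS, non-idling, infinite-capacity discipline: each job assigned to queue $i$ enters service exactly once, either at its own arrival (case (i)) or at the service completion of its FCFS predecessor (case (ii)). Summing the per-queue identity over $i = 1, \ldots, N$ yields \eqref{KDERelation}. No technical estimates are required; the entire argument is a combinatorial accounting of arrival and departure events.
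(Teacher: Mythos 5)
Your proposal is correct and follows essentially the same argument as the paper: the paper likewise classifies service entries in $[0,t]$ according to whether the queue was empty just before entry, matching the empty-queue entries to arrivals to empty queues (the integral term) and the non-empty-queue entries, via the non-idling/FCFS discipline, to departures from queues of pre-departure length at least $2$ (the $\dn_2$ term). Your queue-by-queue bookkeeping and the explicit disjointness/exhaustiveness check are only a mild repackaging of that same counting argument, not a different route.
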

\begin{proof}
Service entries can be classified into two types, based on whether or not the queue was empty right before service entry. Thus, we can expand \eqref{def_K} as
\begin{align*}
  \kn(t) =&\sum_{j}  \indic{0\leq \kninv_j\leq t}\indic{\custServSize_j(\kninv_j-)\geq 1}
+\sum_{j} \indic{0\leq \kninv_j\leq t}\indic{\custServSize_j(\kninv_j-)= 0}.
\end{align*}
Due to the non-idling assumption, the service entry time of the first type coincides with the departure time of another job from the same queue, which had length of at least $2$ just before departure.  On the other hand, a service entry time of the second type coincides with the arrival time of the same job to an empty  queue. Recalling that  $\stationn_j$ is the queue index of job $j$, we can then write
\begin{align}
  \kn(t)=&\sum_{j}  \indic{\deptn_j\leq t}\indic{\custDS_j\geq 2}
         +\sum_{j\geq1} \indic{\arrivalTime_j\leq t}\indic{\custES_j=1}\notag\\
        =&\sum_{j}  \indic{\deptn_j\leq t}\indic{\custDS_j\geq 2}+\sum_{i=1}^N\;\sum_{j=1}^{\infty}  \indic{\xni(\arrivalTime_j-)=0}\indic{\arrivalTime_j\leq t}\indic{\stationn_j=i}\notag.
\end{align}
Equation \eqref{KDERelation} then follows from \eqref{def_Dl} and \eqref{def_Ei}.
\end{proof}
By \eqref{def_K}-\eqref{KDERelation},  the fact that $\indicil{\custKS_j\geq 1}=1$ (see Remark \ref{remark_custX}), and
\eqref{def_Rphi1},
we have
\begin{align}\label{prelim_deltanu31_eq}
&\sum_{j} \varphi\left(0,\kninv_{j}\right) \indic{0\leq \kninv_j\leq t}\indic{\custKS_j\geq 1} \notag\\
&\hspace{2cm}=\int_{[0,t]}\varphi(0,u)d\dn_2(u)+  \sum_{i=1}^N\;\int_0^t\varphi(0,u)\indic{\xni(u-)=0}d\arrivalni(u)\notag\\
&\hspace{2cm}=\int_{[0,t]}\varphi(0,u)d\dn_2(u)+  \Rn_{\varphi,1} (t).
\end{align}

Finally, the last term on the right-hand side of \eqref{prelim_I2} is zero for $\ell=1$. For $\ell\geq 2$, changing the order of summation, setting $\arrivalTime_{j^\prime,n}=\frac{1}{n}\lfloor n\arrivalTime_{j^\prime}\rfloor$,  noting that on $\Omega_t$, the arrival of $j^\prime$ is the only event taking place in the interval $(k/n, (k+1)/n]$,   the limit of the last term on the right-hand side of \eqref{prelim_I2} is equal to
\begin{align}
  &\hspace{1cm}\lim_{n\to\infty} \sum_{j^\prime\geq1}\;\sum_{j} \varphi\left(\agen_j(\arrivalTime_{j^\prime,n}+\frac{1}{n}),\arrivalTime_{j^\prime,n}\right) \indic{\kninv_j\leq\arrivalTime_{j^\prime,n}}\indic{\deptn_j>\arrivalTime_{j^\prime,n}}\notag\\
  &\hspace{2.7cm} \indic{\custServSize_j(\arrivalTime_{j^\prime,n})= \ell-1} \indic{\arrivalTime_{j^\prime}\leq \frac{\lceil nt\rceil}{n}} \indic{\stationn_{j^\prime}=\stationn_{j}}\notag.
\end{align}
Since  $\arrivalTime_{j^\prime,n}\uparrow \arrivalTime_{j^\prime}$ as $n\to\infty$, the fact that on $\Omega_t$, $\kninv_j,\deptn_j\neq\arrivalTime_{j^\prime}$ for all $j\neq j^\prime$, and by the continuity of  $\varphi$ and $\agen_j$, the last display is equal to
\begin{align*}
&\sum_{j^\prime\geq1}\;\sum_{j}
 \varphi\left(\agen_j(\arrivalTime_{j^\prime}),\arrivalTime_{j^\prime}\right)  \indic{\kninv_j<\arrivalTime_{j^\prime} \leq t \wedge \deptn_j}
 \indic{\custServSize_j(\arrivalTime_{j^\prime}-)= \ell-1}\indic{\stationn_{j^\prime}=\stationn_{j}}.
\end{align*}
Partitioning jobs in terms of their queues, and using  \eqref{nuni}, the last display equals
\begin{align}
  &\hspace{1cm}\sum_{i=1}^N\;\;\sum_{j^\prime\geq1}  \indic{\stationn_{j^\prime}=i}\indic{\arrivalTime_{j^\prime}\leq t} \sum_{j} \varphi\left(\agen_j(\arrivalTime_{j^\prime}),\arrivalTime_{j^\prime}\right) \indic{\kninv_j<\arrivalTime_{j^\prime}}\notag\\
  &\hspace{2.7cm} \indic{\deptn_j\geq\arrivalTime_{j^\prime}}\indic{\custServSize_j(\arrivalTime_{j^\prime}-)= \ell-1}  \indic{\stationn_{j}=i}\notag  \\
  &\hspace{1cm}=\sum_{i=1}^N\sum_{j^\prime\geq1} \indic{\stationn_{j^\prime}=i} \indic{\arrivalTime_{j^\prime}\leq t}\langle \varphi(\cdot,\arrivalTime_{j^\prime}),\nuni_{\ell-1}(\arrivalTime_{j^\prime}-)- \nuni_{\ell}(\arrivalTime_{j^\prime}-)  \rangle\notag\\
  &\hspace{1cm}=\sum_{i=1}^{N}\int_{(0,t]} \langle \varphi(\cdot,s),\nuni_{\ell-1}(s-) - \nuni_{\ell}(s-) \rangle d\arrivalni(s)\notag\\
  &\hspace{1cm}=\Rn_{\varphi,\ell}(t), \label{prelim_deltanu4_eq}
\end{align}
where $\arrivalni$ and $\Rn_{\varphi, \ell} (t)$ are defined in \eqref{def_Ei} and \eqref{def_Rphi}, respectively.

We now combine the above observations to conclude the proof.

\begin{proof}[Proof of Proposition \ref{prelim_dynamics_prop}]
Equation \eqref{prelim_dymamics_eq}  follows from \eqref{prelim_discretization_eq}, \eqref{prelim_I1}, \eqref{prelim_I2}-\eqref{prelim_deltanu32_eq},  \eqref{prelim_deltanu31_eq} and \eqref{prelim_deltanu4_eq}. To establish \eqref{prelim_dymamics_balance}, note that for $\varphi =\f1$, $\In_1(t)$ and the first term on the right-hand side of \eqref{prelim_I2} are zero for all $t\geq0.$ Since Lemma \ref{prelim_deltanuABS_lemma} and the calculation of other terms on the right-hand side of \eqref{prelim_I2} are valid for all $\varphi\in\mathbb{C}_b(\supint\times\hc),$ \eqref{prelim_dymamics_balance} follows on setting $\varphi=\f1$ in \eqref{prelim_dymamics_eq}.
\end{proof}

\begin{remark}\label{rem_dynamic_extension}
Equation \eqref{prelim_dymamics_eq} clearly remains valid for functions $\varphi$ on $\supint\times\R_+$ of the form $\varphi(x,s)=f(x)$ for some $f\in\mathbb{C}_c^1\supint$.
\end{remark}


\section{Martingale Decomposition for Routing and Departure Processes}\label{sec_mgale}
Fix $N \in \mathbb{N}$.
In Section \ref{sec_prelimit_summary} we state a martingale decomposition result for the $\varphi$-weighted routing process $\Rn_{\varphi,\ell}$ and departure process $\Quen_{\varphi,\ell}$ defined in \eqref{def_Rphi} and \eqref{def_Qphi}, respectively. The proofs are given in Section \ref{sec_Rcomp} and Appendix \ref{sec_Qcomp}, and rely on an alternative characterization of the  filtration $\{\filt_t^{(N)}\}$ in terms of a marked point process introduced in Section \ref{sec_mpp}. Unlike Proposition \ref{prelim_dynamics_prop}, these results are specific to our assumptions on the arrival process and load balancing algorithm, although the general method can be adapted to analyze  other models.

\subsection{The Form of Compensators}\label{sec_prelimit_summary}

For $\varphi\in\tightphiset$, define
\begin{equation}\label{def_Rcomp1}
 B_{\varphi,1}^{(N)}(t) \doteq \int_0^t\hen\left(\ren(u)\right)\varphi(0,u)  (1- (\nunonebar_1(u))^d)du,
\end{equation}
where  $\hen$ is the hazard rate  of the inter-arrival distribution, and
 for $\ell \geq 2$, set
\begin{equation}\label{def_Rcomp}
  B_{\varphi,\ell}^{(N)}(t) \doteq \int_0^t \hen\left(\ren(u)\right) \poly{\nunonebar_{\ell-1}(u)}{\nunonebar_{\ell}(u)} \left\langle \varphi(\cdot, u)\;, \nunbar_{\ell-1}(u)-\nunbar_\ell(u)\right\rangle du.
\end{equation}

\begin{proposition}\label{prop_prelim_Rcomp}
  Suppose Assumptions \ref{asm_E}, \ref{asm_G}.\ref{asm_mean}, and \ref{asm_initial}.\ref{asm_initial_ind} hold. Then, for $\ell\geq 1$ and $\varphi\in\tightphiset$, the process
  \begin{equation}\label{def_Rmgale}
    \Nn_{\varphi,\ell} \doteq \Rphinl-\Bphinl,
  \end{equation}
  is a local $\{\filtn_t\}$-martingale, with   quadratic variation
\begin{equation}\label{QuadvarN}
  [\Nn_{\varphi,\ell}](t) = \Rn_{\varphi^2,\ell}(t),\quad\quad t\geq0.
\end{equation}
\end{proposition}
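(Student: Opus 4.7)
The plan is to identify $\Rphinl$ as a predictable stochastic integral against a marked point process whose $\{\filtn_t\}$-compensator can be computed explicitly, and then to apply the standard Br\'emaud intensity theorem to recognize $\Rphinl-\Bphinl$ as a local martingale. The core ingredient is that the arrival process $\arrivaln$, a delayed renewal process with inter-arrival density $\gen$, admits $\hen(\ren(u))$ as its $\{\mathcal{F}^{\arrivaln}_t\}$-intensity by a classical result from renewal theory \cite{BremaudBook}. The enlargement from $\{\mathcal{F}^{\arrivaln}_t\}$ to $\{\filtn_t\}$ must preserve this intensity: by Assumption \ref{asm_initial}.\ref{asm_initial_ind} and the marked point process construction of Section \ref{sec_mpp}, the routing choices $\{\rqc_j\}$, the initial data $I_0$, and the service times of jobs not yet in service are conditionally independent of the future of $\arrivaln$ given $\{\mathcal{F}^{\arrivaln}_t\}$, so the intensity is inherited under $\{\filtn_t\}$.

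Next I would compute the $\filtn_{u-}$-conditional expectation of the integrand in \eqref{def_Rphi} at an arrival epoch $u$. The destination queue $\stationn$ is obtained by sampling $d$ queues uniformly with replacement from $\{1,\dots,N\}$ and selecting one of minimal length. Conditional on $\filtn_{u-}$ (which encodes $\nun(u-)$), the probability that the minimum length among the $d$ samples is at least $\ell$ is $(\nunonebar_\ell(u-))^d$, so the probability that it equals $\ell-1$ is $(\nunonebar_{\ell-1}(u-))^d-(\nunonebar_\ell(u-))^d$, which by \eqref{def_poly} factors as $\poly{\nunonebar_{\ell-1}(u-)}{\nunonebar_\ell(u-)}(\nunonebar_{\ell-1}(u-)-\nunonebar_\ell(u-))$. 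Once a length-$(\ell-1)$ queue is selected, its index is uniformly distributed over the set of such queues, so averaging $\langle\varphi(\cdot,u),\nuni_{\ell-1}(u-)-\nuni_\ell(u-)\rangle$ uniformly over $i$ produces $\poly{\nunonebar_{\ell-1}(u-)}{\nunonebar_\ell(u-)}\langle\varphi(\cdot,u),\nunbar_{\ell-1}(u-)-\nunbar_\ell(u-)\rangle$; the $\ell=1$ case is the same computation, with the empty-queue factor $1-(\nunonebar_1(u-))^d$ replacing the polynomial. Multiplying by the intensity $\hen(\ren(u))$ and applying Br\'emaud's integration-against-a-point-process formula yields exactly \eqref{def_Rcomp1}--\eqref{def_Rcomp} as the $\{\filtn_t\}$-compensator of $\Rphinl$; boundedness of $\varphi$ and localization by the stopping times $T_n\doteq\inf\{t:\arrivaln(t)\geq n\}$ give the local martingale conclusion.

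For the quadratic variation, observe that $\Bphinl$ is absolutely continuous, hence continuous and of locally finite variation, so $[\Nn_{\varphi,\ell}]=[\Rphinl]$. The process $\Rphinl$ is pure jump and its jump times lie among those of $\arrivaln$; at each arrival time $u$ the jump equals the integrand in \eqref{def_Rphi}. Because $\nuni_{\ell-1}(u-)-\nuni_\ell(u-)$ carries at most one unit mass (the age of the job in service at queue $i$, if any, and only when the length is exactly $\ell-1$), we have the pointwise identity $\langle\varphi(\cdot,u),\nuni_{\ell-1}(u-)-\nuni_\ell(u-)\rangle^2=\langle\varphi^2(\cdot,u),\nuni_{\ell-1}(u-)-\nuni_\ell(u-)\rangle$, with an analogous identity for the $\ell=1$ contribution. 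Summing the squared jumps of $\Rphinl$ then recovers $\Rn_{\varphi^2,\ell}(t)$, yielding \eqref{QuadvarN}.

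The main obstacle is the first step: justifying that the $\{\filtn_t\}$-intensity of $\arrivaln$ is still $\hen(\ren(u))$ despite $\{\filtn_t\}$ encoding the per-queue processes $\arrivalni$ and $\dni$, whose dynamics depend intricately on the arrival epochs. This is precisely the role of the marked point process representation of Section \ref{sec_mpp}: by realizing the arrival times, routing marks, and service times on a common space in a way that makes the arrival epochs conditionally renewal given the remaining randomness, one can apply Br\'emaud's theorem on the enlarged filtration without enriching the compensator. Once this is in hand, the compensator computation and the quadratic variation identity follow from the routine steps outlined above.
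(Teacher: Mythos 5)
Your proposal is correct in substance and uses the same underlying machinery as the paper (the marked point process representation, the conditional independence results, the $\mathfrak{P}_d$ routing computation, Br\'emaud's intensity theory, and the squared-jump identity for unit-mass measures), but it packages the routing randomness differently, and that repackaging hides the one step you should be explicit about. The paper does \emph{not} work with the aggregate arrival process: it first decomposes $\Rn_{\varphi,\ell}=\sum_{i=1}^N\Rni_{\varphi,\ell}$, where $\Rni_{\varphi,\ell}$ is an integral of a bounded, left-continuous, adapted (hence predictable) real integrand against the per-queue arrival process $\arrivalni$, and it computes the $\{\filtn_t\}$-intensity of each $\arrivalni$ in Lemma \ref{prelim_Ecomp_lemma} (formula \eqref{Ecomp}), so that the scalar Lemma \ref{remark_integral} applies directly; summing over $i$ then yields \eqref{def_Rcomp1}--\eqref{def_Rcomp}, and the quadratic variation follows exactly as you argue, using that the $\Rni_{\varphi,\ell}$ have no common jumps. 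In your version, if you integrate against the aggregate process $\arrivaln$ with intensity $\hen(\ren(\cdot))$, the effective integrand of \eqref{def_Rphi} involves the routed queue index, which is revealed only at the arrival instant, so it is \emph{not} $\{\filtn_t\}$-predictable and ``the standard Br\'emaud intensity theorem'' does not apply as stated; you must invoke the marked-point-process compensator (ground intensity times conditional mark kernel), which is what your mark-averaging step implicitly does. That is legitimate, but it then requires proving two things of comparable weight: (i) that the arrival intensity is unchanged in the enlarged filtration $\{\filtn_t\}$ (you flag this; in the paper it is the inductive Lemma \ref{lem_condInd}), and (ii) that, conditionally on $\filtn_{u-}$ and on the next event being an arrival, the routing mark has exactly the na\"ive $d$-sampling law evaluated at $\nun(u-)$ and is conditionally independent of the pending departure times --- this is not automatic and is the content of Lemma \ref{lem_rqcInd} together with \eqref{prelim_gtemp2} in the paper; your proposal treats it as immediate from the independence of the sampling vectors. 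With (i) and (ii) supplied, your compensator identification and your quadratic variation argument (continuity and finite variation of $\Bn_{\varphi,\ell}$, pure-jump structure, and $\langle\varphi,\mu\rangle^2=\langle\varphi^2,\mu\rangle$ for measures with at most one unit atom) give \eqref{def_Rmgale} and \eqref{QuadvarN} exactly as in the paper; the paper's per-queue decomposition simply trades your mark-kernel step for the intensity computation of Lemma \ref{prelim_Ecomp_lemma}, keeping all integrands predictable.
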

\noindent  Proposition \ref{prop_prelim_Rcomp} is a key result and its proof is given in Section \ref{sec_Rcomp}. Next, for $\varphi\in\tightphiset$, define
\begin{equation}\label{Def_Qcomp}
  A_{\varphi,\ell}^{(N)}(t) \doteq \int_0^t \langle \varphi(\cdot,s)h(\cdot) ,\nuln(s)\rangle ds,\quad\quad \forall t\geq 0.
\end{equation}

\begin{proposition}\label{prop_prelim_Qcomp}
Suppose Assumptions \ref{asm_E}, \ref{asm_G}.\ref{asm_mean}, and  \ref{asm_initial}.\ref{asm_initial_ind} hold. Then, for  $\ell \geq 1$ and $\varphi\in\tightphiset$,  the process
\begin{equation}\label{def_Mn}
  \Mn_{\varphi,\ell}\doteq\Quen_{\varphi,\ell} - \An_{\varphi,\ell},
\end{equation}
is a local $\{\filtn_t\}$-martingale,  with quadratic variation
\begin{equation}\label{QuadvarM}
  [\Mn_{\varphi,\ell}](t)=\Quen_{\varphi^2,\ell}(t),\quad\quad t\geq0.
\end{equation}
\end{proposition}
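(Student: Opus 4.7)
The plan is to mirror the strategy used for the routing process $\Rphinl$ in Proposition~\ref{prop_prelim_Rcomp}, but now keyed to the service hazard $h$ rather than the interarrival hazard $h_E^{(N)}$. The starting point is the observation that $\Quen_{\varphi,\ell}$ is a pure-jump counting-type process: it increments by $\varphi(v_j,\deptn_j)\indic{\custDS_j\ge\ell}$ at each departure time $\deptn_j$. So I would first realize $\Quen_{\varphi,\ell}$ as an integral against a marked point process of departures whose marks record the service time $v_j$ and the queue-length flag $\indic{\custDS_j\ge\ell}$. Using the marked-point-process representation of the filtration $\{\filtn_t\}$ from Section~\ref{sec_mpp}, I can then invoke the standard Bremaud integration theorem \cite{BremaudBook} to reduce the claim to identifying the $\{\filtn_t\}$-stochastic intensity of this marked process.

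The core computation is to show that the predictable intensity at time $s$ of a departure from queue $i$ with age mark in $da$ is $h(a)\,\nuni_\ell(s-)(da)$, restricted to queues of current length $\ge\ell$. Fix queue $i$ and the job $j=j^*(i,s)$ currently in service there with age $\agen_j(s)=a$. Conditionally on $\filtn_{s-}$, by Assumption~\ref{asm_initial}.\ref{asm_initial_ind} together with the conditional law \eqref{vInitial} of initial service times, and the independence of $\serviceTime_j$ from $\filtn_{\kninv_j}$ for jobs arriving after time $0$, the remaining service time $\serviceTime_j-a$ has the conditional survival function $\overline G(a+r)/\overline G(a)$. Hence the conditional hazard of departure in $(s,s+ds]$ is $h(a)\,ds$, with the post-departure mark equal to the current age $a$ (which becomes $\serviceTime_j$). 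Summing over all queues and using the decomposition $\nuln=\sum_{i=1}^N\nuni_\ell$ yields $\langle\varphi(\cdot,s)h(\cdot),\nuln(s)\rangle\,ds$ as the $\varphi$-weighted intensity, which is precisely the integrand defining $\An_{\varphi,\ell}$. Consequently $\Mn_{\varphi,\ell}=\Quen_{\varphi,\ell}-\An_{\varphi,\ell}$ is a local $\{\filtn_t\}$-martingale. A localization by $T_n\doteq\inf\{t:\xn(0)+\arrivaln(t)\ge n\}$, combined with \eqref{apx_Qbound}, handles the integrability required to pass from intensity to compensator in the standard Bremaud sense.

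For the quadratic variation, since $\An_{\varphi,\ell}$ is absolutely continuous (hence of finite variation and continuous), $[\Mn_{\varphi,\ell}]=[\Quen_{\varphi,\ell}]$, and because $\Quen_{\varphi,\ell}$ is a pure jump process with jumps of size $\varphi(\serviceTime_j,\deptn_j)\indic{\custDS_j\ge\ell}$ at the simple jump times $\deptn_j$ (which are almost surely distinct on $\Omega_t$), one has
\[
[\Mn_{\varphi,\ell}](t)=\sum_{j\ge\je}\varphi^2(\serviceTime_j,\deptn_j)\indic{\deptn_j\le t}\indic{\custDS_j\ge\ell}=\Quen_{\varphi^2,\ell}(t),
\]
which is \eqref{QuadvarM}.

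The main obstacle is step two: rigorously computing the predictable $\{\filtn_t\}$-intensity of the departure marked point process. This is delicate because $\filtn_t$ is generated by both arrival and departure histories across all $N$ queues and by $I_0$, so one must verify that, given $\filtn_{s-}$, the residual service time of the job currently in service at queue $i$ still has the memoryless-in-age law $\overline G(a+\cdot)/\overline G(a)$. This requires a careful conditional-independence argument that combines \eqref{vInitial}, the independence of future service times from the routing randomness and arrival process, and the marked-point-process results from Section~\ref{sec_mpp}; in spirit this parallels the argument for $\Rphinl$, with the roles of arrivals/routing choices and departures/service times interchanged.
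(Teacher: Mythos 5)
Your proposal follows essentially the same route as the paper: the paper also works through the marked point process of Section \ref{sec_mpp}, uses the conditional-independence results (Lemma \ref{lem_condInd}) and Bremaud's theorem to show that the departure process from queue $i$ has $\{\filtn_t\}$-intensity $\indic{\xni(t-)\geq1}h(\ageni(t-))$ (Lemma \ref{prelim_Dcomp_lem}), writes $\Quen_{\varphi,\ell}=\sum_i\int_0^\cdot\varphi(\ageni(s-),s)\indic{\xni(s-)\geq\ell}\,d\dni(s)$ and applies Lemma \ref{remark_integral} to each summand before summing via $\nuln=\sum_i\nuni_\ell$, exactly matching your intensity identification and your localization remark. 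Your quadratic-variation argument (continuity and finite variation of $\An_{\varphi,\ell}$ plus the pure-jump structure with a.s.\ distinct departure times) is also the paper's argument, so the proposal is correct and not materially different.
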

Since the proof of Proposition \ref{prop_prelim_Qcomp} is similar to (in fact much simpler than) that of Proposition \ref{prop_prelim_Rcomp}, it is relegated to Appendix \ref{sec_Qcomp}. A similar result  for a different model and  filtration, can also be found in \cite[(5.24), (5.25) and Lemma 5.4]{KasRam11}.

\begin{remark}
Substituting \eqref{def_Rmgale}, \eqref{def_Mn}, and  \eqref{Def_Qcomp} into   \eqref{prelim_dymamics_eq}, we have
\begin{align}\label{prelimDynamicAlt}
  \langle\varphi(\cdot,t),\nun_\ell(t)\rangle =& \langle\varphi(\cdot,0),\nun_\ell(0)\rangle +\int_0^t \langle \varphi_s(\cdot,s)+\varphi_x(\cdot,s)-\varphi(\cdot,s)h(\cdot),\nun_\ell(s)\rangle ds \notag\\
  &+ \int_{[0,t]}\varphi(0,s)d\dn_{\ell+1}(s)+ \Bn_{\varphi,\ell}(t) -\Mn_{\varphi,\ell}(t)+\Nn_{\varphi,\ell}(t).
\end{align}
\end{remark}

We now state an elementary lemma used in  the proof of Proposition \ref{prop_prelim_Rcomp}. Suppose $(\Omega,\mathcal{G},\{\mathcal{G}_t\},\mathbb{P})$ is a filtered probability space that satisfies the usual conditions, and let $\xi = \{\xi(t);t\geq0\}$ be a point process adapted to $\{\cal G_t\}$.  Recall that a non-negative $\{\cal G_t\}$-progressive process $\{\inten (t);t\geq0\}$ is called a $\{\cal G_t\}$-intensity of $\xi$ if for all $t\geq0$, $\int_0^t \inten(s)ds<\infty$ almost surely, and for every non-negative $\{\cal G_t\}$-predictable processes $H$, $ \Eptil{\int_0^\infty H(t)d\xi(t)}=\Eptil{\int_0^\infty H(s) \inten(s)ds}.$ The next result follows from  \cite[Lemma II.L3]{BremaudBook} and see \cite[(18.1), Chapter IV]{RogWilbook2}, but a proof is provided here for completeness.

\begin{lemma}\label{remark_integral}
Let $\inten$  be a $\{\mathcal{G}_t\}$-intensity of a point process $\xi$ on $(\Omega,\mathcal{G},\{\mathcal{G}_t\},\mathbb{P})$, and given a locally bounded, $\{\cal G_t\}$-predictable process $\theta$, define $\zeta(t)\doteq\int_0^t \theta(s)d\xi(s).$ Then  $\zeta(t)-\int_0^t \theta(s) \inten(s)ds, t\geq 0,$ is a local $\{\cal G_t\}$-martingale, with  quadratic variation
\begin{equation}\label{remark_Quad}
  [\zeta](t)=\int_0^t \theta^2(s)d\xi(s), \quad\quad t\geq0.
\end{equation}
\end{lemma}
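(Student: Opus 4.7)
The plan is to exploit the defining property of the intensity, namely $\mathbb{E}[\int_0^\infty H(s)\,d\xi(s)] = \mathbb{E}[\int_0^\infty H(s)\varrho(s)\,ds]$ for non-negative predictable $H$, and then bootstrap from simple predictable processes to the general case. First I would verify the martingale property when $\theta$ is bounded. Writing $\theta = \theta^+ - \theta^-$ and using linearity, it suffices to treat non-negative bounded predictable $\theta$. Fix $0 \leq a < b$ and $A \in \mathcal{G}_a$, and take $H(s,\omega) = \mathbf{1}_A(\omega)\mathbf{1}_{(a,b]}(s)\theta(s,\omega)$, which is non-negative and $\{\mathcal{G}_t\}$-predictable. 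Plugging into the defining equation of the intensity gives
\[
\mathbb{E}\Bigl[\mathbf{1}_A\int_a^b \theta(s)\,d\xi(s)\Bigr] = \mathbb{E}\Bigl[\mathbf{1}_A\int_a^b \theta(s)\varrho(s)\,ds\Bigr],
\]
which is exactly the statement that $\zeta(t) - \int_0^t \theta(s)\varrho(s)\,ds$ is an $\{\mathcal{G}_t\}$-martingale (the integrability required to convert this into an honest conditional-expectation identity is provided by the local integrability of $\varrho$ together with the boundedness of $\theta$).

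Next I would remove the boundedness assumption via localization. Since $\theta$ is locally bounded and $\{\mathcal{G}_t\}$-predictable, the stopping times $T_n \doteq \inf\{t\ge 0 : |\theta(t)| > n\}$ are $\{\mathcal{G}_t\}$-stopping times with $T_n \uparrow \infty$ almost surely, and the stopped process $\theta^{T_n}(s) \doteq \theta(s)\mathbf{1}_{\{s\le T_n\}}$ is bounded and predictable. The previous step then shows that $\zeta^{T_n}(t) - \int_0^t \theta^{T_n}(s)\varrho(s)\,ds$ is a true martingale for each $n$; since these are precisely the stopped processes of $\zeta(t) - \int_0^t \theta(s)\varrho(s)\,ds$ along $\{T_n\}$, the latter is a local $\{\mathcal{G}_t\}$-martingale.

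For the quadratic variation, I would use the fact that $\xi$ is a counting process and hence a finite-variation pure-jump process with jumps of size exactly $1$ at isolated times $\tau_k$. Consequently $\zeta$ is also a pure-jump finite-variation process, with jumps $\Delta\zeta(\tau_k) = \theta(\tau_k)$. For any such process the quadratic variation is simply the sum of squared jumps, so
\[
[\zeta](t) = \sum_{\tau_k \le t} \theta^2(\tau_k) = \int_0^t \theta^2(s)\,d\xi(s),
\]
since $(\Delta\xi(s))^2 = \Delta\xi(s)$ for a counting process. The identity \eqref{remark_Quad} follows.

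The only step where some care is required is the upgrade from non-negative to signed bounded predictable $\theta$ and the verification that the stopping times $T_n$ are indeed predictable-compatible — i.e.\ that $\theta^{T_n}$ remains $\{\mathcal{G}_t\}$-predictable. This is a standard consequence of $\theta$ being predictable and $T_n$ being an $\{\mathcal{G}_t\}$-stopping time, and does not require any structural property of the filtration beyond the usual conditions already assumed on $(\Omega,\mathcal{G},\{\mathcal{G}_t\},\mathbb{P})$.
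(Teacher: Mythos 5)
There is a genuine gap in your martingale step. For bounded predictable $\theta$ you claim that the identity
$\mathbb{E}\bigl[\mathbf{1}_A\int_a^b \theta(s)\,d\xi(s)\bigr] = \mathbb{E}\bigl[\mathbf{1}_A\int_a^b \theta(s)\varrho(s)\,ds\bigr]$
upgrades to a true martingale statement, with the required integrability ``provided by the local integrability of $\varrho$ together with the boundedness of $\theta$.'' But the definition of an intensity only gives $\int_0^t \varrho(s)\,ds<\infty$ \emph{almost surely}, not in expectation; taking $H=\mathbf{1}_{[0,t]}$ in the defining identity shows $\mathbb{E}[\xi(t)]=\mathbb{E}[\int_0^t\varrho(s)\,ds]$, and both may be infinite. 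Hence even for bounded $\theta$ the random variables $\zeta(t)$ and $\int_0^t\theta(s)\varrho(s)\,ds$ need not be integrable, and the compensated process need not be a martingale — which is precisely why the lemma (and the paper, via Br\'emaud's Lemma II.L3) only asserts a \emph{local} martingale. Your subsequent localization does not repair this, because $T_n=\inf\{t:|\theta(t)|>n\}$ only controls $\theta$; after stopping you are back in the same situation of a bounded integrand against a possibly non-integrable point process. The correct localization must also tame $\xi$ (equivalently $\int_0^\cdot\varrho\,ds$), e.g.\ take $S_n$ to be the minimum of $T_n$, the time of the $n$-th jump of $\xi$, and the first time $\int_0^t(1+|\theta(s)|)\varrho(s)\,ds$ reaches $n$; then $\mathbb{E}[\int_0^{t\wedge S_n}|\theta|\,d\xi]=\mathbb{E}[\int_0^{t\wedge S_n}|\theta|\varrho\,ds]\le n$ and your conditional-expectation argument (with the extra predictable factor $\mathbf{1}_{\{s\le S_n\}}$ inserted into $H$) goes through and yields a genuine martingale for each $n$. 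This joint localization is exactly the content of the Br\'emaud lemma that the paper cites after checking $\int_0^t|\theta(s)|\varrho(s)\,ds<\infty$ a.s.

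Two minor points. First, with $T_n$ defined as a debut time of $\{|\theta|>n\}$, the stopped integrand $\theta\mathbf{1}_{\{s\le T_n\}}$ need not be bounded by $n$ at $s=T_n$ (a predictable process can exceed the level at the debut); the clean statement is that local boundedness \emph{means} there exist stopping times $T_n\uparrow\infty$ with $|\theta|\mathbf{1}_{(0,T_n]}$ bounded, so just invoke that rather than construct $T_n$ by hand. Second, your quadratic-variation argument is correct and is the same as the paper's: $\zeta$ is a finite-variation pure-jump process with jumps $\theta(\tau_k)\Delta\xi(\tau_k)$, so $[\zeta](t)=\sum_{s\le t}\theta^2(s)(\xi(s)-\xi(s-))=\int_0^t\theta^2(s)\,d\xi(s)$, using that the jumps of the counting process have size one.
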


\begin{proof}
First note that by local boundedness of $\theta$, for all $t\geq0$,
\[\int_0^t|\theta(s)|\inten (s)ds\leq \sup_{s\in[0,t]}|\theta(s)|\int_0^t\inten(s)ds<\infty,\quad\quad a.s,\]
where the last inequality is by definition of stochastic intensity. The first claim then follows from \cite[Lemma II.L3]{BremaudBook}. For the second claim, since $\xi$ is a pure jump process with unit jumps,  $\zeta$ is also a pure jump process with jumps
\begin{equation}\label{temp_intensity}
  \zeta(s)-\zeta(s-)=\theta(s)(\xi(s)-\xi(s-)).
\end{equation}
Therefore, the quadratic variation of $\zeta$ is given by (see \cite[(18.1), Chapter IV]{RogWilbook2})
\begin{equation}\label{temp_quadlemma}
   [\zeta](t)= \sum_{0<s\leq t}\left(\zeta(s)-\zeta(s-)\right)^2 = \sum_{0<s\leq t}\theta^2(s)(\xi(s)-\xi(s-))= \int_0^t \theta^2(s)d\xi(s).
 \end{equation}
\end{proof}

\subsection{A Marked Point Process Representation}\label{sec_mpp}

In this section, we construct a point process $\MPn$  consisting of all arrival and departure times, marked  by their type and their corresponding queue index.   This point process has the property that its natural filtration, together with the $\sigma$-algebra generated by initial conditions, is equivalent to the filtration $\{\filtn_t; t\geq0\}$ defined in \eqref{def_filt}.  Moreover, each auxiliary process defined in Section \ref{sec_AuxProcess} can be represented as an integral with respect to $\MPn$, which allows us to more easily identify its compensator.

Consider the set
\[
    \mathbb{T}^{(N)}\doteq \left\{(\arrivalTime_j,(\mathfrak{E},\stationn_j));j\geq1\right\} \cup\left\{(\deptn_j,(\mathfrak{D},\stationn_j));j\geq \j0\right\},
\]
which is the union of all arrival times $\arrivalTime_j$, marked by the tag $\mathfrak{E}$ (indicating that it is an arrival time) and the index of the queue to which job $j$ is routed, and all departure times $\deptn_j$, marked by the tag $\mathfrak{D}$ (indicating that it is a departure time) and the index of the queue from which job $j$ departed.   Since the inter-arrival and service distributions $\Gen$ and $G$ are absolutely continuous with respect to Lebesgue measure, by Assumptions \ref{asm_E} and  \ref{asm_G}.\ref{asm_mean},  almost surely at most one arrival to and at most one departure from each queue can occur at any given time.   Let $\eventtime_0\doteq0$ and  $\eventmark_0$ be a constant (whose value is irrelevant), and define the sequence of \textit{events} $\{(\eventtimen_k,\eventmarkn_k);k\geq1\}$, each composed of an \textit{event time} $\eventtimen_k$ and an \textit{event mark} $\eventmarkn_k$, to be the relabeling (i.e., a one-to-one correspondence) of $\mathbb{T}^{N}$ sorted by lexicographic order, assuming $\mathfrak{D}<\mathfrak{E}$. That is, events are ordered first by event times ($\eventtimen_k\leq\eventtimen_{k+1}$), then by event type (departure first, then arrival) and finally by queue index (with smaller indices first).  Let $\MPn=\{\MPn(t);t\geq0\}$ be the corresponding marked point process. Clearly, for every index $i\in\{1,...,N\}$,
\begin{equation}\label{Ei_mpp_rep}
  \MPn(\mathfrak{E},i;t)\doteq\sum_{k\geq1}\indic{\eventtimen_k\leq t} \indic{\eventmarkn_k=(\mathfrak{E},i)}=\arrivalni(t),
\end{equation}
and
\begin{equation}\label{Di_mpp_rep}
  \MPn(\mathfrak{D},i;t)\doteq\sum_{k\geq1}\indic{\eventtimen_k\leq t} \indic{\eventmarkn_k=(\mathfrak{D},i)}=\dni(t).
\end{equation}
 These relations show that the filtration $\{\filtn_t\}$ in  \eqref{def_filt}
has the representation
\begin{equation}\label{def_filt_alt}
  \filtn_t=\sigma(I_0)\vee\mathcal{F}^{\mathcal{T},(N)}_t,\quad\quad t\geq0,
\end{equation}
where $\{\mathcal{F}^{\mathcal{T},(N)}_t\}$ is the filtration generated by the marked point process $\MPn$ \cite[p.\ 57, eqn.\ (1.2)]{BremaudBook}.

At any time $t$,  server $i$ is called \textit{busy} if $\xni(t)\geq1$, and is called \textit{idle} otherwise.  By the non-idling assumption, there is a job receiving service at any busy server $i$ at time $t$, and  we define $\ageni(t)$ to be the age of that job.  Using this notation, for $ \ell \geq 1$, we can rewrite the definition \eqref{nuni} of $\nuni_\ell$ as
\begin{equation}\label{nuni_alt}
  \nuni_\ell(t)=\indic{\xni(t)\ge \ell}\delta_{\ageni(t)},
\end{equation}
which when combined with \eqref{nun_nuni}, yields
\begin{equation}\label{nun_nuni_alt}
  \nun_\ell(t)=\sum_{i=1}^N\indic{\xni(t)\ge \ell}\delta_{\ageni(t)}.
\end{equation}
For $k\geq0$,  define
\begin{equation}\label{def_busy}
  \busyn_k\doteq \left\{i: \xni(\eventtimen_k)\geq1 \right\}
\end{equation}
to be the set of busy servers at time $\eventtimen_k$, and note that for $i\in\busyn_k$, $\ageni(\eventtimen_k)$ is well-defined. Define $\xi^{(N)}_{k+1}$ to be the next arrival time strictly after $\eventtimen_k$, and for $i=1,...,N$, define $\sigma_{k+1}^{(N),i}$ to be the  next time strictly after $\eventtimen_k$ when there is a departure from queue $i$ if $i\in\busyn_{k}$, and $\sigma_{k+1}^{(N),i}=\infty$, otherwise. When the event time $\eventtimen_k$ is distinct, that is $\eventtimen_k\neq\eventtimen_{k'}$ for all $k'\neq k$, the next event time will be the minimum among the first arrival time after $\eventtime_k$ and the next departure time from  queues that  are busy at time $\eventtime_k$. Therefore, defining
\begin{equation}\label{def_Omegak}
      \tilde\Omega_k\doteq\{\omega\in\Omega; \eventtime_{k}\neq\eventtime_{k'},\quad \forall k'\neq k\},
\end{equation}
to be the set of realizations on which the event time $\eventtimen_k$ is distinct, we have
\begin{equation}\label{next_event}
  \eventtimen_{k+1}= \min\left(\xi^{(N)}_{k+1},\sigma^{(N),i}_{k+1};i=1,...,N\right)\quad\quad\text{ on }\tilde\Omega_k.
\end{equation}

The next lemma identifies the joint distribution of the next arrival and departure times given $\filtn_{\eventtimen_k}$.

\begin{lemma}\label{lem_condInd}
Suppose Assumptions \ref{asm_E}, \ref{asm_G}.\ref{asm_mean}, and \ref{asm_initial}.\ref{asm_initial_ind} hold. Then, for $k \geq 0$,
$\Probil{\tilde\Omega_k}=1$,  $\xi^{(N)}_{k+1}$ and $\sigma^{(N),i}_{k+1}$, $i=1,...,N,$ are conditionally independent given $\filtn_{\eventtimen_k}$,  and
\begin{equation}\label{cond_density_arrival}
  \Prob{\xi^{(N)}_{k+1}-\eventtimen_k>b\big|\filtn_{\eventtimen_k}} = \frac{\bGen(\ren(\eventtimen_k)+b)} {\bGen(\ren(\eventtimen_k))}, \quad  b > 0,
\end{equation}
and for $i=1,...,N,$
\begin{equation}\label{cond_density_departure}
    \indic{i\in\busyn_k}\Prob{\sigma^{(N),i}_{k+1}-\eventtimen_k>b\big|\filtn_{\eventtimen_k}} = \indic{i\in\busyn_k}\frac{\overline G(\ageni(\eventtimen_k)+b)}{\overline G(\ageni(\eventtimen_k))}, \quad  b > 0.
\end{equation}
\end{lemma}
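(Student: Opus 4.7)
The plan is to proceed by induction on $k$, exploiting the fact that under Assumptions \ref{asm_E}, \ref{asm_G}, and \ref{asm_initial}.\ref{asm_initial_ind} the primitive sequences of inter-arrival times $\{u^{(N)}_n\}$ and service times $\{v_j\}$ are mutually independent, with the initial service times satisfying the conditional residual-life formula \eqref{vInitial}. The memoryless-type ratios in \eqref{cond_density_arrival}--\eqref{cond_density_departure} then emerge from a standard exposure-and-conditioning argument: at each event time $\eventtimen_k$, the history $\filtn_{\eventtimen_k}$ has exposed exactly the already-elapsed portion of every currently-running clock, namely the current inter-arrival interval of $\arrivaln$ and the age $\ageni(\eventtimen_k)$ of the job in service at each busy queue, so the residual of each such clock inherits the stated conditional distribution.

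The almost sure distinctness claim $\Probil{\tilde\Omega_k}=1$ is immediate from the absolute continuity of $\Gen$ and $G$: the joint density of any finite collection of arrival and departure epochs assigns zero mass to the lower-dimensional subset where two of them coincide, and a union bound over the countably many pairs involved in the first $k$ events suffices. For the base case $k=0$, one has $\eventtimen_0=0$, $\filtn_0=\sigma(I_0)$, and $\ren(0)=R^{(N)}$, $\busyn_0$, $\{\ageni(0)\}_{i\in\busyn_0}$ are all $\sigma(I_0)$-measurable. Then \eqref{cond_density_arrival} reduces to the delay distribution \eqref{delay_dist}, \eqref{cond_density_departure} is the conditional residual formula \eqref{vInitial} specialized to the set $\mathcal{K}=\{j_i : i\in\busyn_0\}$ (with $j_i$ indexing the job in service at queue $i$), and the product form across $\mathcal{K}$, combined with the independence of $\delayn$ from $I_0$ and from all service times, delivers the claimed conditional independence.

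For the inductive step, I would assume the conclusion at level $k$. On $\tilde\Omega_k$, identity \eqref{next_event} expresses $\eventtimen_{k+1}$ as the minimum of the conditionally independent clocks from the induction hypothesis, and the mark $\eventmarkn_{k+1}$ identifies which clock fires. Conditioning $\filtn_{\eventtimen_{k+1}}$ on top of $\filtn_{\eventtimen_k}$ reveals precisely the firing time and its type, so for every surviving clock (queues still busy at $\eventtimen_{k+1}$, plus the arrival clock when the firing event is a departure) the classical residual identity
\begin{equation*}
\Prob{T-(a+\delta)>b \mid T>a+\delta} = \frac{\overline F(a+\delta+b)}{\overline F(a+\delta)},
\end{equation*}
applied with $\delta = \eventtimen_{k+1}-\eventtimen_k$ and $\overline F\in\{\bGen,\overline G\}$, updates the survival ratio to the new age. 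A fresh clock is started whenever the firing event is an arrival (producing a new $u^{(N)}_{n+1}$) or when a departure triggers a service entry at a queue that was non-empty prior to the departure (producing a new $v_j$); independence of these new primitives from $\filtn_{\eventtimen_{k+1}}$ yields the factor $\bGen(b)$ or $\overline G(b)$, matching \eqref{cond_density_arrival} or \eqref{cond_density_departure} evaluated at age zero.

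The main bookkeeping hurdle is verifying that the state-dependent routing decisions $\stationn_j$, defined in \eqref{def_station} through $\xni(\arrivalTime_j-)$, do not inadvertently expose future service or inter-arrival residuals. This is resolved by observing that each $\stationn_j$ is a measurable function of $\filtn_{\arrivalTime_j}$ together with an independent uniform tie-breaker, neither of which touches the unelapsed portion of any clock running at time $\arrivalTime_j$. Consequently the fresh part of every primitive remains independent of $\filtn_{\eventtimen_k}$, and the induction closes via the algebraic residual-life identity above.
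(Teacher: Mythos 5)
Your outline follows the same route as the paper: induction on the event index, the residual-life identity for the surviving clocks given the firing time and type (the paper's Lemma \ref{lem_condauxind}), fresh clocks contributing factors $\bGen(b)$ or $\overline G(b)$, and a partition of the inductive step according to the mark (arrival to an empty or non-empty queue, departure leaving the queue empty or not). The base case via \eqref{delay_dist} and \eqref{vInitial} is also how the paper starts, and your final paragraph correctly flags the two delicate points: that the routing mark must not distort the law of the running departure clocks (the paper's Lemma \ref{lem_rqcInd}), and that the history must not have leaked information about unelapsed clock portions.

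The genuine gap is that the second of these points is asserted rather than proved, and it is precisely the technical core of the lemma. Your claim that ``the history $\filtn_{\eventtimen_k}$ has exposed exactly the already-elapsed portion of every currently-running clock,'' and hence that the fresh service time of a job arriving at $\eventtimen_k$ (or entering service after a departure at $\eventtimen_k$) is independent of $\filtn_{\eventtimen_k}$, does not follow from the mutual independence of the primitives alone. The filtration $\{\filtn_t\}$ is generated by the per-queue arrival and departure counting processes, the departure processes are functions of the service times, the routing is state-dependent, and the job index attached to the $k$-th event (e.g.\ the event $\{\eventmarkn_k=(\mathfrak{E},i),\arrivaln(\eventtimen_k)=n\}$) is itself a complicated functional of all primitives; so one must prove that, on that event, the trace of $\filtn_{\eventtimen_k}$ is contained in the trace of a $\sigma$-algebra such as $\filtData_{\leq n}$ that excludes the fresh variable $v_n$ (and analogously excludes $v_{J^i(\eventtimen_k)}$ on the departure events). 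The paper does this with a coupling construction (an auxiliary network in which the unrevealed primitives are replaced by constants, Lemma \ref{lem_queue}) combined with a conditioning lemma for trace $\sigma$-algebras (Lemma \ref{lem_condAux}), which together yield Lemma \ref{lem_independence}; without some argument of this type the factors $\bGen(b)$ and $\overline G(b)$ for the fresh clocks, and hence the inductive step, are unjustified. The same caveat applies to your resolution of the routing issue, which again presupposes knowledge of exactly what $\filtn_{\arrivalTime_j}$ exposes. (Your density/union-bound argument for $\Probil{\tilde\Omega_k}=1$ is also looser than needed — the clean route is to deduce it from the inductive hypothesis, since the conditional laws of the competing clocks given $\filtn_{\eventtimen_{k-1}}$ are absolutely continuous and conditionally independent — but that is a minor point compared with the missing measurability argument.)
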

The result in Lemma \ref{lem_condInd} is intuitive and  follows from the independence of the interarrival and service times. However, a completely rigorous proof is rather involved and technical, although involving fairly routine calculations. Hence, we defer the proof to Section \ref{secapx_condind}. Using Lemma \ref{lem_condInd}, we can rewrite \eqref{next_event} as
\begin{equation}\label{next_event_new}
  \eventtimen_{k+1}= \min\left(\xi^{(N)}_{k+1},\sigma^{(N),i}_{k+1};i=1,...,N\right),\quad\quad\text{ a.s.}
\end{equation}
We now state a consequence of Lemma \ref{lem_condInd}. Recall that a sequence $\{t_n;n\in\N\}$ is called non-explosive if for every $T<\infty,$ there are finitely many $n$ with $t_n\leq T.$

\begin{corollary}\label{cor_distinctness}
Suppose Assumptions \ref{asm_E},  \ref{asm_G}.\ref{asm_mean}, and  \ref{asm_initial}.\ref{asm_initial_ind} hold. Then, almost surely, the sequence of event times $\{\eventtimen_k;k\geq0\}$ is non-explosive, and  $\Probil{\Omega_t}=1$,  $t \geq 0$.
\end{corollary}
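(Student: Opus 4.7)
The plan is to decouple the two assertions, proving non-explosiveness of $\{\eventtimen_k\}$ first, pairwise distinctness second, and then combining the two on a common full-measure event to exhibit an admissible partition witnessing $\omega\in\Omega_t$.

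For non-explosiveness, fix $T\geq 0$. Every event time in $[0,T]$ is either an arrival time $\arrivalTime_j$ with $j\geq 1$ or a departure time $\deptn_j$ with $j\geq\j0$. The number of arrival times in $[0,T]$ equals $\arrivaln(T)=\widetilde E(NT)$, which is almost surely finite because $\widetilde E$ is a renewal process with positive (finite-mean) inter-arrival times; indeed $\Eptil{\arrivalnbar(T)}<\infty$ by Lemma \ref{asm_initial_remark}. Since each job contributes at most one departure time, the number of departures in $[0,T]$ is bounded by $\xn(0)+\arrivaln(T)$, which is a.s.\ finite by Assumption \ref{asm_initial}.\ref{asm_initial_ind}. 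Thus only finitely many $\eventtimen_k$ lie in $[0,T]$ almost surely, and so $\eventtimen_k\to\infty$ a.s.

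For pairwise distinctness, I would invoke Lemma \ref{lem_condInd}. The conditional distributions in \eqref{cond_density_arrival}--\eqref{cond_density_departure} are absolutely continuous, since $\Gen$ and $G$ have densities by Assumptions \ref{asm_E} and \ref{asm_G}.\ref{asm_mean}; combined with the conditional independence of $\xi^{(N)}_{k+1}$ and $\{\sigma^{(N),i}_{k+1}:i\in\busyn_k\}$ given $\filtn_{\eventtimen_k}$, any two of these finitely many candidates for $\eventtimen_{k+1}$, as well as any one of them and any fixed $\filtn_{\eventtimen_k}$-measurable time, coincide with probability zero. This is precisely the content $\Probil{\tilde\Omega_k}=1$ recorded in the lemma. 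Setting $\tilde\Omega\doteq\bigcap_{k\geq 0}\tilde\Omega_k$, countable additivity gives $\Probil{\tilde\Omega}=1$, and on $\tilde\Omega$ the event times $\{\eventtimen_k\}$ are pairwise distinct.

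To conclude, fix $t\geq 0$ and restrict attention to the intersection of the two full-measure events above. Then $\mset\doteq\{\eventtimen_k\}\cap[0,t]$ is a finite set of pairwise distinct points, so
\[
\delta\doteq\min\{|s-s'|:s,s'\in\mset,\,s\neq s'\}>0,
\]
with the convention $\min\emptyset=\infty$. Choosing any $n\in\N$ with $1/n<\delta$, each subinterval $(k/n,(k+1)/n]$ of the partition of $(0,t]$ contains at most one event time, so
\[
\omega\in\bigcap_{k=0}^{\lfloor nt\rfloor}\Omega_{k/n,1/n}\subseteq\Omega_t,
\]
yielding $\Probil{\Omega_t}=1$. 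The only non-routine ingredient is Lemma \ref{lem_condInd}; given its conditional-independence and absolute-continuity conclusions, the remaining steps (finite event count, bounded-below gap between events, choice of $n$) are elementary bookkeeping, and I do not anticipate any substantive obstacle.
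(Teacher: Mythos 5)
Your proposal is correct and follows essentially the same route as the paper: non-explosiveness from the a.s.\ finiteness of $\xn(0)+\arrivaln(t)$ (which bounds arrivals and departures on $[0,t]$), distinctness from Lemma \ref{lem_condInd} via $\tilde\Omega\doteq\bigcap_k\tilde\Omega_k$, and then a minimal-gap argument to produce an admissible partition witnessing $\omega\in\Omega_t$. The only (shared, cosmetic) imprecision is that the last subinterval $(\lfloor nt\rfloor/n,(\lfloor nt\rfloor+1)/n]$ may extend past $t$, so the minimal gap should be taken over events in a slightly larger window, say $[0,t+1]$, before choosing $n$.
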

\begin{proof}
Fix $t\ge 0$, define $\hat \Omega_t=\hat{\Omega}^{(N)}_t = \{\omega\;: \xn(0)<\infty,\arrivaln(t)<\infty\}.$ Since $\xn(0)<\infty$ almost surely by Assumption \ref{asm_initial}.\ref{asm_initial_ind} and  $\arrivaln$ is a renewal process with non-degenerate interarrival time distribution by Assumption \ref{asm_E}, $\mathbb{P}\{\hat \Omega_t\}=1$. Moreover $\dn(t)$ is also finite on $\hat \Omega_t$ by \eqref{apx_massTotal}. Thus, on $\hat \Omega_t$, and hence, almost surely, the total number of events up to $t$ is finite and $\{\eventtimen_k\}$ is  non-explosive.

Moreover, the set $\tilde \Omega\doteq\cup_{k\geq0}\tilde \Omega_k$ of realizations on which all events are distinct has full measure by Lemma \ref{lem_condInd}. For every $\omega\in\hat\Omega_t\cap\tilde\Omega$,  the quantity $\Delta(\omega) = \Delta^{(N)}(\omega)\doteq \inf_{k: \eventtimen_k \leq t}\left(\eventtimen_{k+1}-\eventtimen_k\right),$ is strictly positive because it is the infimum of  finitely many positive numbers.  This means that for $n> 1/\Delta$, the distance between any two events prior to time $t$ exceeds $1/n$. Therefore,  $\omega\in\Omega_{\frac{k}{n},\frac{1}{n}}$ for all $k=0,...,\lfloor nt\rfloor$, and hence $\omega\in\Omega_t$. This implies $\hat\Omega_t\cap\tilde\Omega  \subseteq \Omega_t$ and hence,  $\Probil{\Omega_t}=1$.
\end{proof}

\subsection{Compensator for the Weighted Routing Measure}\label{sec_Rcomp}

We state our first result.
\begin{lemma}\label{prelim_Ecomp_lemma}
Suppose Assumptions \ref{asm_E},  \ref{asm_G}.\ref{asm_mean}, and  \ref{asm_initial}.\ref{asm_initial_ind} hold. Then, for $i=1,...,N$,  the process
$\arrivalni$ defined in \eqref{def_Ei} has  the following $\{\filtn_t\}$-intensity process:
\begin{equation}\label{Ecomp}
    \left\{\frac{1}{N}\hen\left(\ren(t-)\right)\sum_{\ell=1}^\infty  \indic{\xni(t-)=\ell-1} \poly{\nunonebar_{\ell-1}(t-)}{\nunonebar_\ell(t-)};t\geq0\right\}.
\end{equation}
\end{lemma}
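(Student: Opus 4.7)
The plan is to identify the intensity via the marked point process representation of Section~\ref{sec_mpp} together with Lemma~\ref{lem_condInd}. By \eqref{Ei_mpp_rep}, $\arrivalni$ is the counting process of events in $\MPn$ with mark $(\mathfrak{E}, i)$, so its $\{\filtn_t\}$-intensity should factor as the instantaneous arrival rate times the conditional probability (given the current state) that an arrival is routed to queue~$i$. I would compute each factor separately and then multiply.

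On each inter-event interval $(\eventtimen_k, \eventtimen_{k+1})$, the queue-length vector is frozen. Differentiating \eqref{cond_density_arrival} in $b$ shows that the conditional hazard rate for the next arrival at time $t \in (\eventtimen_k, \eventtimen_{k+1})$ is $\hen(\ren(\eventtimen_k) + (t-\eventtimen_k))$, which equals $\hen(\ren(t-))$ by the definition \eqref{def_Ren} of $\ren$ on a non-arrival interval. For the routing factor, the sample vector $\rqc_j$ associated with an arrival $j$ with $\arrivalTime_j > \eventtimen_k$ is uniform on $\{1,\dots,N\}^d$ and independent of $\filtn_{\eventtimen_k}$ by Assumption~\ref{asm_initial}.\ref{asm_initial_ind} together with the definition \eqref{def_filt} (which encodes only the counts $\arrivalni, \dni$, not future sample vectors). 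Hence, assuming $\xni(t-) = \ell - 1$, by the symmetry of uniform-with-replacement sampling and the uniform tie-breaking rule, conditional on the minimum sampled length equaling $\ell - 1$ the destination is uniformly distributed over the $\nunone_{\ell-1}(t-) - \nunone_\ell(t-) = N(\nunonebar_{\ell-1}(t-) - \nunonebar_\ell(t-))$ queues currently at length $\ell-1$; and $P(\min\text{ sampled length} = \ell-1) = (\nunonebar_{\ell-1}(t-))^d - (\nunonebar_\ell(t-))^d$. Thus
\begin{equation*}
    P(\text{routed to } i \mid \xni(t-) = \ell-1) = \frac{(\nunonebar_{\ell-1}(t-))^d - (\nunonebar_\ell(t-))^d}{N(\nunonebar_{\ell-1}(t-) - \nunonebar_\ell(t-))} = \frac{1}{N}\poly{\nunonebar_{\ell-1}(t-)}{\nunonebar_\ell(t-)}
\end{equation*}
by the algebraic identity in \eqref{def_poly}. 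Since $\xni(t-)$ takes a unique integer value, exactly one indicator in the sum in \eqref{Ecomp} is non-zero, and multiplying this routing probability by $\hen(\ren(t-))$ reproduces the candidate intensity \eqref{Ecomp}.

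To convert this calculation into a genuine $\{\filtn_t\}$-intensity assertion, I would invoke a standard Br\'emaud-type characterization (cf.\ \cite{BremaudBook}) that identifies the intensity of a counting sub-channel of a marked point process from the successive conditional joint distributions of $(\eventtimen_{n+1}-\eventtimen_n, \eventmarkn_{n+1})$ given $\filtn_{\eventtimen_n}$; Lemma~\ref{lem_condInd} supplies exactly these conditional laws in explicit product form. The main obstacle is the bookkeeping to piece together the per-interval hazards into a single $\{\filtn_t\}$-predictable process on $\R_+$: this reduces to checking left-continuity of $t \mapsto \hen(\ren(t-))$ and of the routing probability between events (ensuring predictability), and local integrability of $\int_0^t \hen(\ren(s-))ds$, which follows from the non-explosiveness of $\arrivaln$ provided by Corollary~\ref{cor_distinctness} and the local boundedness inherited from the inter-arrival density $\gen$.
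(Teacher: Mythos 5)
Your proposal is correct and follows essentially the same route as the paper: the marked point process representation \eqref{Ei_mpp_rep}, Br\'emaud's regenerative-form intensity result applied to the sub-channel with mark $(\mathfrak{E},i)$, the conditional laws and conditional independence from Lemma \ref{lem_condInd}, and the same symmetry computation giving the routing factor $\frac{1}{N}\poly{\nunonebar_{\ell-1}(t-)}{\nunonebar_\ell(t-)}$. The only difference is presentational: the paper carries out the Br\'emaud ratio explicitly, exhibiting the cancellation of the departure-survival factor $\Prob{\sigma_{k+1}>t\,|\,\filtn_{\eventtimen_k}}$ between numerator and denominator (equations \eqref{prelim_gE}--\eqref{prelim_Gbar}), which is exactly what justifies your heuristic factorization into ``arrival hazard $\times$ routing probability.''
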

\begin{proof}
Throughout this proof, we omit  the superscript $(N)$ for  ease of notation. Suppose that for  $k\geq0$ and $i=1, \ldots, N$,  the conditional density$f^{\mathfrak{E},i}_{k+1}$ defined by
\begin{equation}\label{prelim_gek_eq}
\Prob{\eventtime_{k+1}-\eventtime_{k}\in A,\eventmark_{k+1}=(\mathfrak{E},i)\big|\filt_{\eventtime_k}}= \int_A f^{\mathfrak{E},i}_{k+1}(\omega,r) dr,\quad \omega\in\Omega,\; A\in\mathcal{B}\hc,
\end{equation}
exists. Then, by the representation  in \eqref{Ei_mpp_rep} of $\arrivali$ in terms of $\MPn$,
and the fact that the sequence $\{\eventtime_k\}$ is non-explosive by Corollary \ref{cor_distinctness}, it follows from \cite[Theorem III.T7, comment ($\beta$) and (2.10)]{BremaudBook} that the process
\begin{equation}\label{prelim_AintensityForm_EQ}
  \sum_{k=0}^\infty \frac{ f^{{\mathfrak{E}},i}_{k+1}(\omega, t-\eventtime_k)}{\Prob{\eventtime_{k+1}>t|\filt_{\eventtime_k}}}\indic{\eventtime_k<t\leq \eventtime_{k+1}}.
\end{equation}
is an $\{\filt_t\}$-intensity of $\arrivali$.

We now show that $f^{\mathfrak{E},i}_{k+1}$ in \eqref{prelim_gek_eq} exists. First, note that by  \eqref{next_event_new}, the next event after $\eventtime_k$ is an arrival to queue $i$, that is,  $\eventmark_{k+1}=(\mathfrak{E},i)$, if the next arrival occurs before the next departure from any queue and the arriving job, which has index $E(\eventtime_{k+1})$, is routed to queue $i$. Hence,  defining $\sigma_{k+1}\doteq\min(\sigma_{k+1}^{i'};i'=1,...,N),$ we have
\begin{align}\label{prelim_gtemp1}
&\Prob{\eventtime_{k+1}-\eventtime_{k}>t,\eventmark_{k+1}=(\mathfrak{E},i)\big|\filt_{\eventtime_k}} \notag\\ &\hspace{1cm}=\Prob{\xi_{k+1}>\eventtime_k+t,\sigma_{k+1}>\xi_{k+1},\station_{E(\eventtime_{k+1})}=i \big|\filt_{\eventtime_k}}\notag\\
&\hspace{1cm}= \Ept{\indic{\xi_{k+1}>\eventtime_k+t,\sigma_{k+1}>\xi_{k+1}} \Prob{\station_{E(\eventtime_{k+1})}=i|\filt_{\eventtime_k},\xi_{k+1},\sigma_{k+1}} \Big|\filt_{\eventtime_{k}}}.
\end{align}
The queue to which  job $j=E(\eventtime_{k+1})$ is routed is given by \eqref{def_station}, which is a (deterministic) function of the random queue choices vector $\rqc_j$ and queue lengths at time $\eventtime_k$. According to \eqref{def_station}, this job is routed to a queue of length exactly $\ell-1$, if and only if all selected queue indices $\rqc_j$ have lengths at least $\ell-1$, and at least one of them has length exactly $\ell-1$.  Since $\rqc_j$ is independent of all other  random variables, the conditional probability (given $\filt_{\eventtime_k},\sigma_{k+1}$ and $\xi_{k+1})$ that the job is routed to a queue of length $\ell-1$ is $(\nuonebar_{\ell-1}(\eventtime_k))^d-(\nuonebar_\ell(\eventtime_k))^d$. Moreover, the job is equally  likely to be routed to any queue of length $\ell-1$. Since there are  $\nuone_{\ell-1}(\eventtime_k)-\nuone_\ell(\eventtime_k)$ such queues,  on the event ${\xii(\eventtime_k)=\ell-1}$, we have
 \begin{align*}
\Prob{\station_{E(\eventtime_{k+1})}=i|\filt_{\eventtime_k},\xi_{k+1},\sigma_{k+1}} & =
\frac{\left(\nuonebar_{\ell-1}(\eventtime_k)\right)^d-\left(\nuonebar_\ell(\eventtime_k)\right)^d}{\nuone_{\ell-1}(\eventtime_k)-\nuone_\ell(\eventtime_k)} \\
   & =
\frac{1}{N}\poly{\nuonebar_{\ell-1}(\eventtime_k)}{ \nuonebar_\ell(\eventtime_k)},
 \end{align*}
where the polynomial $\mathfrak{P}_d$ is defined in \eqref{def_poly}. Therefore,
\begin{align}\label{prelim_gtemp2}
\Prob{\station_{E(\eventtime_{k+1})}=i|\filt_{\eventtime_k},\xi_{k+1},\sigma_{k+1}}
&
= \sum_{\ell=1}^\infty \indic{\xii(\eventtime_k)=\ell-1} \Prob{\station_{E(\eventtime_{k+1})}=i|\filt_{\eventtime_k},\xi_{k+1},\sigma_{k+1}} \notag \\
&
=\frac{1}{N}\sum_{\ell=1}^\infty \indic{\xii(\eventtime_k)=\ell-1}\poly{\nuonebar_{\ell-1}(\eventtime_k)}{ \nuonebar_\ell(\eventtime_k)}.
\end{align}
Moreover, using \eqref{cond_density_arrival} and Lemma \ref{lem_condInd}, we have
\begin{equation}\label{prelim_gtemp3}
\Prob{\xi_{k+1}>\eventtime_k+t,\sigma_{k+1}>\xi_{k+1}|\filt_{\eventtime_k}}  =
\frac{1}{\overline G_E(\re(\eventtime_k))}\int_t^\infty\Prob{\sigma_{k+1}-\eventtime_k>s|\filt_{\eventtime_k}} \gee(s+\re(\eventtime_k))ds.
\end{equation}
Therefore, by  \eqref{prelim_gtemp1}-\eqref{prelim_gtemp3} and the fact that the right-hand side of \eqref{prelim_gtemp2} is $\filt_{\eventtime_{k}}$-measurable, for $k\in\Z_+$, $i\in\{1,...,N\}$, the density $f^{\mathfrak{E},i}_{k+1}$ exists, and for $t\geq \eventtime_k$,
\begin{align}\label{prelim_gE}
  &f^{\mathfrak{E},i}_{k+1}(t-\eventtime_k)= \Prob{\sigma_{k+1}>t|\filt_{\eventtime_k}} \frac{\gee(t-\eventtime_k+\re(\eventtime_k))}{N\;\overline G_E(\re(\eventtime_k))}\sum_{\ell=1}^\infty \indic{\xii(\eventtime_k)=\ell-1} \poly{\nuonebar_{\ell-1}(\eventtime_k)}{ \nuonebar_\ell(\eventtime_k)}.
\end{align}
Similarly, by \eqref{next_event_new}, Lemma \ref{lem_condInd} and \eqref{cond_density_arrival}, for every $k\ge0$ and $t\geq \eventtime_k$ we have
\begin{align}\label{prelim_Gbar}
  \Prob{\eventtime_{k+1}>t|\filt_{\eventtime_k}} = \Prob{\xi_{k+1}\wedge \sigma_{k+1}>t |\filt_{\eventtime_k}} \notag
  &= \Prob{ \sigma_{k+1}>t |\filt_{\eventtime_k}}\Prob{ \xi_{k+1}>t |\filt_{\eventtime_k}}\notag\\
  &=   \Prob{\sigma_{k+1}>t|\filt_{\eventtime_k}}\frac{\overline G_E(t-\eventtime_k+\re(\eventtime_k))}{\overline G_E(\re(\eventtime_k))}.
\end{align}

Combining \eqref{prelim_gE} and \eqref{prelim_Gbar}  with \eqref{prelim_AintensityForm_EQ}, recalling that $\he=\gee/\bGe$ is the hazard rate function of the interarrival times, and noting that on  $(\eventtime_k,\eventtime_{k+1}]$,
$\xii$, $\nuone_\ell$  are constant and $R_E$ has a unit slope, we see that
\begin{align}\label{prelim_SItemp}
 \frac{1}{N}\sum_{k=0}^\infty & \he(t-\eventtime_k+\re(\eventtime_k)) \sum_{\ell=1}^\infty \indic{\xii(\eventtime_k)=\ell-1} \poly{\nuonebar_{\ell-1}(\eventtime_k)}{ \nuonebar_\ell(\eventtime_k)} \indic{\eventtime_k < t \leq \eventtime_{k+1}}\notag\\
 &= \frac{1}{N}\sum_{k=0}^\infty \he(\re(t-)) \sum_{\ell=1}^\infty \indic{\xii(t-)=\ell-1} \poly{\nuonebar_{\ell-1}(t-)}{ \nuonebar_\ell(t-)} \indic{\eventtime_k < t \leq \eventtime_{k+1}}\notag\\
 &= \frac{1}{N}\he(\re(t-)) \sum_{\ell=1}^\infty \indic{\xii(t-)=\ell-1} \poly{\nuonebar_{\ell-1}(t-)}{ \nuonebar_\ell(t-)},\quad\quad t\geq0,
\end{align}
is an $\{\filt_t\}$-intensity of $\arrivalni$.
\end{proof}

We now use Lemma \ref{prelim_Ecomp_lemma} to  prove the martingale decomposition for $\Rn_{\varphi,\ell}$.

\begin{proof}[Proof of Proposition \ref{prop_prelim_Rcomp}]
First, note  Note from \eqref{def_Rphi} that
\begin{equation}\label{def_Rni_decomp}
\Rn_{\varphi,\ell}=\sum_{i=1}^N \Rni_{\varphi,\ell},
\end{equation}
where
\begin{equation}\label{def_Rni}
  \Rni_{\varphi,\ell}(t)\doteq \twopartdef{  \displaystyle \int_{(0,t]} \varphi(0,s)\indic{\xni(s-)=0}d\arrivalni(s)}{\ell=1,}{\displaystyle \int_{(0,t]} \langle \varphi(\cdot,s),\nuni_{\ell-1}(s-) - \nuni_{\ell}(s-) \rangle d\arrivalni(s)}{\ell \geq 2.}
\end{equation}

We first prove the result for the case $\ell=1$. Consider the setup of Lemma \ref{remark_integral} with $\xi = \arrivalni$, $\cal G_t = \filtn_t$, $\theta(s) =$  $\varphi(0,s)\indicil{\xni(s-)=0}$, and hence, $\zeta = \Rni_{\varphi,1}$. Since $\varphi$ is bounded, continuous and $\xni$ is right-continuous and $\{\filtn_t\}$-adapted (see Proposition \ref{prop_adapted}), $\theta$ is bounded, $\{\filtn_t\}$-adapted , and left-continuous, and thus, $\{\filtn_t\}$-predictable. Therefore, using the fact that  the expression in \eqref{Ecomp} is an intensity of $\arrivalni,$ and by the identity
\begin{align}\label{temp_Rcomp}
  &\frac{1}{N}\varphi(0,t)\indic{\xni(t-)=0}\hen(\ren(t-))\sum_{\ell^\prime=1}^\infty \indic{\xni(t-)=\ell^\prime-1} \poly{\nunonebar_{\ell^\prime-1}(t-)}{\nunonebar_{\ell^\prime}(t-)}\notag\\
 &\hspace{1cm} =\frac{1}{N}\varphi(0,t)\indic{\xni(t-)=0}\hen(\ren(t-)) \poly{1}{\nunonebar_1(t-)},
\end{align}
which holds since  $\mathbb{1}(\xni(t-)=0)\mathbb{1} (\xni(t-)=\ell^\prime-1) \neq 0$ if and only if  $\ell^\prime =1$, and $\overline{S}_0^{(N)} \equiv 1$, it follows from Lemma \ref{remark_integral} that the process $\{\Nni_{\varphi,1}(t);t\geq0\}$ defined as
 $\Nni_{\varphi,1} = \Rni_{\varphi, 1} - \tRni_{\varphi,1}$, where
\begin{align}\label{temp_Rcomp2}
   \tRni_{\varphi,1}(t) \doteq  \frac{1}{N} \int_0^t  \varphi(0,s)\indic{\xni(s)=0}\hen(\ren(s)) \poly{1}{\nunonebar_1(s)}ds,
\end{align}
is a local $\{\filtn_t\}$-martingale (note that the right-hand side of \eqref{temp_Rcomp} and the integrand of the display above only differ on a zero-measure subset of $[0,t]$). Moreover, since the number of idle servers at time $t$ is $N-\nunone_1(t)$, and by definition \eqref{def_poly} of $\mathfrak{P}_d$ and definition \eqref{def_Rcomp} of $\Bn_{\varphi,1}$, we have
\begin{align}\label{temp_Rcomp3}
 \sum_{i=1}^N \tRni_{\varphi,1} (t)  &=\frac{1}{N}\int_0^t\varphi(0,s)\hen(\ren(s))  \poly{1}{\nunonebar_1(s)}\sum_{i=1}^N \indic{\xni(s)=0}ds \notag\\
&=\int_0^t\varphi(0,s)\hen(\ren(s))  \left(1- \big(\nunonebar_1(s)\big)^d\right)ds\notag\\
&= \Bn_{\varphi,1}(t).
\end{align}
Then, since $\sum_{i=1}^N\Nni_{\varphi,1}=\Rn_{\varphi,1}-\Bn_{\varphi,1}=\Nn_{\varphi,1},$  it follows  that $\Nn_{\varphi,1}$ is a local $\{\filtn_t\}$-martingale. Moreover, by \eqref{remark_Quad} of Lemma \ref{remark_integral} in the setup described above,
\begin{align*}
  [\Rni_{\varphi,1}](t)= \int_0^t \varphi^2(0,s)\indic{\xni(u-)=0}d\arrivalni(u)=\Rni_{\varphi^2,1}(t).
 \end{align*}
Also, for $i=1,...,N$  and $i'\neq i$, $\RnX{i}_{\varphi,1}$ and $\RnX{i'}_{\varphi,1}$ are pure jump processes with no common jump times almost surely (see Lemma \ref{lem_condInd}), and hence, $[\Rni_{\varphi,\ell}]$ is locally bounded and $[ \RnX{i}_{\varphi,1},\RnX{i'}_{\varphi,1}] \equiv 0,$ almost surely. Together with the fact that $\Bn_{\varphi,1}=\Rn_{\varphi,1}-\Nn_{\varphi,1}$ is a continuous function with finite variation, this implies
\[ [\Nn_{\varphi,1}] = [\Rn_{\varphi,1}] = [\sum_{i=1}^N\Rni_{\varphi,1}] = \sum_{i=1}^N[\Rni_{\varphi,1}] =\sum_{i=1}^N\Rni_{\varphi^2,1}=\Rn_{\varphi^2,1}. \]

Similarly, for $\ell\geq2$, consider the setup of Lemma \ref{remark_integral} with $\xi$, $\{\cal G_t\}$ as above,
but with $\theta(s)$ replaced by
\[\langle \varphi(\cdot,s),\nuni_{\ell-1}(s-) - \nuni_{\ell}(s-)\rangle= \varphi(\ageni(s-),s)\indic{\xni(s-)=\ell-1} \]
and hence, $\zeta$ replaced by $\Rni_{\varphi,\ell}$. Since $\ageni$ and $\xni$ are $\{\filtn_t\}$-adapted, $\theta$ is bounded, $\{\filtn_t\}$-adapted, and left-continuous. Thus, using the fact that \eqref{Ecomp} is an  intensity of $\arrivalni$, and by the identity
\begin{align*}
    &\frac{1}{N} \varphi\left(\ageni(t-),t\right)\indic{\xni(t-)=\ell-1} \hen(\ren(t-)) \sum_{\ell^\prime=1}^\infty \indic{\xni(t-)=\ell^\prime-1} \poly{\nunonebar_{\ell^\prime-1}(t-)}{\nunonebar_{\ell^\prime}(t-)}\\
    &\hspace{2cm}=\frac{1}{N} \varphi\left(\ageni(t-),t\right)\indic{\xni(t-)=\ell-1} \hen(\ren(t-))  \poly{\nunonebar_{\ell-1}(t-)}{\nunonebar_{\ell}(t-)},
\end{align*}
which holds since  $\indic{\xni(t-)=\ell-1}\indic{\xni(t-)=\ell^\prime-1} \neq 0$ if and only if  $\ell^\prime =\ell$,
it follows from Lemma \ref{remark_integral} that the process
$\Nni_{\varphi,\ell} \doteq \Rni_{\varphi,\ell}  - \tRni_{\varphi,\ell},$ with
\begin{align*}
  \tRni_{\varphi,\ell}(t)\doteq  \frac{1}{N} \int_0^t  \varphi\left(\ageni(s),s\right)\indic{\xni(s)=\ell-1} \hen(\ren(s))  \poly{\nunonebar_{\ell-1}(s)}{\nunonebar_{\ell}(s)}ds,
\end{align*}
is a local $\{\filtn_t\}$-martingale. Again, by \eqref{def_Rcomp} and \eqref{nuni_alt},
\begin{align*}
 \tRni_{\varphi,\ell}(t)
  &
=\frac{1}{N}\int_0^t \hen(\ren(s)) \poly{\nunonebar_{\ell-1}(s)}{\nunonebar_{\ell}(s)}\sum_{i=1}^N \varphi(\ageni(s),s)\indic{\xni(s)=\ell-1}ds\\
  &= \Bn_{\varphi,\ell}(t).
\end{align*}
Using \eqref{def_Rni_decomp}, it follows that
$\sum_{i=1}^N\Nni_{\varphi,\ell}=\Rn_{\varphi,\ell}-\Bn_{\varphi,\ell}=\Nn_{\varphi,\ell},$
and hence, $\Nn_{\varphi,\ell}$ is also a local $\{\filtn_t\}$-martingale. Also, in  the setup above, we have
\[\theta^2(s)= \varphi^2(\ageni(s-),s)\indic{\xni(s-)=\ell-1}=\langle \varphi^2(\cdot,s),\nuni_{\ell-1}(s-) - \nuni_{\ell}(s-)\rangle, \]
and so \eqref{remark_Quad} of Lemma \ref{remark_integral} implies
\begin{align*}
  [\Rni_{\varphi,\ell}](t)=\int_0^t \langle \varphi^2(\cdot,s),\nuni_{\ell-1}(s-) - \nuni_{\ell}(s-) \rangle d\arrivalni(s)=\Rni_{\varphi^2,\ell}.
 \end{align*}
Finally, by the same reasoning as in the case $\ell=1$, for $\ell\geq 2$ we have
$[\Nn_{\varphi,\ell}] = [\Rn_{\varphi,\ell}] = \Rn_{\varphi^2,\ell}.$
This completes the proof.
\end{proof}


\section{Proof of Convergence Results}\label{sec_convergence}

In Section \ref{sec_tightness} we  prove  relative compactness of the state and auxiliary processes.  In Section \ref{sec_limit}, we first show that subsequential limits of the state processes satisfy the hydrodynamic equations, and then prove Theorem \ref{thm_convergence} and Corollary \ref{cor_PropofChaos} in Section \ref{sec_limitCharacterization}. For $H={\mathcal D}_{\varphi,\ell}, {\mathcal A}_{\varphi,\ell}, {\mathcal M}_{\varphi,\ell}, {\mathcal R}_{\varphi,\ell}$, ${\mathcal B}_{\varphi,\ell}$, and  ${\mathcal N}_{\varphi,\ell}$, let
\begin{equation}\label{scaling_DR}
\overline{H}^{(N)} (t) \doteq \frac{H^{(N)}(t)}{N}, \quad N \in \N, t \geq 0.
\end{equation}

\subsection{Relative Compactness}\label{sec_tightness}

The relative compactness results are  summarized in Theorem \ref{thm_tightness} of Section \ref{sec_tightState}. They are established  in Sections \ref{sec_tightDeparture}--\ref{sec_tightState} by verifying the well-known criteria of Kurtz and Jakubowski summarized in Section \ref{sec_tightCriteria}.

\subsubsection{Review of Relative Compactness Criteria}\label{sec_tightCriteria}

Recall from Section \ref{sec_introNotation} that $w^\prime(f,\cdot,\cdot)$ denotes the  modulus of continuity of a function $f$ in $\D$.  The first result follows from \cite[Theorem 3.7.2]{KurtzBook} and  \cite[Theorems 3.7.2 and 3.8.6 and Remark 3.8.7]{KurtzBook}.

\begin{proposition}[Kurtz's criteria]
\label{Kurtz}
A sequence of $\R$-valued c\`{a}dl\`{a}g processes $\{Y^{(N)}\}_{N\in\N}$ is relatively compact if and only if $\{Y^{(N)}\}_{N\in\N}$ satisfies the following:
\begin{description}
\item [K1.] For every rational $t\geq0$,
\begin{equation}
  \lim_{r\to\infty} \sup_{N}\Prob{|Y^{(N)}(t)|>r}=0;
\end{equation}
\end{description}
\begin{description}
  \item[K2a.] For every $\eta>0$ and $T>0$, there exists $\delta>0$ such that
  \begin{equation}
    \sup_N\Prob{w^\prime(Y^{(N)},\delta,T)\geq\eta}\leq \eta.
  \end{equation}
\end{description}
Moreover, $\{Y^{(N)}\}_{N\in\N}$ is relatively compact if  it satisfies K1 and the condition K2b:
\begin{description}
  \item[K2b.] For each $T\geq 0$, there exists $\beta>0$ such that
  \begin{equation}
    \lim_{\delta\to 0}\sup_{N}\Ept{\sup_{0\leq t\leq T}\left|Y^{(N)}(t+\delta)-Y^{(N)}(t)\right|^\beta}=0.
  \end{equation}
\end{description}
\end{proposition}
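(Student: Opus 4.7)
The plan is to derive this from Prohorov's theorem together with a Skorokhod-space analogue of the Arzelà--Ascoli theorem characterizing relative compactness in $\mathbb{D}_{\R}[0,\infty)$. Recall that a subset $A \subset \mathbb{D}_{\R}[0,\infty)$ is relatively compact if and only if (i) for every $t$ in a dense subset of $[0,\infty)$, the set $\{x(t): x \in A\}$ is bounded in $\R$, and (ii) for every $T > 0$, $\lim_{\delta \to 0} \sup_{x \in A} w^\prime(x,\delta,T) = 0$. Since $\R$ is locally compact, boundedness of the values of $x(t)$ is equivalent to their sitting inside a compact subset of $\R$. By Prohorov's theorem, relative compactness of the laws of $\{Y^{(N)}\}$ is equivalent to tightness, i.e., for each $\epsilon > 0$ there exists a compact set $\mathcal{K} \subset \mathbb{D}_{\R}[0,\infty)$ with $\mathbb{P}(Y^{(N)} \in \mathcal{K}) \geq 1 - \epsilon$ for all $N$.

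To prove necessity of K1 and K2a under relative compactness, I would fix $\epsilon > 0$, choose the compact set $\mathcal{K}$ above, and exploit the characterization (i)--(ii). Boundedness of $\{x(t):x \in \mathcal{K}\}$ directly yields K1 at every continuity point of the limiting marginals (hence at every rational $t$ up to a countable exceptional set, which can be handled by right-continuity of the paths). Similarly, (ii) applied to $\mathcal{K}$ yields K2a: choose $\delta$ small so that $\sup_{x \in \mathcal{K}} w^\prime(x, \delta, T) < \eta$, and then $\mathbb{P}\{w^\prime(Y^{(N)}, \delta, T) \geq \eta\} \leq \mathbb{P}\{Y^{(N)} \notin \mathcal{K}\} \leq \epsilon$.

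For sufficiency, given K1 and K2a, I would construct compact sets directly. Fix $\epsilon > 0$. Using K1 at countably many rational times $\{t_k\}$ dense in $[0,\infty)$, pick $r_k$ with $\sup_N \mathbb{P}\{|Y^{(N)}(t_k)| > r_k\} \leq \epsilon 2^{-k-1}$. Using K2a, for each $m \in \mathbb{N}$, pick $\delta_m > 0$ such that $\sup_N \mathbb{P}\{w^\prime(Y^{(N)}, \delta_m, m) \geq 1/m\} \leq \epsilon 2^{-m-1}$. The set $\mathcal{K}$ defined as those $x \in \mathbb{D}_{\R}[0,\infty)$ with $|x(t_k)| \leq r_k$ for all $k$ and $w^\prime(x, \delta_m, m) \leq 1/m$ for all $m$ is relatively compact by the characterization above (and one can take its closure), and $\mathbb{P}\{Y^{(N)} \notin \mathcal{K}\} \leq \epsilon$ by a union bound.

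The main technical step is the implication K2b $\Rightarrow$ K2a, because K2b controls the unshifted modulus $w(Y^{(N)}, \delta, T) = \sup_{|t-s| \leq \delta} |Y^{(N)}(t) - Y^{(N)}(s)|$ rather than the c\`adl\`ag modulus $w^\prime$. The key inequality is $w^\prime(x, \delta, T) \leq w(x, 2\delta, T)$ after one subtracts off the contribution of a single largest jump on the partition interval--- this is precisely the content of \cite[Lemma 3.6.2]{KurtzBook}. Combined with Markov's inequality applied to K2b, this yields $\mathbb{P}\{w^\prime(Y^{(N)}, \delta, T) \geq \eta\} \leq \eta^{-\beta} \mathbb{E}[\sup_{0 \leq t \leq T}|Y^{(N)}(t+\delta)-Y^{(N)}(t)|^\beta] \to 0$ uniformly in $N$ as $\delta \to 0$, which gives K2a. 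The main obstacle is that $w^\prime$ ignores single jumps, so the Kurtz modulus-of-continuity trick is essential; a naive bound using $w$ would fail for processes with jumps that do not vanish in the scaling limit.
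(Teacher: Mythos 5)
Your treatment of the first assertion (relative compactness $\iff$ K1 and K2a) is essentially the textbook proof of \cite[Theorem 3.7.2]{KurtzBook}: Prohorov's theorem on the Polish space $\D$ plus the Arzel\`a--Ascoli-type characterization of compact subsets of $\D$, with the sufficiency direction built from countably many marginal bounds and modulus bounds. That is fine as a sketch (one routine point you gloss over: to turn the pointwise bounds $|x(t_k)|\le r_k$ into a uniform bound on $[0,T]$ you must choose the dense time set so that suitable initial segments form $\delta_m$-nets), and note that the paper itself does not prove any of this --- it simply cites Ethier--Kurtz, so here you are supplying an argument the paper omits.

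The step K2b $\Rightarrow$ K2a, however, contains a genuine gap. Writing $a(\delta)\doteq\sup_N\Ept{\sup_{0\le t\le T}|Y^{(N)}(t+\delta)-Y^{(N)}(t)|^\beta}$, Markov's inequality applied to K2b controls only $\Prob{\sup_{0\le t\le T}|Y^{(N)}(t+\delta)-Y^{(N)}(t)|\ge\eta}$, i.e.\ increments at the single lag $\delta$, whereas $w^\prime(Y^{(N)},\delta,T)$ (and a fortiori $w(Y^{(N)},2\delta,T)$, which you invoke via $w^\prime(x,\delta,T)\le w(x,2\delta,T)$) involves oscillations over all lags $u\in(0,\delta]$ simultaneously. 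There is no pathwise inequality bounding $w^\prime(x,\delta,T)$ by a constant times $\sup_{t\le T}|x(t+\delta)-x(t)|$: for $x(t)=\sin(2\pi t/\delta)$ the latter vanishes identically while $w^\prime(x,\delta,T)=2$ once $T\ge 2\delta$, since every partition of mesh greater than $\delta$ has an interval containing a full period; subtracting the largest jump does not help because $x$ is continuous. Hence your final chain $\Prob{w^\prime(Y^{(N)},\delta,T)\ge\eta}\le\eta^{-\beta}\,a(\delta)$ is unjustified (the example does not contradict the proposition, because such a family violates K2b at other values of $\delta$; it only shows your inequality cannot hold). A correct route exploits that K2b holds for \emph{all} small $\delta$: then $A(\delta)\doteq\sup_{0<u\le\delta}a(u)\to0$, so for every lag $u\le\delta$, every $N$ and every stopping time $\tau\le T$ one has $\Prob{|Y^{(N)}(\tau+u)-Y^{(N)}(\tau)|\ge\epsilon}\le\epsilon^{-\beta}A(\delta)$, and Aldous's criterion (together with K1) yields tightness, whence K2a follows from the already-proved necessity direction; alternatively one verifies the hypotheses of \cite[Theorem 3.8.6 and Remark 3.8.7]{KurtzBook}, whose proof passes from increment estimates to the modulus $w^\prime$ via a stopping-time/random-partition argument rather than a single Markov bound --- which is exactly the citation the paper uses in place of a proof.
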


If $\mset=\supint$ or  $\mset=\supint\times\hc$, equipped with the Euclidean metric,
then   $\mathbb{M}_F(\mset)$   (defined in  Section  \ref{sec_introNotation}) is a separable metric space and hence, a completely regular topological space with metrizable compacts. Thus, the next result follows from \cite[Theorem 4.6]{Jak86}  and  \cite[Theorem 5.1, Chapter 1]{BillingsleyBook}. Recall that a family  $\mathbb{F}$ of real continuous functionals on $\mathbb{M}_F(\mset)$ is said to separate  points in $\mathbb{M}_F(\mset)$ if for  every distinct $\mu,\tilde\mu\in\mathbb{M}_F(\mset)$, there exists a function $f\in\mathbb{F}$ such that $f(\mu)\neq f(\tilde \mu)$.

\begin{proposition}[Jakubowski's criteria]\label{Jakob}
For $\mset=\supint$ or  $\mset=\supint\times\hc$,
A sequence $\{\pi^{(N)}\}_{N\in\N}$ of $\mathbb{D}_{\mathbb{M}_F(\mset)}[0,\infty)$-valued random elements is tight  if and only if
\begin{description}
  \item [J1.] (Compact containment condition) For each $T>0$ and $\eta>0$ there exists a compact set $\cal K_{T,\eta}\subset\mathbb{M}_F(\mset)$ such that
      \[\liminf_{N}\Prob{\pi_t^{(N)}\in \cal K_{T,\eta}\text{ for all }t\in[0,T] }>1-\eta.\]

  \item [J2.] There exists a family $\mathbb{F}$ of real continuous functionals on $\mathbb{M}_F(\mset)$ that (i) is closed under addition, and (ii) separates points in $\mathbb{M}_F(\mset)$, such that $\{\pi^{(N)}\}$ is $\mathbb{F}$-weakly tight, that is for every $F\in\mathbb{F}$, the sequence $\{F(\pi^{(N)}_s);s\ge 0\}_{N\in\N}$ is tight in $\D$.
\end{description}
\vspace{-0.1in}
In particular, $\{\pi^{(N)}\}_{N\in\N}$ is relatively compact if it satisfies J1 and J2.
\end{proposition}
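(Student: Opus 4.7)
The plan is to invoke Jakubowski's general tightness theorem \cite{Jak86} combined with Prohorov's theorem. Since $\mset$ is a separable locally compact metric space, $\mathbb{M}_F(\mset)$ equipped with the Prohorov metric is itself a Polish space, and hence $\mathbb{D}_{\mathbb{M}_F(\mset)}[0,\infty)$ endowed with the Skorohod topology is Polish as well. Consequently, by Prohorov's theorem, tightness and relative compactness of $\{\pi^{(N)}\}$ coincide, which explains the ``in particular'' clause at the end of the statement. So it suffices to prove the ``if and only if'' for tightness.

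For the necessity direction, if $\{\pi^{(N)}\}$ is tight, then for each $\eta > 0$ and $T > 0$ we choose a compact $\mathcal{K} \subset \mathbb{D}_{\mathbb{M}_F(\mset)}[0,\infty)$ with $\Probil{\pi^{(N)} \in \mathcal{K}} \geq 1 - \eta$ for all $N$; the set $\mathcal{K}_{T,\eta} \doteq \{\mu(t) : \mu \in \mathcal{K},\, t \in [0,T]\}$ is then relatively compact in $\mathbb{M}_F(\mset)$ by Skorohod compactness arguments, which yields J1. For J2, given any $F \in \mathbb{F}$ and a subsequence along which $\pi^{(N)} \Rightarrow \pi$, the continuous mapping theorem gives $F(\pi^{(N)}) \Rightarrow F(\pi)$ in $\D$, so the scalar processes inherit tightness.

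The substantive direction is sufficiency. Assuming J1 and J2, the idea is to verify the two parts of Kurtz's criterion (Proposition \ref{Kurtz}) for the projected processes $F(\pi^{(N)})$ simultaneously for a countable collection of $F \in \mathbb{F}$, and then lift back to tightness of $\pi^{(N)}$ itself. J1 supplies a compact $\mathcal{K}_{T,\eta} \subset \mathbb{M}_F(\mset)$ confining the trajectories with high probability. Since $\mathcal{K}_{T,\eta}$ is metrizable and $\mathbb{F}$ is closed under addition and separates points, a Stone--Weierstrass style argument shows that any countable subfamily of $\mathbb{F}$ whose image separates points on $\mathcal{K}_{T,\eta}$ generates the restriction of the topology there. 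Consequently, the Skorohod modulus $w'(\pi^{(N)}, \delta, T)$ can be bounded, on the event $\{\pi^{(N)}(t) \in \mathcal{K}_{T,\eta} \text{ for all } t \in [0,T]\}$, by the maximum of the Skorohod moduli of finitely many scalar processes $F_j(\pi^{(N)})$; applying J2 to each such $F_j$ yields Kurtz's criterion K2a, which together with the compact containment J1 implies tightness.

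The main obstacle is this last step of transferring control from scalar projections back to the measure-valued processes. The delicate point is that while the weak topology on $\mathbb{M}_F(\mset)$ is non-metrizable as a whole, its restriction to each compact $\mathcal{K}_{T,\eta}$ is metrizable, and one must carefully exploit both the separating property of $\mathbb{F}$ and this local metrizability to produce a \emph{finite} family of functionals whose joint moduli of continuity control the Skorohod modulus of $\pi^{(N)}$ inside $\mathcal{K}_{T,\eta}$ uniformly in $N$. This is precisely the content of Jakubowski's construction, so in practice the cleanest route is to cite \cite[Theorem 4.6]{Jak86} and \cite[Theorem 5.1, Chap.\ 1]{BillingsleyBook} directly rather than redo the construction.
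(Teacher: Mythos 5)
Your proposal is correct and follows essentially the same route as the paper, which likewise treats this as a known result obtained by combining Jakubowski's tightness criterion \cite[Theorem 4.6]{Jak86} (applicable since $\mathbb{M}_F(\mset)$ is a separable metric space, hence completely regular with metrizable compacts) with Prohorov's theorem \cite[Theorem 5.1, Chapter 1]{BillingsleyBook} for the relative-compactness conclusion. Your intermediate sketches of the necessity and sufficiency directions are reasonable but not needed, since, as you note at the end, the clean proof is precisely the citation the paper uses.
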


\begin{remark}\label{apx_Jremark}
A set that satisfies  properties (i) and (ii) in condition J2 is
\[\mathbb{F}=\{F: \exists f\in \C^1_c(\mset) \text{ such that } F(\mu)=\langle f,\mu\rangle,\;\; \forall\mu\in\mathbb{M}_F(\mset)\},\]
\end{remark}

\subsubsection{Relative Compactness of Sequences of Departures and Auxiliary Processes}\label{sec_tightDeparture}

First, note that  using \eqref{apx_Qbound} and \eqref{asm_initial_Xarrivalmoment}, we have for $\varphi\in\tightphiset$ and $\ell \geq 1$,
\begin{equation}\label{Qn_bound}
 \limsup_{N\to\infty} \Ept{\Quenbar_{\varphi^2,\ell}(t)}\leq \|\varphi\|^2_\infty \limsup_{N\to\infty}\Ept{ \xnbar(0)+\arrivalnbar(t)} <\infty.
\end{equation}

\begin{lemma}\label{lem_Mconv}
Suppose Assumptions \ref{asm_E}, \ref{asm_G}.\ref{asm_mean}, and \ref{asm_initial}.\ref{asm_initial_ind} hold, and fix  $\varphi\in\tightphiset$, $\ell\geq1$. Then, $\Mn_{\varphi,\ell}$ is a square integrable martingale. Moreover, for $t \geq 0$,
\begin{equation}\label{Alimit_temp2}
  \limsup_{N\to\infty}\Prob{\left| \sup_{0\leq s\leq t}\Mnbar_{\varphi,\ell}(s)\right|> \epsilon}=0, \quad
\epsilon > 0,
\end{equation}
 and $\Mnbar_{\varphi,\ell}\Rightarrow 0$ in $\D$, as $N\to\infty$.
\end{lemma}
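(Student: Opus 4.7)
The plan is to establish the three conclusions in order, all leveraging Proposition~\ref{prop_prelim_Qcomp} together with the uniform bound \eqref{Qn_bound}.

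First, I would upgrade the local martingale $\Mn_{\varphi,\ell}$ from Proposition~\ref{prop_prelim_Qcomp} to a true square-integrable martingale. The quadratic variation is $[\Mn_{\varphi,\ell}](t) = \Quen_{\varphi^2,\ell}(t)$, and \eqref{apx_Qbound} gives the pointwise bound $\Quen_{\varphi^2,\ell}(t) \leq \|\varphi\|_\infty^2 (\xn(0)+\arrivaln(t))$. For each fixed $N$, the right-hand side is integrable by Assumption~\ref{asm_initial}.\ref{asm_initial_X} and Lemma~\ref{asm_initial_remark} (since $\Ept{\xn(0)} = N \Ept{\xnbar(0)}<\infty$ and $\Ept{\arrivaln(t)}= N\Ept{\arrivalnbar(t)} <\infty$). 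Hence $\Ept{[\Mn_{\varphi,\ell}](t)}<\infty$ for all $t\geq 0$, which is the standard criterion for a local martingale to be a genuine square-integrable martingale; the stopped processes along a localizing sequence converge in $L^2$ by dominated convergence, yielding the martingale property for $\Mn_{\varphi,\ell}$ itself.

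Second, for the uniform convergence in probability, I would apply Doob's $L^2$ maximal inequality to the scaled martingale $\Mnbar_{\varphi,\ell}=\Mn_{\varphi,\ell}/N$ and combine it with the bilinearity of quadratic variation, i.e.\ $[\Mnbar_{\varphi,\ell}](t) = N^{-2}[\Mn_{\varphi,\ell}](t) = N^{-1} \Quenbar_{\varphi^2,\ell}(t)$:
\begin{equation*}
\Ept{\sup_{0\leq s\leq t} \bigl|\Mnbar_{\varphi,\ell}(s)\bigr|^2}
\leq 4\, \Ept{\bigl|\Mnbar_{\varphi,\ell}(t)\bigr|^2}
= \frac{4}{N}\,\Ept{\Quenbar_{\varphi^2,\ell}(t)}.
\end{equation*}
By \eqref{Qn_bound}, $\Ept{\Quenbar_{\varphi^2,\ell}(t)}$ is bounded uniformly in $N$, so the right-hand side tends to $0$ as $N\to\infty$. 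Markov's inequality then gives, for every $\epsilon>0$,
\begin{equation*}
\Prob{\sup_{0\leq s\leq t}\bigl|\Mnbar_{\varphi,\ell}(s)\bigr|>\epsilon}
\leq \frac{1}{\epsilon^2}\Ept{\sup_{0\leq s\leq t}\bigl|\Mnbar_{\varphi,\ell}(s)\bigr|^2}
\xrightarrow[N\to\infty]{} 0,
\end{equation*}
which is exactly \eqref{Alimit_temp2} (in fact, a stronger statement with $\limsup$ replaced by $\lim$).

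Third, the weak convergence $\Mnbar_{\varphi,\ell}\Rightarrow 0$ in $\D$ is a direct consequence of the previous step, since uniform convergence in probability to a continuous limit (here the constant zero process) on every compact time interval implies convergence in probability in the Skorokhod topology on $\D$, and convergence in probability implies weak convergence.

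I do not anticipate any significant obstacle: the hard work has already been done in Proposition~\ref{prop_prelim_Qcomp} (identifying the compensator and quadratic variation) and in deriving \eqref{Qn_bound}; what remains is a standard Doob-plus-Markov argument, with the $1/N$ scaling in the quadratic variation providing the decay that drives the martingale to zero.
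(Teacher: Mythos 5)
Your proposal is correct and follows essentially the same route as the paper: invoke Proposition \ref{prop_prelim_Qcomp} for the local-martingale property and quadratic variation $\Quen_{\varphi^2,\ell}$, use the bound \eqref{apx_Qbound}/\eqref{Qn_bound} to upgrade to a square-integrable martingale (the paper cites a standard theorem where you spell out the localization argument), and then combine Doob's inequality with the $1/N$ scaling and \eqref{Qn_bound} to get \eqref{Alimit_temp2} and hence convergence to zero in $\D$. No gaps.
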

\begin{proof}
By Proposition \ref{prop_prelim_Qcomp}, $\Mn_{\varphi,\ell}$ is a local martingale with  $[\Mn_{\varphi,\ell}]=\Quen_{\varphi^2,\ell}$. By  \eqref{Qn_bound}  and  \cite[Theorem 7.35]{klebBook}, $\Mn_{\varphi,\ell}$ is a square integrable $\{\filtn_t\}$-martingale  and for all $t\geq0$, $\mathbb{E}[(\Mn_{\varphi,\ell}(t))^2]= \mathbb{E}[\Quen_{\varphi^2,\ell}(t)].$ By Doob's inequality and \eqref{scaling_DR}, for every $T\geq 0$ and $\epsilon >0$, $\mathbb{P} \{ |\sup_{t\in[0,T]} \Mnbar_{\varphi,\ell}(t)|>\epsilon \} \leq \mathbb{E}[\Quenbar_{\varphi^2,\ell}(T)]/N\epsilon^2$. Together with  \eqref{Qn_bound}, this implies  \eqref{Alimit_temp2}, which   in turn shows  that $\Mnbar_{\varphi,\ell}$ converges to zero  in $\D$ in probability, and hence  in distribution.
\end{proof}

To obtain further tightness results on the departure processes, we recall another many-server model, the so-called GI/GI/N queue, studied in \cite{KasRam11}. In a GI/GI/N queue, arriving jobs choose an idle server at random if there exists one, or if all servers are busy,  join a common queue and enter service in a FCFS manner when servers become free. Equivalently, one can view the GI/GI/N as a network of $N$ parallel servers in which  each server has its own queue and newly arriving jobs join  the queue that has the least residual work (see \cite[Section XII.1]{Asm03}). A fluid limit for the GI/GI/N queue in the same regime (i.e.,  when $N\to\infty$ and the arrival rate is proportional to $N$) is obtained in \cite{KasRam11}. Although the $SQ(d)$ and the GI/GI/N models have very different routing mechanisms (leading to completely different dynamics of the cumulative service entry process $\kn$) the total departure process $\dn_1$ and the measure-valued process $\nun_1$ that keeps track of  ages of all jobs in service in the $SQ(d)$ model share some common properties with their counterparts (denoted in \cite{KasRam11} by $\dn$ and $\nun$, respectively)  in the GI/GI/N model.    As a result, the very same techniques used in \cite{KasRam11} to prove certain tightness estimates for $\{\dn\}_{N\in\N}$ and $\{\nun\}_{N\in\N}$ in the GI/GI/N model can be applied to also  prove analogous tightness estimates for the particular processes $\{\dn_1\}_{N\in\N}$ and $\{ \nun_1\}_{N\in\N}$ in the $SQ(d)$ model. Thus, the latter are  summarized in Lemmas \ref{lem_tightDepart}--\ref{lem_GGNconv} below; outlines of the proofs with a more detailed connections to the corresponding result in the GI/GI/N queue are given in Appendix \ref{apx_ggn}.

The first result is used to prove Lemma \ref{lem_Qtight} below and, along with Lemma \ref{lem_nuContainment2},  to prove relative compactness of the sequence of state processes in Proposition \ref{prop_tightnu}.

\begin{lemma}\label{lem_tightDepart}
Suppose Assumptions \ref{asm_E}, \ref{asm_G}.\ref{asm_mean}, and \ref{asm_initial}.\ref{asm_initial_ind} hold. Then, for $\ell\geq 1$ and $\varphi\in\tightphiset$, $\{\Anbar_{\varphi,\ell}\}_{N\in\N}$, $\{\dnbar_{\ell}\}_{N\in\N}$ and  $\{\Quenbar_{\varphi,\ell}\}_{N\in\N}$  are relatively compact  in $\D$.
\end{lemma}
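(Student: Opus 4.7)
The plan is to verify Kurtz's relative compactness criteria K1 and K2 of Proposition \ref{Kurtz} for each of the three sequences, using the martingale decompositions of Proposition \ref{prop_prelim_Qcomp} to reduce the tightness of $\{\dnbar_\ell\}_N$ and $\{\Quenbar_{\varphi,\ell}\}_N$ to that of $\{\Anbar_{\varphi,\ell}\}_N$.  Specifically, by Proposition \ref{prop_prelim_Qcomp} and the scaling \eqref{scaling_DR},
\[
\Quenbar_{\varphi,\ell} \;=\; \Anbar_{\varphi,\ell} + \Mnbar_{\varphi,\ell}, \qquad \dnbar_\ell \;=\; \Quenbar_{\f1,\ell} \;=\; \Anbar_{\f1,\ell} + \Mnbar_{\f1,\ell},
\]
and Lemma \ref{lem_Mconv} shows that the scaled martingale parts converge in distribution to zero in $\D$.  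Since addition is continuous in $\D$ at points whose second coordinate is the constant zero path, tightness of $\{\Anbar_{\varphi,\ell}\}_N$ (for every $\varphi\in\tightphiset$ and $\ell\geq 1$) will automatically yield tightness of both $\{\dnbar_\ell\}_N$ and $\{\Quenbar_{\varphi,\ell}\}_N$.

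For K1 applied to $\{\Anbar_{\varphi,\ell}\}_N$, I would exploit the monotonicity $\nun_\ell(t)\leq\nun_1(t)$ as positive measures (immediate from \eqref{def_nuln}) together with $h\geq 0$ to obtain
\[
\bigl|\Anbar_{\varphi,\ell}(t)\bigr| \;\leq\; \|\varphi\|_\infty\,\Anbar_{\f1,1}(t) \;=\; \|\varphi\|_\infty\bigl(\dnbar(t) - \Mnbar_{\f1,1}(t)\bigr).
\]
The mass-balance bound \eqref{apx_massTotal} yields $\dnbar(t)\leq\xnbar(0)+\arrivalnbar(t)$, whose first moment is uniformly bounded in $N$ by \eqref{asm_initial_Xarrivalmoment}, while $\Mnbar_{\f1,1}(t)\to 0$ in $L^2$ by Lemma \ref{lem_Mconv} (since $\Ept{[\Mn_{\f1,1}](t)}/N^2 = \Ept{\dnbar(t)}/N\to 0$).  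Combining with Markov's inequality gives $\sup_N\Prob{|\Anbar_{\varphi,\ell}(t)|>r}\to 0$ as $r\to\infty$, verifying K1.

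The main obstacle is K2.  Since $\Anbar_{\varphi,\ell}$ is absolutely continuous, I would work with the continuous modulus $w(\cdot,\delta,T)$ in place of $w^\prime$ and use the same monotonicity to reduce the problem to uniform control of
\[
\Anbar_{\f1,1}(t+\delta)-\Anbar_{\f1,1}(t) \;=\; \frac{1}{N}\int_t^{t+\delta}\langle h,\nun_1(s)\rangle\,ds.
\]
The difficulty is that Assumption \ref{asm_G}.\ref{asm_h} allows $h$ to be unbounded near $L$, so this integrand is not pathwise bounded.  My plan is to split the age interval at some $K\in(\endsupzero,L)$ on which $h$ is bounded: the part of the integral coming from ages in $[0,K]$ is pathwise dominated by $(\sup_{[0,K]} h)\,\delta$, while the part coming from $(K,L)$ is handled via its own martingale decomposition, combined with renewal-type estimates (using $\overline G$ and the fact that a fresh job takes a long time to age past $K$) on the mass of $\nunbar_1(s)$ in $(K,L)$; this residual contribution can be made uniformly small in $N$ by sending $K\uparrow L$.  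This is precisely the strategy developed in \cite{KasRam11} for the analogous sequences in the GI/GI/N model, and it is applicable here because, as noted in the paragraph preceding the statement, the dynamical identities \eqref{prelim_dymamics_balance} and \eqref{prelimDynamicAlt} for $\dnbar_1$ and $\nunbar_1$ have the same form as their GI/GI/N counterparts; the detailed implementation is deferred to Appendix \ref{apx_ggn}.
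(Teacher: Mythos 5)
Your proposal is correct and follows essentially the same route as the paper: verify Kurtz's criteria for $\{\Anbar_{\varphi,\ell}\}$ (K1 from the martingale identity and the moment bound \eqref{Qn_bound}/\eqref{asm_initial_Xarrivalmoment}, K2 via the GI/GI/N-style estimate of \cite{KasRam11} deferred to Appendix \ref{apx_ggn}, which is exactly Lemma \ref{lem_DAbounds}(c)), reduce $\ell\geq 2$ to $\ell=1$ by the monotonicity $\nun_\ell\leq\nun_1$, and transfer compactness to $\{\Quenbar_{\varphi,\ell}\}$ and $\{\dnbar_\ell\}$ through $\Quenbar_{\varphi,\ell}=\Anbar_{\varphi,\ell}+\Mnbar_{\varphi,\ell}$ with $\Mnbar_{\varphi,\ell}\Rightarrow 0$ (Lemma \ref{lem_Mconv}). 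The only cosmetic difference is your K1 argument via $\Anbar_{\f1,1}=\dnbar-\Mnbar_{\f1,1}$ and Markov, where the paper takes expectations of $\Quen_{|\varphi|,\ell}$ directly; both rest on the same martingale decomposition.
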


\begin{proof}
See Appendix \ref{apx_ggn}.
\end{proof}

It follows from  \eqref{apx_Qbound} that the linear functional $\Quen_{\cdot,\ell}(t):\varphi\mapsto\Quen_{\varphi,\ell}(t)$  can be identified with a random  finite nonnegative (Radon) measure on $\supint\times\R_+$ (see Section \ref{sec_introNotation}) and $\Quen_{\cdot,\ell} = \{\Quen_{\cdot,\ell}(t), t\geq 0\}$  can be viewed as an  $\mathbb{M}_F(\supint\times\R_+)$-valued process.

\begin{lemma}\label{lem_Qtight}
Suppose Assumptions \ref{asm_E}-\ref{asm_initial} hold. Then, for  $\ell\geq1$,  $\{\Quenbar_{\cdot,\ell}\}$ is relatively compact in $\mathbb{D}_{\mathbb{M}_F(\supint\times\R_+)}\hc$.
\end{lemma}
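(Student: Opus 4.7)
\medskip

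\noindent\textbf{Plan of proof of Lemma \ref{lem_Qtight}.} I would verify Jakubowski's two criteria (Proposition \ref{Jakob}) applied to the space $\mset=\supint\times\R_+$. For the separating-family condition J2, I take $\mathbb{F}=\{\mu\mapsto\langle f,\mu\rangle : f\in\C^1_c(\supint\times\R_+)\}$ as suggested by Remark \ref{apx_Jremark}; this family is closed under addition and separates points of $\mathbb{M}_F(\mset)$. For each such $f$, we have $F_f(\Quenbar_{\cdot,\ell})=\Quenbar_{f,\ell}$, and since $\C^1_c(\supint\times\R_+)\subset\tightphiset$, relative compactness of $\{\Quenbar_{f,\ell}\}_N$ in $\D$ is exactly the content of Lemma \ref{lem_tightDepart}. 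Thus J2 is immediate.

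The compact containment condition J1 is the substantive part. Fix $T,\eta>0$. Observe that $\Quen_{\cdot,\ell}(t)$ is nonnegative and nondecreasing in $t$ (as a measure), and its support at time $t\leq T$ lies in $\supint\times[0,T]$. By Prohorov's theorem on $\mathbb{M}_F(\mset)$, a set is relatively compact iff the total masses are uniformly bounded and the family is uniformly tight. I therefore aim to construct a compact $\mathcal{K}\subset\mathbb{M}_F(\mset)$ so that $\Quenbar_{\cdot,\ell}(T)\in\mathcal{K}$ with probability at least $1-\eta$ uniformly in $N$; by monotonicity of the mass and the support restriction, this forces $\Quenbar_{\cdot,\ell}(t)\in\mathcal{K}$ for every $t\in[0,T]$. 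The total mass bound is
\[
\Quenbar_{\cdot,\ell}(T)(\mset)=\Quenbar_{\f1,\ell}(T)=\dnbar_\ell(T)\leq \xnbar(0)+\arrivalnbar(T),
\]
which is bounded in probability uniformly in $N$ by Lemma \ref{asm_initial_remark} and Markov's inequality. Tightness in the departure-time coordinate is free (support in $[0,T]$), so the core task is to control $\Quenbar_{\cdot,\ell}(T)((m,L)\times\R_+)$ as $m\uparrow L$.

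To bound this mass I split the departed jobs into post-time-$0$ arrivals and initial jobs. For $j\geq 1$, the service time $v_j$ is i.i.d.\ $G$ and independent of both the arrival process and the routing, so by conditioning on $\arrivaln(T)$ (a stopping time for the arrival filtration, independent of the $v_j$),
\[
\Eptil{\tfrac{1}{N}\#\{j\geq 1: j\leq \arrivaln(T),\, v_j>m\}}\leq \bar G(m)\,\Eptil{\arrivalnbar(T)},
\]
which by Lemma \ref{asm_initial_remark} is bounded by $C\bar G(m)$ uniformly in $N$ and tends to $0$ as $m\uparrow L$. For initial jobs, using \eqref{vInitial} and the fact that $\bar G(m)/\bar G(a)\leq 1$ when $a\geq m$, one obtains
\[
\Eptil{\tfrac{1}{N}\#\{j\leq 0: \deptn_j\leq T,\, v_j>m\}\big|I_0}\leq \bar G(m)\langle 1/\bar G,\nunbar_1(0)\mathbb{1}_{[0,m)}\rangle + \nunbar_1(0)([m,L)) + \bar G(m)\xnbar(0).
\]
Applying Markov's inequality and combining bounds yields a compact containment set after choosing, for each $k\in\N$, a level $m_k<L$ so that $k$ times the above expected mass at level $m_k$ is smaller than $\eta/2^k$.

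The main obstacle is the initial-jobs estimate, because the conditional density of $v_j$ given age $a_j(0)$ blows up as $a_j(0)\uparrow L$. This is handled using the convergence $\nunbar(0)\to\nu(0)$ in $\s$ from Assumption \ref{asm_initial}.\ref{asm_initial_nu}: since $\nu_1(0)\in\mathbb{M}_{\leq 1}\supint$ is a finite measure on $[0,L)$, it places arbitrarily small mass on $[a,L)$ for $a$ near $L$, and the Portmanteau theorem applied to the closed-in-$\supint$ set $[a,L)$ shows that $\nunbar_1(0)([a,L))$ is uniformly small in $N$ with high probability. This, together with the boundedness in probability of $\langle 1/\bar G,\nunbar_1(0)\mathbb{1}_{[0,m)}\rangle$ for fixed $m<L$, makes the bound above go to $0$ uniformly in $N$ as $m\uparrow L$, completing the verification of J1.
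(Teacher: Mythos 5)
Your overall route is the right one and close in spirit to the paper's: J2 via the separating family of Remark \ref{apx_Jremark} together with Lemma \ref{lem_tightDepart}, and J1 via compact containment for the measure at time $T$ plus monotonicity in $t$ (and, implicitly, in $\ell$, since dropping the indicator $\indic{\custDS_j\geq\ell}$ reduces everything to the case $\ell=1$). The paper disposes of J1 by citing Lemma \ref{lem_Dcontainment} (an estimate imported from the GI/GI/N analysis of \cite{KasRam11}) for $\ell=1$ and the pointwise bound $\Quen_{\varphi,\ell}\leq\Quen_{\varphi,1}$ for $\ell\geq2$; you instead re-derive that estimate directly, much as the paper does for the routing measures in Lemma \ref{lem_Rtight}. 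Your mass bound, the treatment of post-time-zero arrivals via $\overline G(m)\Ept{\arrivalnbar(T)}$, and the decomposition of the initial jobs are all correct (and the support observation handles tightness in the time coordinate).

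The gap is in your last paragraph, in the treatment of the term $\overline G(m)\,\langle \f1_{[0,m)}/\overline G,\nunbar_1(0)\rangle$. Knowing that $\langle \f1_{[0,m)}/\overline G,\nunbar_1(0)\rangle$ is bounded in probability for each \emph{fixed} $m<\endsup$, and that $\overline G(m)\to0$, does not give that the product is small uniformly in $N$ as $m\uparrow\endsup$: the only available bound on the bracket, $\nunbar_1(0)([0,m))/\overline G(m)\leq 1/\overline G(m)$, blows up at exactly the rate at which the prefactor vanishes, so the product is a priori only $O(1)$. What you need is precisely \eqref{534} of Lemma \ref{lem_nuContainment}, which is not a consequence of fixed-$m$ boundedness. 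The repair is short: since $\overline G(m)/\overline G(x)\leq1$ for $x<m$, split the integral at a fixed level $a<m$,
\[
\int_{[0,m)}\frac{\overline G(m)}{\overline G(x)}\,\nunbar_1(0,dx)\;\leq\;\frac{\overline G(m)}{\overline G(a)}\,\nunbar_1(0)([0,a))\;+\;\nunbar_1(0)([a,\endsup)),
\]
choose $a$ so that the second term is small uniformly in (all large) $N$ --- that is exactly your Portmanteau/tail argument, i.e.\ the content of \eqref{533} --- and then send $m\uparrow\endsup$ for fixed $a$ to kill the first term. With this replacement (or by simply invoking Lemma \ref{lem_nuContainment}, as the paper effectively does through Lemma \ref{lem_Dcontainment}), your verification of J1, and hence the proof, is complete.
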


\begin{proof}
See Appendix \ref{apx_ggn}
\end{proof}

The next two results are on properties of $\nunbar_1$. Lemma \ref{lem_nuContainment} is used to prove relative compactness of the routing sequence in Lemma \ref{lem_Rtight}, while Lemma \ref{lem_nuContainment2} is used to prove relative compactness of $\{\nunbar_\ell\}$ in Lemma \ref{prop_tightnu}.

\begin{lemma}\label{lem_nuContainment}
Suppose Assumption \ref{asm_initial} holds. Then
\begin{equation}\label{533}
  \lim_{m\to\endsup}\sup_N\Ept{\nunbar_1(0,(m,\endsup))}=0,
\end{equation}
and if $\endsup<\infty,$
\begin{equation}\label{534}
  \lim_{m\to\endsup}\sup_N\Ept{\int_{[0,m)}\frac{\overline{G}(m)}{\overline{G}(x)}\nunbar_1(0,dx)}=0.
\end{equation}
\end{lemma}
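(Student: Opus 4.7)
The key inputs are Assumption \ref{asm_initial}.\ref{asm_initial_nu} --- which (via the metric $d_{\s}$) gives $\nunbar_1(0) \Rightarrow \nu_1(0)$ weakly in $\mathbb{M}_{\leq 1}\supint$, $\mathbb{P}$-almost surely --- together with the deterministic bound $\langle \f1, \nunbar_1(0)\rangle \leq 1$. Combined with bounded convergence, the latter promotes the a.s.\ weak convergence to $\Ept{\langle f, \nunbar_1(0)\rangle} \to \langle f, \nu_1(0)\rangle$ for every $f \in \mathbb{C}_b\supint$. Since $\nu_1(0)$ is a finite measure on $\supint$, its tail $\nu_1(0,(m,L))$ vanishes as $m \to L$; this is the key fact that drives both estimates.

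For \eqref{533}, given $\epsilon > 0$ I would pick $a < b < L$ with $\nu_1(0, [a,L)) < \epsilon/2$, and a continuous cutoff $\phi : \supint \to [0,1]$ supported in $[a,L)$ with $\phi \equiv 1$ on $[b, L)$. The pointwise bound $\nunbar_1(0, (m, L)) \leq \langle \phi, \nunbar_1(0)\rangle$ for $m \geq b$, combined with convergence in expectation, yields $N_0 \in \N$ such that $\Ept{\nunbar_1(0,(m,L))} < \epsilon/2$ for all $m \geq b$ and $N \geq N_0$. For each of the finitely many $N < N_0$, $\nun_1(0)$ is a.s.\ a finite positive measure concentrated on $\supint$, hence $\nunbar_1(0,(m,L)) \downarrow 0$ a.s.\ as $m \uparrow L$; bounded convergence then yields a threshold $m_N < L$ with $\Ept{\nunbar_1(0,(m,L))} < \epsilon/2$ for $m \geq m_N$. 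Taking $m^{\ast} \doteq \max(b, m_1, \ldots, m_{N_0-1})$ gives a single threshold beyond which $\sup_N \Ept{\nunbar_1(0,(m,L))} < \epsilon$, proving \eqref{533}.

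For \eqref{534}, I would reduce to \eqref{533} by splitting the integral at a fixed $m_0 < L$ chosen so that $\sup_N \Ept{\nunbar_1(0,(m_0,L))} < \epsilon/2$, which is possible by \eqref{533}. On $[0, m_0]$, monotonicity of $\overline{G}$ gives the deterministic bound $\overline{G}(m)/\overline{G}(m_0)$ on the integrand, which tends to $0$ as $m \to L$ since $G$ has a density (Assumption \ref{asm_G}.\ref{asm_mean}) and so $\overline{G}$ is continuous on $[0,L]$ with $\overline{G}(L)=0$. On $(m_0, m)$ the integrand is bounded by $1$, so the second piece is bounded by $\nunbar_1(0, (m_0, L))$, whose $\sup_N$-expectation is less than $\epsilon/2$ by the choice of $m_0$. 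Summing the two pieces gives the claim.

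The main obstacle --- which the large-$N$/small-$N$ decomposition above is designed to surmount --- is that a.s.\ weak convergence of the random sub-probability measures $\nunbar_1(0)$ delivers only a $\limsup_N$ control on tail expectations, not the required $\sup_N$ control. The finite-$N$ bounded-convergence step bridges this gap, after which \eqref{534} follows essentially mechanically from \eqref{533} and the decay of $\overline{G}$ near $L$.
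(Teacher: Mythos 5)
Your proof is correct, but it takes a genuinely different route from the paper. The paper does not argue directly at all: it couples the SQ($d$) system with a GI/GI/N queue having the same arrivals, service times, and initial data, notes that under this coupling the initial age measures coincide ($\tilde\nu^{(N)}(0)=\nu^{(N)}_1(0)$) so that Assumption \ref{asm_initial} transfers to Assumption 1(3) of \cite{KasRam11}, and then quotes \cite[Lemma 5.12]{KasRam11}, whose conclusions (5.33)--(5.34) are exactly \eqref{533}--\eqref{534}. You instead give a self-contained argument from Assumption \ref{asm_initial}.\ref{asm_initial_nu}: a.s.\ weak convergence of $\nunbar_1(0)$ to $\nu_1(0)$ plus the deterministic bound $\langle\f1,\nunbar_1(0)\rangle\leq 1$ upgraded by bounded convergence, a continuous cutoff to dominate the tail, and a large-$N$/small-$N$ split to turn a $\limsup_N$ into the required $\sup_N$; \eqref{534} then follows by splitting at a fixed $m_0$ and using $\overline G(m)/\overline G(m_0)\to 0$ as $m\to\endsup$, which uses the continuity of $G$ at $\endsup$ from the standing Assumption \ref{asm_G}.\ref{asm_mean} (this extra hypothesis is genuinely needed for \eqref{534}: an atom of $G$ at $\endsup$ would make it fail, so it is right that you flagged it even though the lemma's statement only names Assumption \ref{asm_initial}). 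Your route buys a short elementary proof that does not rely on the reader having \cite{KasRam11} at hand, while the paper's route is essentially free given the GI/GI/N machinery it already imports for the neighboring lemmas. One small caveat: your argument selects $a<b<\endsup$ with $\nu_1(0,[a,\endsup))<\epsilon/2$, which implicitly treats $\nu_1(0)$ as deterministic (the natural reading of Assumption \ref{asm_initial}.\ref{asm_initial_nu}); if one wished to allow a random limit, the same proof goes through after choosing $a$ so that $\Ept{\nu_1(0,[a,\endsup))}<\epsilon/2$, which is possible by bounded convergence.
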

\begin{proof}
See Appendix \ref{apx_ggn}
\end{proof}

\begin{lemma}\label{lem_nuContainment2}
If Assumption \ref{asm_initial} holds, then  $\{\nunbar_1 \}_{N\in\N}$  satisfies condition J1 with
\begin{equation}\label{compact2}
    \cal K_{\eta,T}=\{\mu\in\mathbb{M}_F(\supint): \nu([m,\endsup))\leq \eta\},
\end{equation}
for some $m < \endsup,$ if $\endsup < \infty$, and with
\begin{equation}\label{compact1}
    \cal K_{\eta,T}=\{\mu\in\mathbb{M}_F(\R_+): \mu(r(n)+T,\infty)\leq \frac{1}{n} \text{ for all }n<-\lceil\log\eta/\log2\rceil\}.
  \end{equation}
for some  sequence  $r(n)\uparrow\infty$ if $\endsup=\infty$.
\end{lemma}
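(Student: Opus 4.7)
The plan is to derive a uniform-in-$N$ bound on $\Ept{\sup_{t\in[0,T]}\nunbar_1(t,[m,L))}$ that vanishes as $m$ approaches $L$, and then convert it to the probability statement of J1 via Markov's inequality plus a union bound over a sequence of cutoffs, the associated intersection of tail-bound events being Prohorov-compact.

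The pathwise starting point is that, by the age dynamics \eqref{def_agen} and the state representation \eqref{def_nuln}, a job $j$ contributes to $\nun_1(t,[m,L))$ at time $t\in[0,T]$ exactly when $\kninv_j\le t-m$ and $v_j>t-\kninv_j$. Writing $x_j=a_j(0)=-\kninv_j$ for initial-in-service jobs ($\kninv_j\le 0$) and treating new arrivals ($\kninv_j>0$) separately, a case analysis on whether $x_j\ge m$ or $\max(0,m-T)\le x_j<m$ yields the pathwise inequality
\begin{align*}
\sup_{t\in[0,T]}\nun_1(t,[m,L))\le{}& \sum_{j\le 0}\indicil{x_j\ge m}+\sum_{j\le 0}\indicil{\max(0,m-T)\le x_j<m}\indicil{v_j>m}\\
& {}+\sum_{j\ge 1}\indicil{\kninv_j\le T-m}\indicil{v_j>m}.
\end{align*}
Taking expectations and invoking \eqref{vInitial} for the initial-job terms (which extracts the conditional survival factor $\overline G(m)/\overline G(x_j)$), together with the independence of $v_j$ from $\kninv_j$ for $j\ge 1$ (the count of such $j$ being bounded by $\xn(0)+\arrivaln(T)$ via \eqref{apx_massTotal}), I obtain
\[
\Ept{\sup_{t\in[0,T]}\nunbar_1(t,[m,L))}\le \Ept{\nunbar_1(0,[m,L))}+\Ept{\int_{[\max(0,m-T),m)}\frac{\overline G(m)}{\overline G(x)}\nunbar_1(0,dx)}+\overline G(m)\Ept{\xnbar(0)+\arrivalnbar(T)}.
\]
For $L<\infty$, the three right-hand terms vanish as $m\uparrow L$: the first by \eqref{533}, the second by \eqref{534} (enlarging the lower integration limit to $0$), and the third from $\overline G(m)\to 0$ combined with the $N$-uniform bound $\limsup_N\Ept{\xnbar(0)+\arrivalnbar(T)}<\infty$ from Lemma \ref{asm_initial_remark} and Assumption \ref{asm_initial}.\ref{asm_initial_X}. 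For $L=\infty$, I will impose $m>T$ from the outset, so the new-arrival sum is identically zero ($\kninv_j\le T-m<0$ is impossible for $j\ge 1$); using $\overline G(m)/\overline G(x)\le 1$ on $[m-T,m)$, the first two terms combine into $\Ept{\nunbar_1(0,(m-T,\infty))}$, which tends to zero as $m\to\infty$ by \eqref{533}.

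To conclude, I will choose a sequence of cutoffs $m_k\uparrow L$ with $m_k>T$ (resp.\ $r(n)\uparrow\infty$ for $L=\infty$) so that the expectation bound above is at most $4^{-k}\eta$ (resp.\ $\eta/(n2^{n})$). Markov's inequality applied level by level, followed by a union bound, then yields $\Prob{\sup_{t\in[0,T]}\nunbar_1(t,[m_k,L))\le 2^{-k}\text{ for every }k}\ge 1-\eta$ (and its $L=\infty$ analogue with $1/n$ and $r(n)+T$). The resulting level-set of sub-probability measures satisfying these tail bounds is Prohorov-compact in $\mathbb{M}_F(\supint)$ and serves as $\cal K_{\eta,T}$, witnessing J1. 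The only genuinely delicate point is the pathwise case analysis producing the three sums above; once it is in place, everything else is a routine combination of Lemma \ref{lem_nuContainment}, Markov's inequality, and Prohorov's theorem.
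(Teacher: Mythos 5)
Your argument is correct and is essentially the paper's own proof: the argument in Appendix D simply imports \cite{KasRam11}, Lemma 5.12, which runs exactly your decomposition (initial jobs in service with large age, initial jobs that age past the cutoff weighted by the survival factor $\overline G(m)/\overline G(x)$, and post-time-zero service entries with $v_j>m$), driven by \eqref{533}, \eqref{534} and the linear growth of ages, followed by Markov's inequality, a union bound over a sequence of cutoffs, and Prohorov's theorem. The only bookkeeping slips are that your third sum should run over all jobs with $\kninv_j>0$ (so that initially queued jobs with indices $j\le 0$ are included)---which your final bound via $\kn(T)\le \xn(0)+\arrivaln(T)$ already covers---and that the level set you construct is only relatively compact, so one should pass to its closure or phrase the tail conditions with open intervals as in \eqref{compact1}.
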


\begin{proof}
See Appendix \ref{apx_ggn}.
\end{proof}

The next result, along with Lemma \ref{lem_Mconv}, is used in Section \ref{sec_tightState} to identify the limit of the weighted departure processes
(Proposition \ref{prop_Alimit}).

\begin{lemma}\label{lem_GGNconv}
    Suppose Assumptions \ref{asm_E}--\ref{asm_initial} hold. For every $\ell\geq 1$, suppose $\{\nunbar_\ell\}_{N\in\N}$ converges almost surely to some $\nu_\ell\in\mathbb{D}_{M_F\supint}\hc$ along a subsequence. Then,
      \begin{equation}\label{apx_Alimit2}
        \limsup_{N\to\infty}\Ept{ \left|\Anbar_{\varphi,\ell}(t)-\int_0^t\langle \varphi(\cdot,s)h(\cdot),\nu_\ell(s)\rangle ds\right|}=0.
      \end{equation}
\end{lemma}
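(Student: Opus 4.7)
The main obstacle is that the hazard rate $h$ may blow up near the right endpoint $\endsup$ of the service time support, so the a.s.\ weak convergence $\nunbar_\ell(s)\Rightarrow\nu_\ell(s)$ does not by itself yield convergence of the integrals $\langle \varphi(\cdot,s)h,\nunbar_\ell(s)\rangle$. I would proceed by a truncation argument: split the integrand into a bulk part where $h$ is bounded and a tail part near $\endsup$, handle the bulk via Portmanteau together with bounded convergence in $s$, and control the tail uniformly in $N$ via Lemma~\ref{lem_nuContainment} and Proposition~\ref{prop_prelim_Qcomp}.

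First I would produce a uniform $L^1$ bound.  Sequences in $\s$ are ordered, so $\nunbar_\ell(s)\leq \nunbar_1(s)$ as measures and
\begin{equation*}
\left|\langle \varphi(\cdot,s)h,\nunbar_\ell(s)\rangle\right| \leq \|\varphi\|_\infty \langle h,\nunbar_1(s)\rangle.
\end{equation*}
Applying Proposition~\ref{prop_prelim_Qcomp} with $\varphi\equiv \f1$ and $\ell=1$ gives $\Mnbar_{\f1,1} = \dnbar_1 - \Anbar_{\f1,1}$ with $\Anbar_{\f1,1}(t)=\int_0^t\langle h,\nunbar_1(s)\rangle ds$.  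Since the martingale has zero mean (Lemma~\ref{lem_Mconv}), and by \eqref{apx_massTotal} and Lemma~\ref{asm_initial_remark},
\begin{equation*}
\Ept{\int_0^t \langle h,\nunbar_1(s)\rangle ds} \;=\; \Ept{\dnbar_1(t)} \;\leq\; \Ept{\xnbar(0)+\arrivalnbar(t)},
\end{equation*}
which is uniformly bounded in $N$ by \eqref{asm_initial_Xarrivalmoment}.  This will upgrade a.s.\ bulk convergence to $L^1$ convergence via dominated convergence.

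Next I truncate.  Fix $m$ with $\endsupzero<m<\endsup$ and $\psi_m\in\C_c^1\supint$ with $\psi_m\equiv 1$ on $[0,m]$, $0\leq\psi_m\leq 1$, and compact support in $\supint$.  By Assumption~\ref{asm_G}.\ref{loc_b}, $h$ is bounded on $\mathrm{supp}(\psi_m)$, so $h\psi_m$ is bounded; by Assumption~\ref{asm_G}.\ref{hazard} it is either continuous or lower semi-continuous.  Decompose
\begin{equation*}
\Anbar_{\varphi,\ell}(t) = \int_0^t \langle \varphi(\cdot,s) h\psi_m,\nunbar_\ell(s)\rangle ds + \int_0^t \langle \varphi(\cdot,s)h(1-\psi_m),\nunbar_\ell(s)\rangle ds,
\end{equation*}
with an analogous split for the target integral.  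For the bulk summand, since $\nunbar_\ell\to\nu_\ell$ a.s.\ in $\mathbb{D}_{\mathbb{M}_F\supint}\hc$ we have $\nunbar_\ell(s)\Rightarrow \nu_\ell(s)$ and $\nunbar_\ell(s)(\supint)\to\nu_\ell(s)(\supint)$ for a.e.\ $s\in[0,t]$; approximating $h\psi_m$ from below by bounded continuous functions and invoking Portmanteau yields $\langle \varphi h\psi_m,\nunbar_\ell(s)\rangle \to \langle \varphi h\psi_m,\nu_\ell(s)\rangle$ for a.e.\ $s$, and bounded convergence in $s$ together with the moment bound above gives $L^1$ convergence of the bulk integrals for each fixed $m$.

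The main obstacle is showing the tail vanishes uniformly in $N$ as $m\to\endsup$.  Applying Proposition~\ref{prop_prelim_Qcomp} with the test function $(x,s)\mapsto 1-\psi_m(x)\in\tightphiset$ and $\ell=1$, taking expectations, and using \eqref{def_Qphi} together with Remark~\ref{remark_custX},
\begin{equation*}
\Ept{\int_0^t \langle h(1-\psi_m),\nunbar_1(s)\rangle ds} = \Ept{\Quenbar_{1-\psi_m,1}(t)} = \frac{1}{N}\Ept{\sum_{j\geq \j0}(1-\psi_m(\serviceTime_j))\indic{\deptn_j\leq t}}.
\end{equation*}
Splitting the sum by job category (post-time-$0$ arrivals with $\serviceTime_j\sim G$ i.i.d.\ and independent; initial jobs in service separated according to whether $\agen_j(0)\leq m$ or $\agen_j(0)>m$, with conditional law given by \eqref{vInitial}; and initial jobs in queue whose $\serviceTime_j$ are i.i.d.\ $G$ and independent of $I_0$) and using $1-\psi_m\leq \mathbf{1}_{[m,\endsup)}$, one bounds the expression above by
\begin{equation*}
\overline{G}(m)\Ept{\xnbar(0)+\arrivalnbar(t)} + \Ept{\int_{[0,m)}\frac{\overline G(m)}{\overline G(x)}\nunbar_1(0,dx)} + \Ept{\nunbar_1(0,(m,\endsup))}.
\end{equation*}
Each term vanishes uniformly in $N$ as $m\to\endsup$: the first by $\overline G(m)\to 0$ and \eqref{asm_initial_Xarrivalmoment}, and the other two by Lemma~\ref{lem_nuContainment}.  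The corresponding tail for $\nu_\ell$ is controlled by the same bound via Fatou applied to the a.s.\ weak convergence (since $h(1-\psi_m)\geq 0$ and lower semi-continuous).  A triangle inequality combining the bulk $L^1$ convergence with the uniform tail bound then closes the proof.
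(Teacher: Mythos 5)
Your uniform moment bound and your treatment of the tail near $\endsup$ are essentially sound: the identity $\Ept{\Anbar_{1-\psi_m,1}(t)}=\Ept{\Quenbar_{1-\psi_m,1}(t)}$ plus the job-by-job splitting reproduces, in substance, the uniform tail estimate \eqref{apx581} of Lemma \ref{lem_DAbounds}(b) that the paper uses (and the analogous bound the paper derives in the proof of Lemma \ref{lem_Rtight}). The genuine gap is in the bulk step. You claim that ``approximating $h\psi_m$ from below by bounded continuous functions and invoking Portmanteau yields $\langle \varphi h\psi_m,\nunbar_\ell(s)\rangle \to \langle \varphi h\psi_m,\nu_\ell(s)\rangle$ for a.e.\ $s$.'' This does not follow. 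First, Assumption \ref{asm_G}.\ref{hazard} only says $h$ is \emph{bounded or} lower semi-continuous on $(\endsupzero,\endsup)$; it imposes no regularity whatsoever on $h$ on $[0,\endsupzero]$ (the density $g$ is merely locally bounded), so $h\psi_m$ is in general just a bounded measurable function, neither continuous nor lsc. Second, even when $h\psi_m$ is lsc, Portmanteau gives only the one-sided inequality $\liminf_N\langle \varphi h\psi_m,\nunbar_\ell(s)\rangle \geq \langle \varphi h\psi_m,\nu_\ell(s)\rangle$ (for $\varphi\geq 0$); weak convergence never yields two-sided convergence of integrals of discontinuous integrands, because nothing a priori prevents $\nu_\ell(s)$ from charging the discontinuity set of $h$. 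For the same reason your Fatou argument for the tail of the limit breaks down in the ``bounded'' branch of Assumption \ref{asm_G}.\ref{hazard} (that part is repairable by bounding $h(1-\psi_m)\leq \|h\|_{\infty,(m,\endsup)}\f1_{(m,\endsup)}$ and using lsc of the open-set indicator), but the bulk step is not repairable by weak convergence alone.

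The missing ingredient is exactly what the paper supplies before invoking the KasRam11 argument: the regularity estimate \eqref{apx517}, $\int_0^T\langle f,\nu_1(s)\rangle ds\leq \tilde L(m,T)\int_{\supint}f(x)dx$ for nonnegative $f$ supported in $[0,m]$, together with its prelimit analogue \eqref{apx57} of Lemma \ref{lem_DAbounds}(a), which holds uniformly in $N$. These two bounds say that the time-integrated age measures (prelimit and limit) are dominated by a constant times Lebesgue measure on $[0,m]$. With them, one approximates $h\f1_{[0,m]}$ in $\mathbb{L}^1([0,m])$ by a continuous function and controls the approximation error against $\nunbar_\ell(s)ds$ uniformly in $N$ and against $\nu_\ell(s)ds$; only for the continuous approximant does one use weak convergence and bounded convergence as you propose. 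Establishing \eqref{apx517} itself requires passing to the limit in the prelimit dynamics \eqref{prelim_dymamics_eq} together with the martingale convergence of Lemma \ref{lem_Mconv} (the route of \cite[Lemma 5.16, Proposition 5.17]{KasRam11}); without this step, or an equivalent argument showing the limit does not charge Lebesgue-null sets in the age variable, your proof does not close.
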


\begin{proof}
See Appendix \ref{apx_ggn}.
\end{proof}

\subsubsection{Relative Compactness of Arrival and Routing Processes}\label{sec_tightnessAux}

We first establish relative compactness of the arrival process sequence.

\begin{lemma}\label{asm_E_lem}
    Suppose Assumption \ref{asm_E} holds. Then, $\{\arrivalnbar\}_{N\in\N}$ is relatively compact in $\D$, and
    \begin{equation}\label{asm_initial_arrival2moment}
      \limsup_{N\to\infty} \Ept{\arrivalnbar(t)^2}<\infty, \quad t\geq0.
    \end{equation}
\end{lemma}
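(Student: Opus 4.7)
The two assertions are quite different in character: relative compactness is essentially free from earlier work, while the moment bound requires a dedicated renewal-theoretic estimate. My plan is to dispose of the first in one line and devote the rest of the proof to the second.

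For relative compactness in $\D$, Lemma~\ref{asm_initial_remark} already establishes that $\arrivalnbar \to \lambda \id$ in $\D$ almost surely, and almost sure convergence in $\D$ implies convergence in distribution, hence tightness, hence relative compactness of $\{\arrivalnbar\}_{N\in\N}$.

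For the moment bound, the goal is to show $\mathbb{E}[\widetilde{E}(s)^2] = O(s^2)$ as $s \to \infty$ using only finiteness of the interarrival mean $\lambda^{-1}$; the scaling $\arrivalnbar(t) = \widetilde{E}(Nt)/N$ will then immediately yield $\limsup_N \mathbb{E}[\arrivalnbar(t)^2] < \infty$. First I would reduce from the delayed process to an undelayed renewal process: writing $\widetilde{E}(s) = \mathbf{1}(\widetilde{u}_0 \leq s)\bigl(1 + E_0(s - \widetilde{u}_0)\bigr)$ with $E_0$ an independent ordinary renewal process with interarrivals distributed as $G_{\widetilde{E}}$, the elementary bound $(1+x)^2 \leq 2(1+x^2)$ gives $\mathbb{E}[\widetilde{E}(s)^2] \leq 2 + 2\mathbb{E}[E_0(s)^2]$. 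Next I would establish the linear bound $U(s) := \mathbb{E}[E_0(s)] \leq C(1+s)$ for some constant $C$ depending only on $G_{\widetilde{E}}$; this follows from $U(1) < \infty$ (guaranteed since $G_{\widetilde{E}}$ has a density with positive mean, so $\mathbb{P}(u_1>0)=1$) combined with the elementary renewal theorem $U(s)/s \to \lambda$. Finally I would invoke the identity
\[
\mathbb{E}[E_0(s)^2] \;=\; U(s) \,+\, 2\int_0^s U(s-r)\,dU(r),
\]
derivable by conditioning on the first renewal and using the strong Markov structure of the renewal sequence, and plug in the linear bound to obtain $\mathbb{E}[E_0(s)^2] \leq C'(1+s)^2$, as desired.

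The main subtlety is that Assumption~\ref{asm_E} supplies only a finite first moment of the interarrival distribution, so Lorden-type variance-based bounds on $U(s)-\lambda s$ are unavailable. What saves the argument is that both the linear bound $U(s) \leq C(1+s)$ and the second-moment identity displayed above require no more than the finite-mean hypothesis, so the estimate $\mathbb{E}[\widetilde{E}(s)^2] = O(s^2)$ survives unchanged.
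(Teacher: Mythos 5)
Your proof is correct and follows essentially the same route as the paper: relative compactness is read off from the almost sure convergence $\arrivalnbar\to\lambda\,\id$ in $\D$ of Lemma \ref{asm_initial_remark}, and the moment bound rests on the classical second-moment identity $\mathbb{E}[E_0(s)^2]=U(s)+2\int_0^s U(s-r)\,dU(r)$ together with the elementary renewal theorem. The only (inessential) difference is that you strip off the delay by hand and derive the convolution identity yourself with a linear bound $U(s)\leq C(1+s)$, whereas the paper applies the analogous inequality to the delayed process directly by citing Daley--Mohan and then passes to the limit $U_{\widetilde E}(Nt)/(Nt)\to\lambda$ to get the explicit bound $2\lambda^2t^2$.
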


\begin{proof}
Since $\arrivalnbar\to\lambda\id$ in $\D$ by Lemma \ref{asm_initial_remark},  $\{\arrivalnbar\}_{N\in\N}$ is relatively compact. Moreover, by  Assumption \ref{asm_E} there exists a delayed renewal process $\tilde{E}$ such that $\arrivaln (t) = \tilde{E} (Nt)$. Let $U_{\widetilde E}$ be the renewal measure associated with the inter-arrival distribution $G_{\widetilde{E}}$. Then,  basic calculations (see \cite[Eq.\ (2.3)]{DalMoh78}) show that
\[\Ept{\widetilde E(t)^2}\leq U_{\widetilde E}(t)+\int_0^t U_{\widetilde E}(t-s)dU_{\widetilde E}(s) \leq 2U_{\widetilde E}(t)^2.\]
Hence, by the elementary renewal theorem (see e.g. \cite[Proposition V.1.4]{Asm03}),
\[\limsup_{N\to\infty}\Ept{\arrivalnbar(t)^2}=\limsup_{N\to\infty}\frac{1}{N^2}\Ept{\widetilde E(Nt)^2}\leq 2t^2\limsup_{N\to\infty}\left(\frac{U_{\widetilde E}(Nt)}{Nt}\right)^2 = 2\lambda^2t^2, \]
which proves \eqref{asm_initial_arrival2moment}.
\end{proof}

Next, we focus on the sequence $\{\Rn_{\varphi,\ell}\}_{N\in\N}$. By \eqref{def_Rphi}, for $\ell\geq1$ and $0\leq s\leq t$,
\begin{equation}\label{Rbound}
    |\Rnbar_{\varphi,\ell}(t)-\Rnbar_{\varphi,\ell}(s)|  \leq
\|\varphi\|_\infty \left(\arrivalnbar(t)-\arrivalnbar(s)\right).
\end{equation}

\begin{lemma}\label{lem_tightR}
  Suppose Assumption \ref{asm_E} holds.  For $\varphi\in\tightphiset$ and $\ell \geq 1$,
\begin{equation}\label{RnBound}
 \limsup_{N\to\infty}\Ept{\Rnbar_{\varphi,\ell}(t)} \leq \|\varphi\|_\infty \limsup_{N\to\infty}\Ept{\arrivalnbar(t)}<\infty, \qquad  t\geq0,
\end{equation}
and  $\{\Rnbar_{\varphi,\ell}\}_{N\in\N}$ is relatively compact in $\D$.
\end{lemma}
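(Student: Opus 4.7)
The plan is to reduce everything to the corresponding properties of $\{\arrivalnbar\}$ already established in Lemma \ref{asm_E_lem}, using the pathwise inequality \eqref{Rbound} as the only bridge. In particular, no martingale decomposition for $\Rn_{\varphi,\ell}$ is needed here (which is convenient, since only Assumption \ref{asm_E} is assumed at this point).

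First I would prove the moment bound. Since $\Rnbar_{\varphi,\ell}(0)=0$, setting $s=0$ in \eqref{Rbound} yields $\Rnbar_{\varphi,\ell}(t)\leq \|\varphi\|_\infty\arrivalnbar(t)$ pathwise. Taking expectation and using Lemma \ref{asm_initial_remark} (which gives $\limsup_N\Ept{\arrivalnbar(t)}=\lambda t<\infty$) immediately yields \eqref{RnBound}.

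For relative compactness in $\D$ I would invoke Kurtz's criterion (Proposition \ref{Kurtz}). Condition K1 is immediate from the moment bound just proved, via Markov's inequality: $\Probil{|\Rnbar_{\varphi,\ell}(t)|>r}\leq \|\varphi\|_\infty \Ept{\arrivalnbar(t)}/r$, which tends to $0$ uniformly in $N$ as $r\to\infty$. For K2a, the key observation is that \eqref{Rbound} combined with the monotonicity of $\arrivalnbar$ implies that for \emph{any} partition $0=t_0<t_1<\cdots<t_n=T$ with $\min_i(t_i-t_{i-1})>\delta$,
\[
\max_i\sup_{s,t\in[t_{i-1},t_i)}|\Rnbar_{\varphi,\ell}(t)-\Rnbar_{\varphi,\ell}(s)|\leq \|\varphi\|_\infty\max_i\sup_{s,t\in[t_{i-1},t_i)}|\arrivalnbar(t)-\arrivalnbar(s)|.
\]
Taking the infimum over such partitions gives $w^\prime(\Rnbar_{\varphi,\ell},\delta,T)\leq \|\varphi\|_\infty w^\prime(\arrivalnbar,\delta,T)$. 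Since Lemma \ref{asm_E_lem} says that $\{\arrivalnbar\}_{N\in\N}$ is relatively compact in $\D$, it satisfies K2a, and the pointwise domination above then transfers K2a to $\{\Rnbar_{\varphi,\ell}\}_{N\in\N}$.

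This is essentially all; no step is truly difficult. The only small item worth being careful about is verifying the modulus-of-continuity domination rigorously from \eqref{Rbound}, since $w^\prime$ is defined via an infimum over partitions and one has to use the \emph{same} partition on both sides before taking the infimum, as indicated above. The $\|\varphi\|_\infty=0$ case is trivial, and otherwise the inequality $\Probil{w^\prime(\Rnbar_{\varphi,\ell},\delta,T)\geq \eta}\leq \Probil{w^\prime(\arrivalnbar,\delta,T)\geq \eta/\|\varphi\|_\infty}$ reduces K2a for the routing sequence to K2a for the arrival sequence.
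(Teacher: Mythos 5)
Your proposal is correct and follows essentially the same route as the paper: the moment bound comes from \eqref{Rbound} with $s=0$ together with \eqref{asm_initial_arrivalmoment}, and relative compactness is obtained by dominating the modulus of continuity $w^\prime$ of $\Rnbar_{\varphi,\ell}$ by that of $\arrivalnbar$ and invoking the necessity and sufficiency of Kurtz's criteria K1 and K2a for the relatively compact sequence $\{\arrivalnbar\}$. Your extra care in using the same partition on both sides before taking the infimum in the definition of $w^\prime$ is exactly the right way to make the domination rigorous.
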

\begin{proof}
Inequality \eqref{RnBound} follows from \eqref{Rbound} with $s=0$, and \eqref{asm_initial_arrivalmoment}. Moreover, \eqref{Rbound} shows that the modulus of continuity $w^\prime$ for $\Rnbar_{\varphi,\ell}$ is bounded by that of $\arrivalnbar$. Relative compactness of $\{\Rnbar_{\varphi,\ell}\}_{N\in\N}$ then follows from that  of  $\{\arrivalnbar\}$  proved in Lemma \ref{asm_E_lem}, and the necessity and sufficiency of Kurtz's criteria K1 and K2a stated in Proposition \ref{Kurtz}.
\end{proof}

Next, recall the definitions of $\Bn_{\varphi,\ell}$ and $\Nn_{\varphi,\ell}$ in \eqref{def_Rcomp} and \eqref{def_Rmgale}, respectively.

\begin{lemma}\label{lem_Nconv}
Suppose Assumptions \ref{asm_E},  \ref{asm_G}.\ref{asm_mean} and \ref{asm_initial}.\ref{asm_initial_ind} hold.  Then for  $\varphi\in\tightphiset$ and $\ell\geq1$, $\Nn_{\varphi,\ell}$ is a square integrable $\{\filtn_t\}$-martingale. Moreover, for  $t \geq 0$,
\begin{equation}\label{Nbound}
  \limsup_{N\to\infty}\Prob{\left|\sup_{0\leq s\leq t} \Nnbar_{\varphi,\ell}(s)\right|> \epsilon}=0, \quad \forall \epsilon > 0,
\end{equation}
and  $\Nnbar_{\varphi,\ell}\Rightarrow 0$ in $\D$, as $N\to\infty$.
\end{lemma}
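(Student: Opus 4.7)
The plan is to mirror the argument used for $\Mn_{\varphi,\ell}$ in Lemma \ref{lem_Mconv}, replacing the departure-compensator identity with the routing-compensator identity established in Proposition \ref{prop_prelim_Rcomp}. First, I invoke Proposition \ref{prop_prelim_Rcomp} to conclude that $\Nn_{\varphi,\ell}$ is a local $\{\filtn_t\}$-martingale with quadratic variation $[\Nn_{\varphi,\ell}](t) = \Rn_{\varphi^2,\ell}(t)$. Since $\varphi\in\tightphiset$ implies $\|\varphi^2\|_\infty = \|\varphi\|_\infty^2 <\infty$, the bound \eqref{RnBound} from Lemma \ref{lem_tightR} (applied with $\varphi$ replaced by $\varphi^2$) gives, for every $t\geq 0$ and $N\in\N$,
\begin{equation*}
\Ept{[\Nn_{\varphi,\ell}](t)} = \Ept{\Rn_{\varphi^2,\ell}(t)} = N\Ept{\Rnbar_{\varphi^2,\ell}(t)} \leq N\|\varphi\|_\infty^2 \Ept{\arrivalnbar(t)} < \infty,
\end{equation*}
where the last inequality uses \eqref{asm_initial_arrivalmoment} of Lemma \ref{asm_initial_remark}. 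By \cite[Theorem 7.35]{klebBook} (the same reference cited in the proof of Lemma \ref{lem_Mconv}), this local martingale is in fact a square integrable $\{\filtn_t\}$-martingale and $\Ept{(\Nn_{\varphi,\ell}(t))^2}=\Ept{\Rn_{\varphi^2,\ell}(t)}$.

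Next, I apply Doob's $L^2$ maximal inequality to the scaled martingale $\Nnbar_{\varphi,\ell}=\Nn_{\varphi,\ell}/N$: for every $\epsilon>0$ and $t\geq 0$,
\begin{equation*}
\Prob{\sup_{0\leq s\leq t}|\Nnbar_{\varphi,\ell}(s)| > \epsilon}
\leq \frac{\Ept{(\Nnbar_{\varphi,\ell}(t))^2}}{\epsilon^2}
= \frac{\Ept{\Rnbar_{\varphi^2,\ell}(t)}}{N\epsilon^2}.
\end{equation*}
Using \eqref{RnBound} once more, $\limsup_N \Ept{\Rnbar_{\varphi^2,\ell}(t)}<\infty$, so the right-hand side tends to $0$ as $N\to\infty$, which proves \eqref{Nbound}.

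Finally, since \eqref{Nbound} yields uniform (on compact intervals) convergence in probability of $\Nnbar_{\varphi,\ell}$ to the zero path, it follows that $\Nnbar_{\varphi,\ell}\Rightarrow 0$ in $\D$; this is a standard consequence (for example, it implies convergence in the Skorokhod $J_1$ topology because uniform convergence on compacts is strictly stronger, and convergence in probability to a deterministic limit implies convergence in distribution). No step is expected to pose a real obstacle; the only mildly delicate point is verifying that the local martingale from Proposition \ref{prop_prelim_Rcomp} is in fact a genuine square integrable martingale, which is handled by the finiteness of $\Ept{\Rn_{\varphi^2,\ell}(t)}$ just noted.
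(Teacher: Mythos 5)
Your argument is correct and coincides with the paper's own proof: it invokes Proposition \ref{prop_prelim_Rcomp} for the local-martingale property and $[\Nn_{\varphi,\ell}]=\Rn_{\varphi^2,\ell}$, uses the bound \eqref{RnBound} together with \cite[Theorem 7.35]{klebBook} to upgrade to a square integrable martingale with $\Ept{(\Nn_{\varphi,\ell}(t))^2}=\Ept{\Rn_{\varphi^2,\ell}(t)}$, and then applies Doob's inequality plus the $1/N$ scaling to obtain \eqref{Nbound} and the weak convergence to zero. No gaps; the approach is the same as in the paper.
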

\begin{proof}
By Proposition \ref{prop_prelim_Rcomp},  $\Nn_{\varphi,\ell}$ is a local martingale with  $[\Nn_{\varphi,\ell}]=\Rn_{\varphi^2,\ell}.$ Since $\Eptil{\Rn_{\varphi^2,\ell}(t)}<\infty$ by \eqref{RnBound},
by \cite[Theorem 7.35]{klebBook}, $\Nn_{\varphi,\ell}$ is a square integrable martingale,  and
$\mathbb{E}[(\Nn_{\varphi,\ell}(t))^2]$ $=$ $\mathbb{E}[\Rn_{\varphi^2,\ell}(t)].$
By Doob's inequality, for $\epsilon  > 0$,
$\mathbb{P}\{ |\sup_{t\in[0,T]} \Nnbar_{\varphi,\ell}(t)|>\epsilon \}$ $\leq$ $\mathbb{E}[\Rnbar_{\varphi^2,\ell}(T)]/N\epsilon^2.$
Together with \eqref{RnBound}, this implies \eqref{Nbound}, which implies $\Nnbar_{\varphi,\ell}$ converges to zero in distribution.
\end{proof}

The following result is a direct corollary of Lemmas \ref{lem_tightR} and \ref{lem_Nconv}.

\begin{corollary}\label{cor_tightB}
  Suppose Assumptions \ref{asm_E}, \ref{asm_G}.\ref{asm_mean}, and \ref{asm_initial}.\ref{asm_initial_ind} hold. Then, for $\varphi\in\tightphiset$ and $\ell\geq 1$,  $\{\Bnbar_{\varphi,\ell}\}_{N\in\N}$ is relatively compact in $\D$.
\end{corollary}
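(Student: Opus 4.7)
The plan is to exploit the martingale decomposition $\Nn_{\varphi,\ell} = \Rn_{\varphi,\ell} - \Bn_{\varphi,\ell}$ from \eqref{def_Rmgale} in Proposition \ref{prop_prelim_Rcomp}. After scaling by $N$, this gives the identity
\begin{equation*}
\Bnbar_{\varphi,\ell} = \Rnbar_{\varphi,\ell} - \Nnbar_{\varphi,\ell}, \quad N \in \N,
\end{equation*}
so the relative compactness of $\{\Bnbar_{\varphi,\ell}\}$ will follow from a ``sum of relatively compact sequences'' argument applied to the two pieces on the right-hand side.

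First, I would invoke Lemma \ref{lem_tightR} to get relative compactness of $\{\Rnbar_{\varphi,\ell}\}$ in $\D$, and Lemma \ref{lem_Nconv} to deduce that $\Nnbar_{\varphi,\ell} \Rightarrow 0$ in $\D$, which in particular shows $\{\Nnbar_{\varphi,\ell}\}$ is tight. By Prohorov's theorem and the fact that marginal tightness in Polish spaces implies joint tightness of the product, the pair $\{(\Rnbar_{\varphi,\ell}, \Nnbar_{\varphi,\ell})\}$ is tight in $\D \times \D$, and every subsequential weak limit has the form $(R, 0)$ for some $R \in \D$.

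Next, I would appeal to the standard fact that the subtraction map $(f,g) \mapsto f - g$ from $\D \times \D$ into $\D$ (all equipped with the Skorohod $J_1$ topology) is continuous at every point $(f, g)$ for which $g$ lies in $\mathbb{C}_{\R}\hc$. Since the constant function $0$ is continuous, subtraction is continuous at each limit point $(R, 0)$, and the continuous mapping theorem then guarantees that every subsequence of $\{\Bnbar_{\varphi,\ell}\}$ admits a further weakly convergent subsequence, which is precisely relative compactness in $\D$.

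I do not expect any real obstacle: the only slightly non-trivial ingredient is the joint-continuity statement about subtraction in the Skorohod topology at points with a continuous component, but this is standard (e.g.\ one can invoke Ethier--Kurtz to convert this into a Kurtz-criteria verification via $|\Bnbar_{\varphi,\ell}(t)| \leq |\Rnbar_{\varphi,\ell}(t)| + |\Nnbar_{\varphi,\ell}(t)|$ for K1, and control of $w^\prime(\Bnbar_{\varphi,\ell},\delta,T)$ by the sum of the corresponding moduli of the two pieces for K2a, using that $\Nnbar_{\varphi,\ell}$ is asymptotically small uniformly on compacts by \eqref{Nbound}).
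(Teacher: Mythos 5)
Your proposal is correct and follows essentially the same route as the paper, which states Corollary \ref{cor_tightB} as a direct consequence of Lemmas \ref{lem_tightR} and \ref{lem_Nconv} via the decomposition $\Bnbar_{\varphi,\ell}=\Rnbar_{\varphi,\ell}-\Nnbar_{\varphi,\ell}$ from \eqref{def_Rmgale}. Your filling-in of the details (continuity of subtraction at limit points with a continuous second component, or equivalently the Kurtz-criteria verification using \eqref{Nbound}) is exactly the standard argument the paper leaves implicit.
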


Analogous to $\Quen_{\cdot,\ell}$, due to  \eqref{Rbound}, the linear functional $\Rn_{\cdot,\ell}:\varphi\mapsto\Rn_{\varphi,\ell}$ can be identified with a random  finite nonnegative  measure on $\supint\times\R_+$, and hence, $\Rn_{\cdot,\ell} = \{\Rn_{\cdot,\ell}(t), t \geq 0\},$ can be viewed as an  $\mathbb{M}_F(\supint\times\R_+)$-valued process.

\begin{lemma}\label{lem_Rtight}
Suppose Assumptions \ref{asm_E}, \ref{asm_G}.\ref{asm_mean}, and \ref{asm_initial} hold. Then for every $\ell\geq1$, the sequence $\{\Rnbar_{\cdot,\ell}\}_{N\in\N}$ is relatively compact in $\mathbb{D}_{\mathbb{M}_F(\supint\times\R_+)}\hc$.
\end{lemma}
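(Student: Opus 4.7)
The plan is to verify Jakubowski's criteria (Proposition \ref{Jakob}) with the separating family from Remark \ref{apx_Jremark}. Since every $f\in\C_c^1(\supint\times\R_+)$ lies in $\tightphiset$, condition J2 follows immediately from Lemma \ref{lem_tightR} applied to $\{\Rnbar_{f,\ell}\}_N=\{\langle f,\Rnbar_{\cdot,\ell}\rangle\}_N$.

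For the compact containment condition J1, the key structural observation is that $t\mapsto \Rn_{\cdot,\ell}(t)$ is non-decreasing as a positive measure, since the integrand in \eqref{def_Rphi} is non-negative. Thus it suffices to exhibit, for each $\eta,T>0$, a deterministic set
\[
\cal K_{T,\eta}=\left\{\mu\;:\;|\mu|\leq M,\;\mu((m_k,L)\times[0,T])\leq 1/k\;\forall k\geq 1,\;\mu(\supint\times(T,\infty))=0\right\}
\]
(with $M$ and $m_k\uparrow L$ to be chosen) that contains $\Rnbar_{\cdot,\ell}(T)$ with probability at least $1-\eta$ for all large $N$; the inclusion then extends automatically to $\Rnbar_{\cdot,\ell}(t)$ for every $t\in[0,T]$. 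Such $\cal K_{T,\eta}$ is relatively compact by Prohorov's theorem and closed by lower semicontinuity of $\mu\mapsto\mu(B^c)$ for closed $B$. The total-mass bound $|\Rnbar_{\cdot,\ell}(T)|=\Rnbar_{\f1,\ell}(T)\leq\arrivalnbar(T)$ together with Lemma \ref{asm_E_lem} selects $M$, while the support condition on $\supint\times(T,\infty)$ is automatic from \eqref{def_Rphi}.

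To choose $m_k\uparrow L$, the case $\ell=1$ is immediate from \eqref{def_Rphi1}, which shows $\Rn_{\cdot,1}$ is supported on $\{0\}\times\R_+$. For $\ell\geq 2$, I would take $\varphi_m\in\tightphiset$ with $0\leq\varphi_m\leq 1$, $\varphi_m\equiv 1$ on $[m,L)$ and $\varphi_m\equiv 0$ on $[0,m-1]$, so that $\Rnbar_{\cdot,\ell}(T)((m,L)\times[0,T])\leq \Rnbar_{\varphi_m,\ell}(T)=\Bnbar_{\varphi_m,\ell}(T)+\Nnbar_{\varphi_m,\ell}(T)$ by Proposition \ref{prop_prelim_Rcomp}. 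Since $\Nnbar_{\varphi_m,\ell}$ is a mean-zero square-integrable martingale by Lemma \ref{lem_Nconv}, $\Eptil{\Rnbar_{\varphi_m,\ell}(T)}=\Eptil{\Bnbar_{\varphi_m,\ell}(T)}$. Using $\mathfrak{P}_d\leq d$, $\nunbar_{\ell-1}-\nunbar_\ell\leq\nunbar_1$, and the pathwise bound $\nunbar_1(u)([m-1,L))\leq \nunbar_1(0)([m-1-T,L))$ valid for $u\in[0,T]$ whenever $m>T+1$ (because jobs entering service after time $0$ have age at time $u$ at most $u\leq T<m-1$, so only initially served jobs contribute),
\[
\Bnbar_{\varphi_m,\ell}(T)\;\leq\; d\,\nunbar_1(0)([m-1-T,L))\cdot\frac{1}{N}\int_0^T\hen(\ren(u))\,du.
\]
By Assumption \ref{asm_initial}.\ref{asm_initial_ind} the two factors are independent, and Lemma \ref{prelim_Ecomp_lemma} (summed over queues) identifies the expectation of the second factor with $\Eptil{\arrivalnbar(T)}$, which is uniformly bounded in $N$ by Lemma \ref{asm_initial_remark}. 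Thus $\sup_N\Eptil{\Bnbar_{\varphi_m,\ell}(T)}\leq C(T)\sup_N\Eptil{\nunbar_1(0)([m-1-T,L))}$, which tends to $0$ as $m\to L$ by Lemma \ref{lem_nuContainment}. Choosing $m_k$ so that $\sup_N\Eptil{\Rnbar_{\varphi_{m_k},\ell}(T)}\leq \eta/(k\,2^{k+1})$, Markov's inequality and a union bound over $k$ yield $\Rnbar_{\cdot,\ell}(T)\in\cal K_{T,\eta}$ with probability at least $1-\eta$.

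The principal obstacle will be the uniform-in-$N$ control of the compensator $\Bnbar_{\varphi_m,\ell}(T)$ for large $m$: it requires simultaneously propagating the initial-state tightness of Lemma \ref{lem_nuContainment} forward in time, via the structural facts that ages of in-service jobs grow linearly and new service entries begin at age $0$, and handling the possibly unbounded hazard rate $\hen$ by identifying the time-integral of $\hen\circ\ren$ as the compensator of the scaled cumulative arrival process.
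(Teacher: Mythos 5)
Your handling of J2, the total-mass bound, and the tail estimate in the case $\endsup=\infty$ is sound, and your route through the compensator (using $\Eptil{\Rnbar_{\varphi_m,\ell}(T)}=\Eptil{\Bnbar_{\varphi_m,\ell}(T)}$, the independence of $\nunbar_1(0)$ and the arrival process, and a uniform moment bound on $\frac{1}{N}\int_0^T\hen(\ren(u))du$) is a legitimate alternative to the paper's argument there. However, the case $\endsup<\infty$ has a genuine gap. Your key pathwise bound $\nunbar_1(u)([m-1,\endsup))\leq\nunbar_1(0)([m-1-T,\endsup))$ only shifts the window by $T$: as $m\to\endsup$ the right-hand side tends to $\nunbar_1(0)([\endsup-1-T,\endsup))$, a fixed quantity that does not vanish, and Lemma \ref{lem_nuContainment} cannot help because \eqref{533} controls $\nunbar_1(0,(m,\endsup))$ only in the limit $m\to\endsup$ (moreover, if $\endsup\leq T+1$ your restriction $m>T+1$ leaves no admissible $m$ at all). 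The missing idea is that when $\endsup<\infty$ a job whose age at time $u\in(0,T]$ is near $\endsup$ need not have had initial age near $\endsup$; what makes such jobs rare is that their \emph{service time} must exceed $m-1$, an event whose conditional probability given the initial age is $\overline G(m-1)/\overline G(\agen_j(0))$ by \eqref{vInitial}. This is exactly how the paper closes the claim \eqref{Rtight_claim}: it splits the contribution (see \eqref{Rtight_temp4}--\eqref{Rtight_temp5}) into initially-in-service jobs with age $>m$, initially-in-service jobs with service time $>m$, and post-time-zero arrivals with service time $>m$, bounded respectively by $\Ept{\nunbar_1(0,(m,\endsup))}$, $\Ept{\int_{[0,m)}\frac{\overline G(m)}{\overline G(x)}\nunbar_1(0,dx)}$ and $\overline G(m)\Ept{\arrivalnbar(T)}$, which vanish by \eqref{533}, \eqref{534} and $\overline G(m)\to0$. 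Your argument never invokes \eqref{534} or the tail of $G$, so it cannot succeed for $\endsup<\infty$; repairing it inside your compensator framework would require bounding $\Ept{\nunbar_1(u)([m-1,\endsup))}$ via the conditional law of residual service times, which is essentially the paper's computation.

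Two smaller points. First, the paper's proof of \eqref{Rtight_claim} for $\ell\geq2$ is entirely pathwise: starting from \eqref{prelim_deltanu4_eq} it observes that during one job's service period the length of its queue is non-decreasing, so at most one arrival can find that queue at length exactly $\ell-1$; this avoids the martingale decomposition and any moment estimate on $\frac{1}{N}\int_0^T\hen(\ren(u))du$ altogether. Second, if you do go through the compensator, the uniform bound on $\Ept{\frac{1}{N}\int_0^T\hen(\ren(u))du}$ is most directly obtained from Lemma \ref{lem_renewal} (or Lemma \ref{lem_hRN}), rather than by summing the intensities of Lemma \ref{prelim_Ecomp_lemma} over queues, which additionally requires justifying the telescoping over $\ell$ and the expectation identity for the compensated arrival process.
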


\begin{proof}
By definition, $\Rnbar_{\varphi,\ell}$ is a pure jump process and hence lies in $\mathbb{D}_{\mathbb{M}_F(\supint\times\R_+)}\hc$. Lemma \ref{lem_tightR} implies that for  $\varphi\in\tightphiset,$  $\{\Rnbar_{\varphi,\ell}\}_{N\in\N}$ is relatively compact (and therefore tight, by Prohorov's theorem) in $\D.$   It only remains to show that $\{\Rnbar_{\cdot,\ell}\}_{N\in\N}$ satisfies condition J1. We first claim that for $\ell\geq 1$,
\begin{equation}\label{Rtight_claim}
  \lim_{m\to\endsup}\sup_N\Ept{\Rnbar_{\f1_{(m,\endsup)},\ell}(T)} =0.
\end{equation}
Fix $m > 0$.  Substituting $\varphi(x,s)=\f1_{(m,\endsup)}(x)$ in \eqref{def_Rphi1}, we see that $\Rnbar_{\f1_{(m,\endsup)},1}\equiv 0$.  This proves \eqref{Rtight_claim} for $\ell = 1$.  For $\ell\geq 2$, substituting $\varphi(x,s)=\f1_{(m,\endsup)}(x)$ in \eqref{prelim_deltanu4_eq},  we have
\begin{align}\label{Rtight_temp1}
    \Rn_{\f1_{(m,\endsup)},\ell}(T)=\sum_{j\ge\j0}\;\sum_{j^\prime\geq 1}\indic{\arrivalTime_{j^\prime}\leq  T}  &\indic{\agen_j(\arrivalTime_{j^\prime})>m} \indic{\kninv_j<\arrivalTime_{j^\prime}\leq\deptn_j}\\
    &\indic{\custServSize_j(\arrivalTime_{j^\prime}-)= \ell-1}\indic{\stationn_{j^\prime}=\stationn_{j}}\notag.
\end{align}
For  every $j\geq\j0$, consider the (random) set
\[
\mathcal{J}_{j,\ell}^{(N)}(T)\doteq \left\{j^\prime\geq1 \; : \arrivalTime_{j^\prime}\leq T, \arrivalTime_{j^\prime}\in [\kninv_j,\deptn_j),\custServSize_j(\arrivalTime_{j^\prime}-)= \ell-1,\stationn_{j^\prime}=\stationn_{j}\right\},
\]
of jobs $j'$ that arrive  prior to $T$ while job $j$ is receiving service, and are routed to the same queue $\stationn_{j}$  as job $j$,  and this queue has length $\ell-1$ right before the arrival of $j'$.   During the time $(\kninv_j,\deptn_j)$ when job $j$ is in service at queue $\stationn_{j}$, there are no departures from the queue and so its length is non-decreasing.  Thus, there can be at most one job $j^\prime$ that arrives during that period and finds the queue length  equal to $\ell - 1$.
In other words, when $\mathcal{J}_{j,\ell}^{(N)}(T) \neq \emptyset$, $\mathcal{J}_{j,\ell}^{(N)}(T)  = \{j_*\}$ for some  $j_*= j_{*}(j,\ell,N,T).$  Also, for $j>\arrivaln(T)$, $\arrivalTime_{j} > T$, and thus,  $\mathcal{J}_{j,\ell}^{(N)}(T)=\emptyset.$  Therefore, since all departure and arrival times are almost surely distinct (i.e., $P(\tilde\Omega_T)=1$ by Corollary \ref{cor_distinctness}), almost surely,
\begin{align}\label{Rtight_temp2}
    \Rn_{\f1_{(m,\endsup)},\ell}(T) = \sum_{j\geq\j0} \sum_{j'\in\mathcal{J}_{j,\ell}^{(N)}(T)} \indic{\agen_j(\arrivalTime_{j'})>m}
    &=\sum_{j=\j0}^{\arrivaln(T)} \indic{\mathcal{J}_{j,\ell}^{(N)}(T)\neq\emptyset} \indic{\agen_j(\arrivalTime_{j_*})>m},
\end{align}
Now we consider two possible cases.  If $\endsup=\infty$ and a job $j$ has initial age $\agen_j(0)\leq m$, then  $\agen_j(t)\leq m+T$ for all $t\in[0,T]$. Therefore, \eqref{Rtight_temp2} implies
\begin{align}\label{Rtight_temp3}
    \Ept{\Rnbar_{\f1_{(m+T,\endsup)},\ell}(T)}    \leq \Ept{\frac{1}{N}\sum_{j\ge\j0} \indic{\agen_j(0)>m}} = \Ept{\nunbar_1(0,(m,\infty))}.
\end{align}
On the other hand, if $\endsup<\infty$, using \eqref{Rtight_temp2} and the fact that the age  process $\agen_j(\cdot)$ is non-decreasing and bounded by the  service time $\serviceTime_j$,  we have
\begin{align}\label{Rtight_temp4}
    \Rn_{\f1_{(m,\endsup)},\ell}(T)\leq& \sum_{j=\j0}^{0} \indic{\agen_j(0)>m}+\sum_{j=\j0}^{0} \indic{\agen_j(0)\leq m}\indic{\serviceTime_j>m}
    +\sum_{j=1}^{\arrivaln(T)} \indic{\serviceTime_j>m}.
\end{align}
Using the fact that $\j0$, $\agen_j(0)$, and $\nun_1(0)$ are $\filtn_0$-measurable, we then have
\begin{align*}
  \Ept{\sum_{j=\j0}^{0} \indic{\agen_j(0)\leq m}\indic{\serviceTime_j>m}} & =\Ept{\sum_{j=\j0}^{0} \indic{\agen_j(0)\leq m}\Ept{\indic{\serviceTime_j>m}|\filtn_0}} \\
  & =\Ept{\sum_{j=\j0}^{0} \indic{\agen_j(0)\leq m}\frac{\overline G(m)}{\overline G(\agen_j(0))}} \\
  & =\Ept{\int_{[0,m)}\frac{\overline G(m)}{\overline G(x)}\nun_1(0,dx)}.
\end{align*}
Hence, taking expectations in \eqref{Rtight_temp4} and  using  the independence of the initial conditions and the arrival process for the third term on the right-hand side, we have
\begin{equation}\label{Rtight_temp5}
  \Ept{\Rnbar_{\f1_{(m,\endsup)},\ell}(T)}\leq \Ept{\nunbar_1(0,(m,\endsup))}+\Ept{\int_{[0,m)}\frac{\overline G(m)}{\overline G(x)}\nunbar_1(0,dx)} + \overline G(m)\Ept{\arrivalnbar(T)}.
\end{equation}
Taking first the supremum over $N$ and then the limit as $m\to\endsup$ of both sides of \eqref{Rtight_temp3} and \eqref{Rtight_temp5}, the claim \eqref{Rtight_claim} for both cases  follows  by using \eqref{533} and \eqref{534} in Lemma \ref{lem_nuContainment}, the bound \eqref{asm_initial_arrivalmoment}, and the  elementary identity $\lim_{m\to\endsup}\overline G(m)=0$.

We now use the claim \eqref{Rtight_claim} to complete the proof of the lemma.
Fix $\eta>0$ and apply  \eqref{RnBound} with $\varphi=\f1$ and $t=T$, to conclude that the constant
\begin{equation}\label{temp_Rtight1}
  C(\eta,T)\doteq\frac{2}{\eta}\sup_N \Ept{\Rnbar_{\f1,\ell}(T)}
\end{equation}
is finite. Also, by \eqref{Rtight_claim}, there exists a sequence $\{m(n,\eta)\}$ with $\lim_{n \rightarrow \infty} m(n,\eta) = \endsup$ such that
\begin{equation}\label{temp_Rtight2}
  \sup_N\Ept{\Rnbar_{\f1_{(m(n,\eta),\endsup)},\ell}(T)}\leq \frac{\eta}{n2^{n+1}}.
\end{equation}
The subset $\mathcal{K}_{\eta,T}$ of $\mathbb{M}_F(\supint\times\R_+)$ defined as
\[  \mathcal{K}_{\eta,T}\doteq\left\{\mu\in\mathbb{M}_F(\supint\times\R_+)\; : \langle\f1,\mu \rangle\leq C(\eta,T),\;\mu((m(n,\eta),\endsup)\times \R_+)\leq \frac{1}{n}\;\forall n\in\N  \right\},\]
is compact because $\sup_{\mu\in\mathcal{K}_{\eta,T}} \;\mu(\supint\times\R_+)\leq C(\eta,T)$ and
\[\inf_{ \genfrac{}{}{0pt}{1}{C\subset \supint\times\R_+} {C \text{ compact}}} \;\sup_{\mu\in\mathcal{K}_{\eta,T}}\mu (C^c)\leq \inf_n \sup_{\mu\in\mathcal{K}_{\eta,T}}\mu\big( (m(n,\eta),\endsup)\times \R_+ \big)=0.\]
Finally, using \eqref{temp_Rtight1} and \eqref{temp_Rtight2}, for $W \doteq
\{\Rnbar_{\cdot,\ell}(t)\not\in\mathcal{K}_\eta \text{ for some }t\in[0,T] \}$, we have
\begin{align*}
  \Prob{W }&\leq \Prob{\Rnbar_{\f1,\ell}(T)\geq C(\eta,T)} + \sum_{n\in\N}\Prob{\Rnbar_{\f1_{(m(n,\eta),\endsup)},\ell}(T)>\frac{1}{n}}\\
  &\leq \sup_N\frac{\Ept{\Rnbar_{\f1,\ell}(T)}}{ C(\eta,T)}+\sum_{n\in\N}n\Ept{\Rnbar_{\f1_{(m(n,\eta),\endsup)},\ell}(T)}\\
  &\leq\eta.
\end{align*}
This shows that $\{\Rnbar_{\cdot,\ell}\}_{N\in\N}$ satisfies condition J1, and  completes the proof.
\end{proof}

\subsubsection{Relative Compactness of State Variables}\label{sec_tightState}

\begin{proposition}\label{prop_tightnu}
  Suppose Assumptions \ref{asm_E},  \ref{asm_G}.\ref{asm_mean}  and \ref{asm_initial}.\ref{asm_initial_ind} hold. Then, for every $\ell \geq 1$, the sequence $\{\nunbar_\ell\}_{N\in\N}$ is relatively compact in $\mathbb{D}_{\mathbb{M}_F\supint}[0,\infty)$.
\end{proposition}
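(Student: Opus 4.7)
The plan is to verify Jakubowski's criteria of Proposition \ref{Jakob} with the separating family $\mathbb{F}=\{\mu\mapsto\langle f,\mu\rangle : f\in\mathbb{C}_c^1\supint\}$ furnished by Remark \ref{apx_Jremark}.  The compact containment condition J1 will be inherited from the corresponding statement for $\{\nunbar_1\}$ via the built-in ordering of $\mathbb{S}$, while the $\mathbb{F}$-weak tightness J2 will be derived by decomposing $\langle f,\nunbar_\ell\rangle$ through \eqref{prelimDynamicAlt} and assembling the tightness and convergence results already collected in the previous subsections.  For J1, since $(\nunbar_\ell(t))_{\ell\geq 1}\in\mathbb{S}$, the definition of $\mathbb{S}$ gives $\nunbar_\ell(t)\leq \nunbar_1(t)$ as measures and $\langle\f1,\nunbar_\ell(t)\rangle\leq 1$ for every $\ell$, $N$, $t$; intersecting the compact set $\mathcal{K}_{\eta,T}\subset\mathbb{M}_F\supint$ produced by Lemma \ref{lem_nuContainment2} with the closed unit ball $\{\mu\in\mathbb{M}_F\supint:\langle\f1,\mu\rangle\leq 1\}$ then yields a compact set that contains $\nunbar_\ell(t)$ on every event on which it contains $\nunbar_1(t)$, so J1 for $\{\nunbar_\ell\}$ is immediate.

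For J2, I would fix $f\in\mathbb{C}_c^1\supint$ and specialize \eqref{prelimDynamicAlt} to $\varphi(x,s)=f(x)$ (permitted by Remark \ref{rem_dynamic_extension}); dividing by $N$ then gives
\begin{align*}
\langle f,\nunbar_\ell(t)\rangle = \;&\langle f,\nunbar_\ell(0)\rangle + \int_0^t\langle f^\prime,\nunbar_\ell(s)\rangle\,ds - \Anbar_{f,\ell}(t) \\
&+ f(0)\dnbar_{\ell+1}(t) + \Bnbar_{f,\ell}(t) - \Mnbar_{f,\ell}(t) + \Nnbar_{f,\ell}(t).
\end{align*}
Each summand on the right gives rise to a tight sequence in $\D$: the initial term converges almost surely by Assumption \ref{asm_initial}.\ref{asm_initial_nu}; the integral term is uniformly Lipschitz in $t$ with constant $\|f^\prime\|_\infty$ since $\langle\f1,\nunbar_\ell(s)\rangle\leq 1$; $\{\Anbar_{f,\ell}\}$ and $\{\dnbar_{\ell+1}\}$ are relatively compact by Lemma \ref{lem_tightDepart}, $\{\Bnbar_{f,\ell}\}$ by Corollary \ref{cor_tightB}; and $\{\Mnbar_{f,\ell}\}$, $\{\Nnbar_{f,\ell}\}$ converge to $0$ in $\D$ by Lemmas \ref{lem_Mconv} and \ref{lem_Nconv}.

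The most delicate step is combining these into tightness of the sum in the Skorokhod topology, since addition on $\D\times\D$ is not everywhere continuous.  Marginal tightness into the Polish space $\D$ nevertheless yields joint tightness of the seven summands in $\D^7$, and along any weakly convergent subsequence the limits are all continuous paths: the drift, Lipschitz integral and compensator terms are absolutely continuous in $t$; the martingale terms have (a.s.\ zero) continuous limits by Lemmas \ref{lem_Mconv} and \ref{lem_Nconv}; and the pure-jump term $f(0)\dnbar_{\ell+1}$ has jumps of size $|f(0)|/N\to 0$, so any subsequential limit in $\D$ is continuous.  Since the addition map $\D^7\to\D$ is continuous at points all of whose coordinates are continuous, $\langle f,\nunbar_\ell\rangle$ inherits tightness from the joint, delivering J2 and, together with J1, the claimed relative compactness of $\{\nunbar_\ell\}$ in $\mathbb{D}_{\mathbb{M}_F\supint}\hc$.
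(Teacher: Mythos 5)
Your proof is correct and follows essentially the same route as the paper: Jakubowski's criteria from Proposition \ref{Jakob}, with J1 inherited from $\{\nunbar_1\}$ via the ordering $\nunbar_\ell\leq\nunbar_1$ and Lemma \ref{lem_nuContainment2}, and J2 obtained by decomposing $\langle f,\nunbar_\ell\rangle$ through the prelimit dynamics and citing the tightness results of Section \ref{sec_tightness}. The only minor differences are that you work with the compensated form \eqref{prelimDynamicAlt} (so Corollary \ref{cor_tightB} and Lemmas \ref{lem_Mconv}, \ref{lem_Nconv} play the role that Lemmas \ref{lem_tightDepart} and \ref{lem_tightR} play for $\Quenbar_{f,\ell}$ and $\Rnbar_{f,\ell}$ in the paper), and that you make explicit the joint-tightness/continuous-limit argument justifying addition of the summands in the $J_1$ topology, a step the paper's proof leaves implicit.
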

\begin{proof}
By Lemma \ref{lem_nuContainment},  the sequence  $\{\nunbar_1\}_{N\in\N}$    satisfies condition J1 with the compact set $\cal K_{\eta,T}$ equal to either \eqref{compact1} or \eqref{compact2} depending on whether $\endsup$ is finite or infinite.  Moreover, for every interval $I$,  $t\geq 0$ and  $\ell\geq 2$, $\nunbar_{\ell}(t,I)\leq \nunbar_1(t,I)$, and hence,  condition J1 also holds for $\{\nunbar_\ell\}$ with the same set $\cal K_{\eta,T}$.

It remains to  prove that $\{\nun_\ell\}$ satisfies condition J2, for which it suffices, by Remark \ref{apx_Jremark}, to show that for  $f\in\fset$, $\{ \langle f,\nunbar_\ell(t)\rangle\}_{N \in\N}$ is tight in $\D$. It follows from Proposition \ref{prelim_dynamics_prop}  and Remark \ref{rem_dynamic_extension} that
\begin{align*}
  \langle f,\nunbar_\ell(t)\rangle =& \langle f,\nunbar_\ell(0)\rangle +\int_0^t \langle f^\prime,\nunbar_\ell(s)\rangle ds - \Quenbar_{f,\ell}(t)
  + f(0)\dnbar_{\ell+1}(t)+ \Rnbar_{f,\ell}(t).
\end{align*}
Relative compactness of $\{\langle f,\nunbar_\ell(0)\rangle\}_{N \in \N}$ follows from Assumption \ref{asm_initial}.\ref{asm_initial_nu}, relative compactness of $\{\dnbar_{\ell+1}\}_{N\in\N}$ and $\{\Quenbar_{f,\ell}\}_{N\in\N}$  follow from Lemma \ref{lem_tightDepart}, and relative compactness of $\{\Rnbar_{f,\ell}\}_{N\in\N}$  follows from Lemma \ref{lem_tightR}. Moreover, the fact that $\{\int_0^\cdot \langle f^\prime,\nunbar_\ell(s)\rangle ds\}_{N\in\N}$ satisfies both criteria K1 and K2b, and hence is relatively compact, follows from the bound $\left|\int_t^{t+\delta}\langle f^\prime,\nunbar_\ell(s)\rangle ds\right| \leq \delta \|f^\prime\|_\infty,$ which uses the fact that $\nunbar_{\ell}(s)$ is a sub-probability measure. Therefore, for all $\ell\geq 1$ and $f\in\C_b^1\hc$, $\{\langle f,\nunbar_\ell(t)\rangle\}_{N\in\N}$ is relatively compact  in $\D$  (and therefore, tight, by Prohorov's Theorem since $\D$ is Polish).
\end{proof}

We summarize all the results of this section in the following theorem.

\begin{theorem}\label{thm_tightness}
Suppose Assumptions \ref{asm_E}, \ref{asm_G}.\ref{asm_mean}, and \ref{asm_initial} hold. Then, for each $\ell\geq 1$, the sequence  $\{(\nunbar,\Quenbar_{\cdot,\ell},\Rnbar_{\cdot,\ell})\}_{N\in\N}$  is relatively compact in $\mathbb{D}_{\s}[0,\infty)\times\mathbb{D}_{\mathbb{M}_F(\supint\times\R_+)}\hc^2$.
\end{theorem}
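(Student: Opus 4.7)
The plan is to assemble the theorem from the three component results (Proposition \ref{prop_tightnu}, Lemma \ref{lem_Qtight}, Lemma \ref{lem_Rtight}) via a standard product-space argument, with the only non-trivial task being to lift the $\ell$-wise relative compactness of $\{\nunbar_\ell\}$ to relative compactness of the full $\s$-valued process $\{\nunbar\}$ in $\mathbb{D}_\s\hc$. I would begin by noting that since $\mathbb{D}_\s\hc$ and $\mathbb{D}_{\mathbb{M}_F(\supint\times\R_+)}\hc$ are Polish, tightness in their finite product reduces to marginal tightness in each factor: given compact sets $K_1, K_2, K_3$ with marginal containment probability at least $1-\eta/3$, their product is compact in the product topology and has joint containment probability at least $1-\eta$. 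Thus it suffices to establish relative compactness of each of the three marginals. The second and third marginals are handled directly by Lemmas \ref{lem_Qtight} and \ref{lem_Rtight}.

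The main work is the first marginal. I would apply Jakubowski's criteria (Proposition \ref{Jakob}) to the Polish space $\s$. For compact containment (J1), observe that $(\s, d_\s)$ is topologically the countable product of copies of $\mathbb{M}_{\leq 1}\supint$ (since $d_P$ is bounded on $\mathbb{M}_{\leq 1}\supint$, so dividing by $\ell$ makes the tail of $d_\s$ vanish, identifying $d_\s$-convergence with componentwise Prohorov convergence). Consequently, by Tychonoff's theorem, a subset of $\s$ is relatively compact if and only if each of its $\ell$-coordinate projections is relatively compact in $\mathbb{M}_{\leq 1}\supint$. Using the pointwise domination $\nunbar_\ell(t,B) \leq \nunbar_1(t,B)$ valid for every Borel $B\subset\supint$ and every $\ell \geq 1$, I would take the compact $\cal K_{\eta,T} \subset \mathbb{M}_{\leq 1}\supint$ furnished by Lemma \ref{lem_nuContainment2} for the sequence $\{\nunbar_1\}$, and set $\tilde{\cal K}_{\eta,T} \doteq \{\mu = (\mu_\ell)_{\ell\geq 1} \in \s : \mu_\ell \leq \mu_1 \;\forall \ell,\; \mu_1 \in \cal K_{\eta,T}\}$. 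This set is compact in $\s$ (its $\ell$-projections all lie in a relatively compact subset of $\mathbb{M}_{\leq 1}\supint$), and the event $\{\nunbar(t) \in \tilde{\cal K}_{\eta,T} \text{ for all } t \in [0,T]\}$ coincides with $\{\nunbar_1(t) \in \cal K_{\eta,T} \text{ for all } t \in [0,T]\}$, whose probability is controlled by Lemma \ref{lem_nuContainment2}.

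For the separating-family condition (J2), I would use the family
\[
\mathbb{F} \doteq \left\{ \mu \mapsto \sum_{k=1}^m \langle f_k, \mu_{\ell_k}\rangle \; : \; m \in \N, \ell_k \in \N,\, f_k \in \fset \right\},
\]
which is closed under addition and separates points of $\s$ (since weak convergence of each $\mu_\ell$ characterizes convergence in $\s$, and $\fset$ is dense in $\C_b\supint$ for this purpose). Tightness in $\D$ of each real-valued process $\{\langle f, \nunbar_\ell\rangle\}_N$ is already extracted in the proof of Proposition \ref{prop_tightnu} via the decomposition \eqref{prelim_dymamics_eq} with $\varphi = f$, combined with the relative compactness of $\{\Quenbar_{f,\ell}\}$, $\{\dnbar_{\ell+1}\}$, and $\{\Rnbar_{f,\ell}\}$ from Lemmas \ref{lem_tightDepart} and \ref{lem_tightR}, plus the trivial bound $|\int_t^{t+\delta}\langle f', \nunbar_\ell(s)\rangle ds| \leq \delta\|f'\|_\infty$. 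Tightness for finite sums follows by linearity, which verifies J2.

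The step I expect to require the most care is the compact-containment argument: one must be sure that the single sequence of compact projections $\{K_\ell\}_{\ell\geq 1}$ defining $\tilde{\cal K}_{\eta,T}$ is chosen without paying a separate tightness cost for each $\ell$ (which would blow up as a sum over $\ell$). The domination $\nunbar_\ell \leq \nunbar_1$ is precisely what avoids this, allowing the containment at level $\ell=1$ to propagate to all higher levels uniformly in $N$; without this observation a naive union bound over $\ell$ would fail.
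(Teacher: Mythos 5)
Your proposal is correct, but it takes a genuinely different route from the paper's on the key step of lifting coordinate-wise compactness to the $\s$-valued process. The paper's proof is short: it invokes Proposition \ref{prop_tightnu} for each fixed $\ell$, then uses a diagonalization argument (every subsequence admits a further subsequence along which all coordinates converge simultaneously) to conclude relative compactness of $\{\nunbar\}$ with respect to $d_{\s}$, and cites Lemmas \ref{lem_Qtight} and \ref{lem_Rtight} for the other two components, with the product step left implicit exactly as in your first paragraph. You instead apply Jakubowski's criterion directly on $\s$: your compact containment set built from Lemma \ref{lem_nuContainment2} via the domination $\nunbar_\ell(t,\cdot)\leq\nunbar_1(t,\cdot)$ is the same device the paper uses inside Proposition \ref{prop_tightnu}, and your separating family of finite sums $\sum_k\langle f_k,\mu_{\ell_k}\rangle$ recycles the real-valued tightness established there, so the ingredients coincide while the assembly differs. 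Your route buys something: it yields tightness in $\mathbb{D}_{\s}\hc$ directly, sidestepping the subtlety in the diagonalization step that coordinate-wise Skorokhod convergence does not in general give convergence in the Skorokhod topology of the product-valued path (here it is harmless because all subsequential limits are continuous, but your argument never needs this). Two points in your write-up deserve a touch more care: (i) Proposition \ref{Jakob} as stated applies to $\mathbb{M}_F(\mset)$-valued processes, so you must invoke Jakubowski's theorem in its general form for the Polish space $\s$ (legitimate, since $\s$ is a closed subset of a countable product of the spaces $\mathbb{M}_{\leq1}\supint$, but it should be said); (ii) tightness in $\D$ of the finite sums does not follow from ``linearity'' alone, since sums of tight sequences in the Skorokhod topology need not be tight; it does hold here because each summand $\langle f_k,\nunbar_{\ell_k}\rangle$ has jumps of size at most $\|f_k\|_\infty/N$ and is therefore C-tight, and finite sums of C-tight sequences are C-tight (the paper's own proof of Proposition \ref{prop_tightnu} relies on the same kind of observation).
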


\begin{proof}
By Proposition \ref{prop_tightnu}, for each $\ell\geq 1$, $\{\nunbar_\ell\}_{N\in\N}$ is relatively compact in $\mathbb{M}_F\supint$.  Therefore, by a diagonalization argument, every subsequence of $\{\nunbar\}_{N\in\N}=\{(\nunbar_\ell;\ell\geq1)\}_{N\in\N}$ has a further subsequence that is convergent, simultaneously for all $\ell\geq1$. This  means that the sequence $\{\nunbar\}_{N\in\N}\subset\s$ is relatively compact with respect to the norm $d_{\s}$ defined in \eqref{def_dS}.  Relative compactness of the other two components are proved in Lemmas \ref{lem_Qtight} and \ref{lem_Rtight}, respectively.
\end{proof}

\subsection{Characterization of Subsequential Limits}\label{sec_limit}

We now  show that any subsequential limit of the sequence $\{\nun\}_{N\in\N}$ is a  solution to the hydrodynamic equations.
Section \ref{sec_limitCharacterization} contains the proofs of Theorem \ref{thm_convergence}, and Corollary \ref{cor_PropofChaos}.

\subsubsection{Limit of the Departure Processes}\label{sec_limitDeparture}

\begin{proposition}\label{prop_Alimit}
Suppose Assumptions \ref{asm_E}--\ref{asm_initial} hold, and fix $\ell\geq 1$. If $(\Quenbar_{\cdot,\ell},\nunbar)$ converges to $(\Que_{\cdot,\ell},\nu)$ in $\mathbb{D}_{\mathbb{M}_F(\supint\times\hc)}\hc\times \mathbb{D}_{\s}\hc$ almost surely, then, for every $\varphi\in\tightphiset$, $\mathcal{D}_{\varphi,\ell}$ is continuous and for every $t\geq0$,
\begin{equation}\label{Alimit}
    \Que_{\varphi,\ell}(t)= \int_0^t\langle \varphi(\cdot,s)h(\cdot),\nu_\ell(s)\rangle ds,\quad\quad\text{a.s.}
\end{equation}
\end{proposition}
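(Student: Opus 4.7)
The plan is to exploit the martingale decomposition from Proposition \ref{prop_prelim_Qcomp} together with the two convergence inputs already available. Dividing by $N$, Proposition \ref{prop_prelim_Qcomp} and \eqref{Def_Qcomp} give, for every $N$ and every $t \geq 0$,
\begin{equation}\label{plan:decomp}
\Quenbar_{\varphi,\ell}(t) \;=\; \Anbar_{\varphi,\ell}(t) \;+\; \Mnbar_{\varphi,\ell}(t), \qquad \Anbar_{\varphi,\ell}(t) = \int_0^t \langle \varphi(\cdot,s)h(\cdot),\nunbar_\ell(s)\rangle\, ds.
\end{equation}
I would fix $t \geq 0$ and let $N \to \infty$ along the subsequence on which the hypothesized a.s.\ convergence holds. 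By Lemma \ref{lem_Mconv}, $\Mnbar_{\varphi,\ell}(t) \to 0$ in probability. By hypothesis, the projection $\nunbar_\ell \to \nu_\ell$ a.s.\ in $\mathbb{D}_{\mathbb{M}_F\supint}\hc$, so Lemma \ref{lem_GGNconv} applies and gives
\begin{equation}\label{plan:Alimit}
\Anbar_{\varphi,\ell}(t) \;\longrightarrow\; \int_0^t \langle \varphi(\cdot,s)h(\cdot),\nu_\ell(s)\rangle\, ds \quad \text{in } L^1,
\end{equation}
hence in probability. Combining with \eqref{plan:decomp}, $\Quenbar_{\varphi,\ell}(t)$ converges in probability to the right-hand side of \eqref{plan:Alimit}.

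On the other hand, the assumed convergence $\Quenbar_{\cdot,\ell} \to \Que_{\cdot,\ell}$ in $\mathbb{D}_{\mathbb{M}_F(\supint\times\hc)}\hc$ a.s., combined with $\varphi\in\tightphiset$, yields $\Quenbar_{\varphi,\ell}(t) \to \Que_{\varphi,\ell}(t)$ a.s.\ at every $t$ that is a continuity point of $\Que_{\cdot,\ell}$, since weak convergence of finite measures pairs with bounded continuous test functions. Uniqueness of limits in probability then forces
\begin{equation*}
\Que_{\varphi,\ell}(t) \;=\; \int_0^t \langle \varphi(\cdot,s)h(\cdot),\nu_\ell(s)\rangle\, ds \quad \text{a.s.}
\end{equation*}
at every such $t$, a co-countable subset of $\hc$.

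Finally, taking $\varphi \equiv \f1$ in \eqref{plan:Alimit} and using that $L^1$ convergence preserves almost sure finiteness of the limit, the map $s \mapsto \langle h,\nu_\ell(s)\rangle$ is in $\Loneloc$ a.s.; since $|\langle \varphi(\cdot,s)h(\cdot),\nu_\ell(s)\rangle| \leq \|\varphi\|_\infty \langle h,\nu_\ell(s)\rangle$, the same holds for the integrand with $\varphi$, so the right-hand side of \eqref{Alimit} is (absolutely) continuous in $t$. Since $\Que_{\varphi,\ell}$ is c\`adl\`ag and agrees with a continuous function on a dense set, it must itself be continuous, and \eqref{Alimit} extends from the dense set of continuity points to every $t \geq 0$ by right-continuity.

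The argument is essentially a bookkeeping exercise once the two inputs (Lemmas \ref{lem_Mconv} and \ref{lem_GGNconv}) are in hand; the only point that requires a moment's attention is reconciling the different modes of convergence (a.s.\ for $\nunbar$ and $\Quenbar_{\cdot,\ell}$, in probability for $\Mnbar_{\varphi,\ell}$, in $L^1$ for $\Anbar_{\varphi,\ell}$), which is handled by passing everything through convergence in probability and then using the continuity of the candidate limit to upgrade pointwise equality to equality as c\`adl\`ag paths.
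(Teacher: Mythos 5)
Your decomposition $\Quenbar_{\varphi,\ell}=\Anbar_{\varphi,\ell}+\Mnbar_{\varphi,\ell}$ and the two inputs you feed it (Lemma \ref{lem_Mconv} for $\Mnbar_{\varphi,\ell}\to 0$ and Lemma \ref{lem_GGNconv} for the $L^1$ limit of $\Anbar_{\varphi,\ell}$) are exactly the ingredients of the paper's proof; where you diverge is in how continuity of $\Que_{\varphi,\ell}$ and the pointwise identification are obtained. The paper settles this upfront: since by Lemma \ref{lem_condInd} at most one departure occurs at any time, the jumps of $\Quenbar_{\varphi,\ell}$ are bounded by $\|\varphi\|_\infty/N$, so by \cite[Theorem 13.4]{BillingsleyBook} the Skorokhod limit $\Que_{\varphi,\ell}$ is continuous and the convergence is uniform on compacts a.s.; then $\Quenbar_{\varphi,\ell}(t)\to\Que_{\varphi,\ell}(t)$ at \emph{every} fixed $t$ and \eqref{Alimit} follows immediately. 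You instead use only the generic Skorokhod fact (convergence at continuity points of the limit) and recover continuity a posteriori from absolute continuity of the right-hand side plus right-continuity of $\Que_{\varphi,\ell}$; that route works and has the mild virtue of not using the structural ``one departure at a time'' fact, at the cost of a longer bookkeeping step. One point needs tightening: ``a.s.\ at every $t$ that is a continuity point of $\Que_{\cdot,\ell}$'' refers to an $\omega$-dependent set of times, whereas your convergence in probability of $\Quenbar_{\varphi,\ell}(t)$ is at a fixed deterministic $t$, so you cannot directly intersect the two statements over a ``co-countable subset of $\hc$.'' The standard repair is to restrict to deterministic $t$ outside the at most countable set of fixed times of discontinuity, i.e.\ those $t$ with $\Probil{\Que_{\cdot,\ell}(t)\neq\Que_{\cdot,\ell}(t-)}>0$; for such $t$ the identity holds a.s., and choosing a countable dense subset of these times gives the identity simultaneously a.s., after which your density/right-continuity/absolute-continuity extension is fine. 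With that patch (or by adopting the paper's jump-size argument, which removes the issue entirely), your proof is correct.
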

\begin{proof}
 Using \eqref{def_Mn}, the difference of the two sides of \eqref{Alimit} is equal to
\begin{equation}\label{Alimit_temp0}
\Que_{\varphi,\ell}(t)-\Quenbar_{\varphi,\ell}(t)+ \Mnbar_{\varphi,\ell}(t)
+\Anbar_{\varphi,\ell}(t)-\int_0^t\langle \varphi(\cdot,s)h(\cdot),\nu_\ell(s)\rangle ds,
\end{equation}
By the hypothesis of this proposition,  $\Quenbar_{\varphi,\ell}$ converges to $\Que_{\varphi,\ell}$  almost surely, as $N\to\infty$. Moreover, by Lemma \ref{lem_condInd}, only one departure can occur at each time, and hence the jump size of $\Quenbar_{\varphi,\ell}$ is bounded by $\|\varphi\|_\infty$/N. Therefore, by \cite[Theorem 13.4]{BillingsleyBook}, $\mathcal{D}_{\varphi,\ell}$ is continuous and the convergence also holds uniformly on compact sets, almost surely. Since $\Mnbar_{\varphi,\ell}(t)$  also converges to zero in probability by  Lemma \ref{lem_Mconv},  in view of \eqref{Alimit_temp0} to prove \eqref{Alimit} it suffices to show  that
\begin{equation}\label{temp_limitD2}
\limsup_{N\to\infty}\Ept{ \left|\Anbar_{\varphi,\ell}(t)-\int_0^t\langle \varphi(\cdot,s)h(\cdot),\nu_\ell(s)\rangle ds\right|}=0.
\end{equation}
However, \eqref{temp_limitD2} follows from Lemma \ref{lem_GGNconv}(b).
\end{proof}

\subsubsection{Limit of the Routing Processes}\label{sec_limitRouting}

We begin with a  preliminary result, which uses the bounds obtained in Lemma \ref{lem_renewal}.

\begin{lemma}\label{lem_hRN}
Suppose Assumption \ref{asm_E} holds.
\begin{enumerate}
  \item  \label{lem_hRN_Mean} For every $0\leq s\leq t$, the following limit holds in probability as $N\to\infty$:
  \begin{equation}\label{hRN}
   \frac{1}{N}\int_s^t\hen(\ren(u))du\to (t-s)\lambda;
  \end{equation}
  \item  \label{lem_hRN_Var} For every $t\geq0$,
   \begin{equation}\label{hRN_var}
   C_E(t)\doteq\limsup_{N\to\infty}\Ept{\left(\frac{1}{N}\int_0^t\hen(\ren(u))du\right)^2}<\infty.
  \end{equation}
\end{enumerate}
\end{lemma}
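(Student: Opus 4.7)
The plan is to perform the change of variables $v=Nu$ to reduce the integral to an integral against the compensator of the unscaled delayed renewal process $\widetilde E$, and then combine the functional law of large numbers for $\widetilde E$ with standard $L^2$ bounds on the compensated counting martingale.

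First I would verify, directly from the definition \eqref{def_Ren} together with $E^{(N)}(t)=\widetilde E(Nt)$ and $R^{(N)}=\widetilde R/N$, the key identity
$$\widetilde R_E(Nu)\;=\;N\,R_E^{(N)}(u),\qquad u\ge 0,$$
where $\widetilde R_E$ is the backward recurrence time of $\widetilde E$. Since $h_E^{(N)}(r)=N h_{\widetilde E}(Nr)$ with $h_{\widetilde E}\doteq g_{\widetilde E}/\overline G_{\widetilde E}$, substituting $v=Nu$ would yield
$$\frac{1}{N}\int_0^t h_E^{(N)}\!\bigl(R_E^{(N)}(u)\bigr)\,du \;=\; \frac{1}{N}\int_0^{Nt} h_{\widetilde E}\!\bigl(\widetilde R_E(v)\bigr)\,dv \;=:\; \frac{\widetilde F(Nt)}{N}.$$
Next I would invoke the standard point-process result (see, e.g., \cite{BremaudBook}) identifying $\widetilde F$ as the compensator of the counting process $\widetilde E$ with respect to its natural filtration; since $E[\widetilde E(T)]=U_{\widetilde E}(T)<\infty$ for every $T$, this would imply that $\widetilde M\doteq\widetilde E-\widetilde F$ is in fact a true square-integrable martingale with $[\widetilde M](T)=\widetilde E(T)$.

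For part \ref{lem_hRN_Mean}, I would decompose $\widetilde F(Nt)/N=\overline E^{(N)}(t)-\widetilde M(Nt)/N$. By Lemma \ref{asm_initial_remark} the first term converges almost surely to $\lambda t$, and the martingale term is controlled by
$$E\!\left[\bigl(\widetilde M(Nt)/N\bigr)^2\right] \;=\; \frac{E\bigl[[\widetilde M](Nt)\bigr]}{N^2} \;=\; \frac{U_{\widetilde E}(Nt)}{N^2}\;\longrightarrow\;0$$
as $N\to\infty$ by the elementary renewal theorem, giving convergence of $\widetilde F(Nt)/N$ to $\lambda t$ in probability and hence \eqref{hRN} after taking differences between $t$ and $s$. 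For part \ref{lem_hRN_Var}, the same decomposition combined with $(a-b)^2\le 2a^2+2b^2$ would give
$$E\!\left[\bigl(\widetilde F(Nt)/N\bigr)^2\right] \;\le\; 2\,E\!\left[\overline E^{(N)}(t)^{2}\right] + 2\,\frac{U_{\widetilde E}(Nt)}{N^{2}},$$
where the first term is bounded in $N$ by \eqref{asm_initial_arrival2moment} of Lemma \ref{asm_E_lem} and the second vanishes, yielding \eqref{hRN_var}.

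The main technical point I expect to have to handle carefully is the identification of $\widetilde F$ as a compensator in the presence of the delay $\widetilde u_0$, whose distribution is the stationary-type form underlying \eqref{delay_dist}. The natural workaround is to embed $\widetilde E$ into an ordinary undelayed renewal process started at time $-\widetilde R$: the conditional law of the first renewal after time $0$ under this embedding matches the specified law of $\widetilde u_0$, so the intensity $h_{\widetilde E}(\widetilde R_E(\cdot-))$ carries over verbatim to the filtration of $\widetilde E$ on $[0,\infty)$. Once this is in hand, everything else is routine $L^2$ estimation and an appeal to the elementary renewal theorem, very likely using the quantitative bounds collected in Lemma \ref{lem_renewal}.
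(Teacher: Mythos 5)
Your proof is correct, but it takes a genuinely different route from the paper's. The paper never identifies $\int_0^\cdot \hen(\ren(u))\,du$ as a compensator; instead it proves the purely renewal-theoretic, non-asymptotic bound of Lemma \ref{lem_renewal}, $\Ept{(\int_0^t \hstar(\rstar(s))ds-\Pstar(t))^2}\le 12+3\Ept{\Pstar(t)}$, by splitting the integral over renewal epochs, observing that $y_n=\int_0^{\ustar_n}\hstar(v)dv=-\log \overline{G}^*(\ustar_n)$ is exponential with mean and variance $1$, and applying Wald's first and second moment identities (with the delay and overshoot terms computed by hand); Chebyshev plus the FLLN for $\arrivalnbar$ then gives part (1), and $(a+b)^2\le 2a^2+2b^2$ together with \eqref{asm_initial_arrival2moment} gives part (2). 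Your argument replaces all of this with the stochastic-intensity identification $\widetilde F(T)=\int_0^T h_{\widetilde E}(\widetilde R_E(v))dv$ as the compensator of $\widetilde E$, the observation $[\widetilde M](T)=\widetilde E(T)$ (valid since $\widetilde F$ is continuous), and the isometry $\Ept{\widetilde M(T)^2}=\Ept{\widetilde E(T)}$ — the same local-martingale-to-$L^2$-martingale step the paper uses elsewhere via \cite[Theorem 7.35]{klebBook}; your rescaling identities $\hen(r)=Nh_{\widetilde E}(Nr)$ and $\widetilde R_E(Nu)=N\ren(u)$ check out, and the embedding you sketch to handle the stationary-type delay \eqref{delay_dist} is the right way to justify the intensity. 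What each approach buys: yours is slicker and yields a sharper variance bound ($\Ept{\widetilde E(T)}$ in place of $12+3\Ept{\widetilde E(T)}$), at the cost of invoking point-process intensity machinery for the arrival stream and having to verify the compensator rigorously in the delayed case (the kind of verification the paper spends real effort on, e.g., in Lemma \ref{lem_condInd}); the paper's route is elementary and self-contained, needing only the i.i.d. structure and Wald. Two small imprecisions in your write-up, neither fatal: $\Ept{\widetilde E(T)}$ equals the renewal function $U_{\widetilde E}(T)$ only for the undelayed process — for the delayed process you should write $\Ept{\widetilde E(T)}\le U_{\widetilde E}(T)$ (as the paper does in the proof of Lemma \ref{lem_renewal}), which is all you need since $U_{\widetilde E}(Nt)/N^2\to 0$; and square-integrability of $\widetilde M$ should be deduced from $\Ept{[\widetilde M](T)}=\Ept{\widetilde E(T)}<\infty$ rather than from integrability of $\widetilde E$ alone.
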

\begin{proof}
By Assumption \ref{asm_E}, $\arrivaln$ is a pure renewal process with inter-arrival distribution $\Gen$ and backward recurrence time $\ren$. Applying Lemma \ref{lem_renewal} with $\Pstar$, $\Gstar$, $\hstar$, $\rstar$ and $\rstar_0$ replaced by $\arrivaln$, $\Gen$, $\hen$, $\ren$, and $R^{(N)}$, respectively, it follows from \eqref{renewal_Bound} that  for every $t\geq0$,
\begin{align*}
    &\Prob{\left|\frac{1}{N}\int_0^t\hen(\ren(s))ds- \lambda t\right|>2\epsilon}\\
    &\hspace{1cm} \leq\frac{1}{\epsilon^2}\Ept{\left(\frac{1}{N}\int_0^t\hen(\ren(s))ds- \arrivalnbar(t)\right)^2} +\Prob{\big|\arrivalnbar(t)- \lambda t\big|>\epsilon}\\
    &\hspace{1cm} \leq\frac{3}{N\epsilon^2}\left(\frac{4}{N}+\Ept{\arrivalnbar(t)}\right) +\Prob{\big|\arrivalnbar(t)- \lambda t\big|>\epsilon}
\end{align*}
By \eqref{asm_initial_arrivalmoment}, $\Eptil{\arrivalnbar(t)}$ is finite, and $\arrivalnbar(t)$ converges in expectation, and hence in probability, to $\lambda t$ by Lemma \ref{asm_E_lem}. Hence, the right-hand side of display above converges to zero as $N\to\infty$. Since both sides of \eqref{hRN} are linear in $t$, \eqref{hRN} follows.

To establish \eqref{hRN_var}, by another application  of \eqref{renewal_Bound}, we have
\begin{align*}
    \Ept{\left(\frac{1}{N}\int_0^t\hen(\ren(s))ds \right)^2}\leq&\frac{2}{N^2}\Ept{\left(\int_0^t\hen(\ren(s))ds- \arrivaln(t)\right)^2} +2\Ept{\arrivalnbar(t)^2}\\
    \leq&\frac{24}{N^2}+\frac{6}{N}\Ept{\arrivalnbar(t)} +2\Ept{\arrivalnbar(t)^2}.
\end{align*}
Taking the limit superior as $N\to \infty$ of both sides of the inequality above and using  \eqref{asm_initial_arrivalmoment}, \eqref{hRN_var} follows with $C_E(t)=2 \limsup_{N\to\infty}\Ept{\arrivalnbar(t)^2}$,
which is finite by \eqref{asm_initial_arrival2moment}.
\end{proof}

\begin{proposition}\label{prop_Blimit}
Suppose  Assumptions \ref{asm_E},  \ref{asm_G}.\ref{asm_mean}, and \ref{asm_initial} hold,  fix $\ell\geq1$, and let
$\eta_\ell$ be  defined as in \eqref{Fluid_R}.
 If $(\Rnbar_{\cdot,\ell},\nunbar)$ converges to $(\mathcal{R}_{\cdot,\ell},\nu)$ in $\mathbb{D}_{\mathbb{M}_F(\supint\times\hc)}\hc\times \mathbb{D}_{\s}\hc$, almost surely as $N\to\infty,$ then for every $\varphi\in\tightphiset$, $\mathcal{R}_{\varphi,\ell}$ is continuous and satisfies for every $t\geq0$,
\begin{equation}\label{Blimit}
  \mathcal{R}_{\varphi,\ell}(t)=\int_0^t\langle\varphi(\cdot,s),\eta_\ell(s)\rangle ds,\quad\quad\text{a.s.}
\end{equation}
\end{proposition}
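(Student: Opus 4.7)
The plan is to combine the martingale decomposition of Proposition \ref{prop_prelim_Rcomp} with the renewal estimates of Lemma \ref{lem_hRN}. By that decomposition, $\Rnbar_{\varphi,\ell}=\Bnbar_{\varphi,\ell}+\Nnbar_{\varphi,\ell}$, and Lemma \ref{lem_Nconv} shows $\Nnbar_{\varphi,\ell}\to 0$ in probability, uniformly on compact intervals. Moreover, by Lemma \ref{lem_condInd} the event times are almost surely distinct, so \eqref{def_Rphi} implies that each jump of $\Rnbar_{\varphi,\ell}$ has size at most $\|\varphi\|_\infty/N$; together with the hypothesis $\Rnbar_{\cdot,\ell}\to\mathcal{R}_{\cdot,\ell}$ in $\mathbb{D}_{\mathbb{M}_F(\supint\times\hc)}\hc$ and \cite[Theorem 13.4]{BillingsleyBook}, this gives that $t\mapsto\mathcal{R}_{\varphi,\ell}(t)$ is continuous and $\Rnbar_{\varphi,\ell}\to\mathcal{R}_{\varphi,\ell}$ uniformly on compact intervals. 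It therefore suffices to prove that for each fixed $t\geq 0$, $\Bnbar_{\varphi,\ell}(t)\to\int_0^t\langle\varphi(\cdot,s),\eta_\ell(s)\rangle\,ds$ in probability.

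To identify this limit, I would introduce
\begin{equation*}
  \psi_N(u) \doteq
  \begin{cases}
    \varphi(0,u)\bigl(1-(\overline{S}^{(N)}_1(u))^d\bigr), & \ell=1,\\[1mm]
    \mathfrak{P}_d\bigl(\overline{S}^{(N)}_{\ell-1}(u),\overline{S}^{(N)}_\ell(u)\bigr)\bigl\langle\varphi(\cdot,u),\nunbar_{\ell-1}(u)-\nunbar_\ell(u)\bigr\rangle, & \ell\geq 2,
  \end{cases}
\end{equation*}
together with its limit analog $\psi$ obtained by replacing $(\overline{S}^{(N)}_\cdot,\nunbar_\cdot)$ with $(S_\cdot,\nu_\cdot)$. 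By \eqref{def_Rcomp1}--\eqref{def_Rcomp}, $\Bnbar_{\varphi,\ell}(t)=\tfrac{1}{N}\int_0^t\hen(\ren(u))\psi_N(u)\,du$, and by \eqref{Fluid_R}, $\int_0^t\langle\varphi(\cdot,s),\eta_\ell(s)\rangle\,ds=\lambda\int_0^t\psi(u)\,du$; also $\|\psi_N\|_\infty\vee\|\psi\|_\infty\leq d\|\varphi\|_\infty$ by \eqref{polyDerivative}. The first key step is to establish $\|\psi_N-\psi\|_t\to 0$ almost surely. Since $\nunbar_\ell\to\nu_\ell$ in $\mathbb{D}_{\mathbb{M}_F\supint}\hc$ and each jump of $\nunbar_\ell$ has Prohorov diameter at most $1/N$, the limit $\nu_\ell$ is continuous and the convergence is uniform on $[0,t]$ in the Prohorov metric. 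This yields uniform convergence $\overline{S}^{(N)}_\ell\to S_\ell$ on $[0,t]$, hence (via \eqref{polyDerivative}) uniform convergence of the $\mathfrak{P}_d$ factor, while Lemma \ref{lem_nuContainment2} localizes the ages to a compact subset of $\supint$ on which $\varphi$ is uniformly continuous, giving the required uniform convergence of the $\langle\varphi,\nunbar_{\ell-1}-\nunbar_\ell\rangle$ factor.

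With $\|\psi_N-\psi\|_t\to 0$ in hand, I would write
\begin{equation*}
  \Bnbar_{\varphi,\ell}(t) - \lambda\!\int_0^t\!\psi(u)\,du \;=\; \frac{1}{N}\!\int_0^t\!\hen(\ren(u))(\psi_N-\psi)(u)\,du \;+\; \Bigl(\frac{1}{N}\!\int_0^t\!\hen(\ren(u))\psi(u)\,du - \lambda\!\int_0^t\!\psi(u)\,du\Bigr).
\end{equation*}
The first summand is bounded in absolute value by $\|\psi_N-\psi\|_t\cdot\tfrac{1}{N}\int_0^t\hen(\ren(u))\,du$, which tends to $0$ in probability since the first factor vanishes almost surely and the second is tight by Lemma \ref{lem_hRN}\ref{lem_hRN_Var}. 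For the second summand, the bounded continuous $\psi$ is approximated uniformly on $[0,t]$ by simple step functions $\psi^\epsilon$ with $\|\psi-\psi^\epsilon\|_t<\epsilon$; Lemma \ref{lem_hRN}\ref{lem_hRN_Mean} applied on each constancy interval of $\psi^\epsilon$ gives $\tfrac{1}{N}\int_0^t\hen(\ren(u))\psi^\epsilon(u)\,du\to\lambda\int_0^t\psi^\epsilon(u)\,du$ in probability, while the $\psi\mapsto\psi^\epsilon$ replacement error on both sides is bounded by $\epsilon\cdot(\tfrac{1}{N}\int_0^t\hen(\ren(u))du+\lambda t)$, which is $O(\epsilon)$ in probability by Lemma \ref{lem_hRN}\ref{lem_hRN_Var}. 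Letting $\epsilon\downarrow 0$ closes the argument.

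The main obstacle is the uniform convergence $\|\psi_N-\psi\|_t\to 0$, specifically in the $\ell\geq 2$ case when $\endsup=\infty$, where $\varphi\in\tightphiset$ need not be uniformly continuous on the unbounded strip $\supint\times[0,t]$. The remedy is the compact-containment estimate of Lemma \ref{lem_nuContainment2}: up to a uniformly small residual mass of $\nunbar_\ell(u)$ outside a compact age interval, one may treat $\varphi$ as uniformly continuous on a compact rectangle and deduce uniform convergence from the Prohorov convergence of the measures. Once this localization step is in place, the remainder of the proof is driven entirely by the two bounds of Lemma \ref{lem_hRN}.
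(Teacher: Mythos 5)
Your proposal is correct and follows essentially the same route as the paper: the identical decomposition $\Rnbar_{\varphi,\ell}=\Bnbar_{\varphi,\ell}+\Nnbar_{\varphi,\ell}$ with Lemma \ref{lem_Nconv}, the jump-size argument plus \cite[Theorem 13.4]{BillingsleyBook} for continuity and locally uniform convergence, the same two-term split of $\Bnbar_{\varphi,\ell}(t)-\int_0^t\langle\varphi(\cdot,s),\eta_\ell(s)\rangle ds$ (your $\psi_N,\psi$ are exactly the paper's $f^{(N)}_{\varphi,\ell},f_{\varphi,\ell}$), and both estimates of Lemma \ref{lem_hRN}. The only differences are cosmetic technical choices --- you use a step-function approximation and a product-of-(a.s.\ vanishing)$\times$(tight) bound where the paper uses the modulus of continuity together with Markov/Cauchy--Schwarz and the second-moment bound, and you make the localization behind the uniform convergence $\|\psi_N-\psi\|_t\to0$ explicit via compact containment where the paper deduces it directly from Skorokhod convergence of the real-valued processes --- none of which changes the substance of the argument.
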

\begin{proof}
Fix $\varphi\in\tightphiset$, $\epsilon > 0$  and let ${\mathcal W} \doteq
 \{|\mathcal{R}_{\varphi,\ell}(t)-\int_0^t\langle\varphi(\cdot,s),\eta_\ell(s)\rangle ds|>3\epsilon\}$.  Using the relation $\Rnbar_{\varphi,\ell}=\Bnbar_{\varphi,\ell}+\Nnbar_{\varphi,\ell}$ from  \eqref{def_Rmgale}, we have
\begin{align}\label{Blimit_temp0}
  \Prob{{\mathcal W}}
  &\leq\Prob{\left|\mathcal{R}_{\varphi,\ell}(t)-\Rnbar_{\varphi,\ell}(t)\right|>\epsilon} +\Prob{\left|\Nnbar_{\varphi,\ell}(t)\right|>\epsilon}\notag \\
& \qquad   +\Prob{\left|\Bnbar_{\varphi,\ell}(t)-\int_0^t\langle\varphi(\cdot,s),\eta_\ell(s)\rangle ds\right|>\epsilon}.
\end{align}
By the hypothesis of this proposition, $\Rnbar_{\varphi,\ell}$ converges almost surely to $\mathcal{R}_{\varphi,\ell}$, in $\D$. Also, by \eqref{Rbound}, the jump sizes of $\Rnbar_{\varphi,\ell}$ are bounded by $\|\varphi\|_{\infty}$ times the jump sizes of $\arrivalnbar$, which are at most $1/N$. Therefore, the maximum jump size of $\Rnbar_{\varphi,\ell}$ converges to zero as $N\to\infty,$ and by  \cite[Theorem 13.4]{BillingsleyBook}, $\mathcal{R}_{\varphi,\ell}$ is almost surely continuous and $\Rnbar_{\varphi,\ell}$ converges to $\mathcal{R}_{\varphi,\ell}$, almost surely, uniformly on compact sets:
\begin{equation}\label{Blimit_temp1}
  \limsup_{N\to\infty}\Prob{\left| \mathcal{R}_{\varphi,\ell}(t)-\Rnbar_{\varphi,\ell}(t)\right|> \epsilon}=0, \qquad t \geq 0.
\end{equation}

Moreover, by \eqref{def_Rcomp} and \eqref{Fluid_R}, we can write
\begin{align}\label{Blimit_temp3}
  \Bnbar_{\varphi,\ell}(t)-\int_0^t\langle\varphi(\cdot,s),\eta_\ell(s)\rangle ds
                       =& \int_0^t\left(\frac{1}{N}\hen(\ren(u))-\lambda\right)f_{\varphi,\ell}(u)du\\
                        &+ \frac{1}{N}\int_0^t\hen(\ren(u))(f^{(N)}_{\varphi,\ell}(u)-f_{\varphi,\ell}(u))du, \notag
\end{align}
with
\[f_{\varphi,\ell}(u)\doteq \twopartdef{\varphi(0,u)(1- S_1(u)^d)}{\ell=1,}{  \left\langle \varphi(\cdot, u), \nu_{\ell-1}(u)-\nu_\ell(u)\right\rangle \poly{S_{\ell-1}(u)}{S_\ell(u)}}{\ell\geq2,}\]
and $f^{(N)}$ defined analogously, but with $S_\ell$ and $\nu_\ell$ replaced by $\nunonebar_\ell$ and $\nunbar$, respectively, for $\ell \geq 1$.  By the hypothesis of this proposition, almost surely, $\nunbar_\ell$ converges weakly to $\nu_\ell$, and hence, almost surely, $\nunonebar_\ell = \langle \f1,  \nunbar_\ell  \rangle$ and $\langle \varphi,\nunbar_{\ell-1}-\nunbar_\ell\rangle$ converge to $S_\ell = \langle \f1, \nu_\ell \rangle$ and $\langle \varphi, \nu_{\ell-1}-\nu_\ell\rangle$, respectively. By Lemma \ref{lem_condInd}, there is at most one arrival or departure at each time, and hence, the maximum jump sizes of $\nunonebar_\ell$ and $\langle \varphi,\nunbar_{\ell-1}-\nunbar_\ell\rangle$ are bounded by $1/N$ and $\|\varphi\|_\infty/N$, respectively. Therefore, by \cite[Theorem 13.4]{BillingsleyBook}, $S_\ell$ and $\langle \varphi, \nu_{\ell-1}-\nu_\ell\rangle$ are continuous.  Consequently, $f_{\varphi,\ell}$ is continuous, and  $f_{\varphi,\ell}^{(N)}$ converges to $f_{\varphi,\ell}$ uniformly on compact sets, almost surely.

Fix $t\geq0$ and $\delta>0$, and  let  $w_{f_{\varphi,\ell}}(\delta,t)$ be the modulus of continuity of $f_{\varphi,\ell}$, and define $\mathcal{P}=\{0=t_0<t_1<...<t_n=t\}$ to be a partition of size $\delta = \max_{j} |t_j - t_{j-1}|$. We have
\begin{align}\label{Blimit_temp31}
   \lim_{N \rightarrow \infty} \Prob{\sum_{j=0}^{n-1}f_{\varphi,\ell}(t_j)\left|\frac{1}{N}\int_{t_j}^{t_{j+1}}\hen(\ren(u))du- (t_{j+1}-t_{j})\lambda \right|>\epsilon} = 0,
\end{align}
and, by the Markov and Cauchy-Schwartz inequalities and \eqref{hRN_var},
\begin{align*}
&\Prob{w_{f_{\varphi,\ell}}(\delta,t)\sum_{j=0}^{n-1}\int_{t_j}^{t_{j+1}}\left|\frac{1}{N}\hen(\ren(u))-\lambda\right|du>\epsilon}\\
        &\hspace{1cm}\leq \frac{1}{\epsilon}\Ept{w_{f_{\varphi,\ell}}(\delta,t)\left(\frac{1}{N}\int_{0}^{t}\hen(\ren(u))du+\lambda t\right)}\\
          &\hspace{1cm}\leq \frac{ \sqrt{2}}{\epsilon} \Ept{w^2_{f_{\varphi,\ell}}(\delta,t)}^{\frac{1}{2}}\left(\Ept{\left(\frac{1}{N}\int_0^t\hen(\ren(u))du\right)^2} +\lambda^2t^2\right)^{\frac{1}{2}}.
\end{align*}
Taking the limit superior as $N \rightarrow \infty$ of both sides of the display above, using  \eqref{hRN_var} and
\eqref{Blimit_temp31}, we conclude that
\begin{align*}
    &\limsup_{N\to\infty}\Prob{\left|\int_0^t\left(\frac{1}{N}\hen(\ren(u))-\lambda\right)f_{\varphi,\ell}(u)du\right|>2\epsilon} \leq \frac{C(t)}{\epsilon}  \left( \Ept{w^2_{f_{\varphi,\ell}}(\delta,t)} \right)^{1/2},
\end{align*}
where $C(t)\doteq \sqrt{2}(C_E(t)+\lambda^2t^2)^{1/2}$. Since $f_{\varphi,\ell}$ is continuous on $[0,t]$,  $w^2_{f_{\varphi,\ell}}(\delta,t)$ is bounded and converges almost surely to zero as $\delta\to 0$. Sending $\delta\to0$ in the last display and applying the bounded convergence theorem, we see that
\begin{equation}\label{Blimit_temp33}
    \limsup_{N\to\infty}\Prob{\left|\int_0^t\left(\frac{1}{N}\hen(\ren(u))-\lambda\right)f_{\varphi,\ell}(u)du\right|>2\epsilon} =0.
\end{equation}

Similarly, applying the Markov and Cauchy-Schwartz inequalities, we have
\begin{align*}
  &\Prob{\left|\frac{1}{N}\int_0^t\hen(\ren(u))(f^{(N)}_{\varphi,\ell}(u)-f_{\varphi,\ell}(u))du\right|>\epsilon}\\
  &\hspace{1cm}\leq \frac{1}{\epsilon}\Ept{\sup_{u\in[0,t]}\Big|f^{(N)}_{\varphi,\ell}(u)-f_{\varphi,\ell}(u)\Big|^2}^{\frac{1}{2}}
  \Ept{\left(\frac{1}{N}\int_0^t\hen(\ren(u))du\right)^2}^{\frac{1}{2}}. \notag
\end{align*}
Together with  \eqref{hRN_var}, the almost sure uniform convergence of  $f^{(N)}_{\varphi,\ell}$ to  the bounded function $f^{\varphi,\ell}$, and the bounded convergence theorem,  this implies
\begin{equation}\label{Blimit_temp5}
\limsup_{N\to\infty}  \Prob{\left|\frac{1}{N}\int_0^t\hen(\ren(u))(f^{(N)}_{\varphi,\ell}(u)-f_{\varphi,\ell}(u))du\right|>\epsilon}=0.
\end{equation}
From \eqref{Blimit_temp3}, \eqref{Blimit_temp33} and \eqref{Blimit_temp5} it follows that
\begin{equation}\label{Blimit_temp6}
  \limsup_{N\to\infty} \Prob{\left|\Bnbar_{\varphi,\ell}(t)-\int_0^t\langle\varphi(\cdot,s),\eta_\ell(s)\rangle ds\right|>\epsilon} =0.
\end{equation}
Finally, \eqref{Blimit} follows on sending first $N\to\infty$ on the right-hand side of \eqref{Blimit_temp0}, next invoking \eqref{Blimit_temp1}, \eqref{Nbound} of Lemma \ref{lem_Nconv} and \eqref{Blimit_temp6}, and then sending  $\epsilon\downarrow0.$
\end{proof}

\subsubsection{Proofs of the Convergence Theorem  and Propagation of Chaos}

\label{sec_limitCharacterization}

\begin{proof}[Proof of Theorem \ref{thm_convergence}]
By Assumption \ref{asm_initial}, Lemma \ref{asm_E_lem}, and Theorem \ref{thm_tightness}, the sequence
\begin{equation}
\Yn\doteq\left(\nunbar(0), \arrivalnbar,\nunbar, \Quenbar_{\cdot,\ell}, \Rnbar_{\cdot,\ell};\ell\geq 1 \right), \quad N \in \N,
\end{equation}
is relatively compact in
\begin{equation}
  \Y\doteq\s\times\D\times\mathbb{D}_{\s}[0,\infty)\times \mathbb{D}_{\mathbb{M}_F(\supint\times\R_+)}\hc^{\N_0}\times \mathbb{D}_{\mathbb{M}_F(\supint\times\R_+)}\hc^{\N_0}.
\end{equation}
Therefore, for every subsequence  $\{ \Y^{N_k}\}$, there exists a  a further subsequence $\{N_{k_j}\}$, such that as $j \rightarrow \infty$, $Y^{N_{k_j}}$ converges in distribution to a random element
\begin{equation}
  Y\doteq (\nu(0),\lambda \id,\nu,\Que_{\cdot,\ell}, {\cal R}_{\cdot,\ell};\ell\geq1 ),
\end{equation}
that takes values in $\Y$.  It follows from the Skorokhod representation theorem that there exists a probability space that supports $\Y$-valued random elements $\tilde{\Y}_{N_{k_j}}$ and the $\Y$-valued random element $\widetilde Y$,  such that $\tilde{\Y}^{(N_{k_j})} \deq Y^{(N_{k_j})}$ for every $j$, $Y\deq\widetilde Y$, and as $j \rightarrow \infty$, $\tilde{\Y}^{(N_{k_j})} \to\widetilde Y$ almost surely in $\Y$.   With a slight abuse of notation, since we are only interested in distributional properties, we denote the subsequence $\{N_{k_j}\}$ just as $\{N\}$ and identify $\Ynt$ and $\widetilde Y$ with $\Yn$ and $Y$, respectively.  Using this convention, we have
\begin{equation}\label{convproof_conv}
  \Yn\to Y \quad\quad \text{ in } \Y, \quad \text{ a.s.}
\end{equation}

Now, we uniquely characterize the subsequential limit $Y$. Fix $\ell\geq1.$  For $f\in\C_b\supint$, by \eqref{convproof_conv}, $\langle f,\nunbar_\ell\rangle$  converge almost surely to $\langle f,\nu_\ell\rangle$ in $\D$. Since for every $N\in\N$, the maximum jump size of $\langle f,\nunbar_\ell\rangle$ is bounded by $\|f\|_\infty/N$ (due to Lemma \ref{lem_condInd}) the limit $\langle f, \nu_\ell\rangle $ is continuous, and hence $\nu$ is a continuous $\s$-valued process, almost surely. Next, let  ${\cal T}$ be a countable dense subset of $\R_+$ which contains $0$ (say the diadic numbers). For $t\in{\cal T}$, it follows from Proposition \ref{prop_Alimit}, with $\varphi = \f1$,  that the limit $\mathcal{D}_{\f1,\ell}$ of $\dnbar_\ell=\Quenbar_{\f1,\ell}$ takes the form
\begin{equation}\label{ConvProof_temp1}
  \mathcal{D}_{\f1,\ell} (t)=\int_0^t\langle h,\nu_\ell(s)\rangle ds<\infty.
\end{equation}
Therefore,  sending  $N\to\infty$ on both sides of \eqref{prelim_dymamics_balance} in  Proposition \ref{prop_Blimit},
for every $t \in {\cal T}$, the identity
\begin{equation}\label{ConvProof_Balance}
    \langle\f1,\nu_\ell(t)\rangle - \langle \f1,\nu_\ell(0)\rangle = D_{\ell+1}(t)+\int_0^t\langle \f1,\eta_\ell(s)\rangle ds -D_\ell(t),
\end{equation}
holds almost surely, where $\eta$ is defined by \eqref{Fluid_R}. Moreover, almost surely the relations \eqref{ConvProof_temp1} and \eqref{ConvProof_Balance} hold simultaneously for all $\ell\geq1$ and $t\in{\cal T}$ because ${\cal T}$ is countable, and therefore for all $t \geq 0$ since  both sides are continuous by Propositions \ref{prop_Alimit} and \ref{prop_Blimit}.

Furthermore, let $\mathcal{C}$ be a countable dense subset of $\phiset$, and fix $\varphi\in{\cal C}$ and $t\in{\cal T}$. By Proposition \ref{prelim_dynamics_prop}, for every $\ell\geq 1$ and $N\in\N$, $\Yn$ satisfies the equation \eqref{prelim_dymamics_eq}. Since $\varphi,$ $\varphi_x$ and $\varphi_s$ are all bounded continuous functions, $\langle \varphi(\cdot,t),\nunbar_\ell(t)\rangle$ and $\int_0^t\langle \varphi_s(\cdot,s)+\varphi_x(\cdot,s),\nunbar_\ell(s)\rangle ds$ converge almost surely to $\langle \varphi(\cdot,t),\nu_\ell(t)\rangle$ and $\int_0^t\langle \varphi_s(\cdot,s)+\varphi_x(\cdot,s),\nu_\ell(s)\rangle ds$, respectively. Also, as we have already shown, $\dnbar_{\ell+1}$ converges to $D_{\ell+1}$ almost surely in $\D$, and therefore, the associated sequence of Stieltjes integrals $\int_{[0,t]}\varphi(0,s)d\dnbar_{\ell+1}(s)$ converges almost surely to $\int_{[0,t]}\varphi(0,s)d D_{\ell+1}(s)$. Finally, by Proposition \ref{prop_Alimit}, the sequence  $\Quenbar_{\varphi,\ell}(t)$ converges to $\int_0^{t}\langle\varphi(\cdot,s)h(\cdot),\nu_\ell(s)\rangle ds$,  and  by Proposition \ref{prop_Blimit},  $\Rnbar_{\varphi,\ell}(t)$ converge to $\int_0^t\langle \varphi(\cdot,s),\eta_\ell(s)\rangle ds$.   Sending $N\to\infty$ on both sides of \eqref{prelim_dymamics_eq} we then have
\begin{align}\label{ConvProov_Dynamic}
  \langle \varphi(\cdot, t),\nu_\ell(t)\rangle  = & \langle \varphi(\cdot, 0),\nu_\ell(0)\rangle + \int_0^t \langle \varphi_x(\cdot, s)+\varphi_s(\cdot, s),\nu_\ell(s)\rangle ds \notag\\
  & - \int_0^t \langle \varphi(\cdot, s)h(\cdot),\nu_\ell(s)\rangle ds + \int_0^t\varphi(0,s)dD_{\ell+1}(s)\notag\\
  &+ \int_0^t \langle \varphi(\cdot, s),\eta_\ell(s)\rangle ds,
\end{align}
almost surely. The equation \eqref{ConvProov_Dynamic} above holds with probability one, simultaneously for all $\ell\geq1$, $\varphi\in{\cal C}$ and $t\in{\cal T}$, because both $\cal C$ and $\cal T$ are countable. Moreover, since both sides of the equation above are continuous functions of $t$, the identity holds simultaneously for all $t\geq 0$, and since  $\nu_\ell$, $\Que_{\cdot,\ell}$ and ${\cal R}_{\cdot,\ell}$ are finite Radon measures, the identity holds simultaneously for all $\varphi\in\phiset$ using the Dominated Convergence Theorem.

Consequently, it follows from \eqref{ConvProof_temp1},\eqref{ConvProof_Balance} and \eqref{ConvProov_Dynamic} and Proposition \ref{prop_SolutionForm} that $\nu$ is a solution to the hydrodynamic equations \eqref{Fluid_bound}-\eqref{Fluid_R} associated to $(\lambda,\nu(0))$, which is proved to be unique in Theorem \ref{thm_uniqueness}.  This provides a unique characterization of all subsequential limits of $\{ \nunbar\}$ and completes the proof.
\end{proof}

\begin{proof}[Proof of Corollary \ref{cor_PropofChaos}]
Since queues and servers are homogeneous and the routing algorithm is symmetric with respect to the queue indices, the queue lengths and age distributions remain exchangeable for all finite times $t\geq0$.
In particular,  for any permutation $\pi:\{1, \ldots, N\} \mapsto \{1, \ldots, N\}$,
\begin{equation}\label{temp_cor1}
    \left( \xni(t);i=1,...,N \right)\deq\left( \xnX{\pi(i)}(t);i=1,...,N \right).
\end{equation}
Recall that $\nunone_\ell(t)$  is the number of queues  of length of at least $\ell$, that is, with $\xni(t)\geq 1$. Therefore,
\begin{equation}\label{cortemp3}
  \Ept{\nunonebar_\ell(t)} = \frac{1}{N}\Ept{\sum_{i=1}^N \indic{\xni(t)\geq\ell}}=\frac{1}{N}\sum_{i=1}^N \Prob{\xni(t)\geq\ell}=\Prob{\xnX{1}(t)\geq\ell},
\end{equation}
where the last equality is due to \eqref{temp_cor1}. By Theorem \ref{thm_convergence}, and since the solution $\nu$ to the hydrodynamic equations is continuous, for every $t\geq0$, $\nunbar(t) \Rightarrow \nu(t)$ in $\s$.  Hence, by the continuous mapping theorem, $\nunonebar_\ell(t) \Rightarrow S_\ell(t)$. Since $\sup_N\nunonebar_\ell(t)$ is bounded by $1$,
$\{\nunonebar_\ell\}$ is uniformly integrable and so the convergence also holds in expectation.

To prove the second claim, note that by \eqref{temp_cor1},
\begin{align}\label{temp_cor2}
  \Ept{\prod_{m=1}^k \nunonebar_{\ell_m}(t)} =& \frac{1}{N^k}\Ept{\sum_{i_1=1}^N, \ldots, \sum_{i_k=1}^N \indic{\xnX{i_1}(t)\geq\ell_1}, \ldots, \indic{\xnX{i_k}(t)\geq\ell_k}}\\
  =& \frac{1}{N^k}\sum_{i_1=1}^N...\sum_{i_k=1}^N \Prob{\xnX{i_1}(t)\geq\ell_1,\ldots, \xnX{i_k}(t)\geq\ell_k}\notag\\
  =& \Prob{\xnX{1}(t)\geq \ell_1, \ldots,\xnX{k}(t)\geq \ell_k}. \notag
\end{align}
Since $\nunbar(t) \Rightarrow \nu(t)$ in $\s$, by the continuous mapping theorem, $\prod_{m=1}^n \nunonebar_{\ell_m}(t) \Rightarrow \prod_{m=1}^n S_{\ell_m}(t)$ and,  since $\sup_N \prod_{m=1}^n \nunonebar_{\ell_m}(t)$ is bounded by $1$, the convergence also holds in expectation. Taking the limit as $N\to\infty$ of both sides of \eqref{temp_cor2}, \eqref{propOfChaos} follows.
\end{proof}

\noindent
{\bf Acknowledgments. }
The first author was partially supported by NSF grant CMMI-1538706.  The second author  was partially supported by ARO grant W911NF-12-1-0222. Both authors would like to thank Microsoft Research New England for their hospitality during the Fall of 2014, when part of this research was conducted.

\appendix

\section{A Marked Point Process Representation}\label{secapx_mpp}
 For conciseness, throughout this section, we fix $N$ and suppress the superscript $(N)$, and also  assume that the arrival process $\arrivaln$ is a pure renewal process with inter-arrivals $u^{(N)}_n;n\geq1$. The extension to a delayed renewal process is straightforward.

\subsection{Construction of State and Auxiliary Processes}\label{secapx_Adapted}

Recall that $I_0$ defined in \eqref{def_I0} is the initial state of the network, $u_j,j=1,2,...$ are inter-arrival times, $\rqc_j=(\rqc_{j}(1),...,\rqc_{j}(d)),j=1,2,...$ are queue indices randomly chosen by job $j$ upon arrival and $v_j;j\in\Z,$ are service times, and define $Data$ to be the vector of all input data,
\begin{equation}\label{data}
    Data \doteq \left(I_0,u_j,\rqc_{j};j\geq1,v_j,j\in\Z\right),
\end{equation}
and $\filtData$ to be the associated $\sigma$-algebra:
\begin{equation}\label{def_filtData}
  \filtData\doteq\sigma(\initial)\vee\sigma(u_j,\rqc_{j};j=1,2,...)\vee\sigma(v_j;j\in\Z).
\end{equation}

\begin{lemma}\label{lem_construct}
  Suppose, almost surely, all inter-arrival times and service times are positive  and the arrival process is non-explosive. Then, we can construct $\arrivalTimenon_j,\kinv_j$ and $\dept_j$ for all jobs $j$, and the process $\arrivali$ and $\di$ on $\hc$ for all queues $i=1,...,N$, as measurable functions of $Data$. In particular, $\filt_{t}\subset\filtData$ for all $t\geq0$.
\end{lemma}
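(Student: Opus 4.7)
The plan is to construct all of the required objects by an explicit event-driven recursion on the ordered sequence of arrival and departure epochs, with every object at stage $k$ exhibited as a measurable function of $Data$. Then $\filt_t\subset\filtData$ is immediate because every random element entering the definition \eqref{def_filt} of $\filt_t$ will have been exhibited as $\sigma(Data)$-measurable.

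First, the arrival times are trivial: since $\arrivali$ is a pure renewal process,
$\arrivalTimenon_j = \sum_{k=1}^{j} u_k$ for $j\ge 1$ is a measurable function of $(u_k)_{k\ge 1}$. For jobs initially in the system, indexed $j\in\{\j0,\dots,0\}$, the assignments $\stationn_j$ and initial ages $\agen_j(0)$ are, by definition, coordinates of $\initial$. By the non-idling/FCFS convention together with the indexing convention stated in Section \ref{sec_StateVariable}, the values $\kinv_j$ for $j\in\{\j0,\dots,0\}$ are determined in a measurable way from $\initial$: a job initially in service is assigned $\kinv_j = -\agen_j(0)$ (or more precisely, the corresponding nonpositive quantity consistent with the age at time $0$), and jobs initially waiting are assigned $\kinv_j$ at the instant their queue becomes available.

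Next, I would inductively construct the event times $\eventtime_k$ together with $\stationn_j$, $\kinv_j$, $\dept_j$, $\arrivali(\eventtime_k\wedge\cdot)$ and $\di(\eventtime_k\wedge\cdot)$ as follows. Set $\eventtime_0\doteq0$ and suppose that up to stage $k$, all of these quantities, as well as the pre-limit state variables $\xii(s)$, $\agei(s)$, $\nuni_\ell(s)$ for $s\le \eventtime_k$ and $i=1,\dots,N$, $\ell\ge 1$, have been exhibited as measurable functionals of $Data$. Given this information, define
\[
\xi_{k+1}\doteq\min\{\arrivalTimenon_j : \arrivalTimenon_j>\eventtime_k\}, \qquad
\sigma^i_{k+1}\doteq\twopartdefO{\kinv_{j(i,k)}+v_{j(i,k)}}{i\in\busy_k,}{\infty}
\]
where $j(i,k)$ is the (measurably defined) index of the job currently in service at queue $i$ at time $\eventtime_k$. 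Then $\eventtime_{k+1}\doteq \min(\xi_{k+1},\sigma^i_{k+1};i=1,\dots,N)$ by \eqref{next_event}, and this is manifestly $\sigma(Data)$-measurable. If the minimum is attained by $\xi_{k+1}$, corresponding to arrival of job $j^\star=E(\eventtime_{k+1})$, then $\stationn_{j^\star}$ is the minimizer of $(\xX{\rqc_{j^\star}(1)}(\eventtime_{k+1}-),\dots,\xX{\rqc_{j^\star}(d)}(\eventtime_{k+1}-))$ per \eqref{def_station}, with ties broken using an extra independent uniform coordinate that we include in $Data$. If the chosen queue was empty, set $\kinv_{j^\star}=\arrivalTimenon_{j^\star}$; otherwise $\kinv_{j^\star}$ will be defined at a later stage once its queue-mate in service departs. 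If instead the minimum is attained by $\sigma^{i^\star}_{k+1}$, set $\dept_{j(i^\star,k)}=\eventtime_{k+1}$, and if that queue remains nonempty, assign $\kinv$ of the next-in-line job at queue $i^\star$ to $\eventtime_{k+1}$. In either case, the updates to $\xii$, $\agei$, $\arrivali$ and $\di$ at $\eventtime_{k+1}$ are explicit algebraic functions of the quantities already measurable with respect to $\sigma(Data)$, so the induction step is complete.

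Finally, by the non-explosiveness hypothesis, for every $t\ge 0$ only finitely many arrivals occur in $[0,t]$ almost surely; since every departure is preceded by an arrival of the same job (or is a departure of a job initially in the system, and there are only $\xn(0)<\infty$ such jobs), only finitely many events occur in $[0,t]$. Hence, almost surely the recursion above covers $[0,t]$ in finitely many steps, so $\arrivali(t)$, $\di(t)$, and each of $\arrivalTimenon_j,\kinv_j,\dept_j$ for all $j$ with $\dept_j\le t$, are measurable functions of $Data$. Consequently $\sigma(\arrivali_s,\di_s;s\le t,i=1,\dots,N)\subset\filtData$ for every $t$, and since $\sigma(\initial)\subset\filtData$ trivially, \eqref{def_filt} yields $\filt_t\subset\filtData$.

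The only real issue is avoiding circularity in the inductive definition: the routing of arrival $j^\star$ at stage $k+1$ depends on the queue lengths at $\eventtime_{k+1}-$, which in turn depend on all events in $[0,\eventtime_{k+1})$. This is handled cleanly by the event-driven ordering, because between consecutive event times nothing changes in the discrete state, so the queue lengths immediately before $\eventtime_{k+1}$ coincide with those immediately after $\eventtime_k$, which are $\sigma(Data)$-measurable by the inductive hypothesis. This is the main bookkeeping point but not a deep obstacle; the positivity of interarrival and service times ensures strict increase of $\eventtime_k$, and non-explosiveness ensures the recursion reaches any finite time horizon.
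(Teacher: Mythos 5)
Your proposal is correct in substance but takes a genuinely different route from the paper. You run an event-driven recursion over the combined sequence of arrival \emph{and} departure epochs $\eventtime_k$, deferring the definition of a job's service-entry time $\kinv_j$ until the departure event that frees its server, and you then need a separate argument that only finitely many events fall in $[0,t]$ so the recursion exhausts $\hc$. The paper instead inducts only over arrival epochs $\arrivalTimenon_{j_*}$: because service times are positive, no job with index $\geq j_*+1$ can depart by $\arrivalTimenon_{j_*+1}$, so the per-queue departure processes on $[0,\arrivalTimenon_{j_*+1}]$ are explicit sums over the already-constructed $\dept_j$, $j\leq j_*$; the queue lengths $\xii(\arrivalTimenon_{j_*+1}-)$ then come from the mass balance \eqref{mass_balance_i}, the routing from \eqref{def_station}, and the newly arrived job's $\kinv_{j_*+1}$ and $\dept_{j_*+1}$ are written in closed form immediately (arrival time plus the residual service of all jobs ahead of it in its queue), rather than being assigned at a later event. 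What the paper's route buys is that the only non-explosiveness it needs is that of the arrival process, which is exactly the hypothesis of the lemma; your route additionally invokes finiteness of the number of departures in $[0,t]$, and your argument for that uses $\xn(0)<\infty$, which is Assumption III(a) rather than a hypothesis of the lemma. What your route buys is a structure that mirrors the marked point process $\MPn$ of Section \ref{sec_mpp}, so it dovetails naturally with Lemma \ref{lem_condInd} and Corollary \ref{cor_distinctness}.

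Two small caveats. First, you enlarge $Data$ by an extra uniform tie-breaking coordinate; strictly speaking this changes $\filtData$ as defined in \eqref{def_filtData}, whereas the lemma asserts measurability with respect to that specific $\sigma$-algebra (the paper tacitly treats the tie-break as encoded in the routing data, and your fix is reasonable, but it should be stated as a convention about $\rqc_j$ rather than as a new coordinate). Second, ``once its queue-mate in service departs'' should read ``once all jobs ahead of it in its queue have departed''; your recursion does handle this correctly at departure events, so this is only a wording issue, not a gap.
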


\begin{proof}
First, define $\arrivalTimenon_0=0$ and note that the arrival time $\arrivalTimenon_j$ of a job $j\geq0$ satisfies $\arrivalTimenon_j=\sum_{j'=1}^ju_{j'}.$ We define $\station_j$, $\kinv_j$, $\dept_j$  and processes $\xii,\arrivali$ and $\di$  on $(\arrivalTimenon_j,\arrivalTimenon_{j+1}]$ inductively. First, define $\arrivali(0)=0$ and $\di(0)=0$ for $i=1,...,N$, and recall that  the random variables $\xii(0)$,  $i=1,...,N,$ and $\station_j$, $\kinv_j$, $\dept_j$, $j\leq0,$ (initially in service) are measurable with respect to  $\sigma (I_0)$.  Now for  $j_*\geq0$, assume $\station_j$, $\kinv_j$, $\dept_j$, $j\leq j_*$, and $\xii,\arrivali$ and $\di$ on $[0,\arrivalTimenon_{j_*}]$ are defined as  measurable functions of $I_0$.  Define
\[\di_{j_*}(t)=\sum_{j=-X(0)+1}^{j_*}\indic{\dept_j\leq t}\indic{\station_j=i},\quad\quad t\geq0,\]
to be the cumulative departure process of jobs from queue $i$ with indices no greater than  $j_*$. By the induction hypothesis,  this is a measurable function of $Data$.  Since service times are positive almost surely,  the departure time $\dept_j$ of any job $j\geq j_*+1$ satisfies $\dept_j \geq \arrivalTimenon_j+v_j>\arrivalTimenon_{j_*+1}$,  and hence, $\d  = \di_{j_*}$ on $[0,\arrivalTimenon_{j_*+1}]$. Thus, $\di$ can be extended to $[0,\arrivalTimenon_{j_*+1}]$ as a measurable function of $Data$. Moreover, since $\arrival$ and hence $\arrivali$, $i=1,...,N$, are piecewise constant on $(\arrivalTimenon_{j_*},\arrivalTimenon_{j_*+1}),$ the length of  queue $i$ right before the arrival of the $\Xth{(j_*+1)}$ job can be obtained from the mass balance equation \eqref{mass_balance_i}:
\[\xii(\arrivalTimenon_{j_*+1}-)=\xii(0)+ \arrivali(\arrivalTimenon_{j_*+1}-)+\di(\arrivalTimenon_{j_*+1}-)=\xii(0)+ \arrivali(\arrivalTimenon_{j_*})+\di(\arrivalTimenon_{j_*+1}-),\]
and  so $\xii(\arrivalTimenon_{j_*+1}-)$  is also $\filtData$-measurable.  Therefore, both the queue index $\station_{j_*+1}$ to which the job $j_*+1$ is routed, as defined by \eqref{def_station},  and the process $\arrivali(t)=\sum_{j=1}^{j_*+1}\indic{\arrivalTimenon_j\leq t}\indic{\station_j=i}$ on $[0,\arrivalTimenon_{j_*+1}],$ are  measurable functions of $Data$. The job $j_*+1$  joins the back of the queue (if the queue is not empty), and enters service when the service requirement of all the jobs ahead of it in the same queue is completed, that is at time
\[\kinv_{j_*+1}=\arrivalTimenon_{j_*+1}+\sum_{j=-X(0)+1}^{j_*}\left(v_j-a_j(\arrivalTimenon_{j_*+1})\right) \indic{\station_j=\station_{j_*+1}},\]
and departs at time $\dept_{j_*+1}=\kinv_{j_*+1}+v_{j_*+1}.$ Using these relations and the $\filtData$-measurability of  the age processes $a_j, j < j_*,$ defined by \eqref{def_agen},  due to the induction hypothesis, it follows that $\kinv_{j_*+1}$ and $\dept_{j_*+1}$ are also measurable functions of $Data$. This completes the  induction argument.  Finally, note that since the arrival process is non-explosive, $\lim_{j\to\infty}\arrivalTimenon_j=\infty,$ and hence, the above construction holds  for the whole of $\hc$.
\end{proof}

\begin{remark}
   The assumptions of Lemma \ref{lem_construct} hold under our Assumptions \ref{asm_E} and \ref{asm_G}.\ref{asm_mean} because inter-arrival and service time distributions have densities $\gee$ and $g$, respectively, and the arrival process is non-explosive by Corollary \ref{cor_distinctness}.
\end{remark}

Next, we prove that the state and auxiliary processes defined in Section \ref{sec_model} are $\{\filt_t\}$-adapted.

\begin{proposition}\label{prop_adapted}
    Under the assumptions of Lemma \ref{lem_construct}, for every job $j,$ the arrival time $\arrivalTimenon_j$, service entry time $\kinv_j$ and departure time $\dept_j$  are $\{\filt_t\}$-stopping times, and the age and queue index processes $a_j(\cdot)$ and $\station_j(\cdot)$ are $\{\filt_t\}$-adapted. Also, the processes $\xii, i=1,...,N,$   $D_\ell,$ and $\nu_\ell,\ell\geq 1,$ are  $\{\filt_t\}$-adapted.
\end{proposition}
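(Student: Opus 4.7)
The plan is to decompose $\filt_t = \sigma(I_0) \vee \bigvee_{i=1}^N (\mathcal{F}_t^{\arrivali} \vee \mathcal{F}_t^{\di})$ from \eqref{def_filt} and process each family of quantities in the natural causal order. First, for $j \geq 1$, $\arrivalTimenon_j = \inf\{t \geq 0 : \arrival(t) \geq j\}$ is an $\{\filt_t\}$-stopping time because $\arrival = \sum_{i=1}^N \arrivali$ is $\{\filt_t\}$-adapted; the routing index $\station_j$ coincides with the unique $i$ for which $\arrivali(\arrivalTimenon_j) - \arrivali(\arrivalTimenon_j-) = 1$, hence $\station_j$ is $\filt_{\arrivalTimenon_j}$-measurable and the process $\station_j(t) = \indic{t \geq \arrivalTimenon_j}\,\station_j$ is adapted. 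For $j \leq 0$ one reads $(\arrivalTimenon_j, \kinv_j, \station_j)$ off $I_0$ via \eqref{def_I0}, so these sit in $\filt_0$.

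Second, the mass balance \eqref{mass_balance_i} combined with $\sigma(I_0)$-measurability of $\xii(0)$ immediately yields that $\xii$ is $\{\filt_t\}$-adapted. The crucial step is then to realize $\kinv_j$ (for $j \geq 1$) and $\dept_j$ as hitting times of adapted right-continuous processes. By non-idling and FCFS within each queue, the $\xii(\arrivalTimenon_j-)$ older jobs in queue $i = \station_j$ must depart before job $j$ enters service, so
\[
\kinv_j \;=\; \inf\Big\{t \geq \arrivalTimenon_j \,:\, D^{\station_j}(t) - D^{\station_j}(\arrivalTimenon_j-) \;\geq\; X^{\station_j}(\arrivalTimenon_j-)\Big\},
\]
with the convention that the infimum equals $\arrivalTimenon_j$ when the right-hand side of the inequality is zero. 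Because $\arrivalTimenon_j$ is a stopping time and the pair $(X^{\station_j}(\arrivalTimenon_j-), \station_j)$ is $\filt_{\arrivalTimenon_j-}$-measurable, this identifies $\kinv_j$ as an $\{\filt_t\}$-stopping time; and then $\dept_j = \inf\{t > \kinv_j : D^{\station_j}(t) > D^{\station_j}(\kinv_j)\}$ is another $\{\filt_t\}$-stopping time. For $j \leq 0$ initially in service the same formula applies with $\kinv_j \leq 0$ determined by $I_0$.

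Third, with $(\arrivalTimenon_j, \kinv_j, \dept_j)$ established as stopping times, the age process $a_j$ follows from \eqref{def_agen} since $v_j = \dept_j - \kinv_j$ is recoverable on $\{t \geq \dept_j\}$; hence $a_j$ is $\{\filt_t\}$-adapted. Adaptedness of $D_\ell$ and $\nu_\ell$ then follows from \eqref{def_Dl} and \eqref{def_nuln}: each indicator in the sums is $\filt_t$-measurable since $\custServSize_j(t) = X^{\station_j}(t)$ is adapted, and by the non-explosiveness in Corollary \ref{cor_distinctness} only finitely many terms contribute on any bounded interval, so the countable sums define legitimate $\{\filt_t\}$-adapted random variables and measure-valued processes. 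The main technical point requiring care will be a simple induction (on either job or event index) to ensure that the hitting-time characterization of $\kinv_j$ is well-defined and almost surely finite; for this the non-explosiveness of $\arrival$ and the positivity of service times under Assumptions \ref{asm_E} and \ref{asm_G}.\ref{asm_mean} (via Corollary \ref{cor_distinctness}) are exactly what is needed. Everything else reduces to the standard fact that hitting times of adapted right-continuous processes are stopping times.
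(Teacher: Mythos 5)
Your proposal follows essentially the same route as the paper: write $\{\arrivalTimenon_j\le t\}$, $\{\kinv_j\le t\}$, $\{\dept_j\le t\}$ as events expressed through the adapted processes $\arrivali$, $\di$, $\xii$ together with $\filt_{\arrivalTimenon_j}$-measurable quantities, and then propagate adaptedness to $a_j$, $\custServSizenon_j$, $D_\ell$ and $\nu_\ell$. Two points need tightening. First, $\station_j$ is \emph{not} $\filt_{\arrivalTimenon_j-}$-measurable (it is revealed only at the arrival instant through the jump of $\arrivalnX{\station_j}$), but it is $\filt_{\arrivalTimenon_j}$-measurable, exactly as your own first paragraph shows; since $X^{\station_j}(\arrivalTimenon_j-)$ is also $\filt_{\arrivalTimenon_j}$-measurable and $\filt_{\arrivalTimenon_j}\wedge\{\arrivalTimenon_j\le t\}\subset\filt_t$, your identification of $\kinv_j$ still goes through. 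Relatedly, do not lean on the blanket statement that hitting times of right-continuous adapted processes are stopping times: in general that requires the d\'ebut theorem and the usual conditions, which are not assumed for $\{\filt_t\}$. What makes your argument work is monotonicity: since $D^{\station_j}$ is nondecreasing and right-continuous one has the explicit identity $\{\kinv_j\le t\}=\{\arrivalTimenon_j\le t\}\cap\{D^{\station_j}(t)-D^{\station_j}(\arrivalTimenon_j-)\ge X^{\station_j}(\arrivalTimenon_j-)\}$, and similarly for $\dept_j$; this is precisely the event identity the paper uses (there written with $X^{\station_j}(\arrivalTimenon_j)-1$ in place of $X^{\station_j}(\arrivalTimenon_j-)$).

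Second, your treatment of the initial jobs $j\le 0$ is incomplete: you cover only jobs initially in service, for which $\kinv_j\le 0$ is read off $I_0$, but the proposition also covers jobs initially waiting in queue, for which $\kinv_j>0$ is not determined by $I_0$, and your hitting-time formula cannot be applied verbatim because $\arrivalTimenon_j$ is undefined for $j\le 0$. The fix is immediate and is what the paper does: for $j\le0$ the station $\station_j$ and the number $p_j$ of jobs ahead of $j$ in that queue at time $0$ are $\filt_0$-measurable by the indexing convention and \eqref{def_I0}, and $\kinv_j$, $\dept_j$ are the times at which $D^{\station_j}$ reaches $p_j$ and $p_j+1$, so the same monotone argument applies. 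With these repairs your argument coincides with the paper's proof; the remaining steps (adaptedness of $\xii$ via \eqref{mass_balance_i}, of $a_j$, and of $D_\ell$, $\nu_\ell$ through the $\filt_{\dept_j}$-measurability of $\custDSnon_j=\xX{\station_j}(\dept_j-)$) match it. Two small remarks: non-explosiveness of the arrival process is already a hypothesis of Lemma \ref{lem_construct}, so invoking Corollary \ref{cor_distinctness} (which needs the stronger distributional assumptions) is unnecessary and narrows the generality; and a.s.\ finiteness of the sums defining $D_\ell$ and $\nu_\ell$ is not needed for measurability, since countable sums of $\filt_t$-measurable indicators are $\filt_t$-measurable in any case.
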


\begin{proof}
First, note that $\arrival(t)=\sum_{i=1}^N\arrivali(t)$ is $\filt_t$-adapted, and since $\xii(0)$ is $\filt_0$-measurable, the mass balance equation \eqref{mass_balance_i} for queue $i$ implies that $\xii$ is also adapted. Moreover, $\arrivali,$ $\di$ and $\xii$ are all right-continuous, and hence $\{\filt_t\}$-progressive.

Noting that  $\{\arrivalTimenon_j\leq t\} = \{\arrival(t)\geq j\},$ and   $\arrival(t)$ is $\filt_t$-measurable, we see that $\arrivalTimenon_j$ is an $\{\filt_t\}$-stopping time. Also, since there are not multiple arrivals at the same time, the queue $\station_j$ to which job $j$ is routed is the unique queue whose arrival process increases at $\arrivalTimenon_j$, that is,
\[
    \station_j=\text{argmax}_{i\in\{1,...,N\}} \left(\arrivali(\arrivalTimenon_j)-\arrivali(\arrivalTimenon_j-)\right).
\]
Since $\arrivali, i=1,...,N,$ are $\{\filt_t\}$-progressive, each argument of the argmax above is $\filt_{\arrivalTimenon_j}$-measurable, and hence, so is $\station_j$. Therefore, $\station_j(t)=\indicil{\arrivalTimenon_j\leq t}\station_j$ is $\{\filt_t\}$-adapted for $j\geq1$. For jobs initially in network, that is $j\leq0$, for all $t\geq0$, $\station_j(t)$ is equal to the $\filt_0$-measurable quantity $\station_j$, and hence is also $\{\filt_t\}$-adapted.

When a job $j\geq 1$ arrives at time $\arrivalTimenon_j$ and joins the queue $\station_j$, there are  $(X^{\station_j}(\arrivalTimenon_j)-1)$ jobs ahead of it in that queue. Hence, for job $j$ to have entered service by  time $t$, it must have arrived at a queue prior to $t$ and all the jobs ahead it in the same queue must have already departed,  or, in other words,
\[
    \{\kinv_j \leq t\}  =  \{\arrivalTimenon_j\leq t\}\cap\{D^{\station_j}(t)-D^{\station_j}(\arrivalTimenon_j)\geq X^{\station_j}(\arrivalTimenon_j)-1 \},\quad\quad j\geq1.
\]
Since $\arrivalTimenon_j$ is an $\{\filt_t\}$-stopping time, $\di$ and $\xii$, $i = 1, \ldots, N$, are $\{\filt_t\}$-progressive, and $\station_j$ is $\filt_{\arrivalTimenon_j}$-measurable, and $D^{\station_j}(\arrivalTimenon_j)$ and $X^{\station_j}(\arrivalTimenon_j)$ are $\filt_{\arrivalTimenon_j}$-measurable, it follows that  $D^{\station_j}(t)$, $D^{\station_j}(\arrivalTimenon_j)$ and $X^{\station_j}(\arrivalTimenon_j)$ are $\filt_t$-measurable on $\{\arrivalTimenon_j\leq t\}$. Therefore, $\{\kinv_j\leq t\}$ is $\filt_t$-measurable and  $\kinv_j$ is  an $\{\filt_t\}$-stopping time. Similarly,
\[
        \{\dept_j \leq t\}  =  \{\arrivalTimenon_j\leq t\}\cap\{D^{\station_j}(t)-D^{\station_j}(\arrivalTimenon_j)\geq X^{\station_j}(\arrivalTimenon_j) \},\quad\quad j\geq1,
\]
and thus, $\dept_j$ is also an  $\{\filt_t\}$-stopping time. For jobs initially in network ($j\leq0$), the queue index $\station_j$ and the number of jobs $p_j$ ahead of job $j$ in station $\station_j$ at time $0$ are $\filt_0$-measurable, and since $\{\kinv_j\geq t\}=\{D^{\station_j}(t)\geq p_j\}$ and $\{\dept_j\geq t\}=\{D^{\station_j}(t)\geq p_j+1\}$, $\kinv_j$ and $\dept_j$ for $j\leq 0$ are also $\{\filt_t\}$-stopping times. Consequently, the age process $\agen_j$ defined in \eqref{def_agen} is $\{\filt_t\}$-adapted for all $j$.

Finally, recall that $\custServSizenon_j(t)$ is the queue size observed by job $j$ at time $t\geq \arrivalTimenon_j$, defined in \eqref{def-custservsize}. Since $\station_j$ is $\filt_{\arrivalTimenon_j}$-measurable, $\indic{\arrivalTimenon_j\leq t}\custServSizenon_j(t)$ is $\{\filt_t\}$-adapted, and also $\{\filt_t\}$-progressive, as it is right-continuous. Therefore, since $\arrivalTimenon_j\leq\dept_j$, $\custDSnon_j=\xX{\station_j}(\dept_j-)$ is $\filt_{\dept_j}$-measurable and hence $D_\ell$ defined in \eqref{def_Dl} is $\{\filt_t\}$-adapted. Moreover, since $\dept_j$ and $\age_j$ are $\{\filt_t\}$-stopping times, $a_j(\cdot)$ is $\{\filt_t\}$-adapted and $\{\kinv_j\leq t\}\subset\{\arrivalTimenon_j\leq t\}$, $\nu_\ell$ defined in \eqref{nuln_alternative} is $\{\filt_t\}$-adapted.
\end{proof}

\subsection{Preliminary Independence Results}

We first list two elementary generic lemmas on conditioning in Section \ref{subsub-elem}.  These are then used
in Section \ref{subsub-indep2} to obtain some preliminary conditional independence results on the model.

\subsubsection{Elementary Lemmas}\label{subsub-elem}

Recall  that given a sigma algebra $\filt$ and a subset $A,$ the trace of $\filt$ on $A$ is defined as $\filt\wedge A=\{A\cap B: B\in\filt\}.$  The following lemma is used in the proof of Lemma \ref{lem_independence}.

\begin{lemma}\label{lem_condAux}
Given a probability space $(\Omega,\filt,\mathbb{P})$, suppose sub $\sigma$-algebras $\cal G_c,\cal G_f\subset\filt$ and a set $A\in\cal G_c\cap \cal G_f$ are such such that $\cal G_c\wedge A\subset \cal G_f\wedge A.$ Then, for every integrable random variable $X$,
\begin{equation}\label{cond_lem_1}
        \indic{A}\Ept{X|\cal G_c}=\indic{A}\Ept{\Ept{X|\cal G_f}|\cal G_c }.
\end{equation}
\end{lemma}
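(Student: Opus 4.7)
The plan is to use the defining property of conditional expectation, together with the trace-inclusion hypothesis, to identify both sides of \eqref{cond_lem_1} as the same $\cal G_c$-measurable version. Specifically, I would set $Y\doteq\Ept{X|\cal G_c}$ and $Z\doteq\Ept{\Ept{X|\cal G_f}|\cal G_c}$, and observe that both $\indic{A}Y$ and $\indic{A}Z$ are $\cal G_c$-measurable, since $A\in\cal G_c$. Thus to prove \eqref{cond_lem_1} it suffices to check that for every $B\in\cal G_c$,
\[
    \Ept{\indic{A\cap B}\,Y}=\Ept{\indic{A\cap B}\,Z}.
\]

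The key point is that the set $A\cap B$ lies in $\cal G_c\wedge A$, which by hypothesis is contained in $\cal G_f\wedge A$; hence $A\cap B$ can simultaneously be used as a test set against both $\cal G_c$ and $\cal G_f$. Using that $A\cap B\in\cal G_c$ together with the defining property of $Y=\Ept{X|\cal G_c}$ gives $\Ept{\indic{A\cap B}Y}=\Ept{\indic{A\cap B}X}$. Using that $A\cap B\in\cal G_c$ together with the defining property of $Z$ gives $\Ept{\indic{A\cap B}Z}=\Ept{\indic{A\cap B}\Ept{X|\cal G_f}}$, and then using that $A\cap B\in\cal G_f$ and the defining property of $\Ept{X|\cal G_f}$ gives $\Ept{\indic{A\cap B}\Ept{X|\cal G_f}}=\Ept{\indic{A\cap B}X}$. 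Both sides therefore agree.

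The only subtle step is the very first one, namely the reduction of the claim to test sets of the form $A\cap B$ with $B\in\cal G_c$. This is really a statement about the trace $\sigma$-algebra $\cal G_c\wedge A$: a.s.\ equality of two $\cal G_c$-measurable random variables on $A$ is determined by integrals against indicators of sets of the form $A\cap B$, $B\in\cal G_c$, which span $\cal G_c\wedge A$. Once this reduction is in place, the two applications of the tower/defining property above are entirely routine, and no further measure-theoretic machinery is needed.

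I do not expect any genuine obstacle here: the hypothesis $\cal G_c\wedge A\subset\cal G_f\wedge A$ is exactly what makes the second chain of equalities work, allowing the set $A\cap B$ to pass through $\cal G_f$ even though $\cal G_c$ itself need not be contained in $\cal G_f$. The mild care needed is to keep track of which $\sigma$-algebra each factor is measurable with respect to, and to note that we are free to multiply by $\indic{A}$ throughout since $A\in\cal G_c\cap\cal G_f$.
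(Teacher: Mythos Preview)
Your proposal is correct and follows essentially the same approach as the paper: both reduce to testing against sets of the form $A\cap B$ with $B\in\cal G_c$, use the trace inclusion $\cal G_c\wedge A\subset\cal G_f\wedge A$ to conclude $A\cap B\in\cal G_f$, and then apply the defining property of conditional expectation twice. The paper packages the conclusion slightly differently (first proving $\Ept{\indic{A}X\mid\cal G_c}=\Ept{\indic{A}\Ept{X\mid\cal G_f}\mid\cal G_c}$ and then pulling $\indic{A}$ outside), but the substance is identical.
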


\begin{proof}
For  $B\in\cal G_c$, by definition, $A\cap B\in\cal G_c\wedge A$.  Since  $\cal G_c\wedge A\subset \cal G_f\wedge A,$ we have $A\cap B\in\cal G_f\wedge A$.  Hence,
\[
    \Ept{\indic{B}\indic{A}\Ept{X|\cal G_f}}=\Ept{\indic{B\cap A}\Ept{X|\cal G_f}}=\Ept{\Ept{\indic{B\cap A}X|\cal G_f}}=\Ept{\indic{B}\indic{A}X},
\]
where the second equality uses $A\cap B\in\cal G_f\wedge A \subset G_f$ and the last equality follows by the tower property of conditional expectation. Hence, we have shown $\Ept{\indic{A}X|\cal G_c}=\Ept{\indic{A}\Ept{X|\cal G_f}|\cal G_c }$. Since $A\in\cal G_c$,  \eqref{cond_lem_1} follows.
\end{proof}

Let  $W_1,W_2,...,W_n$ and $Y$ be $\R\cup\{\infty\}$-valued random variables defined on a probability space $(\Omega,\filt,\mathbb{P})$, and let $\cal G\subset\cal F$ be a $\sigma$-algebra such that $W_i,i=1,...,n,$ and $Y$ are conditionally independent given $\mathcal{G}$. For $i=1,...,n,$ define
\[ \overline F_i(a) = \Prob{W_i>a|\cal G},  \quad\quad  a \geq 0. \]
Also, define $T\doteq \min(W_1,...,W_n,Y)$ and let $Z$ be a discrete-valued random variable such that $\{Y<\min(W_1,...,W_n)\}=\{Z= z_0\}$ for some value $z_0$.  The following lemma is used in the proof of Lemma \ref{lem_condInd}, when $W_i$'s and $Y$ are replaced by certain arrival and departure times.

\begin{lemma}\label{lem_condauxind}
Suppose $W_1,...,W_n,Y,T$ and $Z$ are as described above. Then, on $\{Z=z_0\}$,  $W_i-T;i=1,...,n,$ are  conditionally independent given $\cal G$, $T$ and $Z$, that is, for every $b_1,...,b_n\geq0$,
\begin{equation}\label{condAuxInd}
   \indic{Z=z_0}\Prob{W_i>T+b_i;i=1,...,n\big| \mathcal{G},T,Z}=   \indic{Z=z_0}\prod_{i=1}^n\Prob{W_i>T+b_i\big| \mathcal{G},T,Z},
\end{equation}
and,  for $b \geq 0$,
\begin{equation}\label{condAuxDist}
     \indic{Z=z_0}\Prob{W_i>T+b\big| \mathcal{G},T,Z} = \indic{Z=z_0}\frac{\overline F_i(T+b)}{\overline F_i(T)}.
\end{equation}
\end{lemma}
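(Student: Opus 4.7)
The plan is to verify both identities by computing conditional expectations directly, using the conditional independence of $W_1,\ldots,W_n,Y$ given $\mathcal{G}$ together with the key observation that on the event $\{Z=z_0\}$ we have $T=Y$ and consequently $W_i>Y=T$ for every $i$. Since $b_i\geq 0$, this also means that $\prod_i\indic{W_i>Y+b_i}$ already contains $\indic{Z=z_0}$ as a factor, so these indicators interact nicely with the minimum.

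First I would establish the core computation: for every bounded Borel $\phi$ and every $b_1,\ldots,b_n\geq 0$,
\begin{equation}\label{pf_plan_core}
\mathbb{E}\Bigl[\indic{Z=z_0}\phi(T)\prod_{i=1}^n\indic{W_i>T+b_i}\Bigm|\mathcal{G}\Bigr]=\mathbb{E}\Bigl[\phi(Y)\prod_{i=1}^n\overline{F}_i(Y+b_i)\Bigm|\mathcal{G}\Bigr].
\end{equation}
On $\{Z=z_0\}$, $T=Y$, so the left-hand integrand equals $\phi(Y)\prod_i\indic{W_i>Y+b_i}$; conversely, $W_i>Y+b_i\geq Y$ for all $i$ forces $Y<\min_i W_i$, i.e. $Z=z_0$, so $\indic{Z=z_0}$ can be dropped. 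Conditioning further on $(\mathcal{G},Y)$ and using that $W_1,\ldots,W_n$ remain conditionally independent given $(\mathcal{G},Y)$ with $\mathbb{P}(W_i>y+b_i\mid\mathcal{G},Y=y)=\overline{F}_i(y+b_i)$ yields \eqref{pf_plan_core}. Setting all $b_i=0$ gives the companion formula $\mathbb{E}[\indic{Z=z_0}\phi(T)\mid\mathcal{G}]=\mathbb{E}[\phi(Y)\prod_i\overline{F}_i(Y)\mid\mathcal{G}]$.

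Next, to obtain the combined form $\indic{Z=z_0}\mathbb{P}\{W_i>T+b_i,\,i=1,\ldots,n\mid\mathcal{G},T,Z\}=\indic{Z=z_0}\prod_i\overline{F}_i(T+b_i)/\overline{F}_i(T)$, I would check the defining property of conditional expectation against generators of $\mathcal{G}\vee\sigma(T,Z)$. Since $Z$ is discrete, it suffices to test against random variables of the form $\psi\,\phi(T)\,\indic{Z=z_0}$ with $\psi$ bounded $\mathcal{G}$-measurable and $\phi$ bounded Borel. Using $\indic{Z=z_0}=\prod_i\indic{W_i>Y}$ and conditioning on $(\mathcal{G},Y)$ on the candidate side, one computes
\begin{equation*}
\mathbb{E}\Bigl[\psi\,\phi(T)\,\indic{Z=z_0}\prod_i\tfrac{\overline{F}_i(T+b_i)}{\overline{F}_i(T)}\Bigr]
=\mathbb{E}\Bigl[\psi\,\phi(Y)\prod_i\overline{F}_i(Y)\cdot\tfrac{\overline{F}_i(Y+b_i)}{\overline{F}_i(Y)}\Bigr]
=\mathbb{E}\Bigl[\psi\,\phi(Y)\prod_i\overline{F}_i(Y+b_i)\Bigr],
\end{equation*}
which by \eqref{pf_plan_core} matches $\mathbb{E}[\psi\,\phi(T)\,\indic{Z=z_0}\prod_i\indic{W_i>T+b_i}]$. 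This identifies the conditional expectation on $\{Z=z_0\}$. Formula \eqref{condAuxDist} is then simply the marginal case obtained by choosing $b_j=0$ for $j\neq i$ (so the factors $\overline{F}_j(T+b_j)/\overline{F}_j(T)$ collapse to $1$), and \eqref{condAuxInd} follows by combining the joint formula with these marginals.

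The only mild subtlety will be well-definedness of the ratio $\overline{F}_i(T+b_i)/\overline{F}_i(T)$: on $\{Z=z_0\}$ one has $W_i>T$, which forces $\overline{F}_i(T)>0$ a.s.\ on that event (else $W_i>T$ would have zero conditional probability there), so the ratio makes sense wherever the factor $\indic{Z=z_0}$ is nonzero; outside that set both sides of \eqref{condAuxInd}--\eqref{condAuxDist} vanish and no issue arises. Everything else reduces to a memoryless-style computation for the minimum of conditionally independent variables, so I do not expect any real obstacle beyond organizing the conditional tower carefully.
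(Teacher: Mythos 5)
Your proposal is correct and follows essentially the same route as the paper: you prove the same core identity (the paper's claim \eqref{temp_cond}, with $\indic{T\in B}$ replaced by a bounded Borel $\phi(T)$ and the test class enlarged by $\mathcal{G}$-measurable $\psi$, which is an equivalent verification of the defining property of conditional expectation on $\{Z=z_0\}$), exploit exactly the same two observations ($T=Y$ on $\{Z=z_0\}$ and $W_i>Y+b_i$ for all $i$ forces $Z=z_0$) together with conditional independence given $\mathcal{G}$, and then obtain \eqref{condAuxDist} as the marginal case and \eqref{condAuxInd} by substituting the marginals back into the joint formula, just as in the paper. Your added remark on $\overline F_i(T)>0$ a.s.\ on $\{Z=z_0\}$ is a harmless extra precaution not spelled out in the paper.
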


\begin{proof}
First, we claim that for every $B\in\mathcal{B}\hc$,
\begin{align}\label{temp_cond}
    \Ept{\indic{T\in B}\indic{Z=z_0}\prod_{i=1}^n\indic{W_i>T+b_i}\Big|\mathcal{G}}=\Ept{\indic{T\in B}\indic{Z=z_0}\prod_{i=1}^n\frac{\overline F_i(T+b_i)}{\overline F_i(T)}\Big|\mathcal{G}}.
\end{align}
To prove the claim, note that since $T=Y$ on $\{Z=z_0\}$ and $W_i > Y +b_i, i = 1, \ldots, n$ implies $Z=z_0$,  we have
\[\{T\in B,Z=z_0,W_i>T+b_i;i=1,...,n\}=\{Y\in B,W_i>Y+b_i;i=1,...,n\},\]
Then  the conditional independence of $W_1,...,W_n$ and $Y$ implies the left-hand side of \eqref{temp_cond} is equal to
\begin{align*}
    \Prob{Y\in B,W_i>Y+b_i,i=1,...,n\big|\mathcal{G}}  =  \Ept{\indic{Y\in B} \prod_{i=1}^n\overline F_i(Y+b_i)\Big|\mathcal{G}}.
\end{align*}
Similarly, since $\{T\in B,Z=z_0\}=\{ Y = T \in B,W_i>Y,i=1,...,n\}$,  by the conditional independence of $W_1,...,W_n$ and $Y$ given $\cal G$, the right-hand side of \eqref{temp_cond} is equal to
\begin{align*}
    \Ept{\indic{Y\in B,W_i>Y,i=1,...,n}\prod_{i=1}^n\frac{\overline F_i(Y+b_i)}{\overline F_i(Y),}\Big|\mathcal{G}}
        =\Ept{\indic{Y\in B}  \prod_{i=1}^n \overline F_i(Y+b_i)\Big|\mathcal{G}},
\end{align*}
and the claim \eqref{temp_cond} follows. Moreover,  by definition of conditional expectations,
\eqref{temp_cond} implies
\begin{equation}\label{temp_con2}
    \indic{Z=z_0}\Prob{W_i>T+b_i;i=1,...,n\big| \mathcal{G},T,Z}=   \indic{Z=z_0}\prod_{i=1}^n\frac{\overline F_i(T+b_i)}{\overline F_i(T)}.
\end{equation}
Now, substituting $b_i=b$ and $b_{i'}=0,i'\neq i$, in \eqref{temp_con2} and observing that for all $i$, $W_i>T$ on $\{Z=z_0\}$,  \eqref{condAuxDist} follows. Finally, \eqref{condAuxInd} is obtained by substituting each term of the product on the right-hand side of \eqref{temp_con2} from the equation \eqref{condAuxDist}.
\end{proof}

\subsubsection{Some Conditional Independence Results of the Model}
\label{subsub-indep2}

Recall that by the non-idling condition, when $\xii(t)\geq 1$ for some $t\geq0$, there exists a job receiving service at queue $i$ at time $t$, which we denote by $J^i(t)$. Note that $\indicil{\xii(t)\geq1}J^i(t)$ is  well defined for all $t\geq0$. Also, recall from Section \ref{sec_mpp} the mark $z_k$ associated with the event time $\tau_k$; with $z_k = (\mathfrak{E},i)$ (respectively, $z_k = (\mathfrak{D},i)$) indicating that the event is the arrival of a job to queue $i$ (respectively, departure of a job from queue $i$).

\renewcommand{\theenumi}{\alph{enumi}}
\begin{lemma}\label{lem_independence}
  Suppose Assumptions \ref{asm_E},  \ref{asm_G}.\ref{asm_mean}, and \ref{asm_initial}.\ref{asm_initial_ind} hold, and fix $k\geq1$ and $i\in\{1,...,N\}$. Then, on $\{\eventmark_k=(\mathfrak{E},i)\}$, the next interarrival time $u_{\arrival(\eventtime_k)+1}$ and the service time $v_{\arrival(\eventtime_k)}$ are independent of $\filt_{\eventtime_k}$, and for every $b\geq0$,
  \begin{equation}\label{indep_UE}
    \indic{\eventmark_k=(\mathfrak{E},i)}\Prob{u_{\arrival(\eventtime_k)+1}>b\Big|\filt_{\eventtime_k}} = \indic{\eventmark_k=(\mathfrak{E},i)} \bGe(b),
  \end{equation}
  and
  \begin{equation}\label{indep_VE}
    \indic{\eventmark_k=(\mathfrak{E},i)}\Prob{v_{\arrival(\eventtime_k)}>b\Big|\filt_{\eventtime_k}} = \indic{\eventmark_k=(\mathfrak{E},i)} \overline G(b).
  \end{equation}
  Moreover, on $\{\eventmark_k=(\mathfrak{D},i),\xii(\eventtime_k)\geq1\}$, $v_{J^i(\eventtime_k)}$ is independent of $\filt_{\eventtime_k}$ and for every $b\geq0$,
  \begin{equation}\label{indep_VJ}
    \indic{\eventmark_k=(\mathfrak{D},i),\xii(\eventtime_k)\geq1}\Prob{v_{J^i(\eventtime_k)}>b\Big|\filt_{\eventtime_k}} =\indic{\eventmark_k=(\mathfrak{D},i),\xii(\eventtime_k)\geq1} \overline G(b).
  \end{equation}
\end{lemma}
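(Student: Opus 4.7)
The plan is to reduce each claim to a standard independence argument by expressing $\mathcal{F}_{\tau_k}$ in terms of the input data $Data$ via the pathwise construction of Lemma~\ref{lem_construct}, and then ``peeling off'' the fresh random variable using Lemma~\ref{lem_condAux}. Throughout, I would partition by the $\mathcal{F}_{\tau_k}$-measurable random variables $E(\tau_k)$ and (for the departure case) $J^i(\tau_k)$, so that on each piece of the partition the relevant fresh variable has a deterministic index; then I sum the resulting conditional probabilities.

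For \eqref{indep_UE} and \eqref{indep_VE}, fix $n \geq 1$, $i \in \{1,\ldots,N\}$, and consider the set $A_{n,i} \doteq \{z_k=(\mathfrak{E},i),\,E(\tau_k)=n\}$, which lies in $\mathcal{F}_{\tau_k}$ since $E(\cdot)$ and $z_k$ are $\{\mathcal{F}_t\}$-adapted (Proposition~\ref{prop_adapted}). Introduce the auxiliary $\sigma$-algebra
\[
\mathcal{G}_n \doteq \sigma(I_0) \vee \sigma(u_j,\kappa_j : 1 \leq j \leq n) \vee \sigma(v_j : j \in \mathbb{Z}\setminus\{n\}).
\]
By unwinding the inductive construction in Lemma~\ref{lem_construct}, one checks that on $A_{n,i}$ every state variable up to $\tau_k$ is determined without consulting $u_{n+1}$ or $v_n$ (the latter because the $n$-th job has just arrived at $\tau_k$ and has experienced no service yet), so $\mathcal{F}_{\tau_k} \wedge A_{n,i} \subseteq \mathcal{G}_n \wedge A_{n,i}$. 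Applying Lemma~\ref{lem_condAux} with $\mathcal{G}_c=\mathcal{F}_{\tau_k}$, $\mathcal{G}_f=\mathcal{G}_n$, $A = A_{n,i}$, and with $X$ equal to $\indic{u_{n+1}>b}$ or $\indic{v_n>b}$, and using that $u_{n+1}$ and $v_n$ are each independent of $\mathcal{G}_n$ under Assumptions~\ref{asm_E} and~\ref{asm_G}.\ref{asm_mean}, I get
\[
\indic{A_{n,i}} \Prob{u_{n+1}>b \big| \mathcal{F}_{\tau_k}} = \indic{A_{n,i}} \bGe(b), \qquad \indic{A_{n,i}} \Prob{v_n>b \big| \mathcal{F}_{\tau_k}} = \indic{A_{n,i}} \overline G(b).
\]
Since $u_{E(\tau_k)+1}=u_{n+1}$ and $v_{E(\tau_k)}=v_n$ on $A_{n,i}$, summing over $n$ yields \eqref{indep_UE} and \eqref{indep_VE}. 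The argument for \eqref{indep_VJ} is analogous, but with the partition $B_{j_*} \doteq \{z_k=(\mathfrak{D},i),\,X^i(\tau_k)\geq 1,\,J^i(\tau_k)=j_*\} \in \mathcal{F}_{\tau_k}$; note $J^i(\tau_k)$ is $\mathcal{F}_{\tau_k}$-measurable, being determined by the observed arrival-departure history at queue $i$. On $B_{j_*}$, job $j_*$ has been waiting in queue $i$ without entering service, so its service time $v_{j_*}$ has not yet entered the dynamics. Setting
\[
\widetilde{\mathcal{G}}_{j_*} \doteq \sigma(I_0) \vee \sigma(u_j,\kappa_j : j \geq 1) \vee \sigma(v_j : j \in \mathbb{Z},\, j \neq j_*),
\]
the same construction argument yields $\mathcal{F}_{\tau_k} \wedge B_{j_*} \subseteq \widetilde{\mathcal{G}}_{j_*} \wedge B_{j_*}$, and since $v_{j_*}$ is independent of $\widetilde{\mathcal{G}}_{j_*}$, Lemma~\ref{lem_condAux} and summation give \eqref{indep_VJ}.

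The principal technical obstacle is the measurability inclusion $\mathcal{F}_{\tau_k}\wedge A_{n,i} \subseteq \mathcal{G}_n \wedge A_{n,i}$ (and its analogue on $B_{j_*}$). This is intuitively clear but bookkeeping-intensive: one must trace through the inductive definitions of $\arrivalTimenon_j, \kinv_j, \dept_j$ and the processes $X^i, E^i, D^i$ in Lemma~\ref{lem_construct} and verify that on $A_{n,i}$ no invocation uses $u_{n+1}$ or $v_n$, and symmetrically on $B_{j_*}$ no invocation uses $v_{j_*}$. A clean way to carry this out is to observe that the map $Data \mapsto (\text{state at }\tau_k,\indic{A_{n,i}})$, restricted to the pre-image of $A_{n,i}$, factors through the projection that forgets the $(n+1)$-st inter-arrival time and the $n$-th service time; an induction on the event index $k'\leq k$ then yields the required inclusion.
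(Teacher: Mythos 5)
Your proposal is correct and follows essentially the same route as the paper: partition $\{z_k=(\mathfrak{E},i)\}$ (resp.\ $\{z_k=(\mathfrak{D},i),X^i(\tau_k)\geq 1\}$) by the index $E(\tau_k)$ (resp.\ $J^i(\tau_k)$), prove the trace inclusion $\filt_{\tau_k}\wedge A\subset \mathcal{G}_f\wedge A$ for a data-$\sigma$-algebra omitting the fresh variable, and conclude via Lemma \ref{lem_condAux} and independence of $u_{n+1}$, $v_n$, $v_{j_*}$ from that $\sigma$-algebra; the inclusion you flag as the main obstacle is exactly the paper's Lemma \ref{lem_queue}, proved there by the coupling-with-modified-data induction you sketch. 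Your auxiliary $\sigma$-algebras differ only cosmetically (slightly larger, and built from routing destinations rather than the raw choice vectors), which changes nothing in the argument.
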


We prove Lemma \ref{lem_independence} at the end of this section, after we prove the following intuitive auxiliary  results.   First, on the set $A_{k,i,n}$ where the $\Xth{k}$ event is the arrival of  job  $n$,
\begin{equation}\label{queue_Akn}
    A_{k,i,n}\doteq\{\eventmark_{k}=(\mathfrak{E},i),\arrival(\eventtime_{k})=n\},
\end{equation}
we claim the state of the network up to time $\eventtime_k$ depends only on the arrival times and random queue choices of the first $n$ jobs, and the service times of the first $n-1$ jobs, or equivalently, is $\filtData_{\leq n}$-measurable, where
\begin{equation}\label{def_filtDataEn}
  \filtData_{\leq n}\doteq\sigma(\initial)\vee\sigma(u_j,\rqc_{j};j=1,2,...,n)\vee\sigma(v_j;j\leq n-1).
\end{equation}
Similarly, on the set
\begin{equation}\label{queue_Bkij}
B_{k,i,m}\doteq\{\eventmark_{k}=(\mathfrak{D},i),\xii(\eventtime_k)\geq1,J^i(\eventtime_k)=m\},
\end{equation}
where the $k$th event is a departure from queue $i$ and an entry into service by job $m$, we claim the state of the network up to time $\eventtime_k$ does not  depend on the service time of job $m$, or equivalently, is  $\filtData_{-m}$-measurable, where
\begin{equation}\label{def_filtDataDj}
    \filtData_{-m}\doteq\sigma(\initial)\vee\sigma(u_j,\rqc_{j};j\geq 1)\vee\sigma(v_j;j\in\Z\backslash\{m\}),
\end{equation}

\begin{remark}\label{remark_ABmble}
    Note that by the representation \eqref{Ei_mpp_rep} of $\arrivalni$ and the relation $\arrivaln=\sum_{i=1}^N\arrivalni$, $A_{k,i,n}$ is $\filt_{\eventtime_k}$-measurable.  Also, since $J^i(t)$ is the job with the largest index that has  entered service at queue $i$ before time $t$, we have
    \[\indic{\xii(t)\ge 1}J^i(t)=\indic{\xii(t)\geq1} \max\left\{j: \kinv_j\le t, \station_j=i\right\}.\]
    Since $\kinv_j$ is an $\{\filt_t\}$-stopping time and $\station_j$ is $\filt_{\kinv_j}$-measurable by Lemma \ref{lem_construct} and $X^i$ is $\{{\mathcal F}_t\}$-adapted by Proposition \ref{prop_adapted}, the right-hand side above is $\{\filt_t\}$-adapted,  and hence $\{\filt_t\}$-progressive because it is right-continuous as a function of $t$. Therefore, $\indic{\xii(\eventtime_k)\geq1}J^i(\eventtime_k)$, and hence $B_{k,i,m}$, are ${\mathcal F}_{\eventtime_k}$-measurable.
\end{remark}

\begin{lemma}\label{lem_queue}
    Suppose the assumptions of Lemma \ref{lem_construct} hold. Then, for  $k,n\geq1$, $i=1,...,N$ and $m\in \Z$,
    \begin{enumerate}
        \item  $A_{k,i,n}$ is $\filtData_{\leq n}$-measurable and $\filt_{\eventtime_{k}}\wedge A_{k,i,n}\subset \filtData_{\leq n}\wedge A_{k,i,n}.$ \label{queue_E}

        \item $B_{k,i,m}$ is $\filtData_{-m}$-measurable and $\filt_{\eventtime_{k}}\wedge B_{k,i,m}\subset \filtData_{-m}\wedge B_{k,i,m}.$\label{queue_D}
        \end{enumerate}
\end{lemma}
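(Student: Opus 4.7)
My plan is to use a ``shadow system'' comparison grounded in the inductive construction of Lemma \ref{lem_construct}, which builds all state and auxiliary processes chronologically as deterministic functions of $Data$. The key observation is that, on $A_{k,i,n}$ (resp.\ $B_{k,i,m}$), the construction of events up to time $\eventtime_k$ consumes only the data in $\filtData_{\leq n}$ (resp.\ $\filtData_{-m}$), because jobs with index $\geq n$ have not yet arrived (resp.\ job $m$ has not yet departed) by time $\eventtime_k$.

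For part (a), I define a truncated data vector $Data^{(n)}$ obtained from $Data$ by setting $u_j = \infty$ for $j > n$ and $v_j = \infty$ for $j \geq n$, so that in the system driven by $Data^{(n)}$ no arrival occurs after that of job $n$ and no job with index $\geq n$ ever departs. Applying Lemma \ref{lem_construct}'s construction to $Data^{(n)}$ produces a shadow evolution depending measurably only on $\initial$, $\{u_j, \rqc_j : 1 \leq j \leq n\}$, and $\{v_j : j \leq n-1\}$, i.e.\ on $\filtData_{\leq n}$. I would then show by induction on the intermediate event index $k' \leq k$ that, on $A_{k,i,n}$, the shadow and real systems produce identical events throughout $[0, \eventtime_k]$: all arrivals in this interval are of jobs $1, \ldots, n$ (whose interarrivals and queue choices agree), all departures are of jobs with index $\leq n-1$ (whose service times agree in both systems), and the routing of the $n$th arrival uses only the queue lengths at $\arrivalTimenon_n-$, which coincide by the inductive hypothesis. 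It follows that every generator of $\filt_{\eventtime_k}$ restricted to $A_{k,i,n}$, namely $\initial$ and the increments of $\arrivali(\cdot), \di(\cdot)$ up to $\eventtime_k$, is $\filtData_{\leq n}$-measurable on $A_{k,i,n}$, which gives both the membership $A_{k,i,n} \in \filtData_{\leq n}$ and the trace inclusion.

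For part (b), the argument is more direct. Fix $m \in \Z$ and consider $Data^{-m}$ obtained from $Data$ by replacing $v_m$ with an arbitrary positive value $v'_m$. By inspection of the recursion in Lemma \ref{lem_construct}, $v_m$ is consumed only at the step producing $\dept_m = \kinv_m + v_m$. On $B_{k,i,m}$, the non-idling discipline forces $\eventtime_k = \kinv_m$: the departure at $\eventtime_k$ is of the previous job in service at queue $i$, and job $m = J^i(\eventtime_k)$ must enter service immediately thereafter. Since $v_m > 0$, we get $\eventtime_k < \dept_m$, so the shadow and real systems agree on $[0, \eventtime_k]$. Hence the indicator of $B_{k,i,m}$ and every generator of $\filt_{\eventtime_k}$ are invariant under any modification of $v_m$, yielding $B_{k,i,m} \in \filtData_{-m}$ and $\filt_{\eventtime_{k}} \wedge B_{k,i,m} \subset \filtData_{-m} \wedge B_{k,i,m}$.

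The main technical obstacle will be the careful bookkeeping in (a): the inductive assertion must track, for each intermediate step of Lemma \ref{lem_construct}'s construction, exactly which pieces of data have been consumed, so that the shadow--real coincidence is preserved at all intermediate events and not merely at the terminal time $\eventtime_k$. For (b), the analogous subtle point is verifying that $\kinv_m$ is insensitive to $v_m$, which reduces to the FCFS rule: $\kinv_m$ depends on $\arrivalTimenon_m$, $\station_m$, and the service times of jobs ahead of $m$ in queue $\station_m$, all of which are unaltered when only $v_m$ is modified.
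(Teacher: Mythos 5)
Your proposal is correct and follows essentially the same route as the paper: both construct a coupled alternative network whose driving data outside $\filtData_{\leq n}$ (resp.\ $\filtData_{-m}$) is replaced by fixed values, observe that the two systems coincide up to $\arrivalTimenon_n$ (resp.\ $\kinv_m$), which equals $\eventtime_k$ on $A_{k,i,n}$ (resp.\ $B_{k,i,m}$), and then deduce measurability and the trace inclusion from Lemma \ref{lem_construct}. The only details to tighten in writing it up are that $v_m$ also enters the recursion through the service-entry times of jobs queued behind $m$ (these all exceed $\eventtime_k$, so the pathwise coincidence on $[0,\eventtime_k]$ still holds), and that converting pathwise coincidence up to the stopping time into the statement about $\filt_{\eventtime_k}$ is where the paper invokes \cite[Theorem T3 of Section III.1]{BremaudBook}.
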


\begin{proof}
For part \ref{queue_E}, consider an identical queuing network driven by the alternative data
\[\widetilde{Data}=(\tilde\initial,\tilde u_j,\tilde \rqc_{j};j\geq1,\tilde v_j;j\in\Z),\]
coupled with our original network so that $\tilde \initial=\initial,$ $\tilde u_j=u_j, \tilde \rqc_{j}=\rqc_{j};j=1,2,...,n$ and $\tilde v_j=v_j;j\leq n-1,$ but $\tilde u_j=1$ and $\tilde{\rqc}_{j}(d')= {\rqc}_{j}(d')                                                                                                                                                                                                                            =1$ for $j\geq n+1$ and $d'=1,...,d$ and $\tilde{v}_j=1$ for $j\geq n$.  Then clearly
\begin{align*}
  \tilde{\mathcal{F}}^{\widetilde{\text{Data}}}&\doteq\sigma(\tilde\initial,\tilde u_j,\tilde \rqc_{j};j\geq1 ,\tilde v_j;j\in\Z)= \sigma(\initial,\rqc_{j}, u_j;j=1,...,n,v_j;j\leq n-1) = \filtData_{\leq n},
\end{align*}
where we distinguish quantities associated to the alternative network from those of the original network by a tilde. Clearly, the dynamics of the two networks are clearly identical up to the arrival time $\arrivalTimenon_n=\tilde \arrivalTimenon_n$ of job $n$.   In particular, $\tilde A_{k,i,n} \doteq\{\tilde{\arrival}(\tilde{\eventtime}_{k})=n,\tilde{\eventmark}_{k}=(\mathfrak{E},i)\}=A_{k,i,n}$, and since $\tilde A_{k,i,n}$  is $\tilde{\filt}_{\tilde{\eventtime}_k}$-measurable, so is $A_{k,i,n}$. But by Lemma \ref{lem_construct}, $\tilde{\filt}_{\tilde{\eventtime}_k} \subset \tilde{\mathcal{F}}^{\widetilde{\text{Data}}} = \filtData_{\leq n}$, and hence $A_{k,i,n}$ is $\filtData_{\leq n}$-measurable.

Moreover, for all $t\geq0$ and $i=1,...,n$,
\[\arrivali(t\wedge \arrivalTimenon_n)=\tilde \arrivali(t\wedge\arrivalTimenon_n),\quad\quad \di(t\wedge \arrivalTimenon_n)=\tilde \di(t\wedge\arrivalTimenon_n),\]
and hence, by \cite[Theorem T3 of Section III.1]{BremaudBook} and definition \eqref{def_filt} of the filtration $\{\filt_t;t\geq0\}$, and because  $\arrivalTimenon_j$ is an $\{\filt_t\}$-stopping time by Lemma \ref{lem_construct}, we have $\filt_{\arrivalTimenon_n}=\tilde{\filt}_{\arrivalTimenon_n}$.  In particular, since $\arrivalTimenon_n=\eventtime_k$ on $A_{k,i,n}$,  another application of Lemma \ref{lem_construct} shows
\[\filt_{\eventtime_k}\wedge A_{k,i,n} = \filt_{\arrivalTimenon_n}\wedge A_{k,i,n}=\tilde{\filt}_{\arrivalTimenon_n}\wedge A_{k,i,n}\subset\filtData_{\leq n}\wedge A_{k,i,n}.\]

Finally for part \ref{queue_D}, consider an alternative network driven by another $\widetilde{Data}$ with $\tilde \initial=\initial,$ $\tilde u_j=u_j, \tilde \rqc_{j}=\rqc_{j};j\geq1$ and $\tilde v_j=v_j;j\in\Z\backslash\{m\},$ but with $\tilde v_m=1$. Having in mind that the two systems are identical up to the service entry time $\kinv_m$ of job $m$ and that $B_{k,i,m}$ is $\filt_{\eventtime_k}$-measurable by Remark \ref{remark_ABmble} and $\kinv_m$ is an $\{\filt_t\}$-stopping time, the result follows from exactly analogous arguments as in part \ref{queue_E}.
\end{proof}

\begin{proof}[Proof of Lemma \ref{lem_independence}]
To see \eqref{indep_UE}, we partition the set $\{\eventmark_k=(\mathfrak{E},i)\}$ based on  the value of $E(\eventtime_k)$,  the index of the job that arrived at time $\eventtime_k$. In other words, with $A_{k,i,n}$ as defined in \eqref{queue_Akn}, we have
\begin{equation}\label{temp_ind1}
        \{\eventmark_k=(\mathfrak{E},i)\} =\bigcup_{n\in\N}A_{k,i,n}.
\end{equation}
Lemma \ref{lem_queue}.\ref{queue_E} and the fact that $A_{k,i,n}$ is $\filt_{\eventtime_k}$-measurable (see Remark \ref{remark_ABmble}) show that the conditions of Lemma \ref{lem_condAux} are satisfied with $A=A_{k,i,n}$, $X=\indicil{u_{n+1}>b}$, $\cal G_c=\filt_{\eventtime_k}$ and $\cal G_f=\filtData_{\leq n}$.    Therefore, using Lemma \ref{lem_condAux}, we have
\begin{equation}\label{temp_ind2}
      \indic{A_{k,i,n}}\Prob{u_{n+1}>b|\filt_{\eventtime_k}}= \indic{A_{k,i,n}}\Ept{\Prob{u_{n+1}>b|\filtData_{\leq n}}\Big|\filt_{\eventtime_k}}.
\end{equation}
By Assumptions \ref{asm_E} and  \ref{asm_initial}.\ref{asm_initial_ind}, the inter-arrival time $u_{n+1}$ has complimentary CDF $\bGe$ and is independent of the initial conditions, all other inter-arrival times and all service times and hence, independent of $\filtData_{\leq n}$. Therefore, $\Probil{u_{n+1}>b|\filtData_{\leq n}}=\bGe(b)$, which together with \eqref{queue_Akn},  \eqref{temp_ind1} and \eqref{temp_ind2}, yields
\[ \indic{\eventmark_k=(\mathfrak{E},i)}\Prob{u_{E(\eventtime_k)+1}>b|\filt_{\eventtime_k}}= \sum_{n\in\N} \indic{A_{k,i,n}}\bGe(b) =\indic{\eventmark_k=(\mathfrak{E},i)}\bGe(b).\]
This proves \eqref{indep_UE}. Similarly, by another use of Lemma \ref{lem_condAux}, with the same $A$, $\cal G_c$, $\cal G_f$ as above and with $X=\indicil{v_n>b}$, in the second equality below, we have
\begin{align*}
      \indic{\eventmark_k=(\mathfrak{E},i)}\Prob{v_{E(\eventtime_k)}>b|\filt_{\eventtime_k}}= & \sum_{n\in\N}\indic{A_{k,i,n}}\Prob{v_{n}>b|\filt_{\eventtime_k}}\\
      = & \sum_{n\in\N} \indic{A_{k,i,n}}\Ept{\Prob{v_{n}>b|\filtData_{\leq n}}\Big|\filt_{\eventtime_k}}.
\end{align*}
By Assumptions \ref{asm_G}.\ref{asm_mean} and  \ref{asm_initial}.\ref{asm_initial_ind}, $v_n$ has the complimentary CDF $\overline G$ and is independent of $I_0$, the arrival process and the service times of jobs $j<n$, and hence, is independent of  $\filtData_{\leq n}$. This  yields $\Probil{v_{n}>b|\filtData_{\leq n}}=\overline G(b)$. When combined with the last display, \eqref{indep_VE} follows from another use of \eqref{temp_ind1}.

Finally to see \eqref{indep_VJ}, we partition the set $\{z_k=(\mathfrak{D}, i ),\xii(\eventtime_k)\geq1\}$ based on the value of $J^i(\eventtime_k)$:
\begin{equation}\label{temp_ind3}
        \{z_k=(\mathfrak{D}, i),\xii(\eventtime_k)\geq1\}=\bigcup_{m\in\Z}B_{k,i,m},
\end{equation}
where $B_{k,i,m}$, defined in \eqref{queue_Bkij},   is $\filt_{\eventtime_k}$-measurable by Remark  \ref{remark_ABmble}. (Note that $B_{k,i,m}$ is empty if there is no job with index $m$.) For every $m\in\Z$, the conditions of  Lemma \ref{lem_condAux} with $A=B_{k,i,m}$, $X=\indicil{v_{m}>b}$, $\cal G_c=\filt_{\eventtime_k}$ and $\cal G_f=\filtData_{-m}$ hold due to Lemma \ref{lem_queue}.\ref{queue_D}, and hence, using Lemma \ref{lem_condAux}, we have
\begin{equation}\label{temp_ind4}
    \indic{B_{k,i,m}}\Prob{v_{m}>b|\filt_{\eventtime_k}} = \indic{B_{k,i,m}}\Ept{\Prob{v_{m}>b|\filtData_{-m}}\Big|\filt_{\eventtime_k}}.
\end{equation}
By Assumption \ref{asm_G}.\ref{asm_mean}, $v_m$ has complimentary CDF $\overline G$ and is independent of $I_0$, the  arrival process, and all other service times, and hence, is independent of $\filtData_{-m}$, which implies $\Prob{v_{m}>b|\filtData_{-m}} =\overline G(b)$. When combined with \eqref{temp_ind3} and \eqref{temp_ind4}, we obtain \eqref{indep_VJ} since
  \[
        \indic{\eventmark_k=(\mathfrak{D},i),\xii(\eventtime_k\geq1)}\Prob{v_{J^i(\eventtime_k)}>b\Big|\filt_{\eventtime_k}} = \sum_{m\in\Z} \indic{B_{k,i,m}}\overline G(b) = \indic{\eventmark_k=(\mathfrak{D},i),\xii(\eventtime_k\geq1)} G(b).
    \]
\end{proof}

Next, define $\eventmark_{k,1}\in\{\mathfrak{E},\mathfrak{D}\}$ and $\eventmark_{k,2}\in\{1,...,N\}$ to be the two components of  $\eventmark_k$, that is, $\eventmark_k=(\eventmark_{k,1},\eventmark_{k,2})$. The next lemma shows that when the next event is an arrival, $\sigma_k^i$ is conditionally independent of the queue index to which the new arriving job is routed.

\begin{lemma}\label{lem_rqcInd}
    Suppose Assumptions \ref{asm_E} and \ref{asm_G}.\ref{asm_mean} hold. Then, for every $k\ge 1$,
    \begin{align}\label{rqc_ind}
        &\indic{\eventmark_{k,1}=\mathfrak{E}}\Prob{\sigma^i_{k}>\eventtime_k+b_i,i=1,...,N|\filt_{\eventtime_k}}\notag\\
        &\hspace{1cm}= \indic{\eventmark_{k,1}=\mathfrak{E}} \Prob{\sigma^i_{k}>\eventtime_k+b_i,i=1,...,N|\filt_{\eventtime_{k-1}},\eventtime_k,z_{k,1}}.
    \end{align}
\end{lemma}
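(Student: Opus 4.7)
The strategy is to reduce \eqref{rqc_ind} to a conditional independence statement and then establish that independence by a coupling argument exploiting the fact that the random queue choice of the arriving job is independent of all service times. By the marked-point-process representation \eqref{def_filt_alt} we have $\filt_{\eventtime_k}=\mathcal{G}_0\vee\sigma(z_{k,2})$, where $\mathcal{G}_0\doteq\filt_{\eventtime_{k-1}}\vee\sigma(\eventtime_k,z_{k,1})$. Hence \eqref{rqc_ind} is equivalent to showing that, on $\{z_{k,1}=\mathfrak{E}\}$, the random vector $(\sigma^i_k)_{i=1}^N$ is conditionally independent of $z_{k,2}$ given $\mathcal{G}_0$. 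Intuitively this holds because $\sigma^i_k$ is determined by the state at $\eventtime_{k-1}$ and the residual service time of the job then in service at queue $i$, whereas $z_{k,2}=\station_{\arrival(\eventtime_k)}$ is (by \eqref{def_station}) a function of the queue-length vector at $\eventtime_k^-$—which is $\mathcal{G}_0$-measurable—together with the queue choice $\rqc_{\arrival(\eventtime_k)}$, and this queue choice is independent of all service times by Assumption \ref{asm_initial}.\ref{asm_initial_ind}.

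To make this precise, I would partition $\{z_{k,1}=\mathfrak{E}\}$ as $\bigsqcup_{n\geq1}C_{k,n}$ with $C_{k,n}\doteq\{z_{k,1}=\mathfrak{E},\arrival(\eventtime_k)=n\}$, and argue on each $C_{k,n}$ separately. Since $\arrival$ is $\{\filt_t\}$-adapted by Proposition \ref{prop_adapted} and $\arrival(\eventtime_k)=\arrival(\eventtime_{k-1})+1$ on $\{z_{k,1}=\mathfrak{E}\}$, each $C_{k,n}$ is $\mathcal{G}_0$-measurable. Mimicking the coupling argument in the proof of Lemma \ref{lem_queue}\ref{queue_E}, but now comparing the original network with an alternative one whose primitives differ only in the value of $\rqc_n$, one verifies that the two networks evolve identically up to the arrival time $\arrivalTimenon_n=\eventtime_k$ of the $n$th job. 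Consequently, on $C_{k,n}$ the in-service jobs at $\eventtime_{k-1}$, their ages, and their service times do not depend on $\rqc_n$, so both $X\doteq\prod_{i=1}^N\indic{\sigma^i_k>\eventtime_k+b_i}$ and $\indic{C_{k,n}}$ are measurable with respect to
\[
\filtData_{-\rqc_n}\doteq\sigma(I_0)\vee\sigma(u_j;j\geq1)\vee\sigma(\rqc_j;j\neq n)\vee\sigma(v_j;j\in\Z),
\]
and the trace inclusion $\mathcal{G}_0\wedge C_{k,n}\subset\filtData_{-\rqc_n}\wedge C_{k,n}$ holds (because $\mathcal{G}_0$ restricted to $C_{k,n}$ is generated by $I_0$, the arrival times $u_1,\dots,u_n$, and the routing choices $\rqc_j$ for $j<n$).

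To conclude, by Assumption \ref{asm_initial}.\ref{asm_initial_ind}, $\rqc_n$ is independent of $\filtData_{-\rqc_n}$; and on $C_{k,n}$ the routing rule \eqref{def_station} expresses $z_{k,2}=\station_n$ as a deterministic function of $\rqc_n$ and the queue-length vector at $\eventtime_{k-1}$. Combined with the $\filtData_{-\rqc_n}$-measurability of $X$ and $C_{k,n}$, this gives $X\perp z_{k,2}\mid\mathcal{G}_0$ on $C_{k,n}$, by conditioning on $\filtData_{-\rqc_n}$ and invoking the independence of $\rqc_n$. A standard consequence of conditional independence is that $\Ept{X|\filt_{\eventtime_k}}=\Ept{X|\mathcal{G}_0\vee\sigma(z_{k,2})}=\Ept{X|\mathcal{G}_0}$ on $C_{k,n}$; summing the resulting identity over $n\geq1$ yields \eqref{rqc_ind}. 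The main technical obstacle is the coupling/measurability step: it requires reproducing the inductive construction of Lemma \ref{lem_construct} for an alternative network that differs from the original only in the value of $\rqc_n$, and then verifying the equality $\filt_{\arrivalTimenon_n}=\tilde{\filt}_{\arrivalTimenon_n}$ of stopping-time $\sigma$-algebras in the spirit of the proof of Lemma \ref{lem_queue}\ref{queue_E}. Once this measurability is in hand, the remainder is a routine conditional-independence computation driven by the independence of $\rqc_n$.
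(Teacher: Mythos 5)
Your proposal is sound and reaches the same conditional--independence fact as the paper, but by a heavier route. Both arguments partition $\{\eventmark_{k,1}=\mathfrak{E}\}$ over the index $n$ of the arriving job and exploit that, on that event, $\eventmark_{k,2}=\stationn_n$ is a function of the extra randomness $\rqc_n$ and of $\filt_{\eventtime_{k-1}}$-measurable queue lengths. The paper then finishes directly: it notes that $\filt_{\eventtime_k}\wedge A_{k,n}$ is contained in the trace of $\filt_{\eventtime_{k-1}}\vee\sigma(\eventtime_k,\eventmark_{k,1},\rqc_n)$, so conditioning on $\filt_{\eventtime_k}$ can be replaced by conditioning on $\rqc_n$ together with $\mathcal{G}_0$, and then drops $\rqc_n$ because $\sigma^i_k=\kninv_{J^i(\eventtime_{k-1})}+v_{J^i(\eventtime_{k-1})}$ (or $\infty$ for idle servers) is built from $\filt_{\eventtime_{k-1}}$-measurable quantities (Remark \ref{remark_ABmble}, Proposition \ref{prop_adapted}) plus service times that are independent of the queue choices; no new coupling construction is needed. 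You instead establish $\filtData$-type measurability of $X\indicil{C_{k,n}}$ and the trace inclusion $\mathcal{G}_0\wedge C_{k,n}\subset\filtData_{-\rqc_n}\wedge C_{k,n}$ by a coupling in the spirit of Lemma \ref{lem_queue}, and then run a Lemma \ref{lem_condAux}-style computation. This works, but the coupling step is genuinely new content: Lemma \ref{lem_queue}\ref{queue_E} modifies the \emph{future} data and keeps $\rqc_n$, whereas you must modify $\rqc_n$ itself, and since the routed queue already differs at time $\arrivalTimenon_n$, the stopping-time $\sigma$-algebra identification has to be carried out at $\eventtime_{k-1}$ (or at $\arrivalTimenon_n-$) rather than at $\arrivalTimenon_n$; you correctly flag this as the main burden, and it is exactly the work the paper's shorter argument avoids. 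Two minor points: the lemma assumes only Assumptions \ref{asm_E} and \ref{asm_G}.\ref{asm_mean}, and the independence of $\rqc_n$ from the service times is part of the basic model specification rather than Assumption \ref{asm_initial}.\ref{asm_initial_ind} (which concerns the initial data), so adjust that citation; and, like the paper, you should bundle the uniform tie-breaking variable of \eqref{def_station} together with $\rqc_n$, since $\stationn_n$ is not a deterministic function of $\rqc_n$ and the queue lengths alone.
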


\begin{proof}
    Define $A_{k,n}=\{\eventmark_{k,1}=\mathfrak{E}, \arrival(\eventtime_k)=n\}$, and note that $\{\eventmark_{k,1}=\mathfrak{E}\}=\cup_{n\in\N}A_{k,n}$. On $A_{k,n}$, $\eventmark_{k,2}$ is the queue index of the newly arrived job $n$, that is, $\eventmark_{k,2}=\station_{n}$,  where $\station_n$ is defined by  \eqref{def_station} and is a measurable function of $\rqc_n$ and $\xii(\eventtime_k),i=1,...,N$. Since $\xii(\eventtime_{k-1}),i=1,...,N,$ are  $\filt_{\eventtime_{k-1}}$-measurable by Lemma \ref{lem_construct}, we have \[\filt_{\eventtime_k}\wedge A_{n,k}=(\filt_{\eventtime_{k-1}}\vee\sigma(\eventtime_{k},\eventmark_{k}))\wedge A_{k,n}=(\filt_{\eventtime_{k-1}}\vee\sigma(\eventtime_{k},\eventmark_{k,1},\rqc_n))\wedge A_{k,n},\]
    and therefore,
    \begin{align}\label{temp_rqc}
        &\indic{A_{k,n}}\Prob{\sigma^i_{k}>\eventtime_k+b_i,i=1,...,N|\filt_{\eventtime_k}}\notag\\
        &\hspace{1cm}= \indic{A_{k,n}} \Prob{\sigma^i_{k}>\eventtime_k+b_i,i=1,...,N|\filt_{\eventtime_{k-1}},\eventtime_k,z_{k,1},\rqc_n}.
    \end{align}
    Moreover, for $i\in\busy_{k-1}$, $\sigma_{k}^i$ is the departure time of the job $J^i(\eventtime_{k-1})$ that is receiving service at queue $i$ at time $\eventtime_{k-1}$, and hence, $\sigma_{k}^i=\kinv_{J^i(\eventtime_{k-1})}+v_{J^i(\eventtime_{k-1})}$. By Remark \ref{remark_ABmble}, $J^i(\eventtime_{k-1})$ is $\filt_{\eventtime_{k-1}}$-measurable, and since $\kinv_j$ is a $\{\filt_t\}$-stopping time for all $j\in\Z$ and $\kinv_{J^i(\eventtime_{k-1})}\leq \eventtime_{k-1}$, $\kinv_{J^i(\eventtime_{k-1})}$ is $\filt_{\eventtime_{k-1}}$-measurable. Also, since service times are independent of the random queue choices, $v_{J^i(\eventtime_{k-1})}$ is conditionally independent of $\rqc_n$, given $\filt_{\eventtime_{k-1}}$. On the other hand, if $i\notin\busy_{k-1}$, $\sigma_k^i=\infty$. Consequently, for every $i=1,...,N$, $\sigma_k^i$ is conditionally independent of $\rqc_n$, given $\filt_{\eventtime_{k-1}}$, and therefore,  \eqref{temp_rqc} yields
    \begin{align*}
        \indic{A_{k,n}}\Prob{\sigma^i_{k}>\eventtime_k+b_i,i=1,...,N|\filt_{\eventtime_k}}
        = \indic{A_{k,n}}
        \Prob{\sigma^i_{k}>\eventtime_k+b_i,i=1,...,N|\filt_{\eventtime_{k-1}},\eventtime_k,z_{k,1}}.
    \end{align*}
    Equation \eqref{rqc_ind} follows on summing  both sides of the above display over $n\in\Z$.
\end{proof}

\subsection{Proof of Lemma \ref{lem_condInd} }\label{secapx_condind}

\begin{proof}[Proof of Lemma \ref{lem_condInd}]
We prove the lemma using induction on $k$. As the induction hypothesis, assume that for some $k \geq 1$,  $\Prob{\tilde\Omega_{k-1}}=1$,  the next arrival time $\xi_{k}$ and next departure times $\sigma^{i}_{k}$, $i=1,...,N,$ after $\tau_{k-1}$ are conditionally independent given $\filt_{\eventtime_{k-1}}$, and \eqref{cond_density_arrival} and \eqref{cond_density_departure} hold with $k$ replaced by $k-1$, that is,  for every $b>0,$
    \begin{equation}\label{cond_CDF_arrival}
        \Prob{\xi_{k}-\eventtime_{k-1}> b\big|\filt_{\eventtime_{k-1}}} = \frac{\bGe(\re(\eventtime_{k-1})+b)}{\bGe(\re(\eventtime_{k-1}))},
    \end{equation}
    and for every $i=1, \ldots, N$,
    \begin{equation}\label{cond_CDF_departure}
        \indic{i\in\busy_{k-1}}\Prob{\sigma^{i}_{k}-\eventtime_{k-1}> b\big|\filt_{\eventtime_{k-1}}} = \indic{i\in\busy_{k-1}}\frac{\overline G(\agei(\eventtime_{k-1})+b)}{\overline G(\agei(\eventtime_{k-1}))}.
    \end{equation}

First, we prove $\Probil{\tilde\Omega_k}=1.$ Since $\Probil{\tilde \Omega_{k-1}}=1$,
by \eqref{next_event} we have, almost surely,
\[
        \eventtime_k-\eventtime_{k-1}=\min\{\xi_{k}-\eventtime_{k-1}, \sigma^{i}_k-\eventtime_{k-1}; i=1,...,N\},
\]
By the induction hypothesis, given $\filt_{\eventtime_{k-1}}$, the random variables $\xi_k-\eventtime_{k-1}$ and $\sigma^i_k-\eventtime_{k-1}, i=1,...,N,$  are conditionally independent, and by \eqref{cond_CDF_arrival} and \eqref{cond_CDF_departure} and the fact that  $\Ge$ and $\overline G$ have densities $\gee$ and $g$ by Assumptions \ref{asm_E} and  \ref{asm_G}.\ref{asm_mean}, the random variables are all distinct and strictly positive, taking values in  $(0,\infty]$.  In particular,  this shows that  $\Prob{\eventtime_k>\eventtime_{k-1}|\filt_{\eventtime_{k-1}} }=1$, which implies  $\Prob{\tilde\Omega_k|\filt_{\eventtime_{k-1}} }=1$, and hence,  $\Prob{\tilde\Omega_k}=\Ept{\Prob{\tilde\Omega_k|\filt_{\eventtime_{k-1}}}}=1.$

Next, we establish \eqref{cond_density_arrival} and \eqref{cond_density_departure} and the conditional independence of $\xi_{k+1}$ and $\sigma_{k+1}^i$ given $\filt_{\eventtime_{k}}$. Since $\Probil{\tilde\Omega_k}=1$, by \eqref{next_event} we have, almost surely,
\begin{equation}\label{tau_k}
    \eventtime_k= \min(\xi_{k},\sigma_{k}^i;i=1,...,N).
\end{equation}
Given the  filtration $\{\filt_t\}$ satisfies \eqref{def_filt_alt},  by \cite[Theorem T30 of Section A.2]{BremaudBook},  $\filt_{\eventtime_k}=\sigma(I_0)\vee \sigma(\eventtime_{k^\prime},\eventmark_{k^\prime};1\leq k^\prime\leq k), $ and therefore $\filt_{\eventtime_k}= \filt_{\eventtime_{k-1}}\vee \sigma(\eventtime_k,\eventmark_k).$ We partition $\Omega$ based on the values of $\eventmark_k$ and the queue lengths right before the event time $\eventtime_k$, and show the conditional independence on each (measurable) subset of the partition. First, for every $i_*\in\{1,...,N\}$, let $A_{i_*}\doteq\{\eventmark_k=(\mathfrak{E},i_*)\}$ be the set of realizations on which the $\Xth{k}$ event (which was already shown to be almost surely distinctly defined) is an arrival to the queue $i_*$. On $A_{i*}$, $\eventtime_k=\xi_k$ and $\xi_{k+1}=\eventtime_k+u_{\arrival(\eventtime_k)+1}$, and hence using Lemma \ref{lem_independence}, for every $b_0,b_1,...,b_N\geq0$ we have,
\begin{align}\label{cond_Eind1}
  &\indic{A_{i*}}\Prob{\xi_{k+1}>\eventtime_k+b_0,\sigma^i_{k+1}>\eventtime_k+b_i,i=1,...,N|\filt_{\eventtime_k}}\notag\\
  &\hspace{1cm}= \indic{A_{i*}}\Prob{u_{\arrival(\eventtime_k)+1}>b_0,\sigma^i_{k+1}>\eventtime_k+b_i,i=1,...,N|\filt_{\eventtime_k}}\notag\\
  &\hspace{1cm}= \indic{A_{i*}}\bGe(b_0) \Prob{\sigma^i_{k+1}>\eventtime_k+b_i,i=1,...,N|\filt_{\eventtime_k}}.
\end{align}
In particular, substituting $b_0=b$ and $b_i=0,$ $i=1,...,N,$ in \eqref{cond_Eind1} and recalling that $\re(\eventtime_k)=0$ on $A_{i_*}$, we have
\begin{align}\label{cond_UE}
    \indic{A_{i_*}}\Prob{\xi_{k+1}>\eventtime_k+b\big|\filt_{\eventtime_{k}}} =\indic{A_{i_*}}\bGe(b)
    = \indic{A_{i_*}}\frac{\bGe(\re(\eventtime_k)+b)}{\bGe(\re(\eventtime_k))}.
\end{align}
We now turn to  the joint conditional distribution of $\sigma_k^i,i=1,...,N,$ given $\filt_{\eventtime_k}$. Consider $Y=\xi_k$,  $W_i=\sigma_k^i$, $i=1,...,N$, $T=\eventtime_k$, $Z=\eventmark_{k,1}$ and $z_0=(\mathfrak{E})$. By the induction hypothesis, $\xi_k$ and $\sigma_k^i,i=1,...,N,$ are conditionally independent given $\filt_{\eventtime_{k-1}}$, and  $Y, W_i, i = 1, \ldots, N, T, Z$ satisfy the setup of Lemma \ref{lem_condauxind}. Using \eqref{tau_k} and noting that  $\{\xi_k\leq\min(\sigma_k^i;i=1,...,N)\} =\{\eventmark_k=(\mathfrak{E},i)\text{ for some }i=1,...,N\}=\{\eventmark_{k,1}=\mathfrak{E}\},$ we have $T=\eventtime_k$, $Z=\eventmark_{k,1}$ and $z_0=\mathfrak{E}$. Therefore, and since $A_{i_*}\subset \{\eventmark_{k,1}=\mathfrak{E}\}$ and using \eqref{rqc_ind} of Lemma \ref{lem_renewal} in the second and the last equality and \eqref{condAuxInd} of Lemma \ref{lem_condauxind} in the third equality, we have
\begin{align}\label{temp_apx1}
  \indic{A_{i_*}}\Prob{\sigma^i_{k}>\eventtime_k+b_i,i=1,...,N|\filt_{\eventtime_k}}\notag
  & =  \indic{A_{i_*}} \indic{\eventmark_{k,1}=\mathfrak{E}}
  \Prob{\sigma^i_{k}>\eventtime_k+b_i,i=1,...,N|\filt_{\eventtime_{k-1}},\eventtime_k,\eventmark_k}\notag\\
  & =  \indic{A_{i_*}}\prod_{i=1}^N \Prob{\sigma^i_{k}>\eventtime_k+b_i|\filt_{\eventtime_{k-1}},\eventtime_k,\eventmark_{k,1}}\notag\\
  & =  \indic{A_{i_*}}\prod_{i=1}^N \Prob{\sigma^i_{k}>\eventtime_k+b_i|\filt_{\eventtime_{k}}}.
\end{align}
Moreover, if $i\in\busy_{k-1}$, the conditional distribution of $\sigma_i^k$ is given by \eqref{cond_CDF_departure}, and hence using \eqref{condAuxDist} of Lemma \ref{lem_condauxind} and the fact that on $A_{i_*},$ $\age^i(\cdot)$ grows linearly on $(\eventtime_{k-1},\eventtime_k],$ we have
\begin{align}\label{temp_apx2}
    \indic{A_{i_*}}\indic{i\in\busy_{k-1}}\Prob{\sigma^i_{k}>\eventtime_k+b
    \big|\filt_{\eventtime_{k}}}  & = \indic{A_{i_*}}\indic{i\in\busy_{k-1}}\Prob{\sigma^i_{k}-\eventtime_{k-1}>\eventtime_k-\eventtime_{k-1}+b
    \big|\filt_{\eventtime_{k-1}},\eventtime_k,\eventmark_k} \notag\\
    &=\indic{A_{i_*}}\indic{i\in\busy_{k-1}} \frac{\overline G(\age^i(\eventtime_{k-1})+\eventtime_k-\eventtime_{k-1}+b )}{\overline G(\age^i(\eventtime_{k-1})+\eventtime_k-\eventtime_{k-1})}\notag\\
    &=\indic{A_{i_*}}\indic{i\in\busy_{k-1}}\frac{\overline G(\age^i(\eventtime_{k})+b )}{\overline G(\age^i(\eventtime_{k}))}.
\end{align}

Now we further partition $A_{i_*}$ into two subsets,  $A_{i_*,1}\doteq\{\eventmark_k=(\mathfrak{E},i_*),X^{i_*}(\eventtime_{k-1})\geq1\}$  and $A_{i_*,2}\doteq\{\eventmark_k=(\mathfrak{E},i_*),X^{i_*}(\eventtime_{k-1})=0\}$, on which the queue $i_*$ is non-empty and empty, respectively, right before the arrival.  Since an arrival to a non-empty queue does not change the set of busy servers, $\busy_k=\busy_{k-1}$ and  $\sigma^i_{k+1}=\sigma^i_k$ for all $i=1,...,N$, on $A_{i_*,1}$. Substituting these identities in \eqref{temp_apx1} and \eqref{temp_apx2}, respectively, we have
\begin{equation}\label{cond_Eind2}
  \indic{A_{i_*,1}}\Prob{\sigma^i_{k+1}>\eventtime_k+b_i,i=1,...,N|\filt_{\eventtime_k}} =  \indic{A_{i_*,1}}\prod_{i=1}^N \Prob{\sigma^i_{k+1}>\eventtime_k+b_i|\filt_{\eventtime_{k}}},
\end{equation}
and for every $i=1,...,N,$
\begin{equation}\label{Cond_VE1}
    \indic{A_{i_*,1}}\indic{i\in\busy_{k}}\Prob{\sigma^i_{k+1}>\eventtime_k+b
    \big|\filt_{\eventtime_{k}}}  =\indic{A_{i_*,1}}\indic{i\in\busy_{k}}\frac{\overline G(\age^i(\eventtime_{k})+b )}{\overline G(\age^i(\eventtime_{k}))}.
\end{equation}
On the other hand, on $A_{i_*, 2}$, the arrival makes the previously idle server $i_*$ busy, that is, $\busy_{k}=\busy_{k-1}\cup\{i_*\}$, and also $\sigma_{k+1}^{i}=\sigma_{k}^{i}$ for all $i\neq i_*$  and $ \sigma_{k+1}^{i_*}=\eventtime_{k}+v_{J^{i_*}(\eventtime_k)}$. Therefore, using \eqref{indep_VE} of Lemma \ref{lem_independence} in the second equality and \eqref{temp_apx1} in the third equality, we have
\begin{align}\label{cond_Eind3}
  \indic{A_{i_*,2}}\Prob{\sigma^i_{k+1}>\eventtime_k+b_i;i=1,...,N|\filt_{\eventtime_k}}\notag
  & =\indic{A_{i_*,2}}\Prob{v_{J^{i_*}(\eventtime_k)}> b_{i_*},\sigma^i_{k}>\eventtime_k+b_i;i\neq i_*|\filt_{\eventtime_k}}\notag\\
 & =\indic{A_{i_*,2}}\overline G(b_{i_*})\Prob{\sigma^i_{k}>\eventtime_k+b_i;i\neq i_*|\filt_{\eventtime_k}}\notag\\
 & =\indic{A_{i_*,2}}\overline G(b_{i_*})\prod_{i=1,i\neq i_*}^N\Prob{\sigma^i_{k}>\eventtime_k+b_i|\filt_{\eventtime_k}}.
\end{align}
In particular, by substituting $b_{i_*}=b$ and $b_i=0$ for $i=1,...,N,i\neq i_*$ in \eqref{cond_Eind3} and since $i_*\in\busy_k$ on $A_{i_*,2}$ and  $\age^{i_*}(\tau_k)$, the age  of the job $J^{i_*}(\eventtime_k)$ that began receiving service in queue $i_*$ at time $\tau_k$, is zero, we have
\begin{align}\label{Cond_VE2}
  \indic{A_{i_*,2}}\indic{i_*\in\busy_{k}}\Prob{\sigma_{k+1}^{i*}>\eventtime_k+ b|\filt_{\eventtime_k}}& =\indic{A_{i_*,2}}\indic{i_*\in\busy_{k}}\overline G(b) \notag \\
  & =  \indic{A_{i_*,2}}\indic{i_*\in\busy_{k}}\frac{\overline G(\age^{i_*}(\eventtime_{k})+b )}{\overline G(\age^{i_*}(\eventtime_{k}))}.
\end{align}
Finally, on $A_{i_*,2}$, for every $i\neq i_*$, $\indicil{i\in\busy_{k}}=\indicil{i\in\busy_{k-1}}$ and $\sigma_{k+1}^{i}=\sigma_{k}^{i}$, and so
 \eqref{temp_apx2} implies
\begin{align}\label{cond_VE3}
    \indic{A_{i_*,2}}\indic{i\in\busy_{k}}\Prob{\sigma^i_{k+1}>\eventtime_k+b
    \big|\filt_{\eventtime_{k}}} =\indic{A_{i_*,2}}\indic{i\in\busy_{k}}\frac{\overline G(\age^i(\eventtime_{k})+b )}{\overline G(\age^i(\eventtime_{k}))}.
\end{align}

Now consider the set $B_{i_*}=\{z_k=(\mathfrak{D},i_*)\}$ of realizations on which the $\Xth{k}$ event is a departure from queue $i_*$. Consider $Y=\sigma^{i^*}_k$, $W_{i_*}=\xi_k$ and $W_i=\sigma_k^i$ for $i=1,...,N,i\neq i_*$, $T=\eventtime_k$, $Z=\eventmark_k$ and $z_0=(\mathfrak{D},i_*)$.  By the induction hypothesis, $\xi_k$ and $\sigma_k^i,i=1,...,N,$ are conditionally independent given $\filt_{\eventtime_{k-1}}$, and $Y, W_i, i = 1, \ldots, N, T, Z$ satisfy the setup of Lemma \ref{lem_condauxind}. Therefore, using \eqref{condAuxInd} of Lemma \ref{lem_condauxind} in the second equality below, we have
\begin{align}\label{temp_apx31}
  &\indic{B_{i_*}}\Prob{\xi_{k}\geq \eventtime_k+b_0,\sigma^i_{k}>\eventtime_k+b_i,i=1,...,N,i\neq i_*|\filt_{\eventtime_k}}\notag\\
  &\hspace{1cm}= \indic{B_{i_*}} \indic{\eventtime_k=(\mathfrak{D},i)}
  \Prob{\xi_k\geq \eventtime_k+b_0,\sigma^i_k>\eventtime_k+b_i,i=1,...,N,i\neq i_*|\filt_{\eventtime_{k-1}},\eventtime_k,\eventmark_k}\notag\\
  &\hspace{1cm}= \indic{B_{i_*}}\Prob{\xi_k>\eventtime_k+b_i|\filt_{\eventtime_k}}\prod_{i=1,i\neq i_*}^N \Prob{\sigma^i_{k}>\eventtime_k+b_i|\filt_{\eventtime_{k}}}.
\end{align}
Moreover, given $\filt_{\eventtime_{k-1}}$, the conditional distribution of $\xi_k$  and $\sigma_k^i$ for $i\in\busy_{k-1}\backslash\{i_*\}$ are given by \eqref{cond_CDF_arrival} and \eqref{cond_CDF_departure}, respectively. Therefore, using \eqref{condAuxDist} of Lemma \ref{lem_condauxind}, we have
\begin{align}
    \indic{B_{i_*}}\Prob{\xi_{k}>\eventtime_k+b\big|\filt_{\eventtime_{k}}}
     & =   \indic{B_{i_*}}\Prob{\xi_{k}-\eventtime_{k-1}>\eventtime_k-\eventtime_{k-1}+b\big|\filt_{\eventtime_{k}}}\notag\\
     & =   \indic{B_{i_*}}\frac{\bGe(\re(\eventtime_{k-1})+\eventtime_k-\eventtime_{k-1}+b )}{\bGe (\re(\eventtime_{k-1})+\eventtime_k-\eventtime_{k-1})} \notag  \\
& =  \indic{B_{i_*}}\frac{\bGe(\re(\eventtime_{k})+b )}{\bGe (\re(\eventtime_{k}))},
\label{cond_UD}
\end{align}
where the last equality uses the fact that on $B_{i_*}$, $\re(\cdot)$ grows linearly on  $(\eventtime_{k-1},\eventtime_k]$ (because  $\xi_{k+1}=\xi_k$) and for every $i\neq i_*$,
\begin{align}
    \indic{B_{i_*}}\indic{i\in\busy_{k-1}}\Prob{\sigma^i_{k}>\eventtime_k+b \big|\filt_{\eventtime_{k}}}
        & =\indic{B_{i_*}}\indic{i\in\busy_{k-1}} \frac{\overline G(\age^i(\eventtime_{k-1})+\eventtime_k-\eventtime_{k-1}+b )}{\overline G(\age^i(\eventtime_{k-1})+\eventtime_k-\eventtime_{k-1})} \notag \\
& = \indic{B_{i_*}}\indic{i\in\busy_{k}} \frac{\overline G(\age^i(\eventtime_{k})+b )}{\overline G(\age^i(\eventtime_{k}))},
\label{cond_VD1}
\end{align}
where the last equality uses the fact that  $i\in\busy_{k}$ and  $\sigma_{k+1}^i=\sigma_k^i$ if and only if $i\in\busy_{k-1}$ and $a^i(\cdot)$  grows linearly on $(\eventtime_{k-1},\eventtime_k]$.

The value of $\sigma_{k+1}^{i_*}$ depends on the length of queue ${i_*}$ right before the departure, and hence we further partition $B_{i_*}$ into two parts. On $B_{i_*,1}=\{z_k=(\mathfrak{D},i_*), X^{i_*}(\eventtime_{k-1})=1\}$, queue $i_*$ becomes empty after the departure at $\eventtime_k$, and hence $\sigma_{k+1}^{i_*}=\infty$ by definition. Therefore, using \eqref{temp_apx31}, we have
\begin{align}\label{cond_Dind1}
    &\indic{B_{i_*,1}}\Prob{\xi_{k+1}\geq \eventtime_k+b_0,\sigma^i_{k+1}>\eventtime_k+b_i;i=1,...,N|\filt_{\eventtime_k}}\notag\\
    &\hspace{1cm}=\indic{B_{i_*,1}}\Prob{\xi_{k}>\eventtime_k+ b_0,\sigma^i_{k}>\eventtime_k+b_i;i=1,...,N,i\neq i_*|\filt_{\eventtime_k}}\notag\\
    &\hspace{1cm}= \indic{B_{i_*,1}}\Prob{\xi_k>\eventtime_k+b_0|\filt_{\eventtime_k}}\Prob{\sigma_{k+1}^{i_*}>b_{i_*}}\prod_{i=1,i\neq i_*}^N \Prob{\sigma^i_{k}>\eventtime_k+b_i|\filt_{\eventtime_{k}}},
\end{align}
where we use the  trivial identity $\Probil{\sigma_{k+1}^{i_*}>b_{i_*}}$$=1$ in the last equality.
On the other hand, on $B_{i_*,2}=\{z_k=(\mathfrak{D},i_*), X^{i_*}(\eventtime_{k-1})\geq2\}$, a new job, namely $J^{i_*}(\eventtime_k)$,  enters service $i_*$ right after departure at $\eventtime_k$, and hence $\sigma_{k+1}^i = \sigma_k^i$ for $i \neq i_*$ and $\sigma_{k+1}^{i_*}=\eventtime_k+v_{J^{i_*}(\eventtime_k)}$ and $a^{i_*}(\eventtime_k)=0$. Therefore, using \eqref{indep_VJ} of Lemma \ref{lem_independence} in the second equality and  \eqref{temp_apx31} in the third equality below, we have
\begin{align}\label{cond_Dind2}
&\indic{B_{i_*,2}}\Prob{\xi_{k+1}\geq \eventtime_k+b_0,\sigma^i_{k+1}>\eventtime_k+b_i,i=1,...,N|\filt_{\eventtime_k}}\notag\\
&\hspace{1cm}=\indic{B_{i_*,2}}\Prob{v_{J^{i_*}(\eventtime_k)}>b_{i_*},\xi_{k}\geq \eventtime_k+b_0,\sigma^i_{k}>\eventtime_k+b_i,i=1,...,N,i\neq i_*|\filt_{\eventtime_k}}\notag\\
&\hspace{1cm}=\indic{B_{i_*,2}}\overline G(b_{i_*})\Prob{\xi_{k}\geq \eventtime_k+ b_0,\sigma^i_{k}>\eventtime_k+b_i,i=1,...,N,i\neq i_*|\filt_{\eventtime_k}}\notag\\
&\hspace{1cm}=\indic{B_{i_*,2}}\overline G(b_{i_*})\Prob{\xi_k>\eventtime_k+b_0|\filt_{\eventtime_k}}\prod_{i=1,i\neq i_*}^N \Prob{\sigma^i_{k}>\eventtime_k+b_i|\filt_{\eventtime_{k}}}.
\end{align}
In particular, substituting $b_{i_*}=b$ and $b_i=0$ for $i=0,1,...,N,i\neq i_*$ in \eqref{cond_Dind2} and using the facts that  $i_*\in\busy_k$ and $a^{i_*}(\eventtime_k)=0$ on $B_{i_*,2}$, we have
\begin{align}\label{cond_VD2}
  \indic{B_{i_*,2}}\indic{i_*\in\busy_k}\Prob{\sigma^{i_*}_{k+1}>\eventtime_k+b|\filt_{\eventtime_k}} & =\indic{B_{i_*,2}}\indic{i_*\in\busy_k}\overline G(b) \notag\\
  & = \indic{B_{i_*,2}} \indic{i_*\in\busy_k} \frac{\overline G(a^{i_*}(\eventtime_k)+b)}{\overline G(a^{i_*}(\eventtime_k))}.
\end{align}

To conclude the lemma, observe that
\begin{equation}\label{cond_sum}
  \Omega= \bigcup_{i_*=1}^N\left(A_{i_*}\cup B_{i_*}\right),\quad\quad A_{i_*}=A_{i_*,1}\cup A_{i_*,2}, \quad\quad B_{i_*}=B_{i_*,1}\cup B_{i_*,2},\quad\quad i_*=1,...,N.
\end{equation}
The conditional independence of $\xi_{k+1}$ and $\sigma_{k+1}^i,i=1,...,N$ follows from \eqref{cond_sum} and the conditional independence results on each set of the partition, namely, \eqref{cond_Eind1}, \eqref{cond_Eind2}, \eqref{cond_Eind3}, \eqref{cond_Dind1} and \eqref{cond_Dind2}. Moreover, the form \eqref{cond_density_arrival} for the conditional distribution of $\xi_{k+1}$ given $\filt_{\eventtime_{k}}$ follows from \eqref{cond_sum}, \eqref{cond_UE} and \eqref{cond_UD}. Finally, for every $i\in\busy_{k}$, the form \eqref{cond_density_departure} of the conditional distribution of $\sigma_{k+1}^i$ given $\filt_{\eventtime_{k}}$ follows from \eqref{cond_sum}, \eqref{Cond_VE1}, \eqref{Cond_VE2}, \eqref{cond_VE3}, \eqref{cond_VD1}  and \eqref{cond_VD2}.
\end{proof}

\section{Compensator of the weighted departure process}\label{sec_Qcomp}

In this section, we prove Proposition \ref{prop_prelim_Qcomp}. We first identify the intensity of the process $\dni$, defined in \eqref{def_Di}.

\begin{lemma}\label{prelim_Dcomp_lem}
Suppose Assumptions \ref{asm_E},  \ref{asm_G}.\ref{asm_mean} and \ref{asm_initial}.\ref{asm_initial_ind} hold. Then, for $i=1,...,N$,
an $\{\filtn_t\}$-intensity of $\dni$ is
\begin{equation}\label{def_Dintensity}
  \left\{\indic{\xni(t-)\geq 1}h\left(\ageni(t-)\right);t\geq0\right\}.
\end{equation}
\end{lemma}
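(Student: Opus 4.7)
The plan is to mimic the strategy used for Lemma \ref{prelim_Ecomp_lemma}, exploiting the marked point process representation \eqref{Di_mpp_rep} of $\dni$ together with the conditional distribution result of Lemma \ref{lem_condInd}. Since the sequence of event times $\{\eventtimen_k\}$ is non-explosive almost surely by Corollary \ref{cor_distinctness}, \cite[Theorem III.T7]{BremaudBook} applies to identify the $\{\filtn_t\}$-intensity of $\dni$ as
\[
\sum_{k=0}^\infty \frac{f^{\mathfrak{D},i}_{k+1}(\omega, t-\eventtimen_k)}{\Prob{\eventtimen_{k+1}>t|\filtn_{\eventtimen_k}}}\indic{\eventtimen_k<t\leq\eventtimen_{k+1}},
\]
where $f^{\mathfrak{D},i}_{k+1}$ is the conditional density (under $\mathbb{P}(\cdot|\filtn_{\eventtimen_k})$) of $\eventtimen_{k+1}-\eventtimen_k$ on the set $\{\eventmarkn_{k+1}=(\mathfrak{D},i)\}$.

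The main computation is to show that $f^{\mathfrak{D},i}_{k+1}$ exists and to exhibit it in closed form. By \eqref{next_event_new}, the next event after $\eventtimen_k$ is a departure from queue $i$ precisely when $i\in\busyn_k$ and $\sigma^{(N),i}_{k+1}$ realizes the minimum of $\xi^{(N)}_{k+1}$ and $\{\sigma^{(N),j}_{k+1}:j=1,\dots,N\}$. Using the conditional independence of these random variables and the explicit survival functions from \eqref{cond_density_arrival} and \eqref{cond_density_departure} in Lemma \ref{lem_condInd}, I would write for $t \geq \eventtimen_k$,
\[
\Prob{\eventtimen_{k+1}-\eventtimen_k>t-\eventtimen_k,\;\eventmarkn_{k+1}=(\mathfrak{D},i)\big|\filtn_{\eventtimen_k}} = \indic{i\in\busyn_k}\int_t^\infty \Phi_{k+1}(s)\, \frac{g(\ageni(\eventtimen_k)+s-\eventtimen_k)}{\overline{G}(\ageni(\eventtimen_k))}\,ds,
\]
where
\[
\Phi_{k+1}(s) \doteq \frac{\bGen(\ren(\eventtimen_k)+s-\eventtimen_k)}{\bGen(\ren(\eventtimen_k))}\prod_{j\neq i,\, j\in\busyn_k}\frac{\overline{G}(\agenX{j}(\eventtimen_k)+s-\eventtimen_k)}{\overline{G}(\agenX{j}(\eventtimen_k))}.
\]
Differentiating in $t$, we obtain
\[
f^{\mathfrak{D},i}_{k+1}(t-\eventtimen_k) = \indic{i\in\busyn_k}\,\Phi_{k+1}(t)\,\frac{g(\ageni(\eventtimen_k)+t-\eventtimen_k)}{\overline{G}(\ageni(\eventtimen_k))}.
\]
By the same conditional independence applied to $\Prob{\eventtimen_{k+1}>t|\filtn_{\eventtimen_k}}$, which equals $\Phi_{k+1}(t)\cdot\overline{G}(\ageni(\eventtimen_k)+t-\eventtimen_k)/\overline{G}(\ageni(\eventtimen_k))$ when $i\in\busyn_k$ (and reduces to $\Phi_{k+1}(t)$ otherwise), the quotient telescopes to
\[
\frac{f^{\mathfrak{D},i}_{k+1}(t-\eventtimen_k)}{\Prob{\eventtimen_{k+1}>t|\filtn_{\eventtimen_k}}} = \indic{i\in\busyn_k}\, h(\ageni(\eventtimen_k)+t-\eventtimen_k).
\]

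Finally, on each interval $(\eventtimen_k,\eventtimen_{k+1}]$, the queue length $\xni$ and the identity of the job in service are constant, so $\indic{i\in\busyn_k}=\indic{\xni(t-)\ge 1}$ and $\ageni(t-)=\ageni(\eventtimen_k)+t-\eventtimen_k$ whenever $i\in\busyn_k$. Summing over $k$ collapses the expression to $\indic{\xni(t-)\geq 1}h(\ageni(t-))$, as desired. The main obstacle in practice is the careful bookkeeping of conditional survival factors; once Lemma \ref{lem_condInd} is in hand, each of these steps is a routine calculation along the lines of the proof of Lemma \ref{prelim_Ecomp_lemma}.
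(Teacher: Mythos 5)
Your proposal is correct and follows essentially the same route as the paper's proof: the marked point process representation \eqref{Di_mpp_rep} plus non-explosiveness, Br\'emaud's Theorem III.T7 to express the intensity via the conditional densities $f^{\mathfrak{D},i}_{k+1}$, the conditional independence and explicit survival laws of Lemma \ref{lem_condInd} to compute $f^{\mathfrak{D},i}_{k+1}$ and $\Prob{\eventtimen_{k+1}>t\mid\filtn_{\eventtimen_k}}$, and cancellation of the common factor to leave the hazard rate $h(\ageni(\eventtimen_k)+t-\eventtimen_k)$. The only (cosmetic) difference is that you expand the common factor $\Phi_{k+1}$ as an explicit product of arrival and departure survival functions, whereas the paper keeps it as the joint conditional probability $\Prob{\xi^{(N)}_{k+1}\wedge\min_{i'\neq i}\sigma^{(N),i'}_{k+1}>t\mid\filtn_{\eventtimen_k}}$, which cancels without ever being expanded.
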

\begin{remark}
Recall that although $\agei(t)$ is not defined if server $i$ is idle at time $t$, the quantity $\indicil{\xii(t)\geq 1}h(\agei(t))$ is always well defined.
\end{remark}
\begin{proof}[Proof of Lemma \ref{prelim_Dcomp_lem}.]
Fix $N$  and suppress the superscript $(N)$ for ease of notation.
Suppose that for every $k\geq0$, the following conditional density$f^{\mathfrak{D},i}_{k+1}$ exists:
\begin{equation}\label{prelim_gdk_eq}
\Prob{\eventtime_{k+1}-\eventtime_{k}\in A,\eventmark_{k+1}=(\mathfrak{D},i)\big|\filt_{\eventtime_k}}= \int_A f^{\mathfrak{D},i}_{k+1}(\omega,r) dr,\quad\quad \omega\in\Omega,\; A\in\mathcal{B}\hc.
\end{equation}
Then, by the representation \eqref{Di_mpp_rep} of $\di$ in terms of the marked point process $\MPn$ and  the non-explosiveness of the sequence $\{\eventtime_k\}$  proved in Corollary \ref{cor_distinctness}, it follows from \cite[Theorem III.T7, comment ($\beta$) below the theorem, and Eqn. (2.10) of the next remark]{BremaudBook} that the process
\begin{equation}\label{Di_int}
  \sum_{k=0}^\infty \frac{f^{\mathfrak{D},i}_{k+1}(t-\eventtime_k)}{\Prob{\eventtime_{k+1}>t|\filt_{\eventtime_k}}}\indic{\eventtime_k<t\leq \eventtime_{k+1}}.
\end{equation}
is an $\{\filt_t\}$-intensity of $\di$.

We now identify the process $f^{\mathfrak{D},i}_{k+1}$ that satisfies \eqref{prelim_gdk_eq}. Recall that $\xi_{k+1}$ and $\sigma_{k+1}^i$ are, respectively, the first arrival time and the first departure time from queue $i$ after the event time $\eventtime_k.$ Using \eqref{next_event_new}, the next event after $\eventtime_k$ is a departure from queue $i$, that is,  $\eventmark_{k+1}=(\mathfrak{D},i)$,  if the next departure from queue $i$ occurs before the next arrival and the next  departure  from every other queue, that is, $\sigma^i_{k+1}\leq \min( \xi_{k+1}, \sigma_{k+1}^{i'},i'=1,...,N,i'\neq i)$. If  server $i$ is idle at time $\eventtime_k$, that is,  $i\not\in\busy(\eventtime_k)$, $\sigma^i_{k+1}=\infty$ by definition, and the probability that the next event is a departure from queue $i$ is zero, and therefore,
\begin{equation}\label{gD_idle}
  f^{\mathfrak{D},i}_{k+1}\equiv0,\quad\quad \forall i\not\in \busy(\eventtime_k).
\end{equation}
On the other hand, for $i\in\busy(\eventtime_k)$, using the conditional independence, given $\filt_{\eventtime_k}$, of $\xi_{k+1}$ and $\sigma_{k+1}^i,i=1,...,N$,  established  in Lemma \ref{lem_condInd} and \eqref{cond_density_departure}, we have
  \begin{align*}
    &\indic{i\in\busy(\eventtime_k)}\Prob{\eventtime_{k+1}-\eventtime_{k}>t, \eventmark_{k+1}=(\mathfrak{D},i)|\filt_{\eventtime_k}}\notag\\
    &\hspace{2cm}=\indic{i\in\busy(\eventtime_k)}\Prob{\sigma^i_{k+1} -\eventtime_{k}>t, \xi_{k+1}\wedge  \min_{i'\neq i} \sigma^{i^\prime}_{k+1}>\sigma^i_{k+1} |\filt_{\eventtime_k}}\notag\\
  &\hspace{2cm}=\indic{i\in\busy(\eventtime_k)}\frac{1}{\overline G(\agei(\eventtime_k))}\int_t^\infty\Prob{\xi_{k+1}\wedge  \min_{i'\neq i} \sigma^{i^\prime}_{k+1}>\eventtime_k+s|\filt_{\eventtime_k}}g(s+ \agei(\eventtime_k))ds.
\end{align*}
Thus, the conditional density $f^{\mathfrak{D},i}_{k+1}$ exists and for every $t\geq \eventtime_k$,
\begin{equation}\label{gD_busy}
f^{\mathfrak{D},i}_{k+1}(t-\eventtime_k)=\indic{i\in\busy(\eventtime_k)}\frac{1}{\overline G(\agei(\eventtime_k))}\Prob{\xi_{k+1}\wedge  \min_{i'\neq i} \sigma^{i^\prime}_{k+1}>t|\filt_{\eventtime_k}}g(t-\eventtime_k+\agei(\eventtime_k)).
\end{equation}

Moreover, by \eqref{next_event_new}, and  Lemma \ref{lem_condInd}, for $i\in\busy_k$ we have
\begin{align}\label{prelim_GbarD}
  \Prob{\eventtime_{k+1}>t|\filt_{\eventtime_k}}& = \Prob{\xi_{k+1}\wedge \sigma^i_{k+1}\wedge \min_{i'\neq i}\sigma^{i'}_{k+1}>t |\filt_{\eventtime_k}} \notag\\
  &=   \Prob{\xi_{k+1}\wedge \min_{i'\neq i}\sigma^{i'}_{k+1}>t|\filt_{\eventtime_k}}\frac{\overline G(t-\eventtime_k+\agei(\eventtime_k)).}{\overline G(\agei(\eventtime_k))}.
\end{align}
Combining \eqref{gD_idle}--\eqref{prelim_GbarD} with  definition \eqref{def_busy}, the  $\{\filt_t\}$-intensity of $\di$ given in \eqref{Di_int} takes the form
\begin{align*}
\sum_{k=0}^\infty \indic{i\in\busy_k}h(t-\eventtime_k+\agei(\eventtime_k))\indic{\eventtime_k<t\leq \eventtime_{k+1}}
&= \sum_{k=0}^\infty \indic{\xii(\eventtime_k)\ge1}h(t-\eventtime_k+\agei(\eventtime_k))\indic{\eventtime_k<t\leq \eventtime_{k+1}}\\
&= \sum_{k=0}^\infty \indic{\xii(t-)\geq1}h\left(\age^i(t-)\right)\indic{\eventtime_k<t\leq \eventtime_{k+1}} \\
  &=  \indic{\xii(t-)\geq1}h\left(\age^i(t-)\right),
\end{align*}
where the second equality uses the fact that  for $t\in(\eventtime_k,\eventtime_{k+1}]$ and $i\in\busy_k$, $\xii(t-)=\xii(\eventtime_k)$ and  $\agei(t-)= t-\eventtime_k +\agei(\eventtime_k)$.  This  completes the proof.
\end{proof}

\begin{proof}[Proof of Proposition \ref{prop_prelim_Qcomp}]
Fix $N\in\N$, $\varphi\in\tightphiset$ and $\ell\geq1$, and recall that $\agen_j(\cdot)$, $v_j$ and  $\deptn_j$ are the age process, service time and  the departure time of  job $j$, and that $\stationn_j$ is the queue to which job $j$ is routed and $\agenX{k}(t)$ is the age of the job in service at queue $k$, if one exists. Thus, we have
\[\agenX{\stationn_j}(\deptn_j-)=\agen_j(\deptn_j)=v_j.\]
Using this and \eqref{def_CustXS}, the process $ \Quephinl(t)$ defined in \eqref{def_Qphi} can be rewritten as \begin{align}\label{Q_decompose}
\Quephinl(t)& =  \sum_{j=\j0}^{\infty}  \varphi\Big(\agenX{\stationn_j}(\deptn_j-),\deptn_{j}\Big) \indic{\deptn_j\leq t}\indic{\xnX{\stationn_j}(\deptn_j-)\geq\ell} \notag\\
& =\sum_{i=1}^N\Quephinli(t),
\end{align}
where
\[
   \Quephinli(t)\doteq \sum_{j=\j0}^{\infty} \varphi\Big(\ageni(\deptn_j-),\deptn_{j}\Big) \indic{\deptn_j\leq t}\indic{\xni(\deptn_j-)\geq\ell}\indic{\stationn_j=i}.
\]
By definition \eqref{def_Di} of $\dni$, the process $\Quephinli$ can be represented as the following integral with respect to the departure process $\dni$:
\begin{equation}\label{prelim_Qi}
\Quephinli(t)= \int_0^t \varphi\left(\ageni(s-),s\right) \indic{\xni(s-)\geq \ell} d\dni(s),\quad\quad t\geq0.
\end{equation}
Consider the setup of Lemma \ref{remark_integral} with $\xi, \{\cal G_t\}, \theta$ and $\zeta$ replaced by $\dni$,  $\{\filtn_t\}$, $\varphi(\ageni(s-),s)\indicil{\xni(s-)}$, and $\Quephinli$, respectively.
Since $\ageni$ and $\xni$ are right-continuous and $\{\filtn_t\}$-adapted (see Proposition \ref{prop_adapted}), $\theta$ is bounded, $\{\filtn_t\}$-adapted, and left-continuous. Hence,  by Lemmas \ref{prelim_Dcomp_lem} and \ref{remark_integral}, the process $\{\Mni_{\varphi,\ell}(t);t\geq0\}$ defined by
\begin{align}\label{temp_Dint2}
  \Mni_{\varphi,\ell}(t)& \doteq\Quephinli(t) - \int_0^t\varphi\left(\ageni(s-),s\right)  h(\ageni(s-))\indic{\xni(s-)\geq \ell}ds,\notag\\
                        & =\Quephinli(t) - \int_0^t\varphi\left(\ageni(s),s\right)  h(\ageni(s))\indic{\xni(s)\geq \ell}ds,
\end{align}
is a local $\{\filtn_t\}$-martingale. Moreover,  by \eqref{nun_nuni_alt} and \eqref{Def_Qcomp} we have
\begin{equation}\label{temp_Dint}
  \sum_{i=1}^N \int_0^t\varphi\left(\ageni(s),s\right)h(\ageni(s)) \indic{\xni(s)\geq \ell} ds  = \int_0^t\langle \varphi(\cdot,s)h(\cdot),\nun_\ell(s)\rangle ds = \An_{\varphi,\ell}(t).
\end{equation}
Summing up both sides of \eqref{temp_Dint2} over $i=1,...,N,$ and using \eqref{Q_decompose} and \eqref{temp_Dint}, we have $\sum_{i=1}^N \Mni_{\varphi,\ell}= \Quen_{\varphi,\ell} -\An_{\varphi,\ell} = \Mn_{\varphi,\ell},$ and hence, $\Mn_{\varphi,\ell}$ is also a local $\{\filtn_t\}$-martingale. Moreover, by \eqref{remark_Quad} of Lemma \ref{remark_integral} with the setup described above, for all $t\geq0,$
\begin{align*}
[\Quephinli](t)= \int_0^t \varphi^2\left(\ageni(u-),u\right) \indic{\xni(u-)\geq \ell} d\dni(u)= \Queni_{\varphi^2,\ell} (t).
 \end{align*}
Furthermore, for every $i=1,...,N$ and  $i'\neq i$, $\QuephinlX{i}$ and $\QuephinlX{i'}$ are pure jump processes with no common jump times almost surely (see Lemma \ref{lem_condInd}),  and hence $[ \QuephinlX{i},\QuephinlX{i'}]\equiv0, $ almost surely. Together with the fact that $\An_{\varphi,\ell}=\Quen_{\varphi,\ell}-\Mn_{\varphi,\ell}$ is a continuous function with finite variation, this implies
\[ [\Mn_{\varphi,\ell}] = [\Quen_{\varphi,\ell}] = [\sum_{i=1}^N\Queni_{\varphi,\ell}] = \sum_{i=1}^N[\Queni_{\varphi,\ell}] =\sum_{i=1}^N\Queni_{\varphi^2,\ell}=\Quen_{\varphi^2,\ell}, \]
which completes the proof.
\end{proof}

\section{A Bound for Renewal Processes}\label{apx_renewal}

Fix $\rstar_0\in\R_+$ and let $\Pstar (t)$ be a delayed renewal process with inter-arrival times $\{\ustar_n;n\geq1\}$ with common distribution $\Gstar$ and delay $\ustar_0$ with distribution $\Gstar_{r_0^*}$:

\begin{equation}\label{rhn_u0}
  \Prob{\ustar_0\leq x}= \Gstar_{\rstar_0}(x)\doteq \frac{\Gstar (x+\rstar_0)- \Gstar (\rstar_0)}{1-\Gstar (\rstar_0)}.
\end{equation}
Assume $\Gstar$ has a density, denote $\overline \Gstar \doteq 1-\Gstar$ and let $\hstar$ be the corresponding rate function.  Also, let $\rstar (t)$ denote the backward recurrence time of the renewal process $\Pstar$. By convention, $\rstar(t)=\rstar_0+t$ for $t<\ustar_0$, and in particular, $\rstar(0)=\rstar_0$.

\begin{lemma}\label{lem_renewal}
Given the quantities described above, for every $t\geq0,$
\begin{equation}\label{renewal_Ept}
  \Ept{\int_0^t \hstar(\rstar (s))ds}<\infty,
\end{equation}
and
\begin{equation}\label{renewal_Bound}
  \Ept{\left(\int_0^t \hstar (\rstar(s))ds - \Pstar (t)\right)^2}\leq 12+ 3 \Ept{\Pstar (t)}.
\end{equation}
\end{lemma}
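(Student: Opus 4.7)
The strategy is to identify $A(t)\doteq\int_0^t h^*(r^*(s))ds$ as the $\{\mathcal{G}_t\}$-compensator of the counting process $P^*$, where $\{\mathcal{G}_t\}$ is the completed natural filtration of $P^*$ augmented by $r_0^*$, and then derive both bounds via optional stopping. This mirrors the martingale decompositions used throughout the paper, in particular Propositions \ref{prop_prelim_Rcomp} and \ref{prop_prelim_Qcomp}.

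From the defining property of the delayed renewal process, $\mathbb{P}\{\text{time to next renewal}>b\mid\mathcal{G}_t\}=\bar G^*(r^*(t)+b)/\bar G^*(r^*(t))$---guaranteed on the initial interval $[0,u_0^*)$ by the choice \eqref{rhn_u0} of delay distribution, and on subsequent intervals by the strong Markov property of the renewal sequence---an argument parallel to the one in the proof of Lemma \ref{prelim_Dcomp_lem} identifies $h^*(r^*(\cdot-))$ as an $\{\mathcal{G}_t\}$-intensity of $P^*$. By Lemma \ref{remark_integral} applied with $\xi=P^*$ and $\theta\equiv 1$, the process $M(t)\doteq P^*(t)-A(t)$ is a local $\{\mathcal{G}_t\}$-martingale with quadratic variation $[M](t)=P^*(t)$.

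Next I introduce the double localization $\tau_{n,m}\doteq T_n\wedge S_m$ with $T_n\doteq\inf\{s:P^*(s)\geq n\}$ and $S_m\doteq\inf\{s:A(s)\geq m\}$. Both sequences tend to infinity almost surely: $T_n\uparrow\infty$ because $P^*$ is non-explosive (the inter-arrival times are a.s.\ strictly positive by the density assumption on $G^*$), and $S_m\uparrow\infty$ because $A$ is a.s.\ finite on bounded intervals (its increments between successive renewals equal $-\log\bar G^*(u_k^*)<\infty$ a.s., and only finitely many renewals occur on $[0,t]$). Since $M^{\tau_{n,m}}$ is bounded by $n+m$, optional stopping at $t$ gives $\mathbb{E}[M(t\wedge\tau_{n,m})]=0$, equivalently $\mathbb{E}[A(t\wedge\tau_{n,m})]=\mathbb{E}[P^*(t\wedge\tau_{n,m})]$. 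Monotone convergence as $n,m\to\infty$, combined with $\mathbb{E}[P^*(t)]<\infty$ from the elementary renewal theorem (\cite[Proposition V.1.4]{Asm03}), yields $\mathbb{E}[A(t)]=\mathbb{E}[P^*(t)]<\infty$, proving \eqref{renewal_Ept}.

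For \eqref{renewal_Bound}, since $M$ is a local martingale with $[M]=P^*$, the process $M^2-[M]$ is also a local martingale; stopped at $\tau_{n,m}$ it becomes a uniformly integrable martingale because $M^{\tau_{n,m}}$ is bounded. Optional stopping at $t$ therefore yields $\mathbb{E}[M(t\wedge\tau_{n,m})^2]=\mathbb{E}[P^*(t\wedge\tau_{n,m})]\leq\mathbb{E}[P^*(t)]$. Since $M(t\wedge\tau_{n,m})\to M(t)$ a.s.\ as $n,m\to\infty$, Fatou's lemma gives $\mathbb{E}[M(t)^2]\leq\mathbb{E}[P^*(t)]$, which is sharper than and immediately implies \eqref{renewal_Bound}. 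The only subtle point is the necessity of the \emph{double} localization: stopping only at $T_n$ does not bound $M^{T_n}$ (because $A$ need not be bounded there), while stopping only at $S_m$ does not control the jumps of $P^*$; combining both produces a bounded stopped martingale to which optional stopping can be rigorously applied, after which the limiting arguments are routine.
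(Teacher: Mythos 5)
Your proof is correct, but it takes a genuinely different route from the paper. The paper argues pathwise and elementarily: it decomposes $\int_0^t \hstar(\rstar(s))ds$ into the contribution of the delay interval, the full renewal cycles, and the incomplete last cycle, observes that each full-cycle increment equals $\int_0^{\ustar_n}\hstar(v)dv=-\log\overline{G}^*(\ustar_n)$ and is therefore i.i.d.\ with mean $1$ and second moment $2$, applies Wald's first and second identities with the integrable stopping time $\Pstar(t)$ (for the discrete filtration generated by the inter-arrival times), bounds the two boundary terms by direct computation, and assembles the estimate via $(a+b+c)^2\leq 3(a^2+b^2+c^2)$ --- which is exactly where the constants $12$ and $3$ come from. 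You instead identify $A(t)=\int_0^t\hstar(\rstar(s))ds$ as the compensator of $\Pstar$, so that $M=\Pstar-A$ is a local martingale with $[M]=\Pstar$, and then obtain $\Eptil{M(t)^2}\leq\Eptil{\Pstar(t)}$ by double localization, optional stopping and Fatou; this is sharper than (and trivially implies) \eqref{renewal_Bound}, and the same localization gives $\Eptil{A(t)}=\Eptil{\Pstar(t)}<\infty$, i.e.\ \eqref{renewal_Ept}. The martingale route buys a cleaner and sharper constant and avoids Wald entirely, at the cost of resting on the identification of $\hstar(\rstar(\cdot-))$ as a $\{\mathcal{G}_t\}$-intensity of $\Pstar$, which you assert ``by an argument parallel to Lemma \ref{prelim_Dcomp_lem}'' rather than prove; for a delayed renewal process with absolutely continuous inter-arrival law and the delay law \eqref{rhn_u0} this is a classical fact (and the Br\'emaud machinery the paper already uses covers it), so I regard this as an acceptable appeal to a standard result rather than a gap, though in a final write-up you should cite it explicitly. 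Your localization details (boundedness of $M^{\tau_{n,m}}$ and of $(M^{\tau_{n,m}})^2-[M]^{\tau_{n,m}}$, monotone convergence for the first moment, Fatou for the second) are handled correctly, and the only cosmetic slip is attributing $\Eptil{\Pstar(t)}<\infty$ to the elementary renewal theorem, whereas it is the basic bound $\Eptil{\Pstar(t)}\leq \Ustar(t)<\infty$ that the paper itself invokes.
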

\begin{proof}
Define the epoch times $\{t_j;j\geq0\}$ as $t_0=\ustar_0$ and $t_{j}=t_{j-1}+\ustar_j$ for $j\geq1$. Then, we have
  \begin{align}\label{hrn_temp0}
    \int_0^t \hstar (\rstar (s))ds =&    \int_0^{t_0}\hstar (\rstar (s))ds  +\sum_{n=1}^{\Pstar (t)} \int_{t_{n-1}}^{t_{n}} \hstar (\rstar (s))ds
    - \int_t^{t_{\Pstar (t)}}\hstar (\rstar (s))ds \notag\\
    =& \int_{\rstar_0}^{\rstar_0+\ustar_0}\hstar (v)dv +\sum_{n=1}^{\Pstar (t)} \int_{0}^{\ustar_n}\hstar (v)dv-\int_{\rstar(t)}^{\ustar_{\Pstar (t)}}\hstar (v)dv.
  \end{align}
Defining the random variables $y_n,n\in\N,s$ as $y_n \doteq \int_{0}^{\ustar_n} \hstar (v)dv,$ the second term on the right-hand side of \eqref{hrn_temp0} can be written as $\sum_{n=1}^{\Pstar (t)} y_n.$ Since the renewal times $\{\ustar_n;n\geq1\}$ are i.i.d.\, the sequence $\{y_n\}_{n\in\N}$ is also i.i.d.\  with
\begin{align*}
 \Ept{y_1}=&\int_0^\infty\left( \int_0^{s} \hstar (v)dv\right)\gstar(s)ds
                 = \int_0^\infty\frac{\gstar (s)}{\overline \Gstar (s)}\left(\int_s^\infty \gstar (v)dv\right)ds=1,
\end{align*}
and $\mathbb{E}[(y_1)^2]$ is equal to
\begin{align*}
\int_0^\infty\left( \int_0^s \hstar (v)dv\right)^2 \gstar (s)ds
                 =\int_0^\infty \left(\log(\overline \Gstar (s))\right)^2\gstar (s)ds
                =\int_0^1\left(\log(s)\right)^2ds
                =2.
\end{align*}
Thus, the mean and variance of $y_n$ are both equal to $1$. Now, define the discrete-time filtration $\{\cal G_n;n\geq 0\}$ by $\cal G_n = \sigma(\ustar_j; j=0,...,n)$. Note that $\ustar_n$, and hence $y_n$, are  $\cal G_n$-measurable. Also, since the inter-arrival times are independent, $y_{n+1},y_{n+2},...$ are independent of $\cal G_n$. Finally, the random variable $\Pstar (t)$ satisfies $\Ept{\Pstar (t)}\leq \Ustar (t)<\infty$.  where $\Ustar$ is the renewal measure corresponding to the distribution $\Gstar$. (The inequality can be replaced by an equality if $\Pstar$ is replaced with a pure renewal process $\tilde P^*$, see \cite[Theorem 2.4.(iii) of Section V]{Asm03}, and $P^*$ and $\tilde P^*$ can be coupled such that almost surely, $\Pstar(t)\leq \tilde P^*(t)$ for all $t\geq0$.) Moreover, $\Pstar (t)$ is an integrable $\{\cal G_n\}$-stopping time because $\{\Pstar (t)=n\}=\{t_{n-1}\leq t <t_n\}\in\cal G_n$
since both $t_{n-1}$ and $t_n$ are $\cal G_n$-measurable.
Hence, by Wald's lemma \cite[Proposition A 10.2]{Asm03},
\begin{equation}\label{hrn_second_E}
\Ept{\sum_{n=1}^{\Pstar (t)} \int_{0}^{\ustar_n}\hstar (v)dv}=\Ept{\sum_{n=1}^{\Pstar (t)}y_n}= \Ept{\Pstar (t)}\Ept{y_1} =\Ept{\Pstar (t)} <\infty,
\end{equation}
and
$\mathbb{E}[(\sum_{n=1}^{\Pstar (t)} \int_{0}^{\ustar_n}\hstar (v)dv- \Pstar (t))^2]$ is equal to
\begin{equation}
\label{hrn_second_var}
\Ept{\left(\sum_{n=1}^{\Pstar (t)}y_n - \Pstar (t)\right)^2} = \Ept{\Pstar (t)}\text{Var}(y_1) =\Ept{\Pstar (t)}.
\end{equation}

Now, for the first term on the right-hand side of \eqref{hrn_temp0}, using \eqref{rhn_u0}, we obtain
\begin{align}\label{hrn_first_E}
  \Ept{\int_{\rstar_0}^{\rstar_0+\ustar_0} \hstar (v)dv} & =\frac{1}{\overline \Gstar (\rstar_0)} \int_0^\infty\left(\int_{\rstar_0}^{\rstar_0+u} \hstar (v)dv\right)\gstar (\rstar_0+u)du\notag\\
            & =\frac{1}{\overline \Gstar (\rstar_0)}\int_0^\infty\left(\log(\overline \Gstar (\rstar_0))-\log(\overline \Gstar (\rstar_0+u))\right) \gstar(\rstar_0+u)du\notag\\
            & =\frac{\log(\overline \Gstar (\rstar_0))}{\overline \Gstar (\rstar_0)}\int_{\rstar_0}^\infty \gstar (u)du -\frac{1}{\overline \Gstar (\rstar_0)}\int_0^{\overline \Gstar (\rstar_0)}log(v)dv \notag\\
            &=1,
\end{align}
and $\mathbb{E}[(\int_{\rstar_0}^{\rstar_0+\ustar_0} \hstar (v)dv)^2]$ is equal to
\begin{align}\label{hrn_first_var}
& \frac{1}{\overline \Gstar (\rstar_0)} \int_0^\infty\left(\int_{\rstar_0}^{\rstar_0+u} \hstar (v)dv\right)^2\gstar(\rstar_0+u)du            \notag \\
& \qquad =\frac{1}{\overline \Gstar (\rstar_0)}\int_0^\infty\left(\log(\overline \Gstar (\rstar_0))-\log(\overline \Gstar (\rstar_0+u))\right)^2 \gstar (\rstar_0+u)du\notag\\
            & \qquad =\log(\overline \Gstar (\rstar_0))^2 -2\frac{\log(\overline \Gstar (\rstar_0))}{\overline \Gstar (\rstar_0)}\int_0^{\overline \Gstar (\rstar_0)}log(v)dv + \frac{1}{\overline \Gstar (\rstar_0)}\int_0^{\overline \Gstar (\rstar_0)}(\log(v))^2 dv \notag\\
            &\qquad =\log(\overline \Gstar (\rstar_0))^2 -2 \log(\overline \Gstar(\rstar_0))(\log(\overline \Gstar(\rstar_0))-1)+(\log(\overline \Gstar (\rstar_0))^2-2\log(\overline \Gstar (\rstar_0))+2)\notag\\
            &\qquad =2.
\end{align}
For the last term  on the right-hand side of \eqref{hrn_temp0}, since $\rstar(t)\geq0$,
\begin{equation}\label{hrn_third_E}
\Ept{\int_{\rstar (t)}^{\ustar_{\Pstar(t)}}\hstar (v)dv}\leq \Ept{\int_{0}^{\ustar_{\Pstar(t)}}\hstar (v)dv} = \Ept{y_{\Pstar (t)}}=1
\end{equation}
and
\begin{equation}\label{hrn_third_var}
\Ept{\left(\int_{\rstar (t)}^{\ustar_{\Pstar (t)}}\hstar (v)dv\right)^2}\leq \Ept{\left(\int_0^{\ustar_{\Pstar (t)}}\hstar (v)dv\right)^2} = \Ept{(y_{\Pstar(t)})^2}=2.
\end{equation}
Then \eqref{hrn_second_E} follows on
taking  expectations of both sides of \eqref{hrn_temp0} and using \eqref{hrn_second_E}, \eqref{hrn_first_E}, and \eqref{hrn_third_E}, while  \eqref{renewal_Bound} follows on again applying \eqref{hrn_temp0},  the elementary bound $(a+b+c)^2 \leq 3(a^2 + b^2 + c^2)$, and  invoking \eqref{hrn_second_var}, \eqref{hrn_first_var} and \eqref{hrn_third_var}.
\end{proof}

\section{Proofs of Lemmas in Section 6.1.2} \label{apx_ggn}

In this section, we explain how the results on $\overline{\nu}_1^N$ and associated quantities stated in Section \ref{sec_tightDeparture} can be deduced  from the results on  the GI/GI/N queue in \cite{KasRam11}. Consider a GI/GI/N model that has the same  cumulative arrival process $E^{(N)}$,  iid service times  distributed according to $G$, and initial number of jobs $\xn(0)$ and age measure $\nu_1^{(N)} (0)$ as in our model. We recall that  in a GI/GI/N queue, arriving jobs choose an idle server at random if one is busy, or if all servers are busy,  join a common queue and enter service in a FCFS manner when servers become free (see \cite{KasRam11} for a more detailed description). We will use the tilde notation to denote  quantities associated with the GI/GI/N model, to distinguish them from the SQ($d$) model. Specifically, for $t \geq 0$, let  $\tdn(t)$, $\tkn(t)$ and $\txn(t)$  denote the cumulative number of departures from the system by time $t$,    cumulative number of entries into service by time $t$,  and the number of jobs in the system at time $t$, respectively.  Also, analogous to the definition of $\nun_1(t)$  in \eqref{def_nuln}, let $\tnun (t)$ be a finite measure that has a Dirac delta mass at the ages of each job in service at time $t$,  that is,
\[  \tnun (t)  \doteq \sum_{j \in \tV^{(N)}(t)}  \delta_{\ta_j^{(N)} (t)}, \]
where $\tvn (t)$ is the set of indices of jobs in service at time $t$ and $\ta_j$ is defined exactly as in \eqref{def_agen}, but with $\alpha_j^{(N)}$ replaced by $\tilde{\alpha}_j^{(N)}$, the time of service entry of job j in the GI/GI/N model and $\beta_j^{(N)}$ replaced by $\tilde{\beta}_j^{(N)} \doteq \tilde{\alpha}_j^{(N)} + v_j$. (Note that this coincides with the definition in  \cite[Eqns. (2.7)-(2.8)]{KasRam11} once one notes that the FCFS nature of GI/GI/N ensures that $\tvn (t)\subset \{-\langle \f1, \nu_1(0)\rangle, \ldots, \tkn(t)\}$.)

Simple mass balance relations \cite[Eqns. (2.5)-(2.6)]{KasRam11} then show that
\begin{equation}\label{tkn-mass}
    \tkn (t)  + \langle \f1, \tnun (0) \rangle \leq \txn (0) + E^{(N)} (t),
\end{equation}
and it follows from \cite[Theorem 5.1 and Eqn. (5.1)]{KasRam11}  that for any $\varphi \in \mathbb{C}_c^{1,1}([0,\endsup) \times \R_+)$ and $t \in [0,\infty)$,
\begin{align}\label{apx_dymamics_eq}
  \langle\varphi(\cdot,t),\tnun (t)\rangle = &\langle\varphi(\cdot,0),\tnun(0)\rangle + \int_0^t \langle \varphi_s(\cdot,s)+\varphi_x(\cdot,s),\tnun(s)\rangle ds \notag\\
  &- \tQuen_{\varphi}(t)    +\int_{[0,t]}\varphi(0,s)d\tkn(s),
\end{align}
where $\tQuen$, which  is  analogous to the quantity  ${\mathcal Q}^{(N)}$ in  \cite[Eqn. (5.1)]{KasRam11}, satisfies
\[
\tQuen_{\varphi} (t) \doteq \sum_{j \in \Z: \tilde\beta_j^{(N)} \leq t}  \varphi (v_j, \tilde\beta_j^{(N)}).
\]
Also, for $\varphi \in  \tightphiset$, defining
\[
    \tilAn_{\varphi}  (t) \doteq  \int_0^t \langle \varphi (\cdot, s) h(\cdot), \tnun_s  \rangle  ds, \quad t \geq 0,
\]
it follows from  \cite[Lemma 5.4 and Corollary 5.5]{KasRam11} that $\tmn_{\varphi} \doteq \tQuen_{\varphi} - \tilAn_{\varphi}$ is a martingale with respect to a certain filtration $\{\tilde {\mathcal{F}}_t\}$ (defined in \cite[page 40]{KasRam11} and denoted by $\{\mathcal{F}_t\}$ there).

\begin{remark}\label{rem-ggnnew}
Assumptions 1 and 2 in \cite{KasRam11} follow from Assumptions \ref{asm_E}-\ref{asm_initial} of this paper. Specifically, Lemma \ref{asm_initial_remark} shows that  Assumption 1(1) in \cite{KasRam11} follows from Assumption \ref{asm_E}. Also,  given that we set $\txn(0)=\xn(0)$ and $\tnun(0)=\nun_1(0)$,  Assumptions 1(2) and 1(3) in \cite{KasRam11} follow from Assumptions \ref{asm_initial}.\ref{asm_initial_nu} and \ref{asm_initial}.\ref{asm_initial_X}, respectively. Finally, Assumption 2 in \cite{KasRam11} follows from  Assumption \ref{asm_G} of this paper.
\end{remark}

We start by providing the proof of Lemma \ref{lem_nuContainment}.

\begin{proof}[Proof of Lemma \ref{lem_nuContainment}]
Due to the coupling of the  initial conditions, $\tnun(0)=\nun_1(0)$, it follows from Remark \ref{rem-ggnnew} that if $\{\nun(0)\}$ satisfies Assumption \ref{asm_initial}, then  $\{\tnun(0)\}$ satisfies  Assumption 1(3) of \cite{KasRam11}.   Thus, it follows from \cite[Lemma 5.12]{KasRam11}  that (5.33) and (5.34) of \cite{KasRam11} hold, which correspond exactly to \eqref{533} and \eqref{534} herein due to the fact that  $\tnun(0)=\nun_1(0)$.
\end{proof}

To prove Lemma \ref{lem_tightDepart}, we need the following result.

\begin{lemma}\label{lem_DAbounds}
Suppose Assumptions \ref{asm_E}-\ref{asm_initial} hold, and fix $T>0$.
\renewcommand{\theenumi}{\alph{enumi}}
\begin{enumerate}
\item     For every non-negative function $f\in\mathbb{L}^1\supint$, there exists a constant $C<\infty$ such that
        \begin{equation}\label{apx57}
        \sup_N\Ept{\int_0^T\langle f,\nunbar_1(s)\rangle ds} \leq C \sup_{u\in\supint}\int_u^{(u+T)\wedge \endsup}f(x)dx.
        \end{equation}
\item For every $N\in\N$,
        \begin{equation}\label{apx581}
            \lim_{m\uparrow\endsup}\sup_N\Ept{\int_0^T\left(\int_{[m,\endsup)}h(x)\nunbar_1(s,dx)\right)ds}=0.
        \end{equation}
\item For every $N\in\N$ and  $\varphi\in\tightphiset$,
\begin{equation}\label{apx_Atight_K2}
    \lim_{\delta\to0}\sup_N\Ept{\sup_{t\in[0,T]} \left|\Anbar_{\varphi,1}(t+\delta)-\Anbar_{\varphi,1}(t)\right|}=0.
\end{equation}
\end{enumerate}
\renewcommand{\theenumi}{\arabic{enumi}}
\end{lemma}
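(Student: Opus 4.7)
My plan is to prove (a) by a job-wise accounting of the mass of $\nun_1$, then deduce (b) via an exact integration-by-parts computation of the conditional tail hazard of the service distribution, and finally derive (c) by combining the previous parts with the local boundedness of $h$. For (a), I would write
\[
\int_0^T \langle f, \nun_1(s)\rangle ds = \sum_{j} \int_{(\kninv_j\vee 0,\, \deptn_j \wedge T)} f(\agen_j(s))\, ds,
\]
where the sum is over all $j \geq \j0$ that are in service at some point in $[0, T]$. For a job initially in service, $\agen_j(s) = \agen_j(0) + s$, so the contribution is $\int_{\agen_j(0)}^{(\agen_j(0)+T)\wedge L} f$; for a job entering service after time $0$, $\agen_j$ starts at $0$ and the contribution is $\int_0^{T\wedge L} f$. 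Both are bounded by $\sup_u \int_u^{(u+T)\wedge L} f(x)\,dx$. The number of initial in-service jobs is $\langle \f1, \nun_1(0)\rangle \leq \xn(0)$, and $\kn(T)\leq \xn(0)+\arrivaln(T)$ by \eqref{apx_massTotal}, so \eqref{apx57} follows after dividing by $N$, taking expectations, and invoking Assumption \ref{asm_initial}.\ref{asm_initial_X} and Lemma \ref{asm_initial_remark}.

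For (b), applying (a) naively with $f = h\mathbb{1}_{[m,L)}$ fails because the sup there does not shrink as $m\uparrow L$. Instead I would return to the job-wise bound and use conditional independence of service times (Assumption \ref{asm_initial}.\ref{asm_initial_ind}) to compute the contribution per job exactly. A job $j$ entering service after time $0$ contributes at most $\mathbb{1}_{v_j>m}\int_m^{v_j} h(x)dx$ to $\int_0^T \int_{[m,L)} h(x)\nun_1(s,dx)ds$, and by integration by parts,
\[
\Ept{\mathbb{1}_{v_j>m} \int_m^{v_j} h(x)\,dx} = \int_m^L \log\!\big(\overline G(m)/\overline G(y)\big)\, g(y)\,dy = \overline G(m),
\]
the boundary term vanishing because $\overline G(y)\log(1/\overline G(y))\to 0$ as $y\to L$. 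For an initial job with $\agen_j(0)=a$, a similar computation gives conditional expectation $\overline G(m)/\overline G(a)$ when $a<m$ and at most $1$ when $a\geq m$. Summing and dividing by $N$, the desired expectation is bounded by
\[
\overline G(m)\,\Ept{\arrivalnbar(T)} + \Ept{\int_{[0,m)} \frac{\overline G(m)}{\overline G(x)}\, \nunbar_1(0,dx)} + \Ept{\nunbar_1(0,[m,L))},
\]
each term of which vanishes uniformly in $N$ as $m\uparrow L$: directly by Lemma \ref{lem_nuContainment} when $L<\infty$, and when $L=\infty$ by splitting $[0,m)=[0,m_1)\cup[m_1,m)$, bounding the first part by $\overline G(m)/\overline G(m_1)$ and the second by $\nunbar_1(0,[m_1,\infty))$, and letting $m\to\infty$ then $m_1\to\infty$.

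Finally, for (c) I would decompose $h = h\mathbb{1}_{[0,m)} + h\mathbb{1}_{[m,L)}$ for some $m<L$. Assumption \ref{asm_G}.\ref{loc_b} bounds $g$ on $[0,m]$, and since $\overline G(m)>0$, the hazard $h$ is bounded on $[0,m)$ by some $C_m<\infty$. Using that $\nunbar_1(s)$ is a sub-probability measure, the near-origin contribution to $|\Anbar_{\varphi,1}(t+\delta)-\Anbar_{\varphi,1}(t)|$ is at most $C_m\|\varphi\|_\infty\delta$, while the tail contribution is majorized uniformly in $t\in[0,T]$ by $\|\varphi\|_\infty \int_0^{T+\delta}\langle h\mathbb{1}_{[m,L)},\nunbar_1(s)\rangle ds$, whose expectation is uniformly small in $N$ (for $\delta\leq 1$) by part (b) when $m$ is near $L$. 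Sending $\delta\to 0$ first and then $m\uparrow L$ yields \eqref{apx_Atight_K2}. The main obstacle is the integration-by-parts identity in (b): it is what converts the uncontrolled logarithmic growth that would arise from a direct application of (a) into a sharp bound whose vanishing is supplied by Lemma \ref{lem_nuContainment}, and its validity depends essentially on the conditional independence of service times prescribed in \eqref{vInitial}.
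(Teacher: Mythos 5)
Your argument is correct in substance but follows a genuinely different route from the paper's. The paper obtains all three statements by transfer from the GI/GI/N analysis of \cite{KasRam11}: part (a) from the bound (5.31) there, via the mass balance $\kn(t)+\langle\f1,\nun_1(0)\rangle\le\xn(0)+\arrivaln(t)$; part (b) from Lemma 5.8(1) of \cite{KasRam11}, whose engine is the compensator identity $\Eptil{\An_{\varphi,1}(T)}=\Eptil{\Quen_{\varphi,1}(T)}$ (Proposition \ref{prop_prelim_Qcomp} here) together with an estimate on the weighted departure process and Lemma \ref{lem_nuContainment}; and part (c) from Lemma 5.8(2) there. You instead argue directly: (a) by job-wise accounting of the age trajectories (in effect a reproof of (5.31)); (b) by the exact per-job computation $\Eptil{\indic{v_j>m}\int_m^{v_j}h(x)\,dx}=\overline G(m)$, with its conditional analogue for jobs initially in service, which produces the same $\overline G(m)$ factor that the paper gets by equating $\Eptil{\An_{\varphi,1}}$ with the expected weighted number of departures of jobs with service time exceeding $m$; and (c) by the same splitting of $h$ at a level $m<\endsup$, using Assumption \ref{asm_G}.\ref{loc_b} for the bounded part and (b) for the tail. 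What your route buys is self-containedness---no coupling with a GI/GI/N system and no use of the martingale decomposition---and your two-parameter limit ($m\to\infty$ first, then $m_1\to\infty$) also covers the case $\endsup=\infty$, where \eqref{534} is not available; what it costs is redoing by hand the renewal computation that the compensator identity yields for free.

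One bookkeeping item in (b) needs repair: your three displayed terms account for jobs arriving after time $0$ (through $\overline G(m)\,\Eptil{\arrivalnbar(T)}$) and for jobs in service at time $0$ (through the two terms involving $\nunbar_1(0)$), but not for jobs waiting in queue at time $0$ that enter service during $[0,T]$. By \eqref{vInitial} their service times are independent of $I_0$, so they contribute at most an additional $\overline G(m)\,\Eptil{\xnbar(0)}$, which also vanishes uniformly in $N$ as $m\uparrow\endsup$ by Assumption \ref{asm_initial}.\ref{asm_initial_X}. Relatedly, to replace the random number of post-zero service entrants by expectations, sum over the dominating index set $\{-\xn(0)+1,\ldots,0\}\cup\{1,\ldots,\arrivaln(T)\}$, which is measurable with respect to $(I_0,\arrivaln)$; then the Wald-type step uses only the assumed mutual independence of the arrival process, the initial data and the service times. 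With these small amendments (and noting that in (c) you invoke (b) with horizon $T+\delta\le T+1$), the proof is complete.
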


\begin{proof}
For part a, the analogous result for the GI/GI/N queue follows from \cite[Proposition 5.7]{KasRam11}. Using \cite[Assumption 1]{KasRam11}  and the bound \eqref{tkn-mass} above, it  was shown in \cite[Equation (5.31)]{KasRam11} that  for every $f\in\mathbb{L}^1\supint$, $\varphi\in\tightphiset$, and $0 \leq r\leq t$,
\begin{equation}\label{531}
  \frac{1}{N}\left| \int_r^t \langle \varphi(\cdot,s)f(\cdot),\tnun(s)\rangle  \right| \leq \|\varphi\|_\infty \frac{1}{N}\left(\txn(0)+E^{(N)}(t)\right)\sup_{u\in\supint}\int_u^{u+t-r\wedge \endsup} |f(x)|dx.
\end{equation}
Setting $\varphi=\f1$, $r=0$, and $t=T$ in the bound above, and taking first expectations and then the limit superior of both sides of \eqref{531}, \eqref{apx57} holds with $\nunbar_1$ replace by $\frac{1}{N}\tnun$ and $C$ replaced by $\tilde C\doteq\limsup_{N}\frac{1}{N}\Ept{\txn(0)+E^{(N)}(t)},$ which is finite by \cite[Assumption 1]{KasRam11} and Remark \ref{rem-ggnnew}.  To prove \eqref{apx57} for the SQ($d$) model,  note that by  \eqref{KDERelation}, \eqref{prelim_dymamics_balance}  with $\ell  = 1$, \eqref{def_Rphi1},  and \eqref{apx_massTotal}, the service entry process $\kn$ satisfies the bound
\begin{equation}\label{kbound}
    \kn (t) +  \langle \f1, \nun_1 (0) \rangle \leq \xn (0) + E^{(N)} (t)
\end{equation}
which is analogous to \eqref{tkn-mass}. Therefore, by Assumption \ref{asm_initial} and Remark \ref{rem-ggnnew}, \eqref{531} (with $\tnun$ replaced by $\nun_1$) can be established for the SQ($d$) model, exactly as in the proof of \cite[Proposition 5.7]{KasRam11}. The bound \eqref{apx57} then follows, as above, with $C\doteq\limsup_N\Eptil{\xnbar(0)+\arrivalnbar(t)}$, which is finite by Assumption \ref{asm_initial}.

For part b, the analogous result for the GI/GI/N queue is proved in \cite[Lemma 5.8(1)]{KasRam11}. Using equations \eqref{533} and \eqref{534} above, with $\nunbar_1(0)$ replaced by $\frac{1}{N}\tnun(0)$, the results \cite[Corollary 5.5 and equation (5.29) of Lemma 5.6]{KasRam11}, and the bound \eqref{531} above, it has been shown that \eqref{apx581} holds with $\nunbar_1$ replaced by $\frac{1}{N}\tnun$; see \cite[proof of Lemma 5.8(1)]{KasRam11}. To prove the result for the SQ($d$) model, note that the analogue of \cite[Corollary 5.5]{KasRam11} is Proposition \ref{prop_prelim_Qcomp} and an analogue of \cite[Equation (5.29)]{KasRam11} can be proved with $\tilQuen_\varphi$ replaced by $\Quen_{\varphi,1}$, using the same argument as in \cite[Lemma 5.6]{KasRam11}.   Combining this with  equations \eqref{533} and \eqref{534} and part a. of this Lemma, \eqref{apx581} follows from the same argument as used in \cite[Equation (5.29)]{KasRam11}.

For part c, the analogous result for the GI/GI/N queue is proved in \cite[Lemma 5.8(2)]{KasRam11}. Using \cite[Equation (5.31)]{KasRam11}, \cite[Lemma 5.8(1)]{KasRam11},  \cite[Remark 3.1]{KasRam11} (which follows from \cite[Assumption 1]{KasRam11}), \eqref{apx_Atight_K2} is proved with $A^{(N)}_{ \varphi ,1}$ replaced by $ \frac{1}{N}\tilde A^{(N)}_{\varphi}$; see \cite[Lemma 5.8(2)]{KasRam11}. To prove a version of this result for the $SQ($d$)$ model, note that equivalences of \cite[Equation (5.31)]{KasRam11} and \cite[Lemma 5.8(1)]{KasRam11}  for the $SQ($d$)$  model are proved in parts a and b of this lemma, respectively, while the equivalence of \cite[Remark 3.1]{KasRam11} is proved in \eqref{asm_initial_Xarrivalmoment} of Lemma \ref{asm_initial_remark}. Equation \eqref{apx_Atight_K2} can then be proved using the exact same argument as in the proof of \cite[Lemma 5.8(2)]{KasRam11}.

\end{proof}

\begin{proof}[Proof of Lemma \ref{lem_tightDepart}]
Since $\Mn_{\varphi,\ell}$ is a martingale by Lemma \ref{lem_Mconv}, for  $t\geq0$, $ \Eptil{\Mn_{\varphi,\ell}(t)}=0$, and hence $\Eptil{\An_{\varphi,\ell}(t)}=\Eptil{\Quen_{\varphi,\ell}(t)}$. Therefore, by Fatou's lemma, we have
\begin{equation}\label{apx_Atight_K1}
  \limsup_{N\to\infty}\Ept{\left|\Anbar_{\varphi,\ell}(t)\right|}\leq
\limsup_{N\to\infty}\Ept{\Anbar_{|\varphi|,\ell}(t)} = \Ept{\Quenbar_{|\varphi|,\ell}(t)} <\infty,
\end{equation}
where the finiteness follows from \eqref{Qn_bound}, with $\varphi$ replaced by $\sqrt{|\varphi|}$. Kurtz's K1 and K2b conditions for relative compactness of the sequence  $\{\Anbar_{\varphi,1}\}_{N\in\N}$ follow from \eqref{apx_Atight_K1}, with $\ell=1$, and \eqref{apx_Atight_K2} of Lemma \ref{lem_DAbounds}, respectively.  In turn, this also implies that  Kurtz's conditions are satisfied for $\{\Anbar_{\varphi,\ell}\}_{N\in\N}$ for $\ell \geq 2$ because the fact that $\langle f,\nun_1\rangle\geq \langle f,\nun_\ell\rangle$ for every non-negative function $f$, implies that for $0\leq s\leq t$,  we have $|\An_{\varphi,\ell}(t)|\leq \An_{|\varphi|,1}(t)$, and $\left|\An_{\varphi,\ell}(t)-\An_{\varphi,\ell}(s)\right|\leq \left|\An_{|\varphi|,1}(t)-\An_{|\varphi|,1}(s)\right|.$ This proves the relative compactness of  $\{\Anbar_{\varphi,\ell}\}_{N\in\N}$ for  $\ell \geq 1$.   Now, by Lemma \ref{lem_Mconv},  the sequence $\{\Mnbar_{\varphi,\ell}\}_{N\in\N}$ converges weakly to zero and thus is relatively compact, and hence,  by \eqref{def_Mn}, the sequence $\{\Quenbar_{\varphi,\ell}\}$ is also relatively compact. Setting $\varphi=\f1$, this implies the relative compactness of $\{\dnbar_\ell\}_{N\in\N}$ for $\ell\geq1$.
\end{proof}

Next, to prove Lemma \ref{lem_Qtight}, we need the following result.

\begin{lemma}\label{lem_Dcontainment}
Suppose Assumptions \ref{asm_E}-\ref{asm_initial} hold.  Then, for $\eta>0$ and $T\geq 0$, there exist a constant $B(\eta)<\infty$ and a sequence $\{m(n,\eta)\}$ with $m(n,\eta)\to\endsup$ as $n\to \infty$ such that
\begin{equation}  \label{apx_compactProb}
    \Prob{\Quenbar_{\cdot,1}(t)\not\in\mathcal{K}_\eta \text{ for some }t\in[0,T]  }\leq \eta,
\end{equation}
for the compact subset $\mathcal{K}_\eta \subset \mathbb{M}_F(\supint\times\R_+)$ defined by
\begin{equation}\label{apx_compactSet}
  \mathcal{K}_\eta \doteq \left\{\mu\in\mathbb{M}_F(\supint\times\R_+)\; : \langle\f1,\mu \rangle\leq B(\eta),\;\mu((m(n,\eta),\endsup)\times R_+)\leq \frac{1}{n}\;\forall n\in\N  \right\}.
\end{equation}
\end{lemma}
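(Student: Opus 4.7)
The plan is to combine a uniform total mass bound with a uniform decay-at-$\endsup$ estimate, exploiting the fact that $t \mapsto \Quenbar_{\cdot,1}(t)$ is a monotone (non-decreasing) measure-valued process, so that all suprema over $t \in [0,T]$ reduce to evaluations at $t = T$. Specifically, for any measurable $\varphi \geq 0$ on $\supint \times \R_+$, the definition \eqref{def_Qphi} shows $t \mapsto \Quen_{\varphi,1}(t)$ is non-decreasing, so $\sup_{t \in [0,T]} \Quenbar_{\cdot,1}(t)((m,\endsup) \times \R_+) = \Quenbar_{\f1_{(m,\endsup)},1}(T)$ and similarly $\sup_{t \in [0,T]} \langle \f1, \Quenbar_{\cdot,1}(t) \rangle = \dnbar_1(T)$.

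For the total mass, \eqref{apx_massTotal} gives $\dnbar_1(T) \leq \xnbar(0) + \arrivalnbar(T)$, whose expectation is uniformly bounded in $N$ by Assumption \ref{asm_initial}.\ref{asm_initial_X} and Lemma \ref{asm_initial_remark}. Set $B(\eta) \doteq (2/\eta)\sup_N \Ept{\xnbar(0) + \arrivalnbar(T)}$ so that Markov's inequality yields $\sup_N \Prob{\dnbar_1(T) > B(\eta)} \leq \eta/2$.

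For the tail decay, the key tool is Proposition \ref{prop_prelim_Qcomp} applied together with \eqref{apx581} of Lemma \ref{lem_DAbounds}. Approximating $\f1_{(m,\endsup)}$ from below by an increasing sequence $\{\varphi_k\} \subset \tightphiset$ and invoking the square-integrable martingale property of $\Mn_{\varphi_k,1}$ (Lemma \ref{lem_Mconv}) together with monotone convergence gives
\[
  \Ept{\Quenbar_{\f1_{(m,\endsup)},1}(T)} = \Ept{\Anbar_{\f1_{(m,\endsup)},1}(T)} = \Ept{\int_0^T \int_{(m,\endsup)} h(x)\, \nunbar_1(s, dx)\, ds},
\]
which vanishes uniformly in $N$ as $m \uparrow \endsup$ by \eqref{apx581}. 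Hence we may select a sequence $m(n,\eta) \uparrow \endsup$ with
\[
  \sup_N \Ept{\Quenbar_{\f1_{(m(n,\eta),\endsup)},1}(T)} \leq \frac{\eta}{n\, 2^{n+1}},
\]
so Markov's inequality gives $\sup_N \Prob{\Quenbar_{\f1_{(m(n,\eta),\endsup)},1}(T) > 1/n} \leq \eta/2^{n+1}$. A union bound over $n \in \N$ together with the total-mass estimate then yields \eqref{apx_compactProb}, and compactness of $\mathcal{K}_\eta$ follows from the same argument as in the proof of Lemma \ref{lem_Rtight}.

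The main obstacle I expect is rigorously passing from the martingale identity for $\varphi \in \tightphiset$ to its indicator-function analog for $\f1_{(m,\endsup)}$; this is handled by an approximation argument (monotone convergence on both $\Quen_{\varphi,1}$ and $\An_{\varphi,1}$, justified by the uniform integrability coming from \eqref{Qn_bound} and \eqref{apx57}--\eqref{apx581}), but one must be careful that the limit on the martingale side still produces a finite non-negative random variable whose expectation equals that of the predictable compensator. Once this identification is in place, everything else is a routine Markov/union-bound combination.
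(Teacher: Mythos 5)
Your proposal is correct and is essentially the paper's own argument written out in full: the paper proves this lemma by pointing to the identical argument for the GI/GI/N queue in Kaspi--Ramanan (their Lemma 5.13), which rests on exactly the ingredients you use — the compensator identity of Proposition \ref{prop_prelim_Qcomp} extended to indicators by monotone approximation, the uniform tail estimate \eqref{apx581} of Lemma \ref{lem_DAbounds}, the mass balance \eqref{apx_massTotal}, a Markov/union bound, and the same compactness argument as in Lemma \ref{lem_Rtight}. Your monotonicity observation reducing the supremum over $t\in[0,T]$ to $t=T$ is also the step implicit in that cited proof, so there is no substantive difference.
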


\begin{proof}
For every $N\in\N$, $t\geq0$, the linear functional $\tilQuen_{\cdot}(t):\varphi\mapsto\tilQuen_{\varphi}(t)$  can be identified with a finite non-negative Radon measure on $\supint\times\R_+$  (see, e.g., \cite[p.\ 96]{KasRam11}, where this quantity is denoted by $\cal {Q}^{(N)}$). The analogous result for the GI/GI/N model (with $\Quenbar_{\cdot,1}$ replaced by $\frac{1}{N}\tilQuen_{\cdot}$) is proved in \cite[Lemma 5.13]{KasRam11} (with $\overline{\cal Q}^{(N)}$ there replaced by $\frac{1}{N}\tilQuen_{\cdot}$ here), using the results in \cite[Lemma 5.8(1)]{KasRam11}. As discussed in Lemma \ref{lem_DAbounds} above, an equivalent of \cite[Lemma 5.8]{KasRam11} holds for the SQ($d$) model.  Thus the result follows from an argument exactly analogous to the one in the proof of \cite[Lemma 5.13]{KasRam11}.
\end{proof}

\begin{proof}[Proof  of Lemma \ref{lem_Qtight}]
For $\ell \geq 1$, condition J2 of Proposition \ref{Jakob} holds by  Lemma \ref{lem_tightDepart}. For $\ell=1$, condition J1 follows from Lemma \ref{lem_Dcontainment} above. Furthermore, for $\ell\geq 2$,  by \eqref{def_Qphi},  for every non-negative measurable function $\varphi$ and $t\geq 0$, $\Quen_{\varphi,\ell}(t)\leq \Quen_{\varphi,1}(t)$.  Thus, the bound \eqref{apx_compactProb} holds  with the same compact set $\mathcal{K}_\eta$ defined in \eqref{apx_compactSet}, even when $\Quen_{\varphi,1}$ is  replaced by $\Quen_{\varphi,\ell}$, and so condition J1 holds for $\Quenbar_{\cdot,\ell}, \ell\geq2$.
\end{proof}

Finally, we provide proofs of Lemmas \ref{lem_nuContainment2} and \ref{lem_GGNconv}.

\begin{proof}[Proof of Lemma \ref{lem_nuContainment2}]

The analogous result for the GI/GI/N model is proved in \cite[Lemma 5.12]{KasRam11} with $\nunbar_1$ is substituted by $\frac{1}{N}\tnun$. The proof only uses relations (5.33) and (5.34) in \cite{KasRam11}, and the fact that the ages of jobs in service grow linearly in time.  The equivalent of \cite[Relations (5.33)-(5.34)]{KasRam11} for the SQ($d$) model (with $\nun$ replaced by $\nunbar_1$) was shown above in Lemma \ref{lem_nuContainment}, and thus an exactly analogous argument yields the corresponding result  for $\nunbar_1$.

\end{proof}

\begin{proof}[Proof of Lemma \ref{lem_GGNconv}]

We first prove the following estimate: for every $m<\endsup$ and every non-negative function $f\in\mathbb{L}^1_{\text{loc}}\supint$ with support in $[0,m]$, there exists $\tilde L(m,T)<\infty$ such that
\begin{equation}\label{apx517}
    \left|\int_0^T\langle f,\nu_1(s)\rangle ds\right|\leq \tilde L(m,T)\int_{\supint}f(x)dx.
\end{equation}
To prove the claim, note that  the analogous result for the GI/GI/N queue is proved in \cite[Lemma 5.16]{KasRam11}.  Specifically, using  \eqref{apx_dymamics_eq} and the bound \eqref{tkn-mass} above as well as Assumption 1 and Lemma 5.9 in \cite{KasRam11} (which shows that $\frac{1}{N}\tmn$ converges to zero as $N\to\infty$), it was shown in \cite[Equation (5.46)]{KasRam11} that  \eqref{apx517} holds with  $\nu_1$ replaced by any subsequential limit $\tilde \nu$ of $\frac{1}{N}\tnun$.  To obtain the result for the SQ($d$) model,  note that by equations \eqref{prelim_dymamics_eq}, \eqref{KDERelation} and \eqref{def_Rphi1},  \eqref{apx_dymamics_eq} is satisfied with $\tnun$ replaced by $\nun_1$, $\tilQuen_{\varphi}$ replaced by $\Quen_{\varphi,1}$ and $\tkn$ replaced by $\kn$. Therefore, the result can be deduced from  the bound \eqref{kbound}, Assumption \ref{asm_initial} and Remark \ref{rem-ggnnew}, and Lemma \ref{lem_Mconv}, which is analogous to \cite[Lemma 5.9]{KasRam11},  \eqref{apx517}, using the same argument as in the proof Lemma \cite[Lemma 5.16]{KasRam11}.

Now we prove \eqref{apx_Alimit2}. For $\ell=1$ the  equation \eqref{apx_Alimit2} can be proved in the same manner as the corresponding result \cite[Proposition 5.17]{KasRam11} for the GI/GI/N model, using Lemma \ref{lem_DAbounds}(b), in place of the analogous \cite[Lemma 5.8(1)]{KasRam11}. For $\ell\geq 2$, since $\nunbar(t)$  and $\nu(t)$ both lie in $\s$, for every $N\in\N$ and  $t\geq0$, $\langle f,\nun_{\ell}(t)\rangle \leq\langle f,\nun_1(t)\rangle$,  and $\langle f,\nu_{\ell}(t)\rangle \leq\langle f,\nu_{1}(t)\rangle$  for all $\ell\geq 1$ and every non-negative measurable function $f$. Thus, the bounds \eqref{apx581} and \eqref{apx_Atight_K2} hold with $\nunbar_1$ and $\Anbar_{\varphi,1}$ replaced by $\nunbar_\ell$ and $\Anbar_{\varphi,\ell}$ respectively, \eqref{apx_compactProb} holds with $\overline{\mathcal{D}}^{(N)}_{\cdot,1}$ replaced by $\overline{\mathcal{D}}^{(N)}_{\cdot,\ell}$, and \eqref{apx517} holds with $\nu_1$ replaced by $\nu_\ell$. Therefore, \eqref{apx_Alimit2} for $\ell\geq2$ follows from the same argument as for the case $\ell=1$.

\end{proof}

\bibliographystyle{plain}
\bibliography{reference}

\end{document}